\newcommand{\arxiv}[1]{{\tt arXiv:#1}}
\def\boxempty{\,{\displaystyle\boxvoid}\,}
\def\boxed#1{\,{\displaystyle\boxvoid}\hspace{-2.2mm}{\scriptstyle #1}\:\,}
\DeclareFontFamily{OT1}{pzc}{}
\DeclareFontShape{OT1}{pzc}{m}{it}{<-> s * [1.10] pzcmi7t}{}
\DeclareMathAlphabet{\mathpzc}{OT1}{pzc}{m}{it}
\tikzset{anchorbase/.style={baseline={([yshift=-0.5ex]current bounding box.center)}}}
\tikzset{wipe/.style={white,line width=4pt}}
\newcommand{\braidto}{to[out=up,in=down]}
\newcommand{\tobraid}{to[out=down,in=up]}
\tikzset{->-/.style={decoration={
  markings,
  mark=at position #1 with {\arrow{>}}},postaction={decorate}}}
\tikzset{-<-/.style={decoration={
  markings,
  mark=at position #1 with {\arrow{<}}},postaction={decorate}}}
\tikzset{darkg/.style={green!70!black}}
\def\dt{{\color{white}\bullet}\!\!\!\circ}
\newcommand\opendot[1]{\node at (#1) {$\dt$}}
\newcommand\closeddot[1]{\node at (#1) {$\bullet$}}
\tikzset{->-/.style={decoration={
  markings,
  mark=at position #1 with {\arrow{>}}},postaction={decorate}}}
\tikzset{-<-/.style={decoration={
  markings,
  mark=at position #1 with {\arrow{<}}},postaction={decorate}}}
\def\clock{\begin{tikzpicture}[baseline=-.9mm]
\filldraw[white] (0,0) circle (1.72mm);
\draw[-] (0,-0.18) to[out=180,in=-90] (-.18,0);
\draw[->] (-0.18,0) to[out=90,in=180] (0,0.18);
\draw[-] (-0.02,0.178) to[out=12,in=90] (0.18,0);
\draw[-] (0.18,0) to[out=-90,in=0] (0,-0.18);
\end{tikzpicture}\,}
\def\anticlock{\begin{tikzpicture}[baseline=-.9mm]
\filldraw[white] (0,0) circle (1.72mm);
\draw[-] (0,-0.18) to[out=180,in=-102] (-.178,0.02);
\draw[-] (-0.18,0) to[out=90,in=180] (0,0.18);
\draw[-] (0.18,0) to[out=-90,in=0] (0,-0.18);
\draw[<-] (0,0.18) to[out=0,in=90] (0.18,0);
\end{tikzpicture}\,}
\crefname{definition}{Definition}{Definitions}
\crefname{example}{Example}{Examples}
\crefname{lemma}{Lemma}{Lemmas}
\crefname{corollary}{Corollary}{Corollaries}
\crefname{theorem}{Theorem}{Theorems}
\crefname{remark}{Remark}{Remarks}
\crefname{equation}{}{}
\crefname{enumi}{}{}
\crefname{section}{${\S\!\!}$}{${\S\S\!\!}$}
\def\mid{\pmb{|}}
\def\Specm{\operatorname{Spec}}
\newcommand\Z{\mathbb{Z}}
\newcommand\N{\mathbb{N}}
\newcommand\kk{\Bbbk}
\newcommand\KK{\mathbb{K}}
\def\op{{\operatorname{op}}}
\def\rev{{\operatorname{rev}}}
\newcommand\one{\mathbbm{1}}
\def\P{\mathcal{P}}
\def\add{\operatorname{add}}
\def\rem{\operatorname{rem}}
\def\stringlabel#1{{\scriptscriptstyle\color{blue}#1}}
\def\stringnumber#1{{\scriptstyle\color{blue}#1}}
\def\dual{\circledast}
\def\ostar{{\medcirc\hspace{-2.2mm}\star\hspace{.5mm}}}
\def\costar{\underline{\ostar}}
\def\Nat{U_t}
\def\PR{\operatorname{pr}}
\def\TT{{\mathrm{T}}}
\def\gr{\operatorname{gr}}
\def\ob{\operatorname{ob}}
\def\OO{\mathbb{O}}
\newcommand\cA{\mathpzc{A}}
\newcommand\cB{\mathpzc{B}}
\newcommand\cC{\mathpzc{C}}
\newcommand\cU{\mathpzc{U}}
\newcommand\cD{\mathpzc{D}}
\newcommand\Par{\mathpzc{Par}}
\newcommand\Sym{\mathpzc{Sym}}
\newcommand\sym{{S\!ym}}
\newcommand\Asym{{AS\!ym}}
\newcommand\ASym{\mathpzc{ASym}}
\newcommand\Heis{\mathpzc{Heis}}
\newcommand\APar{\mathpzc{APar}}
\renewcommand\Vec{\mathpzc{Vec}}
\newcommand\Vecfd{\mathpzc{Vec}_{\operatorname{fd}}}
\newcommand\Mod{{\operatorname{-Mod}}}
\newcommand\doM{{\operatorname{Mod-}}}
\newcommand\dfldoM{{\operatorname{Mod_{lfd}-}}}
\newcommand\Proj{{\operatorname{-Proj}}}
\newcommand\Modlfd{{\operatorname{-Mod_{lfd}}}}
\newcommand\Moddelta{{\operatorname{-Mod_\Delta}}}
\newcommand\Modfd{{\operatorname{-Mod_{fd}}}}
\def\eps{\varepsilon}
\def\cont{\operatorname{cont}}
\def\wt{\operatorname{wt}}
\def\HC{\operatorname{HC}}
\def\END{\mathpzc{End}}
\def\ev{\operatorname{ev}}
\def\Ev{\operatorname{Ev}}
\def\Act{\operatorname{Act}}
\def\coev{\operatorname{coev}}
\def\sigmadual{{\scriptscriptstyle\bigcirc\hspace{-1.7mm}\sigma\hspace{.1mm}}}
\def\REP{\underline{\text{Re}}\text{p}}
\DeclareMathOperator{\End}{End}
\DeclareMathOperator{\Ext}{Ext}
\DeclareMathOperator{\Tor}{Tor}
\DeclareMathOperator{\Hom}{Hom}
\DeclareMathOperator{\HOM}{\mathpzc{Hom}}
\DeclareMathOperator{\Id}{Id}       
\DeclareMathOperator{\Kar}{Kar}     
\def\Add{\operatorname{Add}}
\DeclareMathOperator{\res}{res}
\DeclareMathOperator{\ind}{ind}
\DeclareMathOperator{\coind}{coind}
\DeclareMathOperator{\rad}{rad}
\DeclareMathOperator{\soc}{soc}
\DeclareMathOperator{\triv}{triv}
\DeclareMathOperator{\infl}{infl}
\newtheorem{theorem}{Theorem}[section]
\newtheorem{lemma}[theorem]{Lemma}
\newtheorem*{lemma*}{Lemma}
\newtheorem{corollary}[theorem]{Corollary}
\theoremstyle{definition}
\newtheorem{definition}[theorem]{Definition}
\newtheorem{remark}[theorem]{Remark}
\newtheorem{example}[theorem]{Example}
\numberwithin{equation}{section}
\begin{document}

\title[Representations of the partition category]{A new approach to the representation theory of the partition category}
\thanks{
Authors supported in part by the National Science
Foundation grant DMS-2101783.
}
\author{Jonathan Brundan}
\address[J.B.]{
  Department of Mathematics \\
  University of Oregon \\
  Eugene, OR, USA
}
\email{brundan@uoregon.edu}

\author{Max Vargas}
\address[M.V.]{
  Department of Mathematics \\
  University of Oregon \\
  Eugene, OR, USA
}
\email{mvargas2@uoregon.edu}

\begin{abstract}
We explain a new approach to the representation theory of the
partition category based on a reformulation of the definition of the Jucys-Murphy
elements introduced originally by Halverson and Ram and developed further by Enyang. Our reformulation involves a new graphical monoidal category, the
{\em affine partition category}, which is defined here as a certain monoidal subcategory of Khovanov's Heisenberg category.
We use the Jucys-Murphy elements to construct
some special projective functors, then apply these functors
to give self-contained proofs of results of Comes and
Ostrik on blocks of Deligne’s category $\REP(S_t)$.
\end{abstract}

\subjclass[2020]{Primary 18M05; Secondary 20C30, 17B10.}
\keywords{Partition category, Deligne category, Upper finite highest weight category, Jucys-Murphy element}

\maketitle
\thispagestyle{empty}

\section{Introduction}\label{sec1}

Let $\kk$ be an algebraically closed field of characteristic zero
and $t \in \kk$ be a parameter.
The {\em partition category} $\Par_t$ is the free strict $\kk$-linear symmetric
monoidal category
generated by a special commutative Frobenius object
of categorical dimension $t$. Its additive Karoubi envelope is
the category $\REP(S_t)$ introduced by Deligne \cite{Del}, which interpolates the categories of representations of the symmetric groups $S_t\:(t \in \N)$ to non-integer values of $t$.
When $t \notin \N$, Deligne's category is a semisimple tensor category which is not of sub-exponential growth, hence, it does not admit a fiber functor; see \cite[Sec.~9.12]{EGNO} for further background here.
When $t \in \N$, the category $\REP(S_t)$ is not semisimple,
and its semisimplification is the usual tensor category
$\kk S_t\Modfd$ of representations of the symmetric group.

The objects of the partition category are indexed by the natural numbers.
For $n \in \N$,
the endomorphism algebra $\End_{\Par_t}(n)$ is the {\em partition algebra}
$P_n(t)$ introduced by Martin \cite{Martinfirst} and Jones \cite{Jones}.
The representation theory of this finite-dimensional algebra
has been well studied. In \cite{Martin}, Martin showed that
$P_n(t)\Modfd$ is a highest weight category except when $t=0$, and he
determined the precise structure of the standard modules; see also \cite{DW}.
When $t=0$, $P_n(t)$ still has the structure of a cellular algebra, as established in \cite{DW,X}, and its representation theory is also well understood.
The partition algebras form a tower $P_0(t) < P_1(t) < \cdots$, but
the cell modules do not restrict along this tower in a multiplicity-free way,
so that standard techniques like the Jones basic construction cannot be applied directly. To address this, Martin \cite{Martinlast} and Halverson and Ram \cite{HR}
consider
an intermediate family of ``half partition algebras'' fitting into
a tower $$
P_0(t) < P_{\frac{1}{2}}(t) < P_1(t) < P_{\frac{3}{2}}(t) < \cdots.
$$
Halverson and Ram also defined analogs
$L_0, L_{\frac{1}{2}}, L_1, L_{\frac{3}{2}},\dots$
of Jucys-Murphy elements in these partition algebras, which were studied further by Enyang \cite{En, En2}. Enyang
worked out a recursive definition for the Jucys-Murphy elements and
used them to construct an analog of Young's orthogonal form for the
irreducible $P_n(t)$-modules. His definition
involves a complicated
five term recurrence relation, making the Jucys-Murphy elements for partition algebras considerably harder to work with than the classical Jucys-Murphy elements of the symmetric groups.
Recently, Creedon \cite{Creedon} has revisited Enyang's work,
showing that supersymmetric polynomials in a renormalization of the Jucys-Murphy elements give a family of central elements which is large
enough to separate blocks.

In this article, we give a new treatment of the representation theory of
$\Par_t$. Let
$$
Par_t := \bigoplus_{m,n \in \N} \Hom_{\Par_t}(n,m)
$$
be the path algebra of this
$\kk$-linear category, denoting the idempotents arising from the identity endomorphisms of the objects of $\Par_t$ by $\{1_n\:|\:n \in \N\}$.
Since the partition algebra $P_n(t)$
is the idempotent truncation $1_n Par_t 1_n$, most of the known results about the representation theory of the algebras $P_n(t)$ can be deduced
from that of the partition category in a standard way.
In fact, as well as producing more general results, we are convinced that it is
easier to study the representation theory of the partition
category $\Par_t$, instead of working with the tower of partition algebras.
To start with, $\Par_t$ has an efficient monoidal presentation encoding its
universal property, with generating morphisms
$\begin{tikzpicture}[anchorbase,scale=.8]
      \draw[-,thick](0,0) -- (0.55,0.6);
      \draw[-,thick](0.55,0) -- (0,0.6);
\end{tikzpicture}$ (``crossing''),
$\begin{tikzpicture}[anchorbase,scale=.8]
      \draw[-,thick](0,0) -- (0.3,0.3);
      \draw[-,thick](0.6,0) -- (0.3,0.3);
      \draw[-,thick](0.3,0.3) -- (0.3,0.6);
\end{tikzpicture}$ (``merge''),
$\begin{tikzpicture}[anchorbase,scale=.8]
      \draw[-,thick](0.3,0) -- (0.3,0.3);
      \draw[-,thick](0.6,0.6) -- (0.3,0.3);
      \draw[-,thick](0.3,0.3) -- (0,0.6);
\end{tikzpicture}$ (``split''),
$\begin{tikzpicture}[anchorbase,scale=.6]
\draw[-,thick] (0,0)to(0,0.1)to[out=up,in=up,looseness=2](0.8,0.1)to(0.8,0);
\end{tikzpicture}$ (``cap''),
$\begin{tikzpicture}[anchorbase,scale=.6]
\draw[-,thick] (0,0.5)to(0,0.4)to[out=down,in=down,looseness=2](0.8,0.4)to(0.8,0.5);
\end{tikzpicture}$ (``cup''),
$\begin{tikzpicture}[anchorbase,scale=.8]
      \draw[-,thick](0.3,0) -- (0.3,0.25);
      \node at (0.3,0.3) {$\dt$};
\end{tikzpicture}$ (``downward leaf'') and
$\begin{tikzpicture}[anchorbase,scale=.8]
      \draw[-,thick](0.3,0.6) -- (0.3,0.37);
      \node at (0.3,0.3) {$\dt$};
\end{tikzpicture}$ (``upward leaf''); see \cref{Par}.
This means that one can make calculations in $\Par_t$ using
the string calculus for strict
monoidal categories, which seems more flexible
than the traditional algebraic expressions used when working in $P_n(t)$.
But the key reason we prefer to work with $\Par_t$ is that its
path algebra has a {\em triangular decomposition} in the sense of \cite[Def.~5.31]{BS}, hence, the category $Par_t\Modlfd$ of locally finite-dimensional $Par_t$-modules
is an {\em upper finite highest weight category} as in \cite[Def.~3.34]{BS}.
The Cartan subalgebra in this triangular decomposition is the locally unital algebra
$$
\sym := \bigoplus_{n \geq 0} \kk S_n,
$$
with its irreducible modules being the Specht modules $\{S(\lambda)\:|\:\lambda \in \P\}$ indexed by the set $\P$ of all partitions.
The {\em standard modules} $\{\Delta(\lambda)\:|\:\lambda \in \P\}$
for $Par_t$ are the modules defined by parabolically  inducing the
Specht modules.
Then we obtain a full set of pairwise inequivalent irreducible $Par_t$-modules
$\{L(\lambda)\:|\:\lambda \in \P\}$ from the irreducible heads of the standard modules. This gives a quick proof of the classification of irreducible $Par_t$-modules, which was established originally by Deligne \cite{Del}
and Comes and Ostrik \cite{CO}.

The highest weight
approach to the representation theory of combinatorial monoidal categories such as
the partition category as just outlined
has been developed systematically
by Sam and Snowden \cite{SaS}. In their language, $\Par_t$ is a
{\em monoidal triangular category}. There are many
other interesting examples of this structure,
including several that are actually monoidal subcategories of $\Par_t$:
the Brauer category (cups, caps and crossings but no splits and merges),
the Temperley-Lieb category (just cups and caps), and
the category
studied by Khovanov and Sazdanovic in \cite{KS} (just leaves).
In their earlier work \cite{SSold}, Sam and Snowden had already
exposed the importance of the structure of the Borel subcategories of these
and other such categories, although at that time
they did not work out the details fully in the case of the partition category.
In \cref{sec3}, we fill this gap by giving an exposition
of some of their ideas in this case, exploiting the structure of the {\em upper partition category}, i.e., the positive Borel subcategory,
to determine the Grothendieck ring $K_0(Par_t)$ of the
category of finitely generated projective $Par_t$-modules.
In fact, as a ring, this is identified with the ring $\Lambda$ of symmetric functions, but the isomorphism classes of the standard modules $\Delta(\lambda)$
produce an interesting inhomogeneous
basis $\{\tilde s_\lambda\:|\:\lambda \in \P\}$ for
$\Lambda$ of {\em deformed Schur functions}.
These also appeared implicitly in \cite{Del, CO} and again in \cite{SSold}, and
were rediscovered from a slightly different perspective by
Orellana and Zabrocki \cite{OZ}. They are interesting because the structure constants for multiplication in $\Lambda$ with respect to this basis are the {\em reduced Kronecker coefficients}.

Although
there are many important examples of monoidal triangular categories,
and the work of Sam and Snowden has revealed many common features, this still seems to be a subject where the more intricate combinatorics needs to be studied
separately in each case. For example, one wants to understand the {\em center} of the underlying category, and the induced
decomposition of the irreducible representations into blocks which comes from considering central characters. This can be framed as a question about
an analog of {\em Harish-Chandra homomorphism} for monoidal triangular categories; see \cref{HCH}.
However, to answer it,
one needs some way to construct sufficiently many central elements, and we do not know any uniform way to approach this.
After understanding the block decomposition, the next step is to
consider the combinatorics of
{\em special
  projective functors}, which are functors on the module category induced by tensoring with generating objects of the underlying monoidal category.

The crucial new ingredient in our approach to $\Par_t$
is the definition of another graphical monoidal category, the {\em affine partition category} $\APar$.
This is obtained from the partition category by adjoining two new generating morphisms
                                    $\begin{tikzpicture}[anchorbase,scale=1.2]
\draw[-,thick](0,0)--(0,0.4);
\draw[-,thick](0,0.2)--(-0.15,0.2);
\node  at (-0.15,0.2) {$\bullet$};
\end{tikzpicture}$ (``left dot'')
and
$\begin{tikzpicture}[anchorbase,scale=1.2]
\draw[-,thick](0,0)--(0,0.4);
\draw[-,thick](0,0.2)--(0.15,0.2);
\node  at (0.15,0.2) {$\bullet$};
\end{tikzpicture}$ (``right dot'') in such a way that
$\Par_t$ can be recovered as the quotient of $\APar$ by a
 certain left tensor ideal, with the left and right dots mapping to
 renormalized versions of the Jucys-Murphy elements; see \cref{nextmain,agree}.
 However, it is not easy to do this without making additional choices.
 The actual definition of $\APar$
 given in \cref{apardef} below adopts a quite different point of view
 based on an observation due to Likeng and Savage \cite{LSR}: we construct
 $\APar$ initially as a
 monoidal subcategory of Khovanov's Heisenberg category $\Heis$ from \cite{K}.
This allows complicated relations in $\APar$ to be derived
rather quickly by elementary calculations using
the string calculus for $\Heis$; e.g., see \cref{amazinggrace}
which recovers
Enyang's five term recurrence relation for the Jucys-Murphy elements.

In the affine partition category, there is an obvious way to
construct a large family of central elements; see \cref{ourcentralelements}.
These map to central elements in $\Par_t$ which turn out to be closely related to Creedon's central elements of the partition algebras from \cite{Creedon}.
After that, we consider the self-adjoint projective functor
$$
D:Par_t\Mod \rightarrow Par_t\Mod
$$
induced by tensoring with the generating object $1$ of $\Par_t$.
This plays an analogous role in our approach to induction and restriction
along the tower of partition algebras in the work of Martin and others discussed earlier.
We use the action of the left and right dots from  $\APar$ to decompose $D$
into summands
$D = \bigoplus_{a,b \in \kk} D_{b|a}$; see \cref{keyresult}. There is
a close analogy here to the way Jucys-Murphy elements were used to give a
new approach to the representation theory of the symmetric groups in \cite{OV}.
In fact, the Jucys-Murphy elements of $Par_t$ generate a large commutative subalgebra, and the resulting ``Gelfand-Tsetlin characters'' of the standard modules
$\Delta(\lambda)$ can be computed explicitly using the branching rules from
\cref{keyresult}, although we do not pursue this further here.
Finally, we use the combinatorial properties of the special projective functors
$D_{b|a}$ to reprove the main structural result
about the representation theory of $\Par_t$ for $t \in \N$.
This was established originally by Comes and Ostrik \cite{CO}.

\vspace{2mm}
\noindent
{\bf Theorem.}
{\em When $t \in \N$, i.e., $Par_t$ is not semisimple,
  the non-simple blocks of $Par_t$
  are in bijection with isomorphism classes of irreducibles in
the semisimplification
  $\kk S_t\Modfd$.
All of the non-simple blocks are Morita equivalent. These blocks have
infinitely many isomorphism classes of irreducible modules
parametrized by $\N$, and the structure of the corresponding indecomposable
projectives is as follows:
\begin{align*}
P(0)&=\!\!
\begin{tikzpicture}[anchorbase]
\node at (0,0){{\xymatrix@C=0.9em@R=1.1em{
0\ar@{~}[d]\\
1
}}};\end{tikzpicture}\!,
&
\!P(1) &=\!\!
\begin{tikzpicture}[anchorbase]
\node at (0,0){{\xymatrix@C=0.9em@R=1.1em{
&1\ar@{-}[dl]\ar@{~}[dr]\\
0\ar@{~}[dr]&&2\ar@{-}[dl]\\
&1
}}};\end{tikzpicture}\!,
&
\!P(2) &=\!\!
\begin{tikzpicture}[anchorbase]
\node at (0,0){{\xymatrix@C=0.9em@R=1.1em{
&2\ar@{-}[dl]\ar@{~}[dr]\\
1\ar@{~}[dr]&&3\ar@{-}[dl]\\
&2
}}};\end{tikzpicture}\!,
&
\!P(3) &=\!\!
\begin{tikzpicture}[anchorbase]
\node at (0,0){{\xymatrix@C=0.9em@R=1.1em{
&3\ar@{-}[dl]\ar@{~}[dr]\\
2\ar@{~}[dr]&&4\ar@{-}[dl]\\
&3
}}};\end{tikzpicture}\!,\,
\dots
\end{align*}
}

For a more formal statement, see \cref{MAIN}.
It is a straightforward exercise to deduce from this
that each non-simple block is Morita equivalent
to the path algebra of the infinite quiver
$$
\xymatrix{
0
\ar@/^/[r]^{x_0}
&
\ar@/^/[l]^{y_0}{1}
\ar@/^/[r]^{x_1}
&
\ar@/^/[r]^{x_2}
\ar@/^/[l]^{y_1}{2}
&
\ar@/^/[l]^{y_2}
3\cdots}
\quad
\!\!\!\!\!\text{with relations }y_0 x_0 =0, x_{i+1}x_i = y_i y_{i+1} = x_i y_i - y_{i+1}
x_{i+1}=0.
$$
This quiver
is well known in representation theory, for example,
it also describes the non-trivial block of the Temperley-Lieb category, as noted in \cite[Rem.~6.5]{CO}.

It is interesting to compare the general strategy
developed here with the original arguments of Comes and Ostrik. There are many parallels. For example, they also construct a large family of central elements, although different from ours, and they also use summands of the functor $D$ to construct equivalences between blocks; see \cref{omegas} and \cref{blockreduction}.
Another technique which is crucial in \cite{CO}
is the idea of lifting projectives
to the (semisimple) generic partition category. In our approach, this
is replaced everywhere with arguments involving standard modules
and the BGG reciprocity coming from the highest weight structure. In fact,
largely due to the fact that they did not think in these module-theoreic terms, Comes and Ostrik were forced in the end to refer to some of Martin's results from \cite{Martin} to obtain the precise submodule structure of projectives in the above theorem, whereas our proof is independent of {\em loc. cit.}, indeed, Martin's results can now be deduced from here.
One more key idea used by Comes and Ostrik involves
an explicit formula for categorical dimensions derived ultimately from the hook
formula, although we have avoided such considerations entirely
by exploiting the functors $D_{b|a}$ for $a=b$.
The definition of these diagonal components of $D$ cannot be formulated without using
Jucys-Murphy elements, so
no counterpart for this part of our argument appears in \cite{CO}.

\vspace{2mm}
\noindent
    {\em Acknowledgements.}
    We thank Alistair Savage for discussions which influenced
     the final form of the definition of the affine partition category
    given in \cref{apardef}.

\section{Monoidal categories and representations}\label{sec2}

In the opening section, we explain our general conventions for
representations of $\kk$-linear (monoidal) categories.
Always in this article $\kk$ will be an algebraically closed field of characteristic zero, although all of the generalities recorded
make sense more generally. Then we briefly recall some
classical results about $\Sym$, the free strict $\kk$-linear symmetric monoidal category on one object, which categorifies the ring of symmetric functions.

\subsection{Path algebras and modules}
Let $\cA$ be a $\kk$-linear category.
Its {\em path algebra} is
the associative algebra
$$
A := \bigoplus_{X,Y \in \ob \cA} \Hom_{\cA}(X,Y)
$$
with multiplication induced by composition in $\cA$, so that $g f = g \circ f$
for $f:X \rightarrow Y,\ g: Y \rightarrow Z$.
Note that $A$ is not necessarily unital, but it is always 
a {\em locally unital algebra}, i.e., there is a distinguished family
$\{1_X\:|\:X \in \OO_A\}$ of mutually orthogonal idempotents such that
$A = \bigoplus_{X,Y \in \OO_A} 1_Y A 1_X$. In this case, $\OO_A$ is
the object set $\ob\cA$ of the
category $\cA$, with $1_X$ being the identity endomorphism of $X$.
If $\cA$ is a {\em finite-dimensional category}, i.e., its morphism spaces
are finite-dimensional, then the path algebra is {\em locally finite-dimensional}
in the sense that $\dim 1_Y A 1_X < \infty$
for all $X, Y \in \OO_A$.

The category $A\Mod$ of left $A$-modules is the category
$\HOM_\kk(\cA, \Vec)$ of $\kk$-linear functors from $\cA$ to the category $\Vec$ of vector spaces, morphisms being natural transformations.
Equivalently, using the language we systematically adopt below,
a left $A$-module $V$ is a left module in the usual sense of associative algebras such that $V = \bigoplus_{X \in \OO_A} 1_X V$;
this corresponds to the $\kk$-linear functor $V:\cA \rightarrow \Vec$
taking object $X$ to the vector space $V(X) = 1_X V$
and morphism $f \in \Hom_\cA(X,Y)$ to the linear map
$V(f): 1_X V \rightarrow 1_Y V$ defined by left multiplication by $f \in 1_Y A 1_X$.
There is also the category $\doM A$ of right $A$-modules, which is just the same as the
category $\HOM_\kk(\cA^\op, \Vec)$.

We say that $V \in A\Mod$ 
is {\em locally finite-dimensional} if $\dim 1_X V < \infty$
for all $X \in \OO_A$; equivalently, the associated functor goes from 
$\cA$ to the category $\Vecfd$ of finite-dimensional vector spaces.
Let $A\Modlfd$ be the full subcategory of $A\Mod$ consisting of the locally finite-dimensional $A$-modules. 
For more background material about the structure of the category $A\Modlfd$ 
in the case that $A$ is locally finite-dimensional, we refer to
\cite[$\S$2.2--$\S$2.3]{BS}, where Abelian categories of this form are called
{\em Schurian categories}.

We also let $A\Modfd$ be the full subcategory of $A\Mod$ 
consisting of the
globally finite-dimensional modules, i.e., the $V$ with $\dim V < \infty$,
and $A\Proj$ be the full subcategory of $A\Mod$ consisting of the finitely generated projective modules. If $A$ is a locally finite-dimensional locally unital algebra 
then $A 1_X$ is a locally finite-dimensional module for each $X \in \OO_A$, 
hence, $A\Proj$ is a subcategory of $A\Modlfd$.
The category $A\Proj$ can also be obtained in equivalent form directly from the 
$\kk$-linear category $\cA$ since
the Yoneda embedding $h^*:\cA \rightarrow \HOM_\kk(\cA, \Vec)$
induces a contravariant $\kk$-linear equivalence between
$\Kar(\cA)$ and
$A\Proj$. Here, $\Kar(\cA)$ denotes the
{\em additive Karoubi envelope} of $\cA$, that is, the idempotent completion of its additive envelope $\Add(\cA)$.

We let $K_0(A)$ be the split Grothendieck group of the category $A\Proj$. Assuming that $A$ is
locally finite-dimensional, every finitely generated module
has a projective cover in $A\Proj$. Moreover,
$K_0(A)$ is a free Abelian group with
     canonical
     basis coming from the projective covers of the irreducible $A$-modules.

\subsection{Pull-back and push-forward}
Suppose that $\cA$ and $\cB$ are two $\kk$-linear categories.
Let 
$$
A = \bigoplus_{X,Y \in \OO_A} 1_Y A 1_X,
\qquad
B=\bigoplus_{X,Y \in \OO_B} 1_Y B 1_X
$$ 
be their path algebras.
To a $\kk$-linear functor $F:\cA \rightarrow \cB$, 
we associate an exact
functor 
\begin{equation}\label{Frestriction}
\res_F:
B\Mod \rightarrow A\Mod 
\end{equation}
which we call {\em restriction along $F$}.
It is just the functor $\HOM_\kk(\cB,\Vec) \rightarrow \HOM_\kk(\cA,\Vec)$
defined by composing on the right with $F$.
In elementary terms, and introducing a shorthand which will be ubiquitous later on, the functor
$\res_F$ takes $V \in B\Mod$ to 
\begin{equation}\label{Frestrictionleft}
1_{F} V := \bigoplus_{X \in \OO_A} 1_{FX} V \in A\Mod,
\end{equation}
with the left module structure defined so that $f\in 1_Y A 1_X$ acts on the 
$X$-th summand $1_{FX} V$ as
the linear map $Ff:1_{FX} V \rightarrow 1_{FY} V$, and it acts as zero on all other summands. 
It takes a $B$-module homomorphism $\phi:V \rightarrow W$ to the
$A$-module homomorphism $\res_F(\phi):1_{F} V \rightarrow 1_{F} W$
defined by $\phi_{FX}:1_{FX} V \rightarrow 1_{FX} W$
for each $X \in \OO_A$.
Similarly, there is the exact restriction functor we denote by
\begin{equation}\label{Frestrictionop}
  {_F\!}\res:\doM
  B \rightarrow \doM A
  \end{equation}
between the categories of right modules taking $V \in \doM B$ to 
\begin{equation}\label{Frestrictionright}
V 1_{F} := \bigoplus_{X \in \OO_A} V 1_{FX} \in \doM A.
\end{equation}
The functors $\res_F$ and ${_F\!}\res$ may also be denoted
$F^*$ and $(F^\op)^*$; e.g., see
\cite[(2.1.4)]{SSold}, \cite[$\S$3.6]{SaS}.

The restriction $B 1_{F} =
\bigoplus_{X \in \OO_A} B 1_{FX}$
is a $(B,A)$-bimodule.
The functor $\res_F:B\Mod\rightarrow A\Mod$ is isomorphic to 
$\bigoplus_{X \in \OO_A} \Hom_B(B 1_{FX},?)$. Then adjointness of tensor and hom in the locally unital setting (e.g., see \cite[Lem.~2.7]{BS}) implies that the functor
\begin{equation}\label{Finduction}
\ind_F := B 1_{F} \otimes_A :A\Mod \rightarrow B\Mod
\end{equation}
is left adjoint to $\res_F$. We call this {\em induction along
  $F$}.
Since it is left adjoint to an exact functor, $\ind_F$ is right exact and takes projectives to
projectives. In fact, we have that
\begin{equation}\label{indonprojectives}
\ind_F \;A 1_X = B 1_F \otimes_A A 1_X \cong B 1_{FX},
\end{equation}
i.e., $\ind_F$ can be viewed as an extension of $F$ to
arbitrary modules.
From this, it is clear that $\ind_F$ preserves finite generation.
Likewise, the restriction $1_F B=\bigoplus_{X\in\OO_A} 1_{FX} B$ is an $(A,B)$-bimodule.
The functor $\res_F$ is also isomorphic to $1_{F} B \otimes_B $, hence, it has a right adjoint given by the functor
\begin{equation}\label{Fcoinduction}
\coind_F := \bigoplus_{Y \in \OO_B} \Hom_A(1_{F} B 1_Y, ?):A\Mod\rightarrow B\Mod.
\end{equation}
We call this {\em coinduction along $F$}.
Since it is right adjoint to an exact functor, $\coind_F$ is left
exact and takes injectives to injectives.
The functors $\ind_F$ and $\coind_F$ are also called
left and right Kan extensions and may be denoted $F_!$ and $F_*$, respectively;
e.g., see \cite[(2.1.4)]{SSold}, \cite[$\S$3.6]{SaS}. 
There are also analogs of $\ind_F$ and $\coind_F$ with
left modules replaced by right modules, which we denote by ${_F\!}\ind$ and
${_F\!}\coind$; in \cite{SaS}, these are denoted $(F^\op)_!$ and $(F^\op)_*$.

\begin{lemma}\label{cornell}
  Let $F:\cA \rightarrow \cB$ be a $\kk$-linear functor as above.
  \begin{itemize}
  \item[(1)] If $B 1_F$ is a projective right $A$-module then $\ind_F$ and ${_F\!}\coind$ are exact functors.
  \item[(2)] If $1_F B$ is a projective left $A$-module then ${_F\!}\ind$ and $\coind_F$ are exact functors.
    \end{itemize}
\end{lemma}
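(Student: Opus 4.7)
The plan is to reduce both statements directly to the standard homological facts: tensoring with a flat module is exact, and homming out of a projective module is exact. The key preliminary observation is that all four functors $\ind_F, \coind_F, {_F\!}\ind, {_F\!}\coind$ admit simple descriptions as either tensor or Hom functors against the bimodules $B1_F$ and $1_F B$ coming from $F$, and once these descriptions are in hand the lemma is essentially automatic.

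First I would record these bimodule realizations. The definition \cref{Finduction} already gives $\ind_F = B 1_F \otimes_A -$, where $B 1_F$ is viewed as a $(B,A)$-bimodule. Reading \cref{Fcoinduction} in the same way, one can identify $\coind_F = \Hom_A(1_F B, -)$ where $1_F B$ is the $(A,B)$-bimodule (left $A$-action, right $B$-action), with the right $B$-module structure on the Hom spaces coming from the right $B$-action on $1_F B$. The analogous right-module versions, which the excerpt only names without defining, should be taken to be ${_F\!}\ind = - \otimes_A 1_F B$ and ${_F\!}\coind = \Hom_A(B 1_F, -)$, so that they are left/right adjoints of ${_F\!}\res$ by exactly the same tensor-hom adjunction used in \cref{Finduction,Fcoinduction}.

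For part (1), the hypothesis is that $B 1_F$ is projective as a right $A$-module. Since projective right $A$-modules are flat, the functor $B 1_F \otimes_A -$ is exact on $A\Mod$, giving exactness of $\ind_F$. For ${_F\!}\coind = \Hom_A(B 1_F, -)$ acting on right $A$-modules, projectivity of $B 1_F$ as a right $A$-module makes $\Hom_A(B 1_F, -)$ exact, and the residual right $B$-module structure on the values plays no role in checking exactness. Part (2) is entirely symmetric: projectivity of $1_F B$ as a left $A$-module makes $- \otimes_A 1_F B$ exact (flatness) and $\Hom_A(1_F B, -)$ exact (projectivity), which gives exactness of ${_F\!}\ind$ and $\coind_F$ respectively.

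There is no real obstacle here; the whole proof is bookkeeping. The only mild subtlety, which I would address explicitly, is the matching of sides: one must check that the projectivity hypothesis on $B 1_F$ (respectively $1_F B$) is over the same $A$-action used in the relevant tensor or Hom. Once the bimodule structures are laid out carefully, and the identifications of $\coind_F$ with $\Hom_A(1_F B, -)$ and ${_F\!}\coind$ with $\Hom_A(B 1_F, -)$ are verified via the locally unital adjointness of tensor and Hom cited from \cite[Lem.~2.7]{BS}, the argument concludes in a single line per functor.
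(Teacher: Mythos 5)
Your proposal is correct and takes essentially the same approach the paper implies — the paper's entire proof is the one-liner ``this is obvious from the definitions,'' and your argument is precisely the bookkeeping that sentence elides: identify each of the four functors with a tensor or Hom against the bimodules $B1_F$ and $1_F B$, then invoke projective $\Rightarrow$ flat for the tensor side and exactness of $\Hom$ out of projectives for the Hom side. The one cosmetic caveat is that in this locally unital setting $\coind_F$ and ${_F\!}\coind$ are defined as the direct sums $\bigoplus_{Y}\Hom_A(1_F B 1_Y,?)$ and $\bigoplus_{Y}\Hom_{\doM A}(1_Y B 1_F,?)$, so strictly speaking one should note that projectivity of $1_F B$ (resp.\ $B1_F$) gives projectivity of each summand $1_F B 1_Y$ (resp.\ $1_Y B 1_F$), whence each $\Hom$ in the direct sum is exact; but your formulation $\Hom_A(1_F B,-)$ is just a compact notation for this and the reasoning goes through unchanged.
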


\begin{proof}
  This is obvious from the definitions of these functors.
\end{proof}

Suppose that $F, G:\cA \rightarrow \cB$ are $\kk$-linear functors.
A natural transformation $\alpha:F\Rightarrow G$
induces natural
transformations $\res_\alpha:\res_F \Rightarrow \res_G$,
                                           $\ind_\alpha:\ind_G\Rightarrow\ind_F$ and
                                           $\coind_\alpha:\coind_G\Rightarrow\coind_F$.
We  leave the detailed definitions of these to the reader, just noting that
$\ind_\alpha$ and $\coind_\alpha$ are the left and right mates of $\res_\alpha$.
Similarly, $\alpha$ induces natural transformations
  ${_\alpha\!}\res:{_G\!}\res \Rightarrow {_F\!}\res$,
                                           ${_\alpha\!}\ind:{_F\!}\ind
  \Rightarrow{_G\!}\ind$ and ${_\alpha\!}\coind:{_F\!}\coind \Rightarrow{_G\!}\coind$.
  Assuming for simplicity\footnote{To formulate analogs of
  \cref{parmesan,parmesan2} without this assumption, one needs to
  work in the strict $2$-category of $\kk$-linear categories.}
   that $\cA = \cB$, so that $F$ and $G$ are $\kk$-linear endofunctors of $\cA$, these constructions define $\kk$-linear monoidal  functors
\begin{align}\label{parmesan}
  \res_*&: \END_{\kk}(\cA)
  \rightarrow \END_{\kk}(A\Mod)^{\rev},
  &\ind_*, \coind_*&:\END_{\kk}(\cA)^{\op} \rightarrow
\END_{\kk}(A\Mod),\\
{_*\!}\res&: \END_{\kk}(\cA)^{\op} \rightarrow \END_{\kk}(\doM A)^{\rev},&
{_*\!}\ind, {_*\!}\coind&:\END_{\kk}(\cA) \rightarrow
            \END_{\kk}(\doM A).\label{parmesan2}
\end{align}
Here, $\END_{\kk}(\cA)$ denotes the strict monoidal $\kk$-linear
  category of $\kk$-linear endofunctors and natural transformations,
  ``op" means the opposite category with the same monoidal product,
  and ``rev" means the same category with the reversed monoidal product.

  \subsection{Duality}
  Continue with $A$ and $B$ be the path algebras of $\cA$ and $\cB$, respectively.
   There is a contravariant functor
\begin{equation}\label{weirdo1}
?^\dual:
A\Mod \rightarrow \doM A
\end{equation}
taking $V = \bigoplus_{X \in \OO_A} 1_X V$ to $V^\dual :=
\bigoplus_{X \in \OO_A} (1_X V)^*$, the direct sum of the linear
duals of the ``weight spaces'' $1_X V$.
The restriction of this to locally finite-dimensional modules is an
equivalence, with quasi-inverse given by the restriction of the
analogously-defined duality functor \begin{equation}\label{weirdo2}
  {^\dual}?:\doM A \rightarrow A\Mod
\end{equation}
in the other direction.
To obtain a duality ($=$ contravariant auto-equivalence) on
$A\Modlfd$ from \cref{weirdo1,weirdo2}, one also needs a $\kk$-linear equivalence $\sigma:\cA
\rightarrow \cA^\op$. Restriction along $\sigma$ gives equivalences
$\res_\sigma:\dfldoM A \rightarrow A\Modlfd$ and
${_\sigma\!}\res:A \Modlfd \rightarrow \dfldoM A$,
hence, we obtain the
duality functor
\begin{equation}\label{duality}
?^\sigmadual := \res_\sigma \circ  ?^\dual = {^\dual}? \circ {_\sigma\!}\res
:A\Modlfd \rightarrow A\Modlfd.
\end{equation}

Given a $\kk$-linear functor $F:\cA \rightarrow \cB$, we obviously
have that
\begin{align}\label{predual}
?^\dual \circ \res_F &\cong {_F\!}\res \circ ?^\dual
\end{align}
as functors from $B\Mod$ to $\doM A$. 
We deduce that
\begin{align}\label{almostwhatIneed}
  {^\dual}? \circ {_F\!}\ind &\cong \coind_F \circ {^\dual}?,&
  {^\dual}? \circ {_F\!}\coind &\cong \ind_F \circ {^\dual}?
\end{align}
as functors from $\doM A$ to $B\Mod$.

\subsection{Induction product}\label{ip}
The $\kk$-linear categories of interest later on will usually
have some additional monoidal structure. In fact, they will be strict $\kk$-linear monoidal categories defined by generators and relations.
We use the symbol $\star$ for the monoidal product in such categories, reserving $\otimes$ for the tensor product $\otimes_\kk$ of vector spaces over $\kk$.
We adopt the usual string calculus for morphisms in strict monoidal
categories, our convention being that $f \circ g$, the composition of
$f$ and $g$, is drawn as $f$ on top of $g$ and $f \star g$, the monoidal
product of $f$ and $g$, is drawn as $f$ to the left of $g$.

Let $\cC$ be a strict $\kk$-linear monoidal category with 
path algebra $C = \bigoplus_{X, Y \in \OO_C} 1_Y C 1_X$.
The monoidal product $\star$ on $\cC$ extends canonically 
to $\Kar(\cC)$. There is also a monoidal product $\ostar$ 
making $C\Proj$ into a (no longer strict) $\kk$-linear monoidal category such that the 
contravariant Yoneda equivalence from $\Kar(\cC)$ to $C\Proj$ is 
monoidal. This functor
is the restriction of a tensor product functor 
$\ostar$ on the Abelian category $C\Mod$.
We call this the {\em induction product}.
Category theorists refer to this instead
as {\em Day convolution} and define it via the coend expression:
$$
V_1 \ostar V_2 = \int^{X_1,X_2 \in \cC} \Hom_{\cC}(X_1
\star X_2,?) \otimes V_1(X_1) \otimes V_2(X_2).
$$
We give the algebraist's formulation of the definition
in the next paragraph; see also \cite[(2.1.14)]{SSold}, \cite[$\S$3.10]{SaS}.
Using $\ostar$, we can make the split Grothendieck group $K_0(C)$ into
a ring with
multiplication \begin{equation}\label{backinhideaway}
  [P][Q] := [P \ostar
       Q].\end{equation}
     Its identity element is the isomorphism class of
     the distinguished projective module $C 1_\one$, where $\one\in\OO_C$ is
     the unit object.

Here is the detailed definition of $\ostar$. Let $\cC \boxtimes \cC$ be the $\kk$-linearization of the Cartesian product
$\cC \times \cC$. The objects in $\cC \boxtimes \cC$ are pairs $(X_1,X_2)
\in \OO_C \times \OO_C$, and the morphism space from $(X_1,X_2)$ to $(Y_1,Y_2)$
is $\Hom_\cC(X_1,Y_1) \otimes \Hom_{\cC}(X_2,Y_2)$.
We denote its path algebra by 
$$
C \boxtimes C = \bigoplus_{X_1,X_2,Y_1,Y_2 \in \OO_C}
1_{Y_1} C 1_{X_1} \otimes 1_{Y_2} C 1_{X_2}.
$$
Multiplication in $C \boxtimes C$ is the obvious ``tensor-wise" product just like for a tensor product of algebras.
If $C$ is locally finite-dimensional, so too is $C \boxtimes C$.
Given $V_1,V_2 \in C\Mod$, let $$
V_1 \boxtimes V_2
= \bigoplus_{X_1,X_2 \in \OO_C} 1_{X_1} V_1 \otimes 1_{X_2} V_2
$$
be their tensor product over $\kk$
viewed as a left $C\boxtimes C$-module in the obvious way.
In fact, this defines a functor 
$\boxtimes: C\Mod \boxtimes C\Mod \rightarrow C \boxtimes C \Mod$.
The monoidal product on $\cC$ is a $\kk$-linear functor 
$\star:\cC \boxtimes \cC\rightarrow \cC$.
Let $$
C 1_\star
= \bigoplus_{X_1,X_2 \in \OO_C} C 1_{X_1 \star X_2}
$$
be the $(C,C\boxtimes C)$-bimodule obtained by restricting the right $C$-module $C$
along this functor. 
Induction along $\star$, that is, the  
functor $\ind_\star = C 1_\star \otimes_{C \boxtimes C} :C\boxtimes
C\Mod \rightarrow C\Mod$ from \cref{Finduction}, is left adjoint to the
restriction functor $\res_\star$ from \cref{Frestriction}. 
Then the induction product is the composition
\begin{equation}\label{ourway}
\ostar := \ind_\star \circ\; \boxtimes:
C\Mod \boxtimes C\Mod \rightarrow C\Mod.
\end{equation}
Thus, for $V_1, V_2 \in C\Mod$, we have that
$V_1 \ostar V_2 = C 1_\star \otimes_{C\boxtimes C} (V_1 \boxtimes V_2)$.
Associativity of $\ostar$
(up to natural isomorphism) follows from ``transitivity of induction",
i.e., the associativity of tensor products of modules over locally
unital algebras.
We obviously have that
\begin{equation}\label{onprojectives}
C 1_{X_1} \ostar C 1_{X_2} \cong C 1_{X_1\star X_2}
\end{equation}
for $X_1,X_2 \in \OO_C$.
This justifies our earlier assertion that $\ostar$ extends the monoidal
product $\star$ on
$\Kar(\cC)$. It also follows that $V_1 \ostar V_2$ is finitely generated if
both $V_1$ and $V_2$ are finitely generated.

The induction product $\ostar$ is right exact in both arguments, 
but in general it is not left exact.
We denote the $i$th left derived functor of $\ostar$ on $C$-modules
$V, W$ by $\Tor_i^{C}(V,W)$. This can be computed from a projective
resolution of either $V$ or $W$.

\begin{lemma}\label{indprodex}
  If $C 1_\star$ is a projective right $C \boxtimes C$-module then
  the induction product $\ostar$ is biexact.
\end{lemma}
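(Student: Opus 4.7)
The plan is to unwind the definition of $\ostar$ given in \cref{ourway} and reduce the claim to \cref{cornell}. Recall that $\ostar = \ind_\star \circ \boxtimes$, where $\boxtimes : C\Mod \boxtimes C\Mod \to C\boxtimes C\Mod$ is the external tensor product over $\kk$ and $\ind_\star$ is induction along the monoidal product $\star : \cC \boxtimes \cC \to \cC$.

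First I would observe that the external tensor product functor $\boxtimes$ is exact in each argument separately. This is because, on weight spaces, it is literally tensor product over the field $\kk$, and tensor product of vector spaces is exact in each variable; there is nothing to check on the module structure since $C \boxtimes C$ acts factor-wise on $1_{X_1} V_1 \otimes 1_{X_2} V_2$.

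Next I would apply \cref{cornell}(1) with $F = \star : \cC \boxtimes \cC \to \cC$, so that the roles of $\cA$ and $\cB$ there are played by $\cC \boxtimes \cC$ and $\cC$, and the bimodule $B 1_F$ is exactly $C 1_\star$. The hypothesis of the present lemma is therefore precisely the hypothesis of \cref{cornell}(1), and it yields that $\ind_\star : C \boxtimes C\Mod \to C\Mod$ is exact.

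Finally I would conclude by composing: since $V_1 \ostar V_2 = \ind_\star(V_1 \boxtimes V_2)$ is the composition of a functor that is exact in each variable with an exact functor, it is itself exact in each variable, i.e., biexact. There is no real obstacle here; the content of the lemma lies entirely in \cref{cornell}, which was already established, and in the tautological fact that external tensor product over a field is exact in each slot.
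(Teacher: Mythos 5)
Your proof is correct and follows exactly the route the paper intends: the paper's proof is the single line ``This follows from \cref{cornell},'' and you have simply unpacked the routine details (exactness of $\boxtimes$ over a field in each slot, then \cref{cornell}(1) applied to $F = \star$ with $B1_F = C1_\star$) that the authors left implicit.
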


\begin{proof}
  This follows from \cref{cornell}.
\end{proof}

Finally suppose that $\cC$ is a strict $\kk$-linear {\em symmetric} monoidal category, so that there is given a symmetric braiding $R:\star \stackrel{\sim}{\Rightarrow} \star^\rev$.
From this, we obtain a braiding $\ind_R \circ \boxtimes:\ostar^\rev \stackrel{\sim}{\Rightarrow} \ostar$ making $C\Mod$ into a $\kk$-linear
symmetric monoidal category too.

\begin{remark}
  There is a second convolution product $\costar$ which we call the {\em coinduction product}. This is defined by
replacing $\ind_\star$ with $\coind_\star$ in \cref{ourway}. It is easy to understand on
injective rather than projective modules. It will not often be used
subsequently, but note that
the induction and coinduction products are interchanged by duality.
\end{remark}

\subsection{Projective functors}\label{secpf}
Suppose that $\cC$ is a strict $\kk$-linear
monoidal category and $\cA$ is a $\kk$-linear category, denoting their
path algebras by $C$ and $A$ as usual.
We say that $\cA$ is a {\em strict $\cC$-module category} if there is
a strictly associative and unital $\kk$-linear monoidal functor
$\star:\cC \boxtimes \cA \rightarrow \cA$. Equivalently, this is the
data of a strict $\kk$-linear monoidal functor
$\Psi:\cC \rightarrow \END_{\kk}(\cA)$.
For $f \in \Hom_{\cC}(X,X')$, we sometimes denote the
evaluation of the natural transformation
$\Psi(f)$ on $Y \in \OO_A$ simply by $f_Y:X\star Y \rightarrow X'
\star Y$.

The definition of the induction product $\star$ from \cref{ourway}
extends naturally to this setting, thereby defining a $\kk$-linear functor
\begin{equation}\label{ourway2}
\ostar:= \ind_\star \;\circ\; \boxtimes :C\Mod \boxtimes A\Mod
\rightarrow A\Mod
\end{equation}
which makes $A\Mod$ into a (no longer strict) $C\Mod$-module category.
For objects $X \in \OO_C$ and $Y \in \OO_A$, we have
that \begin{equation}\label{backinhideaway2}
  C 1_X \ostar A 1_Y \cong A 1_{X\star Y},
\end{equation}
i.e.,
$\ostar$ extends $\star:\cC \boxtimes \cA \rightarrow
\cA$. Using $\ostar$ to define the action as in
\cref{backinhideaway}, the split Grothendieck group $K_0(A)$ becomes a left
module over the split Grothendieck ring $K_0(C)$.

Now fix $X \in \OO_C$ and consider the functor $X\star:\cA \rightarrow
\cA$.
There is an adjoint pair of endofunctors
$(\ind_{X\star}, \res_{X\star})$ of $A\Mod$ defined by
induction and restriction along $X\star$:
\begin{align}\label{if1}
\ind_{X\star} := A 1_{X\star} \otimes_A \qquad\text{where}\qquad
A 1_{X\star}:= \bigoplus_{Y \in \OO_A} A 1_{X\star Y},\\
\res_{X\star} := 1_{X\star} A \otimes_A \qquad\text{where}\qquad
1_{X\star} A := \bigoplus_{Y \in \OO_A} 1_{X\star Y} A.\label{if2}
\end{align}
The general properties discussed earlier give that $\res_{X\star}$ is
exact, and $\ind_{X\star}$ is right exact and sends 
(finitely generated) projectives to (finitely generated) projectives.
Thus, $\ind_{X\star}$ restricts to a well-defined functor
$\ind_{X\star}:A\Proj\rightarrow A \Proj$.
Note also that
\begin{equation}
\ind_{X\star} (A 1_Y) \cong A 1_{X\star Y}
\end{equation}
for all $Y \in \OO_A$.
One can also interpret $\ind_{X\star}$ as a special induction product, thanks to the following lemma.

\begin{lemma}\label{dillylilly}
For any $X \in \OO_C$, we have that $\ind_{X\star} \cong C 1_X \ostar $.
\end{lemma}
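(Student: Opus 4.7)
The plan is to reduce the claim to an isomorphism of $(A,A)$-bimodules and then write this isomorphism down explicitly. By \cref{if1}, the left-hand functor is
\[
\ind_{X\star} V = A 1_{X\star} \otimes_A V,
\]
while unfolding the induction product from \cref{ourway2} gives
\[
C 1_X \ostar V = A 1_\star \otimes_{C\boxtimes A} (C 1_X \boxtimes V).
\]
Using the standard natural isomorphism $C 1_X \boxtimes V \cong (C 1_X \boxtimes A) \otimes_A V$ of left $(C\boxtimes A)$-modules (a routine instance of $V \cong A \otimes_A V$ applied to the second tensor factor), it suffices to exhibit a natural $(A,A)$-bimodule isomorphism
\[
A 1_\star \otimes_{C\boxtimes A} (C 1_X \boxtimes A) \;\cong\; A 1_{X\star}.
\]

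The key step is to define this isomorphism on generators by $a \otimes (f \boxtimes g) \mapsto a \circ (f \star g)$, where $a: X' \star Y' \to Z$ is a morphism in $\cA$, $f \in 1_{X'} C 1_X$, and $g \in 1_{Y'} A 1_Y$; the image lies in $A 1_{X \star Y} \subseteq A 1_{X\star}$. Well-definedness over $C \boxtimes A$ follows at once from the bifunctoriality of the strict $\cC$-action $\star: \cC \boxtimes \cA \to \cA$. A two-sided inverse is given on the $Y$th summand $A 1_{X\star Y}$ of $A 1_{X\star}$ by $a \mapsto a \otimes (1_X \boxtimes 1_Y)$; that the two composites are mutually inverse reduces to the unit axioms together with the tensor relation $f \boxtimes g = (f \otimes g) \cdot (1_X \boxtimes 1_Y)$ in $C \boxtimes A$.

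It remains to verify compatibility with the $(A,A)$-bimodule structures. Left $A$-linearity is obvious, since on both sides this is postcomposition with $a$. For the right $A$-action: on $A 1_{X\star}$, a morphism $h: Y \to Y'$ in $\cA$ acts by taking $a: X\star Y' \to Z$ to $a \circ (1_X \star h): X\star Y \to Z$; this matches the right action on $A 1_\star \otimes_{C\boxtimes A} (C 1_X \boxtimes A)$ coming from right multiplication of $g$ by $h$ in the second tensor factor, again by functoriality of $\star$. I do not anticipate any real obstacle: the entire argument is formal once the reformulation in terms of bimodules has been made, and all bookkeeping ultimately rests on the bifunctoriality of the monoidal action.
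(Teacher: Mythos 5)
Your proposal is correct and takes essentially the same route as the paper: the paper's proof is precisely the chain $A 1_{X\star}\otimes_A V\cong (A1_\star\otimes_{C\boxtimes A}(C1_X\boxtimes A))\otimes_A V\cong A1_\star\otimes_{C\boxtimes A}((C1_X\boxtimes A)\otimes_A V)\cong A1_\star\otimes_{C\boxtimes A}(C1_X\boxtimes V)=C1_X\ostar V$, whose only non-formal link is the $(A,A)$-bimodule isomorphism $A1_\star\otimes_{C\boxtimes A}(C1_X\boxtimes A)\cong A1_{X\star}$ that you make explicit. The paper leaves that isomorphism unjustified; your proposal supplies the maps and the verification, so it is a slightly more detailed version of the same argument.
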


\begin{proof}
This follows from the chain of isomorphisms
\begin{align*}
A 1_{X\star} \otimes_A V &\cong
(A 1_\star \otimes_{C \boxtimes A} (C 1_X \boxtimes A))
\otimes_A V\\
&\cong
A 1_\star \otimes_{C \boxtimes A} ((C 1_X \boxtimes A)
\otimes_A V)\\
&\cong
A 1_\star \otimes_{C \boxtimes A} (C 1_X \boxtimes V)
= C 1_X \ostar V
\end{align*}
for $V\in A\Mod$.
\end{proof}

Let $X, Y$ be objects of $\cC$.
Recall that $Y$ is a {\em left dual} of $X$ (equivalently, $X$ is a {\em right dual} of $Y$) if there are 
evaluation and coevaluation morphisms
$\ev:Y\star X \rightarrow \one$ and $\coev:\one \rightarrow X \star Y$
satisfying the zig-zag identities.
In string diagrams, we denote $\ev$ and $\coev$ by the cap
$\begin{tikzpicture}[anchorbase,scale=.6]
\draw[-] (0,0)to(0,0.1)to[out=up,in=up,looseness=2](0.8,0.1)to(0.8,0);
      \node at (0,-0.1) {$\stringlabel{Y}$};
      \node at (0.8,-0.1) {$\stringlabel{X}$};
\end{tikzpicture}$
and the cup
$\begin{tikzpicture}[anchorbase,scale=.6]
\draw[-] (0,0.8)to(0,0.7)to[out=down,in=down,looseness=2](0.8,0.7)to(0.8,0.8);
      \node at (0,.9) {$\stringlabel{X}$};
      \node at (0.8,.9) {$\stringlabel{Y}$};
\end{tikzpicture}$,
respectively, so that the zig-zag identities become
\begin{equation}\label{zigzag}
\begin{tikzpicture}[anchorbase]
\draw[-] (0,-.7)to(0,0.1)to[out=up,in=up,looseness=2](0.6,0.1)to(0.6,0);
\draw[-] (.6,0)to(.6,-0.1)to[out=down,in=down,looseness=2](1.2,-.1)to(1.2,.7);
      \node at (0,-.78) {$\stringlabel{Y}$};
\end{tikzpicture}\ =\ 
\begin{tikzpicture}[anchorbase]
\draw[-] (0,-.7)to(0,0.7);
      \node at (0,-.78) {$\stringlabel{Y}$};
\end{tikzpicture}\ ,
\qquad
\begin{tikzpicture}[anchorbase]
\draw[-] (0,.7)to(0,-0.1)to[out=down,in=down,looseness=2](0.6,-0.1)to(0.6,0);
\draw[-] (.6,0)to(.6,0.1)to[out=up,in=up,looseness=2](1.2,.1)to(1.2,-.7);
      \node at (1.2,-.78) {$\stringlabel{X}$};
\end{tikzpicture}\ =\ 
\begin{tikzpicture}[anchorbase]
\draw[-] (0,-.7)to(0,0.7);
      \node at (0,-.78) {$\stringlabel{X}$};
\end{tikzpicture}\ .
\end{equation}

\begin{lemma}\label{dualitylem}
If $X$ has a left dual $Y$ in $\cC$
then there is an isomorphism
$\phi:1_{X\star} A \stackrel{\sim}{\rightarrow} A 1_{Y\star}$ of $(A,A)$-bimodules given
explicitly by
\begin{equation}\label{dualitypic}
\varphi\left(\begin{tikzpicture}[anchorbase,scale=1.5]
\draw[-] (-0.5,-0.2)--(-0.5,0.2)--(0.5,0.2)--(0.5,-0.2)--(-0.5,-0.2);
\node at (-0,-0) {$\scriptstyle{f}$};
\draw[-] (-0.4,-0.2)--(-0.4,-0.5);
\node at (-.47,.58) {$\stringlabel{X}$};
\draw[-] (0.4,-0.2)--(0.4,-0.5);
\draw[-] (-0.4,0.2)--(-0.4,0.5);
\draw[-] (-0.3,0.2)--(-0.3,0.5);
\draw[-] (0.4,0.2)--(0.4,0.5);
\node at (-0.1,0.35) {$\cdot$};
\node at (0.05,0.35) {$\cdot$};
\node at (0.2,0.35) {$\cdot$};
\node at (-0.2,-0.4) {$\cdot$};
\node at (-0,-0.4) {$\cdot$};
\node at (0.2,-0.4) {$\cdot$}; 
\end{tikzpicture}\right) = \begin{tikzpicture}[anchorbase,scale=1.5]
\draw[-] (-0.5,-0.2)--(-0.5,0.2)--(0.5,0.2)--(0.5,-0.2)--(-0.5,-0.2);
\node at (-0,-0) {$\scriptstyle{f}$};
\draw[-] (-0.4,0.2)to[out=up,in=right](-0.6,0.5)to[out=left,in=up](-0.8,0.3)to(-0.8,-0.5);
\draw[-] (-0.4,-0.2)--(-0.4,-0.5);
\node at (-.8,-.58) {$\stringlabel{Y}$};
\draw[-] (0.4,-0.2)--(0.4,-0.5);
\draw[-] (-0.3,0.2)--(-0.3,0.5);
\draw[-] (0.4,0.2)--(0.4,0.5);
\node at (-0.1,0.35) {$\cdot$};
\node at (0.05,0.35) {$\cdot$};
\node at (0.2,0.35) {$\cdot$};
\node at (-0.2,-0.4) {$\cdot$};
\node at (-0,-0.4) {$\cdot$};
\node at (0.2,-0.4) {$\cdot$}; 
\end{tikzpicture}
\end{equation}
Hence, the functors
$\res_{X\star}$ and $\ind_{Y\star}$ are isomorphic.
\end{lemma}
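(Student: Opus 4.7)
This is essentially a formal consequence of the adjunction $(Y\star-)\dashv(X\star-)$ of endofunctors of $\cA$ coming from the left duality, translated into the bimodule language used to define $\res_{X\star}$ and $\ind_{Y\star}$. The plan is to verify that the formula in \cref{dualitypic} gives an $(A,A)$-bimodule isomorphism $\varphi:1_{X\star}A\stackrel{\sim}{\to}A1_{Y\star}$; the isomorphism of functors then follows immediately by tensoring.

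The first step is to unpack the bimodule structures. As a right $A$-module, $1_{X\star}A$ decomposes as $\bigoplus_{Y_1\in\OO_A}1_{X\star Y_1}A$, with basic elements $f\in\Hom_\cA(Z,X\star Y_1)\subseteq 1_{X\star Y_1}A1_Z$; the left action of $a\in 1_{Y_2}A1_{Y_1}$ is not diagonal but moves this summand by $f\mapsto(\id_X\star a)\circ f$. Dually, $A1_{Y\star}=\bigoplus_{Y_1\in\OO_A}A1_{Y\star Y_1}$ is graded on the right, with right action of $a$ given by $g\mapsto g\circ(\id_Y\star a)$. With this in hand, I would read off the formula for $\varphi$ from \cref{dualitypic}, namely
\[\varphi(f):=(\ev\star\id_{Y_1})\circ(\id_Y\star f)\in\Hom_\cA(Y\star Z,Y_1)\subseteq 1_{Y_1}A1_{Y\star Z}\subseteq A1_{Y\star}\]
for $f\in\Hom_\cA(Z,X\star Y_1)$, and check that $\varphi$ is an $(A,A)$-bimodule map. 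Right linearity is immediate from functoriality of $\star$ in its left argument. For left linearity, the interchange identity $(\ev\star\id_{Y_2})\circ(\id_{Y\star X}\star a)=\ev\star a=a\circ(\ev\star\id_{Y_1})$ yields $\varphi((\id_X\star a)\circ f)=a\circ\varphi(f)$, as required.

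Next I would introduce the candidate inverse $\psi(g):=(\id_X\star g)\circ(\coev\star\id_Z)$ for $g\in\Hom_\cA(Y\star Z,Y_1)$, and verify $\psi\circ\varphi=\id$ and $\varphi\circ\psi=\id$ by short string-diagram calculations. For $\psi\circ\varphi$, the composition rearranges via interchange to contain the factor $(\id_X\star\ev)\circ(\coev\star\id_X)$, which collapses to $\id_X$ by the second zig-zag identity in \cref{zigzag}, leaving $\id_{X\star Y_1}\circ f=f$; the calculation for $\varphi\circ\psi$ uses the first zig-zag identity dually. With $\varphi$ established as a bimodule isomorphism, the functor statement is automatic: $\res_{X\star}=1_{X\star}A\otimes_A(-)\cong A1_{Y\star}\otimes_A(-)=\ind_{Y\star}$. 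I do not anticipate any real obstacle; the only point requiring care is index bookkeeping, since $\varphi$ swaps which idempotent keeps track of $Y_1$ and which records $Z$.
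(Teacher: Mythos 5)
Your proposal is correct and follows essentially the same route as the paper: the paper also defines $\varphi$ by the displayed cap diagram, asserts without elaboration that it is a bimodule homomorphism, and exhibits the inverse $\psi$ (given by the dual cup diagram, exactly your $(\id_X\star g)\circ(\coev\star\id_Z)$), with the zig-zag identities doing the work. You have simply spelled out the details the paper leaves to the reader; the only slip is cosmetic — right $A$-linearity uses functoriality of $Y\star(-)$, i.e.\ in the right-hand argument of $\star$, not the left — and does not affect the argument.
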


\begin{proof}
  It is easily checked that $\phi$ is a bimodule homomorphism.
  It is an isomorphism because it has a two-sided inverse $\psi$ defined by
  \begin{align*}
\psi\left(\begin{tikzpicture}[anchorbase,scale=1.5]
\draw[-] (-0.5,-0.2)--(-0.5,0.2)--(0.5,0.2)--(0.5,-0.2)--(-0.5,-0.2);
\node at (-0,-0) {$\scriptstyle{g}$};
\draw[-] (-0.4,0.2)--(-0.4,0.5);
\draw[-] (0.4,0.2)--(0.4,0.5);
\draw[-] (-0.4,-0.2)--(-0.4,-0.5);
\draw[-] (-0.3,-0.2)--(-0.3,-0.5);
\draw[-] (0.4,-0.2)--(0.4,-0.5);
\node at (-0.1,-0.4) {$\cdot$};
\node at (0.05,-0.4) {$\cdot$};
\node at (0.2,-0.4) {$\cdot$};
\node at (-0.2,0.35) {$\cdot$};
\node at (-0,0.35) {$\cdot$};
\node at (0.2,0.35) {$\cdot$}; 
\node at (-.4,-.58) {$\stringlabel{Y}$};
\end{tikzpicture}\right) = \begin{tikzpicture}[anchorbase,scale=1.5]
\draw[-] (-0.5,-0.2)--(-0.5,0.2)--(0.5,0.2)--(0.5,-0.2)--(-0.5,-0.2);
\node at (-0,-0) {$\scriptstyle{g}$};
\draw[-] (-0.4,-0.2)to[out=down,in=right](-0.6,-0.5)to[out=left,in=down](-0.8,-0.3)to(-0.8,0.5);
\draw[-] (-0.4,0.2)--(-0.4,0.5);
\draw[-] (0.4,0.2)--(0.4,0.5);
\node at (-.8,.58) {$\stringlabel{X}$};
\draw[-] (-0.3,-0.2)--(-0.3,-0.5);
\draw[-] (0.4,-0.2)--(0.4,-0.5);
\node at (-0.1,-0.4) {$\cdot$};
\node at (0.05,-0.4) {$\cdot$};
\node at (0.2,-0.4) {$\cdot$};
\node at (-0.2,0.35) {$\cdot$};
\node at (-0,0.35) {$\cdot$};
\node at (0.2,0.35) {$\cdot$}; 
\end{tikzpicture}\: .
\end{align*}
\end{proof}

\begin{corollary}\label{dualitycor}
If $X$ has a left dual $Y$ in $\cC$ then $(\ind_{X\star}, \ind_{Y\star})$ and $(\res_{X\star},\res_{Y\star})$
 are adjoint pairs of functors.
 \end{corollary}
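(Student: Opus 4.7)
The proof should be immediate from the preceding Lemma once one recognizes that both adjunctions being claimed are obtained by substituting a natural isomorphism of functors into the standard induction--restriction adjunction. The plan is as follows.

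First, I would invoke the general framework of \cref{Frestriction,Finduction} applied to the two endofunctors $X\star,Y\star:\cA\to\cA$. Nothing about duality is needed to get, ``for free,'' the two adjoint pairs $(\ind_{X\star},\res_{X\star})$ and $(\ind_{Y\star},\res_{Y\star})$, since induction along a $\kk$-linear functor is always left adjoint to restriction along it in the locally unital setting. Explicitly, these adjunctions are the hom--tensor adjunctions for tensoring with the $(A,A)$-bimodules $A1_{X\star}$ and $1_{X\star}A$ (respectively $A1_{Y\star}$ and $1_{Y\star}A$).

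Second, I would apply \cref{dualitylem}: the bimodule isomorphism $\varphi:1_{X\star}A\xrightarrow{\sim}A1_{Y\star}$ given by capping off with the evaluation map yields a natural isomorphism of functors $\res_{X\star}\cong\ind_{Y\star}$. Substituting this into $(\ind_{X\star},\res_{X\star})$ produces the adjoint pair $(\ind_{X\star},\ind_{Y\star})$, and substituting it into $(\ind_{Y\star},\res_{Y\star})$ produces the adjoint pair $(\res_{X\star},\res_{Y\star})$. Both claimed adjunctions thus drop out.

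I do not expect any genuine obstacle here; all the content involving the cups, caps and zig-zag identities has already been absorbed into the preceding Lemma. The only point to check is that the transferred units and counits satisfy the triangle identities, but this is automatic from the fact that $\varphi$ is a bimodule isomorphism (not merely a left or right module map): the hom--tensor adjunctions are implemented at the bimodule level, so a bimodule isomorphism between the two relevant bimodules passes through to a compatible isomorphism of adjunction data. Hence the proof is essentially a one-line deduction from \cref{dualitylem} and the fact that isomorphic functors share the same left and right adjoints.
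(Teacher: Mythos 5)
Your proof is correct and is precisely the intended deduction: since $\ind_F \dashv \res_F$ for any $F$, the natural isomorphism $\res_{X\star}\cong\ind_{Y\star}$ from \cref{dualitylem} immediately converts the adjunctions $(\ind_{X\star},\res_{X\star})$ and $(\ind_{Y\star},\res_{Y\star})$ into the two claimed pairs. One small remark: the final appeal to $\varphi$ being a bimodule map is unnecessary — any natural isomorphism of functors automatically transports adjunction data (units, counits, triangle identities), so the corollary follows from \cref{dualitylem} plus the general fact that naturally isomorphic functors have the same adjoints.
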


From the corollary, we deduce that if $X$ is {\em rigid}, i.e., it has both a left and a right dual, then both of the functors
$\ind_{X\star}$ and $\res_{X\star}$ have both a right and a left adjoint. 
Moreover, as discussed earlier, both of these functors are exact and
they preserve finitely generated projectives.
We will refer to finite direct sums of direct summands of
endofunctors of $A\Mod$ of this sort as {\em projective functors}.

\subsection{The symmetric category}\label{ssym}
For a basic example,
we have the {\em symmetric category} $\Sym$, which is the free strict $\kk$-linear symmetric monoidal category on one object. In string diagrams, we denote this 
generating 
object simply by $\mid$; then an arbitrary object is the monoidal
product $\mid^{\star n}$ for some $n \geq 0$.
Morphisms in $\Sym$ are generated by
a single morphism depicted by the crossing
\begin{equation}
\begin{tikzpicture}[anchorbase]
      \draw[-,thick](0,0) -- (0.4,0.6);
      \draw[-,thick](0.4,0) -- (0,0.6);
\end{tikzpicture}:\mid\star\mid\to\mid\star\mid
\end{equation}
subject to the relations
\begin{align}
  \begin{tikzpicture}[anchorbase]
    \draw[-,thick] (0,0) \braidto (0.4,0.4) \braidto (0,0.8);
    \draw[-,thick] (0.4,0) \braidto (0,0.4) \braidto (0.4,0.8);
  \end{tikzpicture}\ &=\ \begin{tikzpicture}[anchorbase]
    \draw[-,thick] (0,0) -- (0,0.8);
    \draw[-,thick] (0.3,0) -- (0.3,0.8);
  \end{tikzpicture}\ ,
  & \begin{tikzpicture}[anchorbase]
  \draw[-,thick] (0,0)--(0.6,0.8);
  \draw[-,thick] (0.6,0)--(0,0.8);
  \draw[-,thick] (0.3,0) to[out=120,in=-120,looseness=1.5] (0.3,0.8);
  \end{tikzpicture}\ &= \begin{tikzpicture}[anchorbase]
  \draw[-,thick] (0,0)--(0.6,0.8);
  \draw[-,thick] (0.6,0)--(0,0.8);
  \draw[-,thick] (0.3,0)to[out=60,in=-60,looseness=1.5](0.3,0.8);
\end{tikzpicture}\ .\label{relssym0}
                       \end{align}
Sometimes it is convenient to identify objects in $\Sym$ with natural numbers, so that the object set $\{\mid^{\star n}\:|\:n \in \N\}$
of $\Sym$ is identified with $\N$.
 For $m,n \geq 0$, the morphism space $\Hom_\Sym(n,m)$
is $\{0\}$ if $m \neq n$, while if $m = n$ it consists of $\kk$-linear
combinations of string diagrams representing permutations in the
symmetric group $S_n$, i.e., we have that $\End_{\Sym}(n)
= \kk S_n.$
Note our general convention here is to number strings by $1,\dots,n$
from {\em right to left}, so that the transposition $(1\:2) \in S_n$
is represented by the string diagram
$$
\begin{tikzpicture}[anchorbase,scale=1.2]
  \draw[-,thick] (-.4,0) to (-.4,.8);
  \draw[-,thick] (.5,0) to (.5,.8);
  \draw[-,thick](.8,0) to (1.1,.8);
  \draw[-,thick](1.1,0) to (.8,.8);
\node at (0.1,.4) {$\cdots$};
\node at (1.1,-.1) {$\stringlabel{1}$};
\node at (0.5,-.1) {$\stringlabel{3}$};
\node at (0.8,-.1) {$\stringlabel{2}$};
\node at (-0.4,-.1) {$\stringlabel{n}$};
  \end{tikzpicture}\ .
  $$

Let $\sym$ be the path algebra of $\Sym$. 
Thus, we have that
\begin{equation}\label{semisimpledec}
\sym = \bigoplus_{n \geq 0} \kk S_n.
\end{equation}
Since $\kk$ is of characteristic zero, we deduce from Maschke's theorem that
$\sym$ is a {\em semisimple} locally unital algebra.
In this case, the induction product $\ostar$ making $\sym\Mod$ into a monoidal category 
is nothing more than 
the usual induction product 
on representations of the symmetric groups:
we have that
$$
V \ostar W = 
\ind_{S_n \times S_m}^{S_{n+m}} (V \boxtimes W)
$$
for $V \in \kk S_n\Mod$ and $W \in \kk S_m\Mod$.
In fact, the induction product $\ostar$ and the
coinduction product $\costar$ on $\sym\Mod$ are isomorphic as $\ind_{S_n
  \times S_m}^{S_{n+m}} \cong \coind_{S_n \times S_m}^{S_{n+m}}$ (as
always for finite groups).

Recall that the irreducible $\kk S_n$-modules are the
{\em Specht modules} $S(\lambda)$ parametrized by the set $\P_n$
of partitions $\lambda = (\lambda_1,\lambda_2,\dots)$ of $n$.
Hence, the irreducible $\sym$-modules are
the Specht modules $S(\lambda)$ parametrized by all partitions
$\lambda \in \P = \bigsqcup_{n \geq 0} \P_n$.
We sometimes write $|\lambda|$ for the size $\lambda_1+\lambda_2+\cdots$ of 
a partition $\lambda \in \P$, and $\ell(\lambda)$ for its length, that is, the number of non-zero parts.
We will often identify $\lambda \in \P$ with its Young diagram.
For example, the partition $(5, 3^2, 2)$ is identified with
\begin{align*}
\yng(5,3,3,2)
\end{align*}

The Grothendieck ring $K_0(\sym)$ of the symmetric category is positively graded 
with degree $n$ component being $K_0(\kk S_n)$. It is well known that
$K_0(\sym)$ is canonically isomorphic as a graded ring 
to the {\em ring of symmetric functions} $\Lambda = \bigoplus_{n \geq 0} \Lambda_n$,
with the class $[S(\lambda)]$ of the Specht module corresponding under the isomorphism to the 
{\em Schur function} $s_\lambda \in \Lambda$.
In $\Lambda$, we have that
\begin{equation}\label{LR}
s_\mu s_\nu = \sum_{\lambda \in \P} 
LR_{\mu,\nu}^\lambda s_\lambda
\end{equation}
where
$LR_{\mu,\nu}^\lambda$ is the Littlewood-Richardson coefficient.
Since $\sym$ is semisimple, this is equivalent to the existence of an isomorphism
\begin{equation}\label{LR2}
  S(\mu)\ostar S(\nu) \cong \bigoplus_{\lambda \in \P}
S(\lambda)^{\oplus LR_{\mu,\nu}^\lambda}
\end{equation}
at the level of modules.
Later on, we will also need the ``triple'' Littlewood-Richardson coefficient
\begin{align}\label{tripleLR}
     LR^{\kappa}_{\lambda,\mu,\nu}&:=\sum_{\gamma \in \P} LR^\gamma_{\lambda,\mu} LR^\kappa_{\gamma,\nu} = [S(\lambda)\ostar S(\mu)
   \ostar S(\nu):S(\kappa)].
            \end{align}

The {\em content} of the node in row $i$ and column $j$ of a Young diagram is the integer $c=j-i$. Let $\add(\lambda)$ be the set consisting of the 
contents of the {\em addable nodes}
of $\lambda$, that is, the places in the Young diagram where a node can be added to 
the diagram to obtain a new Young diagram.
Similarly, let $\rem(\lambda)$ be the set of contents of the {\em removable nodes}
of $\lambda$, that is, the places in the Young diagram where a node can be removed from the diagram to obtain a new Young diagram.
Note that all of the addable and removable nodes of a Young diagram
are of different contents (another of the benefits of working in
characteristic zero).
For $a \in \add(\lambda)$, let $\lambda+\boxed{a}$ be the partition obtained by adding 
the unique addable node of content $a$ to the diagram.
For $b \in \rem(\lambda)$, let $\lambda - \boxed{b}$ be the partition obtained by removing the unique removable node of content $b$ from the diagram.

The combinatorial notions just introduced arise naturally on
considering {\em branching rules} for the symmetric group. In our
setup, the sums over all $n \geq 0$ of the usual
restriction and
induction functors
$\res_{S_n}^{S_{n+1}}$ and
$\ind_{S_n}^{S_{n+1}}=  \kk S_{n+1}\otimes_{\kk S_n}$
are isomorphic to the functors
\begin{align}\label{EFdef}
F &:= \res_{\mid\,\star}:\sym\Modfd \rightarrow \sym\Modfd,&
E &:= \ind_{\mid\,\star}:\sym\Modfd \rightarrow \sym\Modfd,
\end{align}
notation as in \cref{if1,if2}.
This follows because
the functor
\begin{align}
  \mid\, \star:&\Sym \rightarrow \Sym,\qquad
            \begin{tikzpicture}[anchorbase]
\draw[-] (-0.5,-0.2)--(-0.5,0.2)--(0.5,0.2)--(0.5,-0.2)--(-0.5,-0.2);
\node at (-0,-0) {$\scriptstyle{g}$};
\draw[-,thick] (-0.4,-0.2)--(-0.4,-0.5);
\draw[-,thick] (0.4,-0.2)--(0.4,-0.5);
\draw[-,thick] (-0.4,0.2)--(-0.4,0.5);
\draw[-,thick] (0.4,0.2)--(0.4,0.5);
\node at (-0.2,0.35) {$\cdot$};
\node at (-0,0.35) {$\cdot$};
\node at (0.2,0.35) {$\cdot$};
\node at (-0.2,-0.4) {$\cdot$};
\node at (-0,-0.4) {$\cdot$};
\node at (0.2,-0.4) {$\cdot$};
\end{tikzpicture} \mapsto \begin{tikzpicture}[anchorbase]
\draw[-,thick] (-0.65,-0.5)--(-0.65,0.5);
\draw[-] (-0.5,-0.2)--(-0.5,0.2)--(0.5,0.2)--(0.5,-0.2)--(-0.5,-0.2);
\node at (-0,-0) {$\scriptstyle{g}$};
\draw[-,thick] (-0.4,-0.2)--(-0.4,-0.5);
\draw[-,thick] (0.4,-0.2)--(0.4,-0.5);
\draw[-,thick] (-0.4,0.2)--(-0.4,0.5);
\draw[-,thick] (0.4,0.2)--(0.4,0.5);
\node at (-0.2,0.35) {$\cdot$};
\node at (-0,0.35) {$\cdot$};
\node at (0.2,0.35) {$\cdot$};
\node at (-0.2,-0.4) {$\cdot$};
\node at (-0,-0.4) {$\cdot$};
\node at (0.2,-0.4) {$\cdot$};
\end{tikzpicture}\,.
\end{align}
coincides with the natural 
inclusion $S_n\hookrightarrow S_{n+1}$ on permutations $g \in
S_n \subset \End_{\Sym}(n)$.
The canonical adjunction makes $(E,F)$
into an adjoint pair of functors. In fact, these functors are
are {\em biadjoint}, i.e., there is also an adjunction making $(F,E)$ into an
adjoint pair.
The effect of the functors $F$ and $E$ on the Specht module
$S(\lambda)$ is well known: we have that
\begin{align}\label{branchey}
  F S(\lambda)&\cong\bigoplus_{b \in \rem(\lambda)}
                S(\lambda-\boxed{b}),&
  E S(\lambda) &\cong \bigoplus_{a \in \add(\lambda)}
                 S(\lambda+\boxed{a}).
                \end{align}

                We finally recall a bit about the Jucys-Murphy
                elements in $\Sym$. One natural way to obtain
                these is to start from the {\em affine symmetric
                  category} $\ASym$, which is the strict $\kk$-linear monoidal category obtained from $\Sym$ by adjoining an extra generator $\begin{tikzpicture}[anchorbase]\draw[-,thick](0,0)to(0,0.4);\opendot{0,0.2};\end{tikzpicture}$ subject to the equivalent relations 
\begin{align}\label{daharel}
\mathord{
\begin{tikzpicture}[baseline = -1mm]
	\draw[-,thick] (0.25,.3) to (-0.25,-.3);
	\draw[-,thick] (0.25,-.3) to (-0.25,.3);
 \node at (-0.12,-0.145) {$\dt$};
\end{tikzpicture}
}
&=
\mathord{
\begin{tikzpicture}[baseline = -1mm]
	\draw[-,thick] (0.25,.3) to (-0.25,-.3);
	\draw[-,thick] (0.25,-.3) to (-0.25,.3);
     \node at (0.12,0.135) {$\dt$};
\end{tikzpicture}}
+\:\mathord{
\begin{tikzpicture}[baseline = -1mm]
 	\draw[-,thick] (0.08,-.3) to (0.08,.3);
	\draw[-,thick] (-0.28,-.3) to (-0.28,.3);
\end{tikzpicture}
},&
\mathord{
\begin{tikzpicture}[baseline = -1mm]
	\draw[-,thick] (0.25,.3) to (-0.25,-.3);
	\draw[-,thick] (0.25,-.3) to (-0.25,.3);
 \node at (-0.12,0.135) {$\dt$};
\end{tikzpicture}
}
&=
\mathord{
\begin{tikzpicture}[baseline = -1mm]
	\draw[-,thick] (0.25,.3) to (-0.25,-.3);
	\draw[-,thick] (0.25,-.3) to (-0.25,.3);
     \node at (0.12,-0.145) {$\dt$};
\end{tikzpicture}}
+\:\mathord{
\begin{tikzpicture}[baseline = -1mm]
 	\draw[-,thick] (0.08,-.3) to (0.08,.3);
	\draw[-,thick] (-0.28,-.3) to (-0.28,.3);
\end{tikzpicture}
}\:.
\end{align}
The path algebra $\Asym$ is isomorphic to $\bigoplus_{n \geq 0} AH_n$
where $AH_n$ is the $n$th {\em degenerate affine Hecke algebra}.
There is an obvious faithful strict $\kk$-linear monoidal
functor $i:\Sym \rightarrow \ASym$. There is also a unique (non-monoidal)
full $\kk$-linear functor
\begin{equation}\label{symp}
  p:\ASym \rightarrow \Sym
\end{equation}
such that $p \circ i = \Id_{\Sym}$ and \begin{equation}
p\left(\begin{tikzpicture}[anchorbase]
\draw[-,thick] (0,0) to (0,0.8);
\node at (0.1,0.4) {$\cdot$};
\node at (0.25,0.4) {$\cdot$};
\node at (0.4,0.4) {$\cdot$};
\draw[-,thick] (0.5,0) to (0.5,0.8);
\draw[-,thick] (0.85,0) to (0.85,0.8);
\node at (0.85,0.4) {$\dt$};
\node at (.85,-.1) {$\stringlabel{1}$};
\node at (.5,-.1) {$\stringlabel{2}$};
\node at (0,-.1) {$\stringlabel{n}$};
\end{tikzpicture}\right)\ =\ 0
\end{equation}
for all $n \geq 1$.
                For $1 \leq j \leq n$, the $j$th Jucys-Murphy element of the
                symmetric group $S_n$
is
                \begin{equation}\label{jmdef}
                x_j = p\left(\begin{tikzpicture}[anchorbase]
\draw[-,thick] (-.1,0) to (-.1,0.8);
\node at (0.05,0.4) {$\cdot$};
\node at (0.2,0.4) {$\cdot$};
\node at (0.35,0.4) {$\cdot$};
\draw[-,thick] (0.6,0) to (0.6,0.8);
\node at (0.6,0.4) {$\dt$};
\node at (0.85,0.4) {$\cdot$};
\node at (1,0.4) {$\cdot$};
\node at (1.15,0.4) {$\cdot$};
\node at (1.3,-.1) {$\stringlabel{1}$};
\node at (-.1,-.1) {$\stringlabel{n}$};
\node at (.6,-.1) {$\stringlabel{j}$};
\draw[-,thick] (1.3,0) to (1.3,0.8);
\end{tikzpicture}\right) =\sum_{i=1}^{j-1} (i\:j)\in \kk S_n,
                \end{equation}
                i.e., it is the sum of the transpositions ``ending'' in $j$.
                Whenever we use this notation, it should be clear from context exactly which symmetric group we have in mind.
                Note $x_{1} = 0$ always. We may also occasionally write $x_0$, which should be interpreted as zero by convention.
                
                The Jucys-Murphy elements $x_{1}, \dots,
                x_{n}$ generate a commutative subalgebra of $\kk
                S_n$ known as the {\em Gelfand-Tsetlin subalgebra}. As
                concisely explained by \cite{OV}, for $\lambda \in
                \P_n$, each Jucys-Murphy element acts
                diagonalizably on the Specht module $S(\lambda)$, and
                the {\em Gelfand-Tsetlin character} of $S(\lambda)$
                recording the dimensions of the simultaneous
                generalized eigenspaces of $x_1,\dots,x_n$ may be obtained from the
                contents of standard $\lambda$-tableaux. 
Indeed, Young's orthonormal basis $\{v_\TT\}$ for $S(\lambda)$ indexed
by standard $\lambda$-tableaux $\TT$ is a basis of simultaneous
eigenvectors for $x_{1},\dots,x_{n}$, with $x_j$ acting on
$v_\TT$ as the content $\cont_j(\TT)$ of the node labelled by $j$ in $T$.
We will
                assume the reader is familiar with these ideas without
                giving any further explanation.

                The functor $p$ induces an isomorphism
                $\ASym / \mathcal{I}\stackrel{\sim}{\rightarrow} \Sym$
                where $\mathcal{I}$ is the
                left tensor ideal\footnote{A left tensor ideal $\mathcal I$ of a $\kk$-linear monoidal category $\cC$
                is the
                data of subspaces $\mathcal{I}(X,Y) \leq \Hom_{\cC}(X,Y)$ for
                all $X,Y \in \ob\cC$, such that these subspaces
                are closed in the obvious sense
                under vertical composition either on top of bottom and under horizontal composition on the left with any morphism.
                Then $\cC / \mathcal{I}$ is the $\cC$-module category with the same objects as $\cC$ and morphisms that are the quotient
                spaces $\Hom_{\cC}(X,Y) / \mathcal{I}(X,Y)$.}
                of $\ASym$ generated by the morphism
$\begin{tikzpicture}[anchorbase]\draw[-,thick](0,0)to(0,0.4);\opendot{0,0.2};\end{tikzpicture}\;$. It
follows that $\Sym$ is a strict $\ASym$-module category.
The functors $E$ and $F$ from \cref{EFdef} are also
the induction and restriction functors $\ind_{\mid\,\star}$ and $\res_{\mid\,\star}$
defined using
this categorical
action of $\ASym$ on $\Sym$. The advantage of passing from $\Sym$ to $\ASym$ here is that
the object $\mid$ of $\ASym$ has the endomorphism defined by the dot,
giving us a natural transformation
$$
\alpha:=\begin{tikzpicture}[anchorbase]\draw[-,thick](0,0)to(0,0.4);\opendot{0,0.2};\node at (.3,.2) {$\star$};\end{tikzpicture}
: \mid\,\star \Rightarrow \mid\,\star.
$$
Applying the general construction from \cref{parmesan} to this, we obtain
endomorphisms
\begin{align}
x&:=
\res_\alpha:F \Rightarrow F,&
x^\vee&:=\ind_\alpha:E\Rightarrow E.
\end{align}
Explicitly, on a $\kk S_n$-module $V$,
$x_V$ is the endomorphism of $FV = \res^{S_n}_{S_{n-1}} V$
defined by multiplying on the left by $x_n \in \kk S_n$,
while
$x^\vee_V$ is the endomorphism of $EV = \kk S_{n+1} \otimes_{\kk S_n} V$ defined
by multiplying $\kk S_{n+1}$ on the right by $x_{n+1} \in \kk S_{n+1}$.
For $c \in \kk$, let $F_c$ and $E_c$ be the $c$ eigenspaces of
$x:F\Rightarrow F$ and $x^\vee:E\Rightarrow E$, respectively.
Since $x^\vee$ is the mate of $x$ and $E$ and $F$ are biadjoint,
it follows that $E_c$ and $F_c$ are biadjoint endofunctors of $\sym\Modfd$
for each $c \in \kk$.
The description of Gelfand-Tsetlin characters of Specht
modules from the previous paragraph is equivalent to the
assertion
that the functors $E_a$ and $F_b$ take the Specht module
$S(\lambda)$ to exactly the summands
                $S(\lambda+\boxed{a})$ and $S(\lambda-\boxed{b})$ in
\cref{branchey}, or to zero if $a \notin \add(\lambda)$ or $b \notin \rem(\lambda)$, respectively.
                It follows that
\begin{align}\label{endofterm}
  F &= \bigoplus_{b \in
                                                 \Z} F_b,&E &= \bigoplus_{a \in \Z} E_a.
\end{align}
            
\section{The partition category and its triangular decomposition}\label{sec3}

Next we introduce the partition category $\Par_t$, which we define by
generators and relations. We then
make some basic observations about its representation theory.
Most of the results here are due to Sam and Snowden \cite[Sec.~6]{SaS}, but we
have tried to give a self-contained account since our general notation and other
conventions are often different. The most important point is
that the path algebra $Par_t$ of the category $\Par_t$
has a triangular decomposition, hence, the category of locally finite-dimensional
$Par_t$-modules is an upper finite highest weight category in the
sense of \cite[$\S$3.3]{BS}.
In fact, $\Par_t$ is a monoidal triangular category in the sense of
Sam and Snowden.

\subsection{The partition category}
Let $t \in \kk$ be a parameter. According to the following definition,
the partition category $\Par_t$ is the free strict $\kk$-linear 
symmetric monoidal category generated by a commutative Frobenius object which is special of categorical dimension $t$.

\begin{definition}\label{Par} The {\em partition category} $\Par_t$ is the strict $\kk$-linear monoidal category generated by one object $\mid$ and the morphisms
\begin{align}\label{gens}
\begin{tikzpicture}[anchorbase]
      \draw[-,thick](0,0) -- (0.4,0.6);
      \draw[-,thick](0.4,0) -- (0,0.6);
\end{tikzpicture}&:\mid\star\mid\to\mid\star\mid\ ,
&
\begin{tikzpicture}[anchorbase]
      \draw[-,thick](0,0) -- (0.3,0.3);
      \draw[-,thick](0.6,0) -- (0.3,0.3);
      \draw[-,thick](0.3,0.3) -- (0.3,0.6);
\end{tikzpicture}&:\mid\star\mid\to\mid\ ,
&
\begin{tikzpicture}[anchorbase]
      \draw[-,thick](0.3,0) -- (0.3,0.3);
      \draw[-,thick](0.6,0.6) -- (0.3,0.3);
      \draw[-,thick](0.3,0.3) -- (0,0.6);
\end{tikzpicture}&:\mid\to\mid\star\mid\ ,
&\begin{tikzpicture}[anchorbase]
      \draw[-,thick](0.3,0) -- (0.3,0.25);
      \node at (0.3,0.3) {$\dt$};
\end{tikzpicture}&:\mid\to\one\ ,
&
\begin{tikzpicture}[anchorbase]
      \draw[-,thick](0.3,0.6) -- (0.3,0.37);
      \node at (0.3,0.3) {$\dt$};
\end{tikzpicture}&:\one\to \mid
\end{align}
subject to the following relations, as well as the ones obtained from these by horizontal and vertical flips:
\begin{align}\label{relssym1}
  \begin{tikzpicture}[anchorbase]
    \draw[-,thick] (0,0) \braidto (0.4,0.4) \braidto (0,0.8);
    \draw[-,thick] (0.4,0) \braidto (0,0.4) \braidto (0.4,0.8);
  \end{tikzpicture}\ &=\ \begin{tikzpicture}[anchorbase]
    \draw[-,thick] (0,0) -- (0,0.8);
    \draw[-,thick] (0.3,0) -- (0.3,0.8);
  \end{tikzpicture}\ ,
  & \begin{tikzpicture}[anchorbase]
  \draw[-,thick] (0,0)--(0.6,0.8);
  \draw[-,thick] (0.6,0)--(0,0.8);
  \draw[-,thick] (0.3,0) to[out=120,in=-120,looseness=1.5] (0.3,0.8);
  \end{tikzpicture}\ &= \begin{tikzpicture}[anchorbase]
  \draw[-,thick] (0,0)--(0.6,0.8);
  \draw[-,thick] (0.6,0)--(0,0.8);
  \draw[-,thick] (0.3,0)to[out=60,in=-60,looseness=1.5](0.3,0.8);
  \end{tikzpicture}\ ,\\\label{relssym2}
  \begin{tikzpicture}[anchorbase]
  \draw[-,thick] (0,0) -- (0.6,-0.8);
  \draw[-,thick] (0.6,0) -- (0,-0.8);
  \draw[-,thick] (0.3,0)to[out=-135,in=135,looseness=1.5](0.15,-0.6);
  \end{tikzpicture} &= \begin{tikzpicture}[anchorbase]
  \draw[-,thick] (0,0) -- (0.6,-0.8);
  \draw[-,thick] (0.6,0) -- (0,-0.8);
  \draw[-,thick] (0.3,0)--(0.45,-0.2);
  \end{tikzpicture},
  & \begin{tikzpicture}[anchorbase]
  \draw[-,thick] (0,0)--(0.6,-0.8);
  \draw[-,thick] (0.6,0)--(0.15,-0.6);
  \node at (0.11,-0.66) {$\dt$};
  \end{tikzpicture}\ &=\ \begin{tikzpicture}[anchorbase]
  \draw[-,thick] (0,0)--(0,-0.8);
  \draw[-,thick] (0.4,0)--(0.4,-0.55);
  \node at (0.4,-0.6) {$\dt$};
  \end{tikzpicture}\ ,\\ \label{relsfrob1}
\begin{tikzpicture}[anchorbase]
  \draw[-,thick] (0.3,0)--(0.3,0.3)--(-0.2,0.8);
  \draw[-,thick] (0.3,0.3)--(0.8,0.8);
  \draw[-,thick] (0.05,0.55)--(0.3,0.8);
  \end{tikzpicture}\ &=\ \begin{tikzpicture}[anchorbase]
  \draw[-,thick] (0.3,0)--(0.3,0.3)--(-0.2,0.8);
  \draw[-,thick] (0.3,0.3)--(0.8,0.8);
  \draw[-,thick] (0.55,0.55)--(0.3,0.8);
  \end{tikzpicture}\ ,&   \begin{tikzpicture}[anchorbase]
  \draw[-,thick] (0,0)--(0,0.2)to[out=45,in=down](0.2,0.4)to[out=up,in=-45](-0.3,0.8);
  \draw[-,thick] (0,0.2)to[out=135,in=down](-0.2,0.4)to[out=up,in=-135](0.3,0.8);
  \end{tikzpicture}\ &=\ \begin{tikzpicture}[anchorbase]
  \draw[-,thick] (0,0)--(0,0.3)--(-0.3,0.8);
  \draw[-,thick] (0,0.3)--(0.3,0.8);
  \end{tikzpicture}\ , \\
    \begin{tikzpicture}[anchorbase]
    \draw[-,thick] (0.3,0)--(0.3,-0.3)--(0.8,-0.8);
    \draw[-,thick] (0.05,-0.55)--(0.3,-0.3);
    \node at (0,-0.6) {$\dt$};
  \end{tikzpicture}\ &=\ \begin{tikzpicture}[anchorbase]
    \draw[-,thick] (0,0.3)--(0,-0.6);
  \end{tikzpicture}\ ,& 
  \begin{tikzpicture}[anchorbase]
    \draw[-,thick] (0,0) -- (0,1);
    \draw[-,thick] (0,.8) -- (0.6,0.2);
    \draw[-,thick] (0.6,0) -- (0.6,1);
  \end{tikzpicture}\ &=\ \begin{tikzpicture}[anchorbase]
    \draw[-,thick] (0,0) -- (0.3,0.3);
    \draw[-,thick] (0.3,0.3) -- (0.3,0.7);
    \draw[-,thick] (0.3,0.7) -- (0,1);
    \draw[-,thick] (0.6,0) -- (0.3,0.3);
    \draw[-,thick] (0.3,0.7) -- (0.6,1);
  \end{tikzpicture}\ , \label{relsfrob2}\\
\begin{tikzpicture}[anchorbase]
  \draw[-,thick] (0,0)to(0,0.15)to[out=45,in=-45,looseness=2](0,0.65)to(0,0.8);
  \draw[-,thick] (0,0.15)to[out=135,in=-135,looseness=2](0,0.65);
  \end{tikzpicture}\ &=\ \begin{tikzpicture}[anchorbase]
  \draw[-,thick] (0,0)to(0,0.8);
  \end{tikzpicture}\ ,&
  \begin{tikzpicture}[anchorbase]\label{relsparam}
  \draw[-,thick] (0,0.05)--(0,0.55);
  \node at (0,0) {$\dt$};
  \node at (0,0.6) {$\dt$};
  \end{tikzpicture}\ &=\ t \one.
\end{align}
The object set of $\Par_t$ is $\{\mid^{\star n}\:|\:n \in \N\}$. We will sometimes denote
$\mid^{\star n}$ simply by $n$, so that the object set is identified with $\N$.
\end{definition}

The relations \cref{relssym1,relssym2} imply that $\Par_t$ is a
{\em symmetric} monoidal category, \cref{relsfrob1,relsfrob2} imply that the generating object is a {\em commutative Frobenius object}, and the first relation from
\cref{relsparam} means that this object is actually a {\em special} 
Frobenius object.
The symmetric monoidal category $\Par_t$ is {\em rigid} with every object being self-dual.
To justify this, 
it is enough to specify the evaluation and coevaluation morphisms 
$\ev:\mid\star\mid\to\one$ and $\coev:\one\to\mid\star\mid$
for the generating object, which we represent graphically by
the cap and cup:
\begin{align}\label{cupcap}
\ev = \begin{tikzpicture}[anchorbase]
\draw[-,thick] (0,0)to(0,0.1)to[out=up,in=up,looseness=2](0.8,0.1)to(0.8,0);
\end{tikzpicture}\ &:=\ \begin{tikzpicture}[anchorbase]
\draw[-,thick] (0,0)to(0.4,0.4)to(0.8,0);
\draw[-,thick] (0.4,0.4)to(0.4,0.65);
\node at (0.4,0.7) {$\dt$};
\end{tikzpicture}\ ,&
\coev =\begin{tikzpicture}[anchorbase]
\draw[-,thick] (0,0.8)to(0,0.7)to[out=down,in=down,looseness=2](0.8,0.7)to(0.8,0.8);
\end{tikzpicture}\ &:=\ \begin{tikzpicture}[anchorbase]
\draw[-,thick] (0,0.8)to(0.4,0.4)to(0.8,0.8);
\draw[-,thick] (0.4,0.4)to(0.4,0.15);
\node at (0.4,0.1) {$\dt$};
\end{tikzpicture}.
\end{align}
These satisfy the zig-zag identities as in \cref{zigzag}, as may easily be checked using
\cref{relsfrob2}.
Now the relations in \cref{relsparam} imply that the
{\em categorical dimension} of the generating object is $t$.

By an {\em $m \times n$ partition diagram}, we mean a 
string diagram $f$ 
representing a morphism in $\Hom_{\Par_t}(n, m)$
obtained by horizontally and vertically composing the generating morphisms \cref{gens},
such that every connected component of $f$ has at least one endpoint, i.e., is not a ``floating bubble". In view of 
the dimension relation in \cref{relsparam},
floating bubbles can be contracted then removed,
multiplying the result by the scalar $t$ each time this occurs. It follows that
every morphism in $\Hom_{\Par_t}(n, m)$
can be written as a $\kk$-linear combination of $m \times n$ partition diagrams.
Let $\sigma:\Par_t \rightarrow (\Par_t)^\op$
be the strict $\kk$-linear monoidal functor that is the identity on
objects and sends the generating morphisms to their flips in a
horizontal axis. More generally, $\sigma$ sends an
$m \times n$ partition diagram to the $n \times m$ partition diagram that is its flip in a horizontal axis.

The above definition of $\Par_t$ by generators and relations
is not the most common definition found in the literature. 
It was first formulated in this way
by Comes in \cite[Th.~2.1]{Com}; see also \cite[Prop.~2.1]{LSR}.
In the more traditional approach (e.g., see \cite[$\S$8]{Del} and \cite[Def.~2.11]{CO}), one instead defines
the morphism space $\Hom_{\Par_t}(n, m)$ to be the vector space with basis labelled by set partitions of
$\{1,\dots,n,1', \dots, m'\}$, giving explicit combinatorial rules for the horizontal and vertical compositions in terms of these partitions.
Suppose that $f$ is an $m \times n$ partition diagram.
Labelling the endpoints of $f$ from right to left by $1,\dots,n$ on the bottom bounadry and $1',\dots,m'$ on the top boundary as in the following example, the diagram $f$ 
determines a partition of the set
$\{1,\dots,n,1',\dots,m'\}$
with parts
arising from the labels at the endpoints of the connected components in the diagram.
For example, the $9 \times 7$ partition diagram
\begin{equation}\label{exmorph}
\begin{tikzpicture}[anchorbase]
\draw[-,thick] (0,0)to[out=up,in=down](1,2);
\draw[-,thick] (0.5,0)to[out=up,in=up](2.5,0);
\draw[-,thick] (-1,2)to[out=down,in=left](-0.5,0.6)to[out=right,in=left](0.8,1.75)to[out=right,in=up](1.25,1.2)to[out=down,in=up](0.75,0.7)to[out=down,in=up](2,0);
\draw[-,thick] (1.25,1.2)to[out=down,in=up](1.4,0.9)to[out=down,in=up](1,0);
\draw[-,thick] (0,2)to[out=down,in=60](-0.5,1);
\node at (-0.52,0.93) {$\dt$};
\draw[-,thick] (-0.5,2)to[out=down,in=135](-0.25,1.5)to[out=-45,in=150](2,0.7)to[out=-30,in=up](1.5,0);
\draw[-,thick] (2,0.7)to[out=-30,in=up](3,0);
\draw[-,thick] (0.5,2)to[out=down,in=up](0.2,1.45)to[out=down,in=160](0.4,1.1165);
\draw[-,thick] (3,2)to[out=down,in=up](3.2,1)to[out=down,in=right](2.85,0.75)to[out=left,in=down](2.6,1)to[out=up,in=left](2.85,1.25)to[out=right,in=15](3.1,0.25)to[out=195,in=-30](2.8,0.364);
\draw[-,thick] (1.5,2)to[out=down,in=155](1.75,0.82);
\draw[-,thick] (2,2)to[out=down,in=87](1.465,1.3);
\draw[-,thick] (2.5,2)to[out=down,in=up](2.6,1);
\draw[-,thick] (1.75,1.657)to[out=30,in=95](2.566,1.4);
\node at (0,-0.2) {$\stringnumber{7}$};\node at (0.5,-0.2) {$\stringnumber{6}$};\node at (1,-0.2) {$\stringnumber{5}$};\node at (1.5,-0.2) {$\stringnumber{4}$};\node at (2,-0.2) {$\stringnumber{3}$};\node at (2.5,-0.2) {$\stringnumber{2}$};\node at (3,-0.2) {$\stringnumber{1}$};
\node at (-1,2.2) {$\stringnumber{9'}$};\node at (-0.5,2.2) {$\stringnumber{8'}$};\node at (0,2.2) {$\stringnumber{7'}$};\node at (0.5,2.2) {$\stringnumber{6'}$};\node at (1,2.2) {$\stringnumber{5'}$};\node at (1.5,2.2) {$\stringnumber{4'}$};\node at (2,2.2) {$\stringnumber{3'}$};\node at (2.5,2.2) {$\stringnumber{2'}$};\node at (3,2.2) {$\stringnumber{1'}$};
\end{tikzpicture}
\end{equation}
determines the partition
$$
\{1,4,1',2',3',4',6',8'\}\sqcup\{2,6\}\sqcup\{3,5,9'\}\sqcup\{7,5'\}\sqcup\{7'\}.
$$
In this way, one obtains a strict $\kk$-linear monoidal functor 
from the category $\Par_t$ defined by generators and relations as above to the 
category $\Par_t$ as defined via the more traditional combinatorial approach.
Then the result of Comes just mentioned asserts that this functor is an
isomorphism.

The discussion in the previous paragraph shows that
two $m \times n$ partition diagrams represent the same morphism 
in $\Hom_{\Par_t}(n, m)$
if and only if the diagrams are equivalent in the sense that they
determine the same partition of the set 
$\{1,\dots,n,1',\dots,m'\}$ labelling their endpoints.
For example, the morphism represented by \cref{exmorph} is equal to the 
one represented by the tidier diagram
\begin{equation}\label{inmorph}
\begin{tikzpicture}[anchorbase]
\draw[-,thick] (0,3)to(0,2.72);
\draw[-,thick] (-1,3)to[out=-90,in=90](2,0);
\node at (0,2.66) {$\dt$};
\draw[-,thick] (0.5,0)to[out=45,in=135](2.5,0);
\draw[-,thick] (2.5,3)to[out=-90](3,2.5);
\draw[-,thick] (2,3)to[out=-90](3,2.3);
\draw[-,thick] (1.5,3)to[out=-90](3,2.1);
\draw[-,thick] (.5,3)to[out=-90](3,1.9);
\draw[-,thick] (-.5,3)to[out=-70](3,1.7);
\draw[-,thick] (1.5,0)to[out=90,in=-135](3,.8);
\draw[-,thick] (1,0)to[out=90,in=-150](1.67,.8);
\draw[-,thick] (0,0)to[out=80,in=-90](1,3);
\draw[-,thick] (3,3)to(3,0);
\node at (0,-0.2) {$\stringnumber{7}$};\node at (0.5,-0.2) {$\stringnumber{6}$};\node at (1,-0.2) {$\stringnumber{5}$};\node at (1.5,-0.2) {$\stringnumber{4}$};\node at (2,-0.2) {$\stringnumber{3}$};\node at (2.5,-0.2) {$\stringnumber{2}$};\node at (3,-0.2) {$\stringnumber{1}$};
\node at (-1,3.2) {$\stringnumber{9'}$};\node at (-0.5,3.2) {$\stringnumber{8'}$};\node at (0,3.2) {$\stringnumber{7'}$};\node at (0.5,3.2) {$\stringnumber{6'}$};\node at (1,3.2) {$\stringnumber{5'}$};\node at (1.5,3.2) {$\stringnumber{4'}$};\node at (2,3.2) {$\stringnumber{3'}$};\node at (2.5,3.2) {$\stringnumber{2'}$};\node at (3,3.2) {$\stringnumber{1'}$};
\draw[dashed,red] (-1.2,1) to (3.5,1);
\draw[dashed,red] (-1.2,1.6) to (3.5,1.6);
\end{tikzpicture}
\end{equation}
because this determines the same partition of the set labelling the endpoints.
In fact, Comes' result implies that 
any set of representatives for the equivalence classes
$m \times n$ partition diagrams 
give a {\em basis} for the morphism space $\Hom_{\Par_t}(n, m)$.
In particular, 
$\dim \Hom_{\Par_t}(n, m)$ is equal to the
the $(m+n)$th {\em Bell number} which counts set partitions of $m+n$.
Taking $m=n=0$, this implies that
\begin{equation}
  \End_{\Par_t}(\one) = \kk.
  \end{equation}

\subsection{Triangular decomposition}\label{stria}
Let $c$ be a connected component in some partition diagram representing a
morphism in $\Par_t$.
We call $c$ an {\em upward branch} if 
$c$ has at least two endpoints on its top boundary and no endpoints on its bottom boundary, and a {\em downward branch} if it has at least two endpoints on its bottom boundary but no endpoints at the top:
\[
c = \begin{tikzpicture}[anchorbase]
 \draw[-,thick](0,0.8) -- (0.6,0.2) arc (225:315:.2) -- (1.6,0.8);
 \draw[-,thick](0.35,0.8)--(0.92,0.23);
 \node at (0.75,0.6) {$\cdot$};
 \node at (0.9,0.6) {$\cdot$};
 \node at (1.05,0.6) {$\cdot$};
 \draw[-,thick](1.1,0.8)--(1.33,0.57);
\end{tikzpicture} \hspace{1.2cm}\text{ or }\hspace{1.2cm} 
c = \begin{tikzpicture}[anchorbase]
 \draw[-,thick](0,0) -- (0.6,0.6) arc (135:45:.2) -- (1.6,0);
 \draw[-,thick](0.35,0)--(0.92,0.57);
 \node at (0.75,0.15) {$\cdot$};
 \node at (0.9,0.15) {$\cdot$};
 \node at (1.05,0.15) {$\cdot$};
 \draw[-,thick](1.1,0)--(1.33,0.23);
\end{tikzpicture}.
\]
We call $c$ an {\em upward leaf} if it has exactly one endpoint at the top and no endpoints at the bottom, and a {\em downward leaf} if it has no endpoints at the top and exactly one at the bottom:
$$
c=\begin{tikzpicture}[anchorbase]
      \draw[-,thick](0.3,0.6) -- (0.3,0.37);
      \node at (0.3,0.3) {$\dt$};
\end{tikzpicture}
\hspace{1.2cm}\text{ or }\hspace{1.2cm}
c=\begin{tikzpicture}[anchorbase]
      \draw[-,thick](0.3,0) -- (0.3,0.25);
      \node at (0.3,0.3) {$\dt$};
\end{tikzpicture}. 
$$
We refer to $c$ as an \textit{upward tree} if it has more than one endpoint at the top and exactly one endpoint at the bottom, and a {\em downward tree} if it has exactly one endpoint at the top and more than one endpoint at the bottom: 
\[
c = \begin{tikzpicture}[anchorbase]
 \draw[-,thick](0,1.2) -- (0.8,0.4) -- (1.6,1.2);
 \draw[-,thick](0.35,1.2) -- (.97,0.58);
 \draw[-,thick](0.8,0.1) -- (0.8,0.4);
 \node at (0.75,1) {$\cdot$};
 \node at (0.9,1) {$\cdot$};
 \node at (1.05,1) {$\cdot$};
 \draw[-,thick](1.1,1.2)--(1.35,0.95);
\end{tikzpicture}\hspace{1.2cm}\text{ or }\hspace{1.2cm} c = \begin{tikzpicture}[anchorbase]
 \draw[-,thick](0,0) -- (0.8,0.8) -- (1.6,0);
 \draw[-,thick](0.35,0) -- (.97,0.62);
 \draw[-,thick](0.8,1.1) -- (0.8,0.8);
 \node at (0.75,0.15) {$\cdot$};
 \node at (0.9,0.15) {$\cdot$};
 \node at (1.05,0.15) {$\cdot$};
 \draw[-,thick](1.1,0)--(1.35,0.25);
\end{tikzpicture}
\]
We say that $c$ is a {\em double tree} if $c$ has more than one endpoint at the top and more than one endpoint at the bottom. In that case, it is equivalent to the composition of an upward tree and a downward tree; for example, the rightmost connected component in \cref{inmorph} is a double tree.
Finally we say that $c$ is a {\em trunk} if $c$ has exactly one endpoint both at the top and at the bottom: 
$$
c=\:
\begin{tikzpicture}[anchorbase]
 \draw[-,thick](0,0) to (0,.8);
\end{tikzpicture}\:. 
$$
Any connected component of a partition diagram can be represented
either as an upward branch, an upward leaf, an upward tree,
a downward branch, a downward leaf, a downward tree, a double tree, or a trunk.

Let $f$ be an $m \times n$  partition diagram. We say $f$ is
\begin{itemize}
\item a {\em permutation diagram}
if all of its connected components are trunks, in which case we must have that $m=n$;
\item an {\em upward partition diagram}
if its connected components are trunks, upward branches, upward leaves and upward trees, in which case we must have that $m \geq n$; 
\item a {\em downward partition diagram} if its connected components are trunks, downward branches, downward leaves and downward trees, in which case we must have that $m \leq n$.
\end{itemize}
Let $f$ be an upward $m \times n$ partition diagram. We say that it is 
{\em strictly upward} if $m > n$.
Let $c_1,\dots,c_k$ be the connected components of $f$ that are either trunks or upward trees, indexing them
so that their bottom endpoints are in order from right to left in $f$.
We say that $f$ is {\em normally ordered} if the rightmost of the top endpoints of each of $c_1,\dots,c_k$ are also in order from right to left in $f$.
In other words, $f$ is normally ordered if it can be drawn so that the right edges of all of the upward trees and trunks in $f$ are non-crossing.
Similarly, we define {\em strictly downward} and 
{\em normally ordered} downward partition diagrams.

Now we can define some monoidal subcategories of $\Par_t$.
Let $\Sym$ be the symmetric category as defined in \cref{ssym}.
There is a strict $\kk$-linear symmetric monoidal functor
\begin{equation}\label{canemb}
i_t^\circ:\Sym \rightarrow \Par_t
\end{equation}
sending the generating object and the generating morphism of $\Sym$ 
to the 
generating object and the generating morphism of $\Par_t$ that is represented by the crossing. Using the basis theorem for morphism spaces 
in $\Par_t$, it follows that this functor
is faithful. We use it to {\em identify} $\Sym$ with a monoidal
subcategory of $\Par_t$. In other words, $\Sym$ is
identified with the subcategory of $\Par_t$ consisting of all objects
and all the morphisms which can be written as linear combinations of permutation diagrams.

Next, let $\Par^\flat$ be the strict $\kk$-linear monoidal category generated by one object $\mid$ and the morphisms
\begin{align}\label{gens2}
\begin{tikzpicture}[anchorbase]
      \draw[-,thick](0,0) -- (0.6,0.6);
      \draw[-,thick](0.6,0) -- (0,0.6);
\end{tikzpicture}&:\mid\star\mid\to\mid\star\mid\ ,
&
\begin{tikzpicture}[anchorbase]
      \draw[-,thick](0.3,0) -- (0.3,0.3);
      \draw[-,thick](0.6,0.6) -- (0.3,0.3);
      \draw[-,thick](0.3,0.3) -- (0,0.6);
\end{tikzpicture}&:\mid\to\mid\star\mid\ ,
&
\begin{tikzpicture}[anchorbase]
      \draw[-,thick](0.3,0.6) -- (0.3,0.37);
      \node at (0.3,0.3) {$\dt$};
\end{tikzpicture}&:\one\to \mid
\end{align}
subject to the relations \cref{relssym1,relssym2,relsfrob1} and their
flips in a vertical axis.
We call this the {\em upward partition category}.
The cup can also be defined in $\Par^\flat$ as in \cref{cupcap}.
Any upward partition diagram can be interpreted as a string diagram representing a morphism in $\Par^\flat$.
Moreover, the defining relations in $\Par^\flat$ imply that
two upward $m \times n$ partition diagrams which are equivalent in the sense that
they define the same partition of the set 
$\{1,\dots,n,1',\dots,m'\}$ labelling the
endpoints
are also equal as morphisms in
$\Hom_{\Par^\flat}(n, m)$.
There is a strict $\kk$-linear monoidal functor
\begin{equation}\label{canembflat}
i_t^\flat:\Par^\flat \rightarrow \Par_t
\end{equation}
sending the generating morphisms of $\Par^\flat$ 
to the corresponding ones in $\Par_t$. As equivalence classes of upward $m \times n$ partition diagrams span $\Hom_{\Par^\flat}(n, m)$ 
and their images in $\Hom_{\Par_t}(n, m)$ are linearly independent, this functor is faithful. 
We use it to {\em identify} $\Par^\flat$ with a monoidal
subcategory of $\Par_t$. In other words, $\Par^\flat$ is
identified with the monoidal subcategory of $\Par_t$ consisting of all objects
and all of the morphisms which can be written as linear combinations of upward partition diagrams.
Also let $\Par^-$ be the monoidal subcategory of $\Par^\flat$ consisting of all objects and all of the 
morphisms which can be written as linear combinations of normally ordered upward partition diagrams.

Similarly to the previous paragraph, we define $\Par^\sharp$, the {\em
  downward partition category}, to be the strict $\kk$-linear monoidal category generated by one object $\mid$ and the morphisms that are the flips
of \cref{gens2} in a horizontal axis, subject to the relations that are the flips of the ones for $\Par^\flat$. The cap can also be defined in $\Par^\sharp$ as in \cref{cupcap}.
Evidently, $\Par^\sharp \cong (\Par^\flat)^\op$ with isomorphism being defined by the flip $\sigma$ in a horizontal axis.
There is a strict $\kk$-linear monoidal functor
\begin{equation}\label{canembsharp}
i_t^\sharp:\Par^\sharp \rightarrow \Par_t
\end{equation}
sending the generating morphisms of $\Par^\sharp$ 
to the corresponding ones in $\Par_t$. 
We have that $i_t^\sharp = \sigma \circ i_t^\flat \circ \sigma$, so we
deduce from the previous paragraph that $i_t^\sharp$ is faithful too.
We use it to {\em identify} $\Par^\sharp$ with a monoidal
subcategory of $\Par_t$. In other words, $\Par^\sharp$ is
identified with the monoidal subcategory of $\Par_t$ consisting of all objects
and all of the morphisms which can be written as linear combinations of downward partition diagrams.
Also let $\Par^+$ be the monoidal subcategory of $\Par^\sharp$ consisting of all objects and all of the morphisms which can be written as linear combinations of normally ordered downward partition diagrams.

Finally we let $Par_t$ be the path algebra of $\Par_t$. It is a locally unital algebra
with distinguished idempotents $\{1_n\:|\:n \in \N\}$ arising from the identity endomorphisms of the objects of $\Par_t$.
We also have the path algebras
$Par^\flat, Par^-, \sym, Par^+, Par^\sharp$ 
of $\Par^\flat, \Par^-, \Sym, \Par^+, \Par^\sharp$, which we may view as
locally unital subalgebras of $Par_t$ via the embeddings
\cref{canemb,canembflat,canembsharp}. The following theorem is 
the {\em triangular decomposition} of $Par_t$.

\begin{theorem}\label{td}
Let $\KK := \bigoplus_{n \geq 0} \kk 1_n$ viewed as a
locally unital subalgebra of $Par_t$. Multiplication defines a linear isomorphism
\begin{align}
Par^- \otimes_{\KK} \sym \otimes_{\KK} Par^+
  &\stackrel{\sim}{\rightarrow} Par_t.\label{td1}\\
  \intertext{Hence, we also have isomorphisms}
Par^- \otimes_{\KK} \sym 
&\stackrel{\sim}{\rightarrow} Par^\flat,\label{td2}\\
\sym \otimes_{\KK} Par^+
&\stackrel{\sim}{\rightarrow} Par^\sharp,\label{td3}\\
Par^\flat \otimes_{\sym} Par^\sharp
&\stackrel{\sim}{\rightarrow} Par_t.\label{td4}
\end{align}
\end{theorem}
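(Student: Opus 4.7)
The plan is to prove \eqref{td1} by a basis-level bijection, then derive \eqref{td2}--\eqref{td4} as formal consequences.

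First, I would assemble the relevant bases. Comes' basis theorem for $\Par_t$ gives $\Hom_{\Par_t}(n,m)$ a basis indexed by set partitions of $\{1,\dots,n,1',\dots,m'\}$, represented by equivalence classes of $m\times n$ partition diagrams. Since the embeddings \cref{canemb,canembflat,canembsharp} are faithful, $\sym$, $Par^\flat$, $Par^\sharp$ inherit bases of permutation, upward, and downward partition diagrams respectively. Because $Par^-$ (resp.\ $Par^+$) is by definition the span of normally ordered upward (resp.\ downward) partition diagrams inside $Par^\flat$ (resp.\ $Par^\sharp$), and these are part of a basis, they themselves form a basis of $Par^-$ (resp.\ $Par^+$).

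Next, I would establish an explicit bijection between the basis of $Par_t$ and the set of triples $(u,w,d)$ with $u\in Par^-$ a normally ordered upward $m\times k$ diagram, $w\in S_k$, and $d\in Par^+$ a normally ordered downward $k\times n$ diagram (for varying $k$). Given a set partition $\mathcal P$, let its ``through blocks''---those containing both bottom and top indices---be $B_1,\dots,B_k$, ordered so that their rightmost bottom indices appear from right to left in this order. Define $d$ by keeping all bottom-only blocks of $\mathcal P$ unchanged and replacing the top indices of each $B_i$ with a single middle label $i^*$; this is normally ordered by construction. Reordering the through blocks instead by the position of their rightmost top indices produces a normally ordered upward diagram $u$ from $k$ to $m$, and the discrepancy between the two orderings is precisely a permutation $w\in S_k$. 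A direct verification shows $u\circ w\circ d=\mathcal P$ in $Par_t$.

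Conversely, given a triple $(u,w,d)$, the through blocks of $u\circ w\circ d$ are recovered by tracing the $k$ middle strands, and this inverts the above assignment; this proves the bijection and hence \eqref{td1}. The isomorphisms \eqref{td2} and \eqref{td3} follow by restriction: the only normally ordered downward $k\times k$ diagram is $1_k$ (a quick check shows any such diagram must consist solely of trunks, and normal ordering then forces the identity permutation), so tensoring on the right with $d=1_k$ identifies $Par^-\otimes_\KK\sym$ with the span of all upward partition diagrams, which is $Par^\flat$; dually for $Par^\sharp$. Finally, \eqref{td4} is formal:
\[
Par^\flat\otimes_\sym Par^\sharp\;\cong\;(Par^-\otimes_\KK\sym)\otimes_\sym(\sym\otimes_\KK Par^+)\;\cong\;Par^-\otimes_\KK\sym\otimes_\KK Par^+\;\cong\;Par_t.
\]

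The main obstacle is the combinatorial bookkeeping in the assignment $\mathcal P\mapsto(u,w,d)$: specifying the permutation $w$ unambiguously and verifying that distinct triples yield distinct partition diagrams in $Par_t$. The normal-ordering condition is the key device here, pinning down a canonical labelling of the middle strands and eliminating the ambiguity that would otherwise arise from freely reordering them.
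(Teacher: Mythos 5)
Your proposal is correct and follows essentially the same approach as the paper: the paper's proof of \eqref{td1} likewise rests on the observation that every partition diagram factors uniquely (up to equivalence) as a normally ordered upward diagram, a permutation diagram, and a normally ordered downward diagram, and then deduces \eqref{td2}--\eqref{td4} formally. The one cosmetic difference is that the paper illustrates the factorization with the worked example \eqref{inmorph} and cites a general lemma from \cite{BS} for the formal deductions, whereas you spell out the combinatorial bijection explicitly (ordering through-blocks by rightmost bottom/top index, extracting $w$ as the discrepancy) and give the direct argument that the only normally ordered downward $k\times k$ diagram is $1_k$; both are valid and your added detail is a welcome elaboration rather than a different strategy.
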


\begin{proof}
Any partition diagram is equivalent to a diagram
that is the composition of a normally ordered upward partition diagram, a permutation diagram, and a normally ordered downward partition diagram; see \cref{inmorph} for an example of such a decomposition.
Moreover, equivalence classes of these sorts of 
diagrams give bases for $Par_t$, $Par^-, \sym$ and $Par^+$. This
implies that \cref{td1} is an isomorphism.
Then \cref{td2,td3,td4} follow as in \cite[Rem.~5.32]{BS}.
\end{proof}

\cref{td} is all that is needed to see that 
the locally finite-dimensional locally unital algebra $$
Par_t = \bigoplus_{m,n \in \N} 1_m Par_t 1_n
$$ 
has a {\em split triangular
decomposition} in the sense of \cite[Rem.~5.32]{BS}.
Its negative and positive Borel subalgebras are $Par^\flat$ and $Par^\sharp$,
and its Cartan subalgebra is $\sym = Par^\flat \cap Par^\sharp$.
The set $I$ in the notation of \cite{BS} is the set $\N$ 
indexing the distinguished idempotents $\{1_n\:|\:n \in \N\}$. The upper finite 
poset $(\Lambda,\leq)$ in the setup of \cite{BS} is $(\N,\geq)$; we stress
 that the ordering is reversed here, as it has to be in order to have an upper finite poset, thereby conforming to the general conventions of \cite{BS}.
 The function $\partial$ from \cite{BS} is the identity function.

 As $\sym$ is semisimple, this discussion
  shows equivalently that $\Par_t$ is a triangular category in the
  sense of  \cite[Def.~4.1]{SaS}. Its upward and downward
  subcategories $\cU$ and $\cD$ in the setup of {\em
    loc. cit.} are $\Par^\flat$
  and $\Par^\sharp$, respectively. In fact, $\Par_t$ is a {\em monoidal
    triangular category} as defined in \cite[$\S$4.11]{SaS}, as was
  established already by Sam and Snowden in \cite[Prop.~6.3]{SaS}.
  This means that induction commutes with induction product: we have that
  \begin{align}
    \ind_{\sym}^{Par_t} (V \ostar W) &\cong (\ind_{\sym}^{Par_t}
                                            V) \ostar
                                            (\ind_{\sym}^{Par_t}
                                            W),
  \end{align}
  for
    $\sym$-modules $V$ and $W$. This is
    easily seen directly from the definition of the induction product
    using
      $i_t^\circ \circ \star \cong\star \circ (i_t^\circ \boxtimes
      i_t^\circ)$. Similarly,
      \begin{align}
        \ind_{\sym}^{Par^\sharp} (V \ostar W) &\cong (\ind_{\sym}^{Par^\sharp}
                                            V) \ostar
                                            (\ind_{\sym}^{Par^\sharp}
                                            W),&
    \ind_{Par^\sharp}^{Par_t} (V \ostar W) &\cong (\ind_{Par^\sharp}^{Par_t}
    V) \ostar (\ind_{Par^\sharp}^{Par_t} W).
  \end{align}

\subsection{Classification of irreducible modules and highest weight structure}
As $Par_t$ has a triangular decomposition with Cartan subalgebra $\sym$ being semisimple, we can appeal to the general results of 
\cite[$\S$5.5]{BS} to obtain the classification of irreducible
$Par_t$-modules. Alternatively, this follows from the results in
\cite[$\S$5.5]{SaS}, but note that Sam and Snowden use the language of {\em lowest weight}
rather than {\em highest weight} categories.
Since isomorphism classes of irreducible $Par_t$-modules are in bijection with isomorphism classes of indecomposable projective $Par_t$-modules, and the latter are identified with isomorphism classes of indecomposable objects in
$\Kar(\Par_t)$, the results discussed
in this subsection are equivalent to the classification obtained originally in \cite[Th.~3.7]{CO}.

The algebra $Par_t$ is $\Z$-graded with $1_m Par_t 1_n$ being in
degree $m-n$. The induced gradings on the
subalgebras $Par^\flat$ and $Par^\sharp$ make these into  positively
and negatively graded algebras, respectively, with degree zero
components in both cases being the semisimple algebra $\sym$.
It follows that
the Jacobson radicals of
 $Par^\flat$ and $Par^\sharp$ are the direct sums of their non-zero graded
 components. Moreover, the quotients by their Jacobson radicals are
 naturally identified with $\sym$, i.e., there are
 locally
unital algebra homomorphisms
 \begin{align}\label{pies}
 \pi^\flat:  &Par^\flat \twoheadrightarrow \sym,&
  \pi^\sharp: &Par^\sharp\twoheadrightarrow \sym.
 \end{align}
 Let
$\infl^\sharp:\sym\Modfd \rightarrow Par^\sharp\Modfd$ and $\infl^\flat:\sym\Modfd\rightarrow Par^\flat\Modfd$
be the functors defined by restriction along these
homomorphisms.
The modules
\begin{align}
  &\big\{S^\flat(\lambda) := \infl^\flat S(\lambda)\:\big|\:\lambda
\in \P\big\},&
&\big\{S^\sharp(\lambda) := \infl^\sharp S(\lambda)\:\big|\:\lambda
                      \in \P\big\}
                      \end{align}
                      give full sets of pairwise inequivalent irreducible modules for
$Par^\flat$ and $Par^\sharp$, respectively.

As in \cite[(5.13)--(5.14)]{BS}, we define the {\em standardization} and {\em costandardization functors}
\begin{align}\label{stdize}
  j_! := \ind_{Par^\sharp}^{Par_t} \circ \infl^\sharp&:\sym\Modfd \rightarrow Par_t \Modlfd,\\
j_* := \coind_{Par^\flat}^{Par_t} \circ \infl^\flat&:
\sym\Modfd \rightarrow Par_t \Modlfd,
\end{align}
where $\ind_{Par^\sharp}^{Par_t} := Par_t \otimes_{Par^\sharp} $ and
$\coind_{Par^\flat}^{Par_t} :=\bigoplus_{n \in \N} \Hom_{Par^\flat}(Par_t 1_n,
?)$. From \cref{td1,td2,td3} it follows that $Par_t$ is projective both as a right
$Par^\sharp$-module and as a left $Par^\flat$-module, hence, these
functors are exact. Then we define the {\em standard}
and {\em costandard modules} for $Par_t$ by
\begin{align}\label{stdcostd}
\Delta(\lambda) &:= j_! S(\lambda) = \ind_{Par^\sharp}^{Par_t} S^\sharp(\lambda),
&
\nabla(\lambda) &= j_* S(\lambda) = \ind_{Par^\flat}^{Par_t} S^\flat(\lambda),
\end{align}
respectively. 

\begin{theorem}\label{firstthm}
The $Par_t$-modules $\{L(\lambda)\:|\:\lambda \in \P\}$ defined from
  $$
  L(\lambda) := \operatorname{hd} \Delta(\lambda) \cong
  \operatorname{soc} \nabla(\lambda)
  $$
give a complete set of pairwise inequivalent irreducible left $Par_t$-modules.
Moreover, $Par_t\Modlfd$ is an upper finite highest weight category 
in the sense of \cite[Def.~3.34]{BS}
with weight poset $(\P, \preceq)$, where $\preceq$ is the partial order on $\P$
defined by $\lambda \preceq \mu$ if and only if either $\lambda = \mu$ or 
$|\lambda| > |\mu|$. Its standard and costandard objects are the
modules $\Delta(\lambda)$ and $\nabla(\lambda)$, respectively.
\end{theorem}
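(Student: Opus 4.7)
The plan is to invoke the general machinery for locally unital algebras with split triangular decomposition developed in \cite[$\S$5]{BS}, since the triangular structure on $Par_t$ has already been established in \cref{td}. The proof reduces to a few explicit computations with the weight space decomposition of $\Delta(\lambda)$ together with a standard classification argument.

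First I would record the key computation: combining \cref{td1} with the definition $\Delta(\lambda) = Par_t \otimes_{Par^\sharp} S^\sharp(\lambda)$ and the fact that the strictly downward part of $Par^\sharp$ annihilates $S^\sharp(\lambda)$, there is an isomorphism of left $Par^-$-modules
$$
\Delta(\lambda) \cong Par^- \otimes_{\KK} S(\lambda).
$$
Hence $1_n \Delta(\lambda) \cong 1_n Par^- 1_{|\lambda|} \otimes_\kk S(\lambda)$, which vanishes unless $n \geq |\lambda|$. Moreover, the only normally ordered upward $n \times n$ partition diagram is the identity, so $1_{|\lambda|} Par^- 1_{|\lambda|} = \kk$ and therefore the ``bottom'' weight space $1_{|\lambda|} \Delta(\lambda) \cong S(\lambda)$ as a $\kk S_{|\lambda|}$-module.

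Next I would show $\Delta(\lambda)$ has a unique maximal submodule. Let $N \subseteq \Delta(\lambda)$ be the sum of all submodules $M$ with $1_{|\lambda|} M = 0$; then $1_{|\lambda|} N = 0$, so $N$ is proper. Conversely, for any proper submodule $M' \subsetneq \Delta(\lambda)$, the intersection $1_{|\lambda|} M'$ is a proper $\sym$-submodule of the irreducible $S(\lambda)$, hence zero (because $\Delta(\lambda)$ is generated over $Par^-$ by its minimal weight space). Therefore $M' \subseteq N$, so $N$ is the unique maximal submodule and $L(\lambda) := \Delta(\lambda)/N$ is irreducible with $1_{|\lambda|} L(\lambda) \cong S(\lambda)$. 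The distinctness of the $L(\lambda)$ follows immediately from this $\sym$-module signature.

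For the classification, given an arbitrary irreducible $L \in Par_t\Modlfd$, let $n_0$ be minimal with $1_{n_0} L \neq 0$. Every strictly negatively graded element of $Par^\sharp$ sends $1_{n_0} L$ into $1_m L$ with $m < n_0$, hence annihilates it; so the $Par^\sharp$-action on $1_{n_0} L$ factors through $\pi^\sharp:Par^\sharp \twoheadrightarrow \sym$. Pick an irreducible $\kk S_{n_0}$-submodule $S(\lambda) \hookrightarrow 1_{n_0} L$ (so $|\lambda| = n_0$); by Frobenius reciprocity for $(\ind_{Par^\sharp}^{Par_t}, \res)$, this extends to a nonzero homomorphism $\Delta(\lambda) \to L$, which must be surjective by irreducibility of $L$. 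Hence $L \cong L(\lambda)$.

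Finally I would verify the highest weight axioms of \cite[Def.~3.34]{BS}. The poset $(\P,\preceq)$ is upper finite since $\{\mu : \mu \succeq \lambda\} = \{\lambda\} \cup \{\mu : |\mu| < |\lambda|\}$ is finite. The weight space analysis shows $[\Delta(\lambda):L(\mu)] \neq 0$ forces $|\mu| \geq |\lambda|$ with equality only for $\mu = \lambda$, in which case multiplicity is $1$; equivalently $\mu \preceq \lambda$. Together with the projectivity of $Par_t$ as a right $Par^\sharp$-module and as a left $Par^\flat$-module (from \cref{td2,td3}), which gives exactness of $j_!$ and $j_*$ and hence a $\Delta$-flag on projective covers via standard arguments, this establishes the highest weight structure. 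The assertion $L(\lambda) \cong \soc \nabla(\lambda)$ is proved by the dual argument applied to $\nabla(\lambda) = \coind_{Par^\flat}^{Par_t} S^\flat(\lambda)$, using that $1_{|\lambda|} \nabla(\lambda) \cong S(\lambda)$ sits now as a submodule rather than a quotient. The only real work is the minimal weight space computation above; the remainder is bookkeeping against the BS axiomatics.
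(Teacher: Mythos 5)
Your proposal is correct and takes essentially the same route as the paper, which simply cites \cite[Cor.~5.39]{BS} applied to the triangular decomposition from \cref{td}. The difference is one of detail: you unpack the irreducible classification explicitly (computation of the lowest weight space $1_{|\lambda|}\Delta(\lambda) \cong S(\lambda)$ via $\Delta(\lambda)\cong Par^-\otimes_{\KK} S(\lambda)$, the unique-maximal-submodule argument, and the adjunction argument identifying every irreducible as some $L(\lambda)$), which is precisely the content of the cited corollary in this instance; for the highest weight axioms themselves — in particular existence of $\Delta$-flags on projective covers and the $\Ext$-vanishing — you, like the paper, ultimately defer to the general machinery of \cite{BS}, which is the right place for it.
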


\begin{proof}
This follows immediately from \cite[Cor.~5.39]{BS} using the triangular decomposition from 
\cref{td} and the semisimplicity of $\sym$;
see also \cite[$\S$5.5]{SaS}.
\end{proof}

The fact established in \cref{firstthm} that $Par_t\Modlfd$ is an upper finite highest weight category has several significant consequences.
As for any Schurian category, $L(\lambda)$ has a projective cover we denote by $P(\lambda) \in Par_t\Modlfd$. 
Let $Par_t\Moddelta$ be the exact subcategory of $Par_t\Modlfd$ consisting of all modules with a $\Delta$-flag, that is, a finite filtration whose sections are 
of the form $\Delta(\lambda)$ for $\lambda \in \P$.
For any 
$V \in Par_t\Moddelta$, the multiplicity $(V:\Delta(\mu))$ of $\Delta(\mu)$ as a section of some $\Delta$-flag in $V$ is well-defined independent of the flag, indeed, it can be calculated from
\begin{equation}\label{bggrecproof}
(V:\Delta(\mu)) = \dim \Hom_{Par_t}(V, \nabla(\mu)).
\end{equation}
This follows from the fundamental $\Ext$-vanishing property
of highest weight categories, namely, that
\begin{equation}\label{deltanabla}
\dim \Ext^i_{Par_t}(\Delta(\lambda), \nabla(\mu)) = \delta_{i,0} \delta_{\lambda,\mu}
\end{equation}
for any $\lambda,\mu \in \P$ and $i \geq 0$; see \cite[Lem.~3.48]{BS}.
The definition of highest weight category gives that $P(\lambda)$ has a $\Delta$-flag,
so that $Par_t\Proj$ is a full subcategory of $Par_t\Moddelta$.
Moreover, from \cref{bggrecproof}, one obtains the usual {\em BGG reciprocity formula}
\begin{equation}\label{bggrecip}
(P(\lambda):\Delta(\mu)) = 
[\nabla(\mu):L(\lambda)].
\end{equation}

The functor $\sigma:\Par_t\stackrel{\sim}{\rightarrow} (\Par_t)^\op$ defined by flipping diagrams in a horizontal axis
can also be viewed as a locally unital anti-involution of the 
algebra $Par_t$. It interchanges the subalgebras $Par^\flat$ and $Par^\sharp$, and restricts
to an anti-involution also denoted $\sigma$ on the subalgebra $\sym$. 
Let $?^\sigmadual$ be the duality on $\sym\Modfd$ taking a finite-dimensional
left $\sym$-module to its linear dual viewed again as a left module using the anti-automorphism $\sigma$. 
Since $\sigma(g) = g^{-1}$ for a permutation $g \in S_n \subset \sym$, 
this is the usual duality on each of the subcategories $\kk S_n\Modfd$.
It is well known that the irreducible $\kk S_n$-modules are self-dual, 
hence,
\begin{equation}\label{easydual}
S(\lambda)^\sigmadual \cong S(\lambda)
\end{equation}
for all $\lambda \in \P$.
There is also a duality $?^\sigmadual$ on $Par_t\Modlfd$ defined as in
\cref{duality}. Similarly, as $\sigma$ interchanges $Par^\flat$ and
$Par^\sharp$, we get contravariant equivalences also denoted $?^\sigmadual$ between
$Par^\sharp\Modlfd$ and $Par^\flat\Modlfd$.
Similarly to \cref{predual,almostwhatIneed}, we have that
\begin{align}
                                                 ?^\sigmadual \circ
                                                 \infl^\flat &\cong
                                                 \infl^\sharp \circ ?^\sigmadual,&
  \ind_{Par^\sharp}^{Par_t} \circ ?^\sigmadual &\cong ?^\sigmadual \circ
                                                 \coind_{Par^\flat}^{Par_t}.
\end{align}
Hence:
\begin{align}\label{harddual}
  j_! \circ ?^\sigmadual &\cong ?^\sigmadual \circ j_*,&
  j_* \circ ?^\sigmadual &\cong ?^\sigmadual \circ j_!
  \end{align}
as functors from
$\sym\Modfd$ to $Par_t\Modlfd$.
Then from \cref{easydual,harddual}, we deduce that
\begin{align}\label{harderdual}
\Delta(\lambda)^\sigmadual &\cong \nabla(\lambda),&
\nabla(\lambda)^\sigmadual &\cong \Delta(\lambda),&
L(\lambda)^\sigmadual \cong L(\lambda)
\end{align}
for $\lambda \in \P$.

\begin{remark}
In fact, the duality $?^\sigmadual$ is a {\em Chevalley duality} of $Par_t\Modlfd$ in the sense of \cite[Def.~4.49]{BS}. The general construction from \cite[Cor.~5.36,
Rem.~5.40]{BS} can be used to show that $Par_t$ admits a basis making
it into an upper finite 
symmetrically based quasi-hereditary algebra in the sense of \cite[Def.~5.1]{BS}.
Equivalently, $\Par_t$ is an object-adapted cellular category in the sense of \cite[Def.~2.1]{EL}.
\end{remark}

\subsection{The downward partition category and reduced Kronecker
  coefficients}
The results in this subsection are the analogs for the downward partition
category of the results proved in
\cite[$\S\S$7.5--7.6]{SSold} for the downward Brauer and downward
walled Brauer categories.
The methods are similar. In the first lemma, our standing assumption that
$\operatorname{char}\kk = 0$ is essential in
order to define the idempotent $1_{l,m,n}^d$.

\begin{lemma}[{cf. \cite[Prop.~6.5]{SaS}}]\label{ugly}
  The right $Par^\sharp \boxtimes Par^\sharp$-module $Par^\sharp 1_\star$
  is projective. Consequently, by \cref{indprodex}, the induction product
  $
  \ostar:Par^\sharp\Mod \boxtimes
  Par^\sharp\Mod
  \rightarrow Par^\sharp \Mod
  $
  is biexact. More precisely,
for integers $l,m,n \geq 0$ and $0 \leq d \leq \min(l+m-n,l+n-m,m+n-l)$ with $d \equiv
l+m+n\pmod{2}$, let
\begin{align}\label{abc}
a&:=(m+n-l-d)/2,&b&:=(l+n-m-d)/2,
&c&:=(l+m-n-d)/2
\end{align}
and define 
\begin{equation}\label{fdlmn}
 f^d_{l,m,n} := \begin{tikzpicture}[anchorbase,scale=1.5]
   \draw[-,thick] (-.1,-.9) to[looseness=2,out=90,in=90] (.1,-.9);
   \draw[-,thick] (-.2,-.9) to[looseness=2,out=90,in=90] (.2,-.9);
   \draw[-,thick] (-.4,-.8) to[out=90,in=-135] (-.1,-.4) to[out=-45,in=90] (.4,-.8);
   \draw[-,thick] (-.5,-.8) to[out=90,in=-135] (0,-.2) to[out=-45,in=90] (.5,-.8);
   \draw[-,thick] (-.6,-.8) to[out=90,in=-135] (.1,0) to[out=-45,in=90] (.6,-.8);
   \draw[-,thick] (-.1,-.4) to (-.1,.6);
   \draw[-,thick] (0,-.2) to (0,.6);
   \draw[-,thick] (.1,0) to (.1,.6);
   \draw[-,thick] (-.8,-1) to[out=90,in=-90] (-.3,.6);
   \draw[-,thick] (-.9,-1) to[out=90,in=-90] (-.4,.6);
   \draw[-,thick] (-1,-1) to[out=90,in=-90] (-.5,.6);
   \draw[-,thick] (.8,-1) to[out=90,in=-90] (.3,.6);
   \draw[-,thick] (.9,-1) to[out=90,in=-90] (.4,.6);
   \draw[-,thick] (1,-1) to[out=90,in=-90] (.5,.6);
   \draw[-,thick] (.6,-1) to (.6,-.8);
   \draw[-,thick] (.5,-1) to (.5,-.8);
   \draw[-,thick] (.4,-1) to (.4,-.8);
   \draw[-,thick] (.2,-1) to (.2,-.9);
   \draw[-,thick] (.1,-1) to (.1,-.9);
   \draw[-,thick] (-.1,-1) to (-.1,-.9);
   \draw[-,thick] (-.2,-1) to (-.2,-.9);
   \draw[-,thick] (-.4,-1) to (-.4,-.8);
   \draw[-,thick] (-.5,-1) to (-.5,-.8);
   \draw[-,thick] (-.6,-1) to (-.6,-.8);
\node at (0.15,-1.15) {$\scriptstyle a$}; 
\node at (-0.15,-1.15) {$\scriptstyle a$}; 
\node at (0.5,-1.15) {$\scriptstyle d$}; 
\node at (-0.5,-1.15) {$\scriptstyle d$}; 
\node at (0.9,-1.15) {$\scriptstyle b$}; 
\node at (-0.9,-1.15) {$\scriptstyle c$}; 
\node at (-0.4,.75) {$\scriptstyle c$}; 
\node at (0.4,.75) {$\scriptstyle b$}; 
\node at (0,.75) {$\scriptstyle d$}; 
\end{tikzpicture}
\in 1_l
 Par^\sharp 1_{m\star n}
\end{equation}
so that
there are $a$ nested caps at the
 bottom,
 $b$ parallel trunks on the right,
$c$ parallel trunks on the left,
and $d$ nested downward binary trees in the middle.
Also let $1_{l,m,n}^d$
be the image of the idempotent $\frac{1}{a!} \sum_{w \in S_a} w$
under the embedding $\kk S_a \hookrightarrow \kk (S_m \times S_n) <  1_m Par^\sharp
\boxtimes 1_n Par^\sharp$
sending $w \in S_a$ to the diagram representing the permutation
of
 $m+n,\dots,1+n,n,\dots,1$ defined by
 $1+n-i \mapsto 1+n-w(i)$, $i+n \mapsto w(i)+n$ for
$i=1,\dots,a$,
and fixing all other points (i.e., it arises from a permutation of the $a$
nested caps in the above picture).
Finally, let $S_l / S_c \times S_d \times S_b$ be a set of coset
representatives viewed as a subset of $\sym$ via the usual embedding
(in particular, $S_c$ and $S_b$ are permuting the leftmost $c$ and
rightmost $b$ strings, respectively).
 Then there is a right $Par^\sharp\boxtimes Par^\sharp$-module isomorphism
 $$
 \bigoplus_{l,m,n \geq 0}
 \bigoplus_{\substack{d=0\\d\equiv l+m+n\;(2)}}^{\min(l+m-n,l+n-m,m+n-l)}  \bigoplus_{g \in S_l / S_c \times S_d \times S_b}
 1_{l,m,n}^d (Par^\sharp \boxtimes Par^\sharp)
 \stackrel{\sim}{\rightarrow}  Par^\sharp 1_\star
  $$
   taking the idempotent $1_{l,m,n}^d$ in the $g$-th summand on the left
   hand side to $g \circ f^d_{l,m,n}$.
 \end{lemma}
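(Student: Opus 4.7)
The plan is to establish the explicit decomposition directly; projectivity of $Par^\sharp 1_\star$ (and hence biexactness of $\ostar$ via \cref{indprodex}) then follows automatically because each summand $1^d_{l,m,n}(Par^\sharp \boxtimes Par^\sharp)$ is a direct summand of the free right module $Par^\sharp \boxtimes Par^\sharp$. The argument will match bases on the two sides, using the basis theorem for $Par^\sharp$ by equivalence classes of downward partition diagrams together with a canonical factorization of each such diagram.

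First I would verify that the assignment sending $1^d_{l,m,n}$ in the $g$-th summand to $g \circ f^d_{l,m,n}$ extends to a well-defined right-equivariant map $\Phi$. The key identity is $f^d_{l,m,n} \circ 1^d_{l,m,n} = f^d_{l,m,n}$, which holds because the $a$ nested caps at the bottom of $f^d_{l,m,n}$ are invariant under the diagonal $S_a$-action that simultaneously permutes their $m$-side and $n$-side legs, precisely the action used to embed $\kk S_a$ into $\kk(S_m \times S_n)$ in the statement.

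For surjectivity, given a downward partition diagram $D \in 1_l Par^\sharp 1_{m \star n}$, I would classify each connected component of $D$ by how its endpoints are distributed between the top, the $m$-part of the bottom, and the $n$-part. Let $d$ count components with one top endpoint and bottoms in both the $m$- and $n$-parts; $c$ those with one top and bottoms only in the $m$-part; $b$ those with one top and bottoms only in the $n$-part; and $a$ those with no top and bottoms in both parts. The constraints $d+c+b = l$, $a+d+c = m$, $a+d+b = n$ then force the values given in \cref{abc}. Using the Frobenius structure, each component hitting both bottom parts decomposes as an $m$-side downward subdiagram tensored with an $n$-side one, joined at the top either by a single cap (when there is no top endpoint) or by the merge-plus-trunk shape appearing in $f^d_{l,m,n}$ (when there is one). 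Collecting all $m$-side subdiagrams, together with the no-top components lying entirely in the $m$-part (which become floating downward branches or leaves), into a single diagram $E_1 \in 1_{a+d+c} Par^\sharp 1_m$, and similarly forming $E_2 \in 1_{a+d+b} Par^\sharp 1_n$, one obtains
\[
D \;=\; g \circ f^d_{l,m,n} \circ (E_1 \boxtimes E_2)
\]
for a unique $g \in S_l$ encoding the left-to-right order of the $l$ top endpoints of $D$.

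For injectivity I would analyze the remaining redundancies. Permuting the $a$ mixed no-top components among themselves leaves $D$ unchanged but permutes the $a$ caps of $f^d_{l,m,n}$; this is exactly the $S_a$-symmetry absorbed by right multiplication by $1^d_{l,m,n}$. Permuting tops within any of the three classes (the $c$ $m$-only, the $d$ mixed, or the $b$ $n$-only) replaces $g$ by $gw$ for $w \in S_c \times S_d \times S_b$ with a compensating change to $E_1 \boxtimes E_2$, so $g$ is well-defined only as a coset representative in $S_l / (S_c \times S_d \times S_b)$, matching the indexing in the statement. The main obstacle will be executing this factorization carefully enough to confirm that the pair $(E_1, E_2)$ is determined up to precisely the $S_a$-orbit absorbed by $1^d_{l,m,n}$: one must fix canonical left-to-right orderings within each class of components and verify that different choices of $(d, g, E_1 \boxtimes E_2)$ yield distinct downward partition diagrams, so that $\Phi$ is bijective on basis elements rather than merely unitriangular.
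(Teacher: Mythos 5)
Your strategy --- factor each downward partition diagram through $f^d_{l,m,n}$ by classifying components and matching bases --- is exactly what the paper's one-sentence proof points at, and the well-definedness identity $f^d_{l,m,n} \circ 1^d_{l,m,n} = f^d_{l,m,n}$ coming from the $S_a$-symmetry of the nested caps is right.

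However, the surjectivity step contains a real error. You take $D \in 1_l Par^\sharp 1_{m\star n}$ and assert $a+d+c = m$ and $a+d+b = n$, where $a,b,c,d$ count components of $D$. These equations fail for a general downward partition diagram: a single component can meet several bottom strands in the $m$-part (for instance a downward tree with one top and three bottoms, two of them on the $m$-side), and there may also be downward branches or leaves lying entirely inside the $m$-part; the correct relation is only $a+d+c \le m$. What you actually need is to let the bottom of $D$ be an arbitrary pair $(M,N)$, \emph{define} $m := a+d+c$ and $n := a+d+b$ from the component counts, and then factor $D = g \circ f^d_{l,m,n}\circ(E_1 \star E_2)$ with $E_1 \in 1_m Par^\sharp 1_M$ and $E_2 \in 1_n Par^\sharp 1_N$, so that the excess legs and the one-sided branches/leaves are absorbed into $E_1,E_2$. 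The indices $(l,m,n,d)$ naming the summand in the direct sum decomposition are these derived quantities, not the top and bottom of $D$. Your later sentence describing $E_1$ as absorbing ``floating downward branches or leaves'' shows you partly see this, but the displayed constraint equations are nonetheless false as written, and the notational collision between the bottom of $D$ and the $(m,n)$ appearing in $f^d_{l,m,n}$ would derail a careful write-up. The step you flag as the ``main obstacle'' (choosing canonical orderings within each class and checking bijectivity on basis elements) is routine once this indexing is disentangled, but it still needs to be carried out rather than merely sketched.
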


 \begin{proof}
   This follows on considering the bases for $1_l Par^\sharp$
   and
   $1_m Par^\sharp \boxtimes 1_n Par^\sharp$ given by equivalence
   classes of downward partition diagrams.
 \end{proof}

   Recall for $\lambda,\mu,\nu \in \P_n$ that the {\em Kronecker
     coefficients} are defined to be the structure constants
   for the internal Kronecker product on representations of the symmetric group $S_n$:
   \begin{equation}\label{krondef}
     G^\lambda_{\mu,\nu} = [S(\mu)\otimes S(\nu):S(\lambda)]
 =  \dim \left(S(\lambda)\otimes S(\mu)\otimes
     S(\nu)\right)^{S_n}.
 \end{equation}
 Obviously from the second equality, $G^\lambda_{\mu,\nu}$ is invariant
 under permuting the partitions $\lambda,\mu,\nu$.
        For a partition $\lambda$ and $n \geq |\lambda|+\lambda_1$,
        let $\lambda(n)$ denote $(n-|\lambda|,\lambda_1,\lambda_2,\dots)\in \P_n$.
        By a classical result of Murnaghan, for $\lambda,\mu,\nu \in
        \P$, the value of the
        Kronecker coeffcient $G^{\lambda(n),}_{\mu(n),\nu(n)}$ stabilizes
        as $n \rightarrow \infty$; see \cite{BOR} for more background.
        The stable value is the {\em reduced Kronecker coefficient},
        denoted $\overline{G}^{\lambda}_{\mu,\nu}$. Like the Kronecker
        coefficients themselves, $\overline{G}^\lambda_{\mu,\nu}$ is
        invariant under permutations of $\lambda,\mu$ and $\nu$.

        \begin{example}\label{springs}
          For any $\lambda,\mu\in\P$, the reduced Kronecker coefficient
          $\overline{G}^{\mu}_{\boxvoid,\lambda}$ is equal to zero
          unless $\mu = \lambda + \boxed{a}$ for some $a \in \add(\lambda)$,
          $\mu = \lambda - \boxed{b}$ for some $b \in \rem(\lambda)$,
          or $\mu = (\lambda-\boxed{b})+\boxed{a}$ for some $b \in \rem(\lambda)$
          and $a = \add(\lambda-\boxed{b})$.
          In these cases, $\overline{G}^{\mu}_{\boxvoid,\lambda}$ is $1$ if $\mu \neq \lambda$ and $|\rem(\lambda)|$ if $\mu = \lambda$.
          To see this using
          the definition just given, consider
          the natural $n$-dimensional permutation module for the symmetric group
          $U_n \cong S((n-1,1))\oplus S((n))$  and note that
          the functor $U_n\otimes$ is isomorphic to $
          \ind_{S_{n-1}}^{S_n} \circ \res_{S_{n-1}}^{S_n}$.
        \end{example}
        
   \begin{lemma}\label{pugly}
     Take $l,m,n \geq 0$ and
     $0 \leq d \leq \min(l+m-n,l+n-m,m+n-l)$ with $d \equiv
l+m+n\pmod{2}$, and define $a,b,c$
         according to \cref{abc}.       Let $\Omega_{l,m,n}^d$ be the set of partitions of 
        $\{1,\dots,l\}\sqcup\{1',\dots,m'\}\sqcup\{1'',\dots,n''\}$
        such that exactly $a$ of the parts are subsets of the form
        $\{j',k''\}$, exactly $b$ of the parts are subsets of the form
        $\{i,k''\}$, exactly $c$ of the parts are subsets of the form
        $\{i,j'\}$, and the remaining $d$ parts are subsets of the
        form $\{i,j',k''\}$ for $i \in \{1,\dots,l\}$, $j' \in
        \{1',\dots,m'\}$ and $k'' \in \{1'',\dots,n''\}$ as suggested
        by the picture:
                $$
        \begin{tikzpicture}[anchorbase,scale=1.5]
\node[align=center,draw,circle] at (0,.4) (a) {$\color{blue}\scriptstyle \:\:1
  \,\cdots \:l\:\:$};
\node[align=center,draw,circle] at (-1,-1) (b) {$\color{blue}\scriptstyle \:1'
  \,\cdots \:m'\:$};
\node[align=center,draw,circle] at (1,-1) (c) {$\color{blue}\scriptstyle 1''\,
  \cdots \:n''$};
\draw[-] (0,-.4) to (.65,-.76);
\draw[-] (0,-.4) to (-.64,-.76);
\draw (a.west) edge[-] (b.north);
\draw (b.east) edge[-] (c.west);
\draw (a.east) edge[-] (c.north);
\draw[-] (0,-.4) to (0,0.02);
\node at (0,-1.1){$\scriptstyle a$};
\node at (-.83,-.1){$\scriptstyle c$};
\node at (.83,-.1){$\scriptstyle b$};
\node at (0.1,-.3){$\scriptstyle d$};
\end{tikzpicture}
$$
        The group $S_l \times S_m \times S_n$ acts on the left on
        $\Omega_{l,m,n}^d$ so that $S_l$ permutes $\{1,\dots,l\}$, $S_m$
        permutes
        $\{1',\dots,m'\}$ and $S_n$ permutes $\{1'',\dots,n''\}$.
        Let $\kk \Omega_{l,m,n}^d$ be the linearization, which is a
        $\kk(S_l \times S_m \times S_n)$-module.
For $\lambda \in \P_l, \mu \in \P_m, \nu \in \P_n$ we
        have that
        $$
        [\kk \Omega_{l,m,n}^d: S(\lambda) \boxtimes S(\mu)\boxtimes
        S(\nu)]
        = \sum_{\substack{\alpha \in \P_a,\beta \in \P_b,\gamma \in \P_c\\\delta,\delta',\delta''
          \in \P_d}}
        LR^\lambda_{\beta,\gamma,\delta} LR^\mu_{\alpha,\gamma,\delta'}LR^\nu_{\alpha,\beta,\delta''}
G^\delta_{\delta',\delta''}.
$$
      \end{lemma}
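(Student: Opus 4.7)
The plan is to identify $\kk\Omega_{l,m,n}^d$ as an induced module and then apply Frobenius reciprocity together with the Littlewood-Richardson rule. First I would observe that $S_l \times S_m \times S_n$ acts transitively on $\Omega_{l,m,n}^d$: given two set partitions of the prescribed combinatorial type, one can match their tripletons and each of the three families of doubletons by independent permutations of $\{1,\dots,l\}$, $\{1',\dots,m'\}$, $\{1'',\dots,n''\}$. Fix the standard partition $\omega_0$ whose $d$ tripletons are $\{i,i',i''\}$ for $i=1,\dots,d$, whose $b$ doubletons of type $\{i,k''\}$ use indices $d+1,\dots,d+b$ on each side, whose $c$ doubletons of type $\{i,j'\}$ use the remaining $c$ entries of $\{1,\dots,l\}$ paired with $(d+1)',\dots,(d+c)'$, and whose $a$ doubletons of type $\{j',k''\}$ use the last $a$ entries in each of the primed and double-primed sets. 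Its stabilizer $H$ is isomorphic to $S_a \times S_b \times S_c \times S_d$, embedded into $S_l \times S_m \times S_n$ so that $S_d$ acts diagonally in all three factors (permuting the tripleton indices), $S_a$ acts diagonally in $S_m \times S_n$, $S_b$ acts diagonally in $S_l \times S_n$, and $S_c$ acts diagonally in $S_l \times S_m$. Hence $\kk\Omega_{l,m,n}^d \cong \Ind_H^{S_l\times S_m\times S_n}\kk$, and Frobenius reciprocity gives
$$[\kk\Omega_{l,m,n}^d : S(\lambda) \boxtimes S(\mu) \boxtimes S(\nu)] = \dim\bigl(S(\lambda) \boxtimes S(\mu) \boxtimes S(\nu)\bigr)^H.$$

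To evaluate the right-hand side, I would restrict each Specht module along the standard embedding of the relevant Young subgroup and apply the Littlewood-Richardson rule \cref{LR2}, in the triple form \cref{tripleLR}:
$$\Res^{S_l}_{S_b \times S_c \times S_d} S(\lambda) \cong \bigoplus_{\beta,\gamma,\delta} LR^\lambda_{\beta,\gamma,\delta}\, S(\beta) \boxtimes S(\gamma) \boxtimes S(\delta),$$
and similarly for $\mu$ (restricted to $S_a \times S_c \times S_{d}$) and $\nu$ (restricted to $S_a \times S_b \times S_{d}$). Taking $H$-invariants of the combined module then factors as three ``doubleton'' diagonal invariants (for $S_a$, $S_b$, $S_c$) and one triple diagonal invariant (for $S_d$). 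The doubleton invariants collapse via $\dim(S(\tau)\otimes S(\tau'))^{S_k} = \delta_{\tau,\tau'}$, which follows from the self-duality recorded in \cref{easydual}; this forces the common partitions $\alpha$, $\beta$, $\gamma$ in the two restrictions where they appear. The triple $S_d$-invariant computes as
$$\dim\bigl(S(\delta)\otimes S(\delta')\otimes S(\delta'')\bigr)^{S_d} = G^{\delta}_{\delta',\delta''},$$
again by self-duality of Specht modules together with the definition \cref{krondef}. Collecting terms, and using the symmetry of the triple Littlewood-Richardson coefficient under its three lower indices, gives the stated formula.

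The only real obstacle is bookkeeping: correctly identifying $H$ and threading the Young subgroups $S_b \times S_c \times S_d \subset S_l$, $S_c \times S_a \times S_d \subset S_m$, $S_a \times S_b \times S_d \subset S_n$ so that their diagonal blocks intersect as claimed. Once the stabilizer is pinned down, the remaining steps are purely formal applications of standard identities for the symmetric group, and no deeper input is needed.
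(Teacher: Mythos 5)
Your proposal matches the paper's proof essentially step for step: identify $\kk\Omega^d_{l,m,n}$ as the permutation module $\Ind_H^{S_l\times S_m\times S_n}\kk$ for the same diagonal stabilizer $H\cong S_a\times S_b\times S_c\times S_d$, apply Frobenius reciprocity, restrict along the three Young subgroups via the Littlewood--Richardson rule, and compute the $H$-invariants using self-duality of Specht modules (forcing the doubly-occurring partitions to coincide) together with the definition of the Kronecker coefficient for the triple $S_d$-diagonal. This is the same argument as in the paper, down to the identification of the stabilizer embedding.
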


      \begin{proof}
        Let $G := S_l \times S_m \times S_n$, $P$ be the parabolic
        subgroup $(S_b \times S_d \times S_c) \times (S_c\times S_d
        \times S_a) \times (S_a \times S_d \times S_b) \leq G$
        and $L$ be the subgroup $S_a \times S_b \times S_c \times S_d
        \leq P$
        embedded diagonally via the map
        $$
        (x,y,z,w) \mapsto (y,w,z;z,w,x;x,w,y).
        $$
        The action of $G$ on $\Omega_{l,m,n}^d$ is transitive and
the subgroup $L$ is a point stabilizer. Hence, the $\kk G$-module $\kk \Omega_{l,m,n}^d$
is induced from $\triv_L$, the trivial $\kk L$-module. By Frobenius reciprocity, it follows that
$$        [\kk \Omega_{l,m,n}^d: S(\lambda) \boxtimes S(\mu)\boxtimes
        S(\nu)] = \dim (S(\lambda) \boxtimes S(\mu) \boxtimes
        S(\nu))^L.
        $$
        The restriction of $S(\lambda)\boxtimes S(\mu) \boxtimes
        S(\nu)$ to the parabolic subgroup $P$ is isomorphic to
        \begin{multline*}
        \bigoplus_{\substack{\alpha,\alpha',\beta,\beta',\gamma,\gamma'\\\delta,\delta',\delta''
          \in \P_d}}\hspace{-4mm}
        \Big(\big(S(\beta) \boxtimes S(\delta) \boxtimes
S(\gamma')\big) \boxtimes
        \big(S(\gamma) \boxtimes S(\delta')\boxtimes S(\alpha')\big) \boxtimes
\big( S(\alpha) \boxtimes S(\delta'') \boxtimes S(\beta')\big)\Big)^{\oplus N}.
        \end{multline*}
        where $N:= LR^\lambda_{\beta,\delta,\gamma'} LR^\mu_{\gamma,\delta',\alpha'}LR^\nu_{\alpha,\delta''',\beta'}$.
By Schur's lemma, this contributes to the $L$-fixed points only from the summands
with $\alpha=\alpha',\beta=\beta'$ and $\gamma = \gamma'$, and for
each of those summands the contribution is $G^\delta_{\delta',\delta''}$ by \cref{krondef}.
        \end{proof}
        
   Since $Par^\sharp$ is negatively graded, any
   $Par^\sharp$-module $V$
   has the {\em degree filtration} $0 = V_{-1} < V_0 < \cdots < V_{n}
   < \cdots$ defined from $V_n :=
   \bigoplus_{m \leq n} 1_m V$. The section $V_n / V_{n-1}$ in this
   filtration is isomorphic to $\infl^\sharp (1_n V)$, viewing $1_n V$ as a
   $\sym$-module by restriction.
   It follows that
   \begin{equation}\label{timesaver}
     [V:S^\sharp(\lambda)]
     = [1_n V:S(\lambda)]
   \end{equation}
   for $\lambda \in \P_n$.
   
      \begin{theorem}\label{gcfs}
        For $\lambda \in \P_l, \mu \in \P_m$ and $\nu \in \P_n$, we
        have that
        $[S^\sharp(\mu)\ostar S^\sharp(\nu):S^\sharp(\lambda)]=\overline{G}^\lambda_{\mu,\nu}$.
      \end{theorem}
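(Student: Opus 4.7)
The plan is to (i) reduce the claimed multiplicity to a $\kk S_l$-module multiplicity inside $1_l(S^\sharp(\mu)\ostar S^\sharp(\nu))$, (ii) identify the bimodule $1_l Par^\sharp 1_{m\star n}$ with $\bigoplus_d \kk\Omega^d_{l,m,n}$ using \cref{ugly}, and (iii) extract the answer from \cref{pugly} and recognize it as a classical Littlewood identity for reduced Kronecker coefficients.

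For step (i), writing $l = |\lambda|$ and applying \cref{timesaver}, we have $[S^\sharp(\mu) \ostar S^\sharp(\nu):S^\sharp(\lambda)] = [1_l(S^\sharp(\mu)\ostar S^\sharp(\nu)):S(\lambda)]_{\kk S_l}$. The standard idempotent identity $X\otimes_R eY \cong Xe\otimes_{eRe}eY$ applied with $e = 1_{m\star n}$ in $R = Par^\sharp \boxtimes Par^\sharp$, combined with the fact that $Par^\sharp$ is negatively graded with degree-zero part $\sym$ (so that $1_m Par^\sharp 1_m = \kk S_m$), yields
\[
1_l(S^\sharp(\mu)\ostar S^\sharp(\nu)) \cong 1_l Par^\sharp 1_{m\star n}\otimes_{\kk(S_m \times S_n)}\bigl(S(\mu)\boxtimes S(\nu)\bigr).
\]

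For step (ii), \cref{ugly} provides a right $Par^\sharp \boxtimes Par^\sharp$-module isomorphism under which $g\cdot f^d_{l,m,n}$ corresponds to $1^d_{l,m,n}$ in the $g$-th summand. Restricting to the $1_l$--$1_{m\star n}$ component and observing that $1^d_{l,m,n}\kk(S_m \times S_n)$ is the right permutation module $\ind_{S_a}^{S_m \times S_n}\triv$ (with $S_a$ diagonally embedded via the cap-position pairing), one obtains, after converting the right $\kk(S_m\times S_n)$-action into a left one, a $\kk(S_l\times S_m\times S_n)$-module isomorphism
\[
1_l Par^\sharp 1_{m\star n} \cong \bigoplus_d \ind_{L_d \times S_a}^{S_l\times S_m\times S_n}\triv,
\]
where $L_d := S_c \times S_d \times S_b$. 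The embedding $L_d \times S_a \hookrightarrow S_l \times S_m \times S_n$ can be read off from the picture of $f^d_{l,m,n}$, and upon comparison it agrees with the diagonal embedding of the point stabilizer of a basepoint of $\Omega^d_{l,m,n}$ appearing in the proof of \cref{pugly}; verifying this matching is the main technical step, requiring careful bookkeeping of how left/right positions in the diagram translate into parabolic embeddings of $S_a,S_b,S_c,S_d$ into $S_m$ and $S_n$. Granted this matching, $1_l Par^\sharp 1_{m\star n} \cong \bigoplus_d \kk \Omega^d_{l,m,n}$ as $(\kk S_l,\kk(S_m\times S_n))$-bimodules.

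For step (iii), semisimplicity of $\kk(S_l\times S_m\times S_n)$ and self-duality of Specht modules give the identity
\[
[U \otimes_{\kk(S_m \times S_n)}(S(\mu)\boxtimes S(\nu)):S(\lambda)]_{\kk S_l} = [U:S(\lambda)\boxtimes S(\mu)\boxtimes S(\nu)]_{\kk(S_l\times S_m\times S_n)}
\]
for any $\kk(S_l\times S_m\times S_n)$-module $U$. Plugging in $U = \bigoplus_d \kk \Omega^d_{l,m,n}$ and applying the multiplicity formula of \cref{pugly} produces
\[
[S^\sharp(\mu)\ostar S^\sharp(\nu):S^\sharp(\lambda)] = \sum_{\alpha,\beta,\gamma,\delta,\delta',\delta''} LR^\lambda_{\beta,\gamma,\delta}LR^\mu_{\alpha,\gamma,\delta'}LR^\nu_{\alpha,\beta,\delta''}G^\delta_{\delta',\delta''},
\]
the sum ranging over all partitions subject to $|\delta|=|\delta'|=|\delta''|$. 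This double sum is precisely Littlewood's classical expansion of $\overline{G}^\lambda_{\mu,\nu}$ in terms of triple Littlewood--Richardson coefficients and ordinary Kronecker coefficients, completing the proof.
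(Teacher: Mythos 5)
Your overall strategy matches the paper's (reduce via \cref{timesaver}, decompose via \cref{ugly}, compute via \cref{pugly}, conclude with Littlewood), but the central identification in steps (i) and (ii) is false, and the argument as written does not hold. The formula
\[
1_l(S^\sharp(\mu)\ostar S^\sharp(\nu))\;\cong\;1_l\, Par^\sharp\, 1_{m\star n}\otimes_{\kk(S_m\times S_n)}\bigl(S(\mu)\boxtimes S(\nu)\bigr)
\]
does not follow from the ``standard idempotent identity.'' That identity is the associativity $X\otimes_R\bigl(Re\otimes_{eRe}V\bigr)\cong Xe\otimes_{eRe}V$, which requires the $R$-module on the right-hand factor to be of the \emph{induced} form $Re\otimes_{eRe}V$. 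Here $S^\sharp(\mu)\boxtimes S^\sharp(\nu)=\infl^\sharp S(\mu)\boxtimes\infl^\sharp S(\nu)$ is the \emph{inflation}, not the induction $P^\sharp(\mu)\boxtimes P^\sharp(\nu)=(Par^\sharp\boxtimes Par^\sharp)1_{(m,n)}\otimes_{\kk(S_m\times S_n)}\bigl(S(\mu)\boxtimes S(\nu)\bigr)$. Tensoring over all of $Par^\sharp\boxtimes Par^\sharp$ with the inflated module imposes extra relations, since every strictly downward element of $1_{m'}Par^\sharp\,1_m\otimes 1_{n'}Par^\sharp\,1_n$ acts as zero on $\infl^\sharp\kk S_m\boxtimes\infl^\sharp\kk S_n$; concretely, this kills every diagram in $1_l\,Par^\sharp\,1_{m\star n}$ that has a connected component with two or more endpoints in the bottom $m$-block (or $n$-block), or a downward leaf lying wholly in one block. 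For a clean counterexample take $\mu=(1)$, $\nu=\varnothing$, $l=0$: then $S^\sharp(\nu)$ is the monoidal unit, so $1_0\bigl(S^\sharp(\mu)\ostar S^\sharp(\nu)\bigr)=1_0 S^\sharp(\mu)=0$, while your formula gives $1_0\,Par^\sharp\,1_1\otimes_{\kk}\kk\cong\kk$ (the downward leaf survives).

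The claimed isomorphism $1_l\,Par^\sharp\,1_{m\star n}\cong\bigoplus_d\kk\Omega^d_{l,m,n}$ in step (ii) is the mirror of the same error: restricting the \cref{ugly} decomposition of $Par^\sharp 1_\star$ to the $(l;m\star n)$-component picks up contributions from \emph{every} summand $1^d_{l,m',n'}(Par^\sharp\boxtimes Par^\sharp)$ with $m'\le m$, $n'\le n$, not only $(m',n')=(m,n)$, so the left side is in general strictly larger (e.g.\ $\dim 1_0\,Par^\sharp\,1_2=2$ while $\sum_d\dim\kk\Omega^d_{0,1,1}=1$). Because the two mistakes are the same over-count, your final formula in step (iii) coincides with the correct one, but the derivation needs repair: one must work throughout with the paper's bimodule $M_{l,m,n}:=1_l\,Par^\sharp\,1_\star\otimes_{Par^\sharp\boxtimes Par^\sharp}\bigl(\infl^\sharp\kk S_m\boxtimes\infl^\sharp\kk S_n\bigr)$, which \cref{ugly} identifies with $\bigoplus_d M^d_{l,m,n}$ of dimension $\sum_d l!m!n!/(a!b!c!d!)$. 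The paper also avoids the stabilizer-embedding verification you defer: since $L=S_a\times S_b\times S_c\times S_d$ fixes the generator $f^d_{l,m,n}\otimes(1\otimes 1)$ of $\widetilde M^d_{l,m,n}$, one gets a surjection $\kk\Omega^d_{l,m,n}\twoheadrightarrow\widetilde M^d_{l,m,n}$, and the dimensions match.
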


      \begin{proof}
        Call $d$ {\em admissible} if $0 \leq d \leq \min(l+m-n,l+n-m,m+n-l)$ and $d \equiv
        m+n+n\pmod{2}$. For such a $d$,
        let $M_{l,m,n}^d$ be the sub-bimodule of the $(\kk S_l, \kk S_m
        \boxtimes \kk S_n)$-bimodule
$$
M_{l,m,n} := 1_l
        \left(\big(\infl^\sharp \kk S_m\big) \ostar \big(\infl^\sharp
          \kk S_n\big)\right) = 1_l 
Par^\sharp 1_\star \otimes_{Par^\sharp \boxtimes Par^\sharp}
\left(\big(\infl^\sharp \kk S_m\big)
  \boxtimes \big(\infl^\sharp \kk S_n\big)\right)
$$
generated by the vector $f^d_{l,m,n} \otimes (1 \otimes 1)$, where
$f^d_{l,m,n}$ is as in \cref{fdlmn}.
\cref{ugly} implies that
$$
M_{l,m,n} = \bigoplus_{\text{admissible $\scriptstyle d$}} M_{l,m,n}^d.
$$
Note also that
$1_l\left(S^\sharp(\mu) \ostar S^\sharp(\nu)\right)\cong   M_{l,m,n} \otimes_{\kk S_m \boxtimes \kk S_n}
\left(S(\mu) \boxtimes S(\nu)\right)$
as $\kk S_l$-modules. In view of this and
\cref{timesaver}, the number we are trying to compute is equal to
$$
\left[
1_l \left(S^\sharp(\mu) \ostar S^\sharp(\nu)\right):S(\lambda)\right]=\sum_{\text{admissible $d$}} \left[M_{l,m,n}^d \otimes_{\kk S_m
  \boxtimes \kk S_n} (S(\mu)\boxtimes S(\nu)):S(\lambda)\right].
$$
Let $\widetilde{M}^d_{l,m,n}$ be the left $\kk S_l \boxtimes \kk S_m \boxtimes
        \kk S_n$-module obtained from the $(\kk S_l, \kk S_m\boxtimes
        \kk S_n)$-bimodule $M^d_{l,m,n}$ by twisting the right actions of $S_m$ and
        $S_n$ into left actions using $\sigma:g \mapsto g^{-1}$. 
        By the self-duality of Specht modules, we have that
        $$
        \left[M_{l,m,n}^d
        \otimes_{\kk S_m \boxtimes \kk S_n} (S(\mu)\boxtimes
        S(\nu)):S(\lambda)\right]
      =
      \left[\widetilde{M}^d_{l,m,n}:S(\lambda)\boxtimes S(\mu) \boxtimes
      S(\nu)\right].
    $$
    Now we claim that $\widetilde{M}^d_{l,m,n}$ is isomorphic to the
    module $\kk \Omega_{l,m,n}^d$ 
from \cref{pugly}. To see this, recall from the proof of that lemma
that $\kk \Omega_{l,m,n}^d$ is the permutation module induced from the
trivial representation of the subgroup $L = S_a \times S_b \times
S_c \times S_d < S_l \times S_m \times S_n$.
It is easy to see from \cref{fdlmn} that $L$ acts
trivially on the generating vector $f^d_{l,m,n} \otimes (1\otimes 1)
\in \widetilde{M}^d_{l,m,n}$. Hence, there is a surjective
homomorphism
$\kk \Omega^d_{l,m,n} \twoheadrightarrow \widetilde{M}^d_{l,m,n}$. It
is an isomorphism because both of these modules are of dimension
$(l!m!n!) / (a!b!c!d!)$.
From the claim, the previous displayed equation and \cref{pugly}, the problem is
reduced to computing
$$
\sum_{\text{admissible $d$}} [\kk \Omega^d_{l,m,n}:S(\lambda)\boxtimes
S(\mu)\boxtimes S(\nu)] =
\sum_{\alpha,\beta,\gamma,\delta,\delta',\delta'' \in \P}
        LR^\lambda_{\beta,\gamma,\delta} LR^\mu_{\alpha,\gamma,\delta'}LR^\nu_{\alpha,\beta,\delta''}
G^\delta_{\delta',\delta''}.
$$
       This expression is equal to the reduced Kronecker coefficient
       $\overline{G}^\lambda_{\mu,\nu}$ by a theorem of Littlewood
       \cite{Littlewood}.
     \end{proof}

     \subsection{Grothendieck rings}
     Next we describe the Grothendieck rings $K_0(Par^\sharp)$ and
     $K_0(Par_t)$.
     For $\lambda \in \P$, let
     \begin{equation}\label{Wed}
     P^\sharp(\lambda) := \ind_{\sym}^{Par^\sharp} S(\lambda).
     \end{equation}
     This is a finite-dimensional projective $Par^\sharp$-module. In fact, it is the
     projective cover of the irreducible $Par^\sharp$-module $S^\sharp(\lambda)$.
     This follows because $P^\sharp(\lambda) \twoheadrightarrow
     S^\sharp(\lambda)$, and
$\End_{Par^\sharp}(P^\sharp(\lambda)) \cong \End_{\sym}(S(\lambda))
\cong \kk$ so that $P^\sharp(\lambda)$ is indecomposable.
Thus, $K_0(Par^\sharp)$ is the free $\Z$-module with basis
$\big\{[P^\sharp(\lambda)]\:\big|\:\lambda \in \P\big\}$, and we see that
the monoidal functor $\ind_{\sym}^{Par^\sharp}$ induces a ring isomorphism
\begin{equation}\label{iso1}
K_0(\sym) \stackrel{\sim}{\rightarrow} K_0(Par^\sharp).
\end{equation}
Recall from \cref{ssym} that $K_0(\sym)$ is identified with the graded ring $\Lambda$ of symmetric functions.
Using \cref{iso1}, it follows that we can also identify $K_0(Par^\sharp)$
with $\Lambda$ so that $[P^\sharp(\lambda)]$
corresponds to the Schur function $s_\lambda$.
Let
\begin{equation}\label{cartanmx}
B_{\lambda,\mu} :=
\dim
\Hom_{Par^\sharp}(P^\sharp(\mu), P^\sharp(\lambda))
=[P^\sharp(\lambda):S^\sharp(\mu)],
\end{equation}
i.e., $(B_{\lambda,\mu})_{\lambda,\mu \in \P}$ is
the Cartan matrix of $Par^\sharp$.  

\begin{lemma}\label{textmessage}
We have that $B_{\lambda,\mu} = \dim e_\mu Par^\sharp
e_\lambda$ where $e_\lambda \in \sym$ denotes Young's
idempotent.
Hence:
\begin{itemize}
\item[(i)] $B_{\lambda,\lambda} = 1$ for every $\lambda \in \P$.
\item[(ii)] $B_{\lambda,\mu} = 0$ if $|\mu| >
|\lambda|$ or
if $|\mu| = |\lambda|$ and $\mu \neq \lambda$.
\item[(iii)]
  If $\lambda = (1^n)$ then
  $  B_{\lambda,\mu}
    = \left\{
      \begin{array}{ll}
        1&\text{if $\mu = (1^n)$ or $\mu = (1^{n-1})$}\\
        0&\text{for all other $\mu\in\P$.}
      \end{array}\right.
    $\end{itemize}
\end{lemma}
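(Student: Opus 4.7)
The main identity $B_{\lambda,\mu} = \dim e_\mu Par^\sharp e_\lambda$ is a direct consequence of the projective-Hom calculus. Since $S(\lambda) \cong \sym e_\lambda$ as a left $\sym$-module, the definition $P^\sharp(\lambda) = \ind_{\sym}^{Par^\sharp} S(\lambda) = Par^\sharp \otimes_\sym S(\lambda)$ rewrites as $P^\sharp(\lambda) \cong Par^\sharp e_\lambda$, and the usual identification $\Hom_{Par^\sharp}(Par^\sharp e_\mu, Par^\sharp e_\lambda) \cong e_\mu Par^\sharp e_\lambda$ closes this step.

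For (i) and (ii), the key observation is that $Par^\sharp$ is negatively graded, with $1_m Par^\sharp 1_n = 0$ for $m > n$ and $1_n Par^\sharp 1_n = \kk S_n$, because every $n \times n$ downward partition diagram consists entirely of trunks and is therefore a permutation diagram. For $|\mu| > |\lambda|$ this immediately gives $e_\mu Par^\sharp e_\lambda \subseteq 1_{|\mu|} Par^\sharp 1_{|\lambda|} = 0$; for $|\mu| = |\lambda|$ one has $e_\mu Par^\sharp e_\lambda = e_\mu \kk S_{|\lambda|} e_\lambda \cong \Hom_{\kk S_{|\lambda|}}(S(\lambda), S(\mu))$, which equals $\delta_{\lambda,\mu}\,\kk$ by Schur's lemma. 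This yields (i) together with the remaining part of (ii).

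The real content is (iii), with $\lambda = (1^n)$. Setting $m := |\mu|$, I would analyze the space $1_m Par^\sharp 1_n \cdot e_{(1^n)}$ diagrammatically. Since $e_{(1^n)}$ antisymmetrizes the bottom strings, a downward partition diagram $f$ satisfies $f \cdot e_{(1^n)} \neq 0$ only when no transposition of bottom endpoints preserves the set-theoretic partition of endpoints defined by $f$. This rules out any downward branch or downward tree, whose two or more bottom endpoints lie in a single connected component, and also forbids two or more downward leaves, since any two leaves are interchangeable. Hence $f$ must consist of exactly $m$ trunks and $n-m$ leaves with $n-m \le 1$, so only $m = n$ and $m = n-1$ can contribute; in particular $B_{(1^n),\mu} = 0$ whenever $|\mu| \le n-2$. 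The case $m = n$ is already covered by (i) and (ii) and yields the contribution $\delta_{\mu,(1^n)}$.

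The remaining case $m = n-1$ is the main work. Let $V_0 \subseteq 1_{n-1} Par^\sharp 1_n$ be the span of diagrams consisting of $n-1$ trunks and one leaf. Viewed as an $(S_{n-1},S_n)$-bimodule, $V_0$ is a single orbit: fixing the base diagram with leaf at the rightmost bottom position and the ``identity'' bijection between the remaining bottoms and the tops, the stabilizer in $S_{n-1} \times S_n$ is the diagonal copy of $S_{n-1}$ corresponding to the natural inclusion $S_{n-1} \hookrightarrow S_n$ as the stabilizer of $n$. Thus $V_0$ is free of rank one as a right $\kk S_n$-module, so $V_0 \cdot e_{(1^n)}$ is one-dimensional; unwinding the left $\kk S_{n-1}$-action (which under $V_0 \cong \kk S_n$ becomes right multiplication by $h^{-1}$) identifies this line with the sign module $S((1^{n-1}))$. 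Applying $e_\mu$ then gives $B_{(1^n),\mu} = \delta_{\mu,(1^{n-1})}$ for $|\mu| = n-1$, completing (iii). The most delicate step is this last bookkeeping: tracking precisely how the commuting left $\kk S_{n-1}$ and right $\kk S_n$ actions on $V_0$ interact through the diagonal stabilizer to produce the sign character.
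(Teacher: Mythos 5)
Your proof is correct and follows essentially the same route as the paper: the identity $B_{\lambda,\mu} = \dim e_\mu \Par^\sharp e_\lambda$ via $P^\sharp(\lambda) \cong Par^\sharp e_\lambda$, (i)--(ii) via the grading and Schur's lemma, and (iii) by using the antisymmetrizer $e_{(1^n)}$ to kill every downward branch, downward tree, and any diagram with two or more leaves, leaving only the weight-$n$ and weight-$(n-1)$ pieces. Your analysis of the bimodule $V_0 = 1_{n-1}Par^\sharp 1_n \cap \{\text{one leaf, rest trunks}\}$ via its $(S_{n-1}\times S_n)$-orbit structure and diagonal stabilizer is a slightly more elaborate version of the paper's one-line observation that $(1_{n-1}\star c)e_{(1^n)}$ spans a copy of the sign module of $S_{n-1}$. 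One minor slip: under the orbit isomorphism $V_0 \cong \kk S_n$, the left $\kk S_{n-1}$-action becomes \emph{left} multiplication by the image of $g$ under $S_{n-1}\hookrightarrow S_n$, not right multiplication by $h^{-1}$; this is harmless here because $e_{(1^n)}$ is (up to sign) invariant under both, so $V_0 e_{(1^n)}$ is the sign module of $S_{n-1}$ either way, but the parenthetical as written is inaccurate.
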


\begin{proof}
For $\lambda \in \P_n$, the primitive idempotent $e_\lambda \in \kk
S_n$ has the property that $S(\lambda) \cong \kk S_n
e_\lambda$. Hence,
$P^\sharp(\lambda) \cong Par^\sharp e_\lambda$. 
We deduce that
$$
B_{\lambda,\mu} \stackrel{\cref{cartanmx}}{=} 
\dim \Hom_{Par^\sharp}(P^\sharp(\mu), P^\sharp(\lambda))
=
\dim \Hom_{Par^\sharp}(Par^\sharp e_\mu, Par^\sharp e_\lambda)
= \dim e_\mu Par^\sharp e_\lambda.
$$
Parts (i) and (ii) follow easily using this formula.
For (iii), let $\lambda := (1^n)$.
Then 
$e_\lambda = \sum_{g \in S_n} (-1)^{\ell(g)} g$, so that any crossing
composed with $e_\lambda$ equals $-e_\lambda$.
The space $Par^\sharp e_\lambda$ is spanned by terms of the form $f e_\lambda$ for
downward partition diagrams $f \in Par^\sharp 1_n$. When $\lambda =
(1^n)$, it follows using the
anti-symmetry that
$f e_\lambda = 0$ if some connected component of $f$ is a downward branch
or a downward tree, or if $f$ has two components
that are downward leaves. So in this case $Par^\sharp e_\lambda$ is spanned just
by the vectors $e_\lambda$ and $(1_{n-1}\star c) e_\lambda$ where $c$ is a single downward leaf. Since these two
elements of $Par^\sharp$ lie in different weight spaces, they
are linearly independent, so $Par^\sharp e_\lambda$ is
exactly two-dimensional. It remains to observe that $\kk e_\lambda$ is
the sign representation $S(\lambda)$ of $S_n$ and $\kk (1_{n-1}\star
c) e_\lambda$ is the sign representation $S(\mu)$ of $S_{n-1}$ for $\mu = (1^{n-1})$.
\end{proof}

\cref{textmessage}(i)--(ii) shows that the Cartan matrix is unitriangular, hence, invertible.
It follows that the inclusion $Par^\sharp\Proj \rightarrow
Par^\sharp\Modfd$ induces an isomorphism 
\begin{equation}\label{iso2}
K_0(Par^\sharp)
\stackrel{\sim}{\rightarrow} K_0'(Par^\sharp),
\end{equation} 
where $K_0'(Par^\sharp)$ denotes
the Grothendieck group of the Abelian category
$Par^\sharp\Modfd$. By \cref{ugly}, we know that $\ostar$ is
biexact on $Par^\sharp\Modfd$, so it induces a multiplication making $K_0'(Par^\sharp)$
into a ring in such a way that \cref{iso2} is a ring isomorphism.
Using the isomorphisms \cref{iso1,iso2}, 
the canonical basis $\big\{[S^\sharp(\lambda)]\:\big|\:\lambda \in
\P\big\}$ of $K_0'(Par^\sharp)$ gives us
another basis $\{\tilde s_\lambda\:|\:\lambda \in \Lambda\}$ for
the ring $\Lambda$.  
We call these the {\em deformed Schur functions}.
We have that
\begin{align}
  s_\lambda &= \sum_{\mu \in \P} B_{\lambda,\mu} \tilde s_\mu,
  &\tilde s_\lambda &= \sum_{\mu \in \P} A_{\lambda,\mu} s_\mu,
\end{align}
where $(B_{\lambda,\mu})_{\lambda,\mu \in \P}$ is the Cartan matrix
from \cref{cartanmx} and $(A_{\lambda,\mu})_{\lambda,\mu \in \P}$ is the inverse matrix.
From the unitriangularity of the latter matrix,
it follows that $\tilde s_{\lambda}$ is equal to $s_\lambda$ plus a
linear combination of $s_\mu$ of strictly lower degree. In other
words, viewing the graded algebra $\Lambda$ as a filtered algebra with filtration induced
by the grading, the deformed Schur function $\tilde s_\lambda$ is in
filtered degree $n := |\lambda|$ and $\gr_n \tilde s_\lambda = s_\lambda$.
This justifies the name ``deformed Schur function''.
By \cref{gcfs} we have that
\begin{equation}\label{sconsts}
  \tilde s_\mu \tilde s_\nu = \sum_{\lambda \in \P}
  \overline{G}^{\lambda}_{\mu,\nu} \tilde s_\lambda,
\end{equation}
i.e., the reduced Kronecker coefficients are the structure constants
of $\Lambda$ in its inhomogeneous basis
arising from deformed Schur functions.
Comparing with \cref{LR}, we deduce that
$\overline{G}^\lambda_{\mu,\nu} = LR^\lambda_{\mu,\nu}$ if $|\lambda|
= |\mu|+|\nu|$ and $\overline{G}^\lambda_{\mu,\nu} = 0$ if $|\lambda|
> |\mu|+|\nu|$, both of which are well known properties of reduced Kronecker
coefficients.

\begin{remark}
  The deformed Schur function $\tilde
s_\lambda \in \Lambda$ as just defined is equal to the symmetric function $\tilde
s_\lambda$ defined in a different way \cite{OZ}. This follows from \cref{textmessage}(iii),
\cref{sconsts} and the characterization given in \cite[Th.~1(3)]{OZ}.
\end{remark}

It remains to pass from $K_0(Par^\sharp)$ to $K_0(Par_t)$.
Let $K_0''(Par_t)$ be
the Grothendieck group of the exact category $Par_t\Moddelta$.
It is the free Abelian group on basis
$\big\{[\Delta(\lambda)]\:\big|\:\lambda \in \P\big\}$. 
The following result implies that $K_0''(Par_t)$ is a ring with multiplication induced by
$\ostar$.

\begin{theorem}\label{lastfromss}
  For $V, W \in Par_t\Moddelta$, we have that
  $\Tor^{Par_t}_i(V,W) = 0$ for all $i > 0$, hence,
  $\ostar$ is biexact on $Par_t\Moddelta$.
  For $\mu,\nu \in \P$, there is a filtration
  $0 = V_{-1} < V_0 < \cdots < V_{|\mu|+|\nu|} = \Delta(\mu) \ostar \Delta(\nu)$ such that
  $$
  V_i / V_{i-1} \cong \bigoplus_{\lambda \in \P_i}
  \Delta(\lambda)^{\oplus \overline{G}_{\mu,\nu}^\lambda}
  $$
  where $\overline{G}_{\mu,\nu}^\lambda$ is the reduced Kronecker coefficient.
\end{theorem}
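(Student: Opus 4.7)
The plan is to reduce both assertions to the corresponding facts over the downward partition category, where the induction product is already known to be biexact by \cref{ugly} and the analogue of the multiplicity statement is provided by \cref{gcfs}. The bridge is the identity $\Delta(\lambda) \cong \ind_{Par^\sharp}^{Par_t}(S^\sharp(\lambda))$ from \cref{stdcostd}, together with two facts already available: $\ind_{Par^\sharp}^{Par_t}$ is exact because $Par_t$ is projective as a right $Par^\sharp$-module by \cref{td}, and it commutes with the induction product, i.e.,
$$
\ind_{Par^\sharp}^{Par_t}(V \ostar W) \cong \ind_{Par^\sharp}^{Par_t}(V) \ostar \ind_{Par^\sharp}^{Par_t}(W),
$$
as recorded in \cref{stria} from the monoidal triangular structure on $\Par_t$.

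To obtain the filtration, set $M := S^\sharp(\mu) \ostar S^\sharp(\nu)$. Because $S^\sharp(\mu)$ and $S^\sharp(\nu)$ are concentrated in weights $|\mu|$ and $|\nu|$ and $Par^\sharp$ is negatively graded, $M$ is supported in weights $0,1,\ldots,|\mu|+|\nu|$. Its degree filtration $0 = M_{-1} < M_0 < \cdots < M_{|\mu|+|\nu|} = M$ has sections $M_i/M_{i-1} \cong \infl^\sharp(1_i M)$, and by \cref{gcfs} together with the semisimplicity of $\sym$ the $\kk S_i$-module $1_i M$ decomposes as $\bigoplus_{\lambda \in \P_i} S(\lambda)^{\oplus \overline{G}^{\lambda}_{\mu,\nu}}$. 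Applying the exact functor $\ind_{Par^\sharp}^{Par_t}$ to $M_\bullet$ and invoking the commutation to identify $\ind_{Par^\sharp}^{Par_t}(M)$ with $\Delta(\mu) \ostar \Delta(\nu)$ yields the desired filtration, whose $i$-th section is $\bigoplus_{\lambda \in \P_i} \Delta(\lambda)^{\oplus \overline{G}^{\lambda}_{\mu,\nu}}$.

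For the $\Tor$ vanishing, d\'evissage via the long exact sequence in $\Tor$ along the $\Delta$-flags of $V$ and $W$ reduces the problem to showing that $\Tor_i^{Par_t}(\Delta(\mu),\Delta(\nu)) = 0$ for all $i > 0$ and $\mu,\nu \in \P$. Since each object of $\Par_t$ is self-dual and hence rigid, the functor $Par_t 1_n \ostar {-}$ is exact by \cref{dualitycor}, so the $\Tor_i$ can be computed using a projective resolution of the first argument. Choose a projective resolution $P_\bullet \twoheadrightarrow S^\sharp(\mu)$ in $Par^\sharp\Mod$ built from direct sums of modules $Par^\sharp 1_n$; applying the exact functor $\ind_{Par^\sharp}^{Par_t}$ yields a projective resolution $\ind P_\bullet \twoheadrightarrow \Delta(\mu)$ in $Par_t\Mod$, since $\ind(Par^\sharp 1_n) \cong Par_t 1_n$. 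The complex $(\ind P_\bullet) \ostar \Delta(\nu)$ is then, by the termwise application of the commutation isomorphism, identified with $\ind_{Par^\sharp}^{Par_t}(P_\bullet \ostar S^\sharp(\nu))$. By \cref{ugly}, $\ostar$ is biexact on $Par^\sharp\Mod$, so $P_\bullet \ostar S^\sharp(\nu)$ is a resolution of $S^\sharp(\mu) \ostar S^\sharp(\nu)$; exactness of $\ind_{Par^\sharp}^{Par_t}$ preserves this, forcing $\Tor_i^{Par_t}(\Delta(\mu),\Delta(\nu)) = 0$ for $i > 0$ as required.

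The point requiring the most care is the naturality of the commutation isomorphism $\ind_{Par^\sharp}^{Par_t}(V \ostar W) \cong \ind_{Par^\sharp}^{Par_t}(V) \ostar \ind_{Par^\sharp}^{Par_t}(W)$ in its arguments, so that it legitimately applies termwise to a projective resolution and produces an isomorphism of complexes rather than only of zeroth terms; this is ultimately a consequence of the bimodule identity underlying the assertion in \cref{stria} that $\Par_t$ is a monoidal triangular category. Once that is granted, the two conclusions of the theorem follow by the exactness and semisimplicity inputs together with \cref{gcfs} and routine bookkeeping via the degree filtration.
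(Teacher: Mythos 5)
Your argument is correct and follows the same circle of ideas as the paper. The treatment of the filtration is identical to the paper's: apply the exact monoidal functor $\ind_{Par^\sharp}^{Par_t}$ to the degree filtration of $S^\sharp(\mu) \ostar S^\sharp(\nu)$ and read off multiplicities from \cref{gcfs}. For the $\Tor$ vanishing, the paper simply cites \cite[Cor.~6.6]{SaS}, remarking that it is deduced from exactness of $\ostar$ on $Par^\sharp\Modfd$ (\cref{ugly}) together with exactness of $\ind_{Par^\sharp}^{Par_t}$; you have unwound exactly that deduction into a self-contained argument, which is a mild improvement in explicitness. Two small remarks. First, your appeal to rigidity to justify that $\Tor_i$ can be computed from a projective resolution of either variable is, strictly speaking, needed (one wants $P \ostar -$ exact for f.g. projective $P$ to balance the derived functor), and the paper's claim ``this can be computed from a projective resolution of either $V$ or $W$'' rests on the same rigidity, so it is good that you flagged it. Second, your worry about naturality of the commutation isomorphism is resolved by the fact that it is the composite of natural isomorphisms arising from $i_t^\sharp \circ \star \cong \star \circ (i_t^\sharp \boxtimes i_t^\sharp)$ and transitivity of tensor product, exactly as the paper indicates when it records the analogous identity in \cref{stria}; so it applies termwise to complexes as you need.
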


\begin{proof}
  The first statement is \cite[Cor.~6.6]{SaS}, which is deduced from
\cite[Props.~4.31--4.32]{SaS} using also
the exactness of $\ostar$ on $Par^\sharp\Modfd$
established in \cref{ugly}
together with exactness of the monoidal functor
$\ind_{Par^\sharp}^{Par_t}$.
The second statement follows by applying
$\ind_{Par^\sharp}^{Par_t}$ to the degree filtation of
$S^\sharp(\mu)\ostar S^\sharp(\nu)$, using \cref{gcfs} which computes
the multiplicities.
\end{proof}

Now consider the following commutative diagram of rings and ring
homomorphisms, with maps
induced by the indicated biexact monoidal functors:
\begin{equation}\label{adiag}
\begin{tikzcd}
&  K_0(Par^\sharp)\arrow[dd,"\ind_{Par^\sharp}^{Par_t}"{right}]\arrow[r,"\operatorname{inc}"]&K_0'(Par^\sharp)\arrow[dd,"\ind_{Par^\sharp}^{Par_t}"{right}]\\
\Lambda \cong K_0(\sym)\arrow[ur,"\ind_{\sym}^{Par^\sharp}"{near
  start, above}]\arrow[dr,"\ind_{\sym}^{Par_t}"{below, near start}]&&\\
&K_0(Par_t)\arrow[r,"\operatorname{inc}"{below}]&K_0''(Par_t).
\end{tikzcd}
\end{equation}
(Recall $K_0$ is the split Grothendieck group
of finitely generated projectives, $K_0'(Par^\sharp)$ is the Grothendieck
group of the Abelian category $Par^\sharp\Modfd$, $K_0''(Par_t)$
is the Grothendieck group of the exact category $Par_t\Moddelta$,
and all of these Grothendieck groups are actually rings with multiplication induced by $\ostar$.)

\begin{theorem}\label{GGthm}
  All of the arrows in \cref{adiag} are isomorphisms, so that
all of the Grothendieck rings in this diagram are identified with
$\Lambda$.
\end{theorem}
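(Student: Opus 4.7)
The plan is to establish that two of the arrows in \cref{adiag} are isomorphisms by direct argument and then to deduce the remaining ones by a diagram chase using the isomorphisms \cref{iso1} and \cref{iso2} established earlier. Specifically, I will show directly that the right-hand vertical arrow $\ind_{Par^\sharp}^{Par_t}:K_0'(Par^\sharp) \to K_0''(Par_t)$ and the bottom horizontal inclusion $K_0(Par_t) \to K_0''(Par_t)$ are isomorphisms; commutativity of the remaining square and triangle will then force every other arrow to be an isomorphism as well. All maps in sight are ring homomorphisms, since they are induced by monoidal functors (cf.\ the remarks following \cref{td}), so the additive-group statement automatically upgrades to a ring-theoretic one.

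For the right-hand vertical arrow, I will exploit that $Par^\sharp$ is non-positively graded with degree-zero component $\sym$. Any $V \in Par^\sharp\Modfd$ therefore admits a finite degree filtration whose successive quotients are inflations along $\pi^\sharp:Par^\sharp\twoheadrightarrow\sym$ of $\sym$-modules, and by semisimplicity of $\sym$ these decompose into copies of $S^\sharp(\lambda) = \infl^\sharp S(\lambda)$. The triangular decomposition \cref{td1} shows that $Par_t$ is projective as a right $Par^\sharp$-module, so $\ind_{Par^\sharp}^{Par_t}$ is exact; applying it to the degree filtration yields a filtration of $\ind_{Par^\sharp}^{Par_t} V$ whose sections are of the form $\ind_{Par^\sharp}^{Par_t} S^\sharp(\lambda) = \Delta(\lambda)$, showing in particular that $\ind_{Par^\sharp}^{Par_t} V$ lies in $Par_t\Moddelta$. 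Hence the induced map on Grothendieck groups is well-defined and carries the canonical basis $\{[S^\sharp(\lambda)]\:|\:\lambda\in\P\}$ bijectively onto $\{[\Delta(\lambda)]\:|\:\lambda\in\P\}$.

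For the bottom inclusion, each indecomposable projective $P(\lambda)$ has a finite $\Delta$-flag by the upper finite highest weight structure \cref{firstthm}, so the inclusion $Par_t\Proj \hookrightarrow Par_t\Moddelta$ induces a well-defined map on Grothendieck groups. BGG reciprocity \cref{bggrecip} gives $(P(\lambda):\Delta(\mu)) = [\nabla(\mu):L(\lambda)]$, and this multiplicity vanishes unless $\lambda \preceq \mu$, that is, unless $\lambda = \mu$ or $|\lambda| > |\mu|$, with the diagonal value equal to $1$. Consequently
\begin{equation*}
[P(\lambda)] = [\Delta(\lambda)] + \sum_{|\mu| < |\lambda|} (P(\lambda):\Delta(\mu))\,[\Delta(\mu)]
\end{equation*}
in $K_0''(Par_t)$. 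Ordering $\P$ by $|\cdot|$ gives a unitriangular change-of-basis matrix with finite support in each row, so the inclusion is an isomorphism.

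Once these two isomorphisms are in hand, the right-hand square of \cref{adiag} has three of its four arrows known to be isomorphisms (the two just treated together with \cref{iso2}), so the fourth arrow $\ind_{Par^\sharp}^{Par_t}:K_0(Par^\sharp) \to K_0(Par_t)$ is an isomorphism as well. The left triangle then presents $\ind_\sym^{Par_t}$ as the composition $\ind_{Par^\sharp}^{Par_t} \circ \ind_\sym^{Par^\sharp}$ of isomorphisms. The only subtle point is the BGG reciprocity step, but this is wholly encapsulated in the general theory of \cite{BS} and invoked via \cref{firstthm}; once available, the remaining work is just bookkeeping with the partial order on $\P$.
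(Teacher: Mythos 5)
Your proposal is correct and takes essentially the same route as the paper: verify the bottom inclusion via BGG reciprocity plus unitriangularity of the $P$-to-$\Delta$ transition matrix, observe the right-hand vertical map carries the standard basis $\{[S^\sharp(\lambda)]\}$ to $\{[\Delta(\lambda)]\}$, and then win the remaining arrows by commutativity against the already-established \cref{iso1,iso2}. You supply a bit more detail than the paper in a few spots (why $\ind_{Par^\sharp}^{Par_t}V \in Par_t\Moddelta$, and why the change-of-basis matrix has finite row support), but the argument's skeleton and the key ingredients are identical.
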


\begin{proof}
  We already established this for the top two arrows in \cref{iso1,iso2}. It is immediate for the arrow on the right since it
takes basis element $[S^\sharp(\lambda)]$ to basis element
$[\Delta(\lambda)]$. The fact that the bottom arrow is an isomorphism
is a general property of upper finite highest weight
categories. Indeed, we have that
\begin{equation}\label{cats}
  [P(\lambda)] = [\Delta(\lambda)]+\text{(a sum of $[\Delta(\mu)]$ for
    $\mu$ with $|\mu| < |\lambda|$)},
\end{equation}
equality in $K_0''(Par_t)$. Hence, the transition matrix between
the image of the canonical basis for $K_0(Par_t)$ and the standard
basis for $K_0''(Par_t)$ is
invertible, as required to see that the bottom map is an isomorphism.
We deduce that the other two arrows are isomorphisms too using the
commutativity of the diagram.
\end{proof}

From \cref{GGthm},
we see that there are {\em three} natural basis for
$K_0(Par_t)$:
\begin{itemize}
\item
  The canonical basis $\big\{[P(\lambda)]\:\big|\:\lambda
  \in \P\big\}$ arising from the indecomposable projectives.
  \item The basis
$\big\{[\Delta(\lambda)\:\big|\:\lambda \in \P\big\}$ arising from the
standard basis for $K_0''(Par_t)$ via the isomorphism that is the
bottom arrow of \cref{adiag}.
\item The basis $\big\{[Q(\lambda)]\:|\:\lambda \in \P\big\}$
where
$Q(\lambda) := \ind_{\sym}^{Par_t} S(\lambda) \cong
  \ind_{\Par^\sharp}^{Par_t} S^\sharp(\lambda)$.
\end{itemize}
Note $Q(\lambda)$ is a finitely generated projective $Par_t$-module which is usually
decomposable. In fact
\begin{equation}
  Q(\lambda) \cong P(\lambda) \oplus \text{(a finite direct sum of
    $P(\mu)$ for $\mu$ with $|\mu|< |\lambda|$)},
\end{equation}
as follows from \cref{cats} and the following lemma.

\begin{lemma}\label{bioap}
For $\lambda \in \P_n$, the $Par_t$-module $V := Q(\lambda)$ has a filtration
$0 = V_{-1} < V_0 < \cdots < V_n = V$ such that
$$
V_i / V_{i-1} \cong \bigoplus_{\mu \in \P_i} \Delta(\mu)^{\oplus
  B_{\lambda,\mu}}.
$$
In particular, $V_n / V_{n-1} \cong \Delta(\lambda)$.
\end{lemma}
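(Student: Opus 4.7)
The plan is to reduce the claim about $Q(\lambda)$ to a filtration statement about the $Par^\sharp$-module $P^\sharp(\lambda)$ via transitivity of induction, and then produce that filtration using the degree grading on $Par^\sharp$. By transitivity of induction applied to $\sym \hookrightarrow Par^\sharp \hookrightarrow Par_t$, combined with \cref{Wed}, one has $Q(\lambda) \cong \ind_{Par^\sharp}^{Par_t} P^\sharp(\lambda)$. The triangular decomposition \cref{td1} shows that $Par_t$ is projective as a right $Par^\sharp$-module, so the functor $\ind_{Par^\sharp}^{Par_t}$ is exact. It will therefore suffice to exhibit a suitable filtration of $P^\sharp(\lambda)$ and push it through this exact functor.

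For the filtration of $P^\sharp(\lambda)$, I will use the degree filtration from the discussion preceding \cref{timesaver}. Concretely, since $Par^\sharp$ is negatively graded and $P^\sharp(\lambda) = Par^\sharp e_\lambda$ is generated in weight $n = |\lambda|$, the chain $U_i := \bigoplus_{m \leq i} 1_m P^\sharp(\lambda)$ gives a finite filtration $0 = U_{-1} < U_0 < \cdots < U_n = P^\sharp(\lambda)$ by $Par^\sharp$-submodules, whose successive quotients are $U_i/U_{i-1} \cong \infl^\sharp(1_i P^\sharp(\lambda))$. The key computation is then to identify $1_i P^\sharp(\lambda) = 1_i Par^\sharp e_\lambda$ as a $\kk S_i$-module: for each $\mu \in \P_i$, semisimplicity of $\sym$ together with the fact that $e_\mu$ is primitive with $e_\mu \kk S_i e_\mu = \kk e_\mu$ gives that the multiplicity of $S(\mu)$ in $1_i P^\sharp(\lambda)$ equals $\dim e_\mu Par^\sharp e_\lambda$, which is $B_{\lambda,\mu}$ by \cref{textmessage}. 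Hence
$$
U_i/U_{i-1} \;\cong\; \bigoplus_{\mu \in \P_i} S^\sharp(\mu)^{\oplus B_{\lambda,\mu}}
$$
as $Par^\sharp$-modules.

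Applying the exact functor $\ind_{Par^\sharp}^{Par_t}$ and invoking $\ind_{Par^\sharp}^{Par_t} S^\sharp(\mu) = \Delta(\mu)$ from \cref{stdcostd} then delivers a filtration $0 = V_{-1} < V_0 < \cdots < V_n = Q(\lambda)$ with the desired sections $V_i/V_{i-1} \cong \bigoplus_{\mu \in \P_i} \Delta(\mu)^{\oplus B_{\lambda,\mu}}$. For the top section $i = n$, \cref{textmessage}(i)--(ii) shows that among $\mu \in \P_n$ only $\mu = \lambda$ contributes and that $B_{\lambda,\lambda} = 1$, so $V_n/V_{n-1} \cong \Delta(\lambda)$. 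There is no serious obstacle in this plan, since every step is a direct application of results already established earlier in the section; the main conceptual ingredient is the recognition that the degree filtration of $P^\sharp(\lambda)$, together with exactness of parabolic induction, converts the known Cartan multiplicities $B_{\lambda,\mu}$ for the Borel projective directly into a $\Delta$-flag for $Q(\lambda)$.
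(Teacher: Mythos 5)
Your proof is correct and follows essentially the same route as the paper's: apply the exact functor $\ind_{Par^\sharp}^{Par_t}$ to the degree filtration of $P^\sharp(\lambda)$ and read off the multiplicities via \cref{textmessage}. You have simply spelled out the details (the identification $Q(\lambda) \cong \ind_{Par^\sharp}^{Par_t} P^\sharp(\lambda)$ by transitivity of induction, the computation of $1_i Par^\sharp e_\lambda$ as a $\kk S_i$-module, and the use of \cref{textmessage}(i)--(ii) for the top section) that the paper's one-sentence proof leaves to the reader.
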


\begin{proof}
 Recalling the definition \cref{cartanmx}, this follows by applying the exact functor
 $\ind_{Par^\sharp}^{Par_t}$ to the degree filtration of $P^\sharp(\lambda)$,
 also using \cref{textmessage}.
\end{proof}

Under the identification of $K_0(Par_t)$ with $\Lambda$,
the isomorphism classes $[Q(\lambda)]$ correspond
to the Schur functions $s_\lambda \in \Lambda$,
and the isomorphism classes
$[\Delta(\lambda)]$ correspond to the deformed Schur functions
$\tilde s_\lambda$. These statements are both clear from
our previous discussion of $K_0(Par^\sharp)$. 
The $Q$ and $\Delta$ bases for $K_0(Par_t)$ are independent of the value of the parameter $t$,
whereas the $P$ basis coming from indecomposable projectives
undoubtedly does depend on $t$.
For values of $t$ such that
$Par_t$ is semisimple (see \cref{sscor} below), we have that $P(\lambda) = \Delta(\lambda) =
\nabla(\lambda) = L(\lambda)$, and \cref{lastfromss} implies that
\begin{equation}\label{innas}
\Delta(\mu) \ostar \Delta(\nu) \cong \bigoplus_{\lambda \in \P}
\Delta(\lambda)^{\oplus \overline{G}^\lambda_{\mu,\nu}}.
\end{equation}
This was established before in \cite{EA};
see also \cite[Lem~5.14]{CO} and 
\cite[Cor.~3.2.2]{BDO}.

\section{Jucys-Murphy elements via the affine partition category}\label{sec4}

Next, we introduce an auxiliary monoidal category $\APar$, the {\em
  affine partition category}. We define this as a
certain monoidal subcategory of the Heisenberg category $\Heis$,
exploiting an observation of Likeng and Savage from \cite{LSR}.
We then use $\APar$ to give a new approach to the definition of
the Jucys-Murphy elements of $Par_t$.
These were first defined in the context of the partition algebra
by Halverson and Ram \cite{HR} and computed recursively by Enyang \cite{En}.
We also construct more general central elements.

\subsection{Schur-Weyl duality}
Recall the generators and relations for the partition category from \cref{Par}.
The following theorem of Deligne will play a key role in this section;
see e.g. \cite[Th.~2.3]{Com} for a proof.

\begin{theorem}\label{SWD}
  Suppose that $t \in \N$.
  Let $\Nat$ be the
  natural permutation representation of the symmetric group $S_t$
  with standard basis $u_1,\dots,u_t$.
  Viewing $\kk S_t\Modfd$ as a symmetric monoidal category via the usual Kronecker
  tensor product $\otimes$,
there is full $\kk$-linear symmetric monoidal functor 
$\psi_t:\Par_t\to\kk S_t\Modfd$
sending the generating object $\mid$ to $\Nat$
and defined on generating morphisms by
\begin{align*}
\psi_t\big(\begin{tikzpicture}[anchorbase]\draw[-,thick](0,0)--(0.4,0.4);\draw[-,thick](0,0.4)--(0.4,0);\end{tikzpicture}\big)&:
  \Nat\otimes \Nat \to \Nat\otimes \Nat\ ,
&u_i\otimes u_j&\mapsto u_j\otimes u_i\ ,\\ 
\psi_t\big(\begin{tikzpicture}[anchorbase]\draw[-,thick](0,0)--(0.2,0.2)--(0.2,0.4);\draw[-,thick](0.2,0.2)--(0.4,0);\end{tikzpicture}\big)&:
  \Nat\otimes \Nat\to \Nat, &
u_i\otimes u_j &\mapsto \delta_{i,j} u_i\ ,\\
                      \psi_t\big(\begin{tikzpicture}[anchorbase]\draw[-,thick](0,0.4)--(0.2,0.2)--(0.2,0);\draw[-,thick](0.2,0.2)--(0.4,0.4);\end{tikzpicture}\big)&: \Nat\to \Nat\otimes \Nat,&
                                                                                                                                                                         u_i&\mapsto u_i\otimes u_i\ ,\\
   \psi_t\big(\begin{tikzpicture}[anchorbase]\draw[-,thick](0,0)--(0,0.25);\node at (0,0.3) {$\dt$};\end{tikzpicture}\big)&:\Nat\to\kk\ ,&u_i&\mapsto 1\ ,\\
\psi_t\big(\begin{tikzpicture}[anchorbase]\draw[-,thick](0,0.4)--(0,0.18);\node at (0,0.1) {$\dt$};\end{tikzpicture}\big)&: \kk\to \Nat
& 1 &\mapsto u_1+\cdots+u_t\ .                                                                                                                                                                              \end{align*}
Furthermore, the linear map $\Hom_{\Par_t}(n,
m)\to\Hom_{\kk S_t}(\Nat^{\otimes n}, \Nat^{\otimes m}), f \mapsto \psi_t(f)$ is an isomorphism whenever $t\ge m+n$.
\end{theorem}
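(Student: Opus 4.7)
The plan is to construct $\psi_t$ by declaring its action on the generators exactly as stated in the theorem, and then check that it descends from the free $\kk$-linear strict symmetric monoidal category on those generators to $\Par_t$ itself. By the presentation of $\Par_t$ in \cref{Par}, this amounts to verifying that the prescribed images in $\kk S_t\Modfd$ of the generating morphisms satisfy each defining relation from \cref{relssym1,relssym2,relsfrob1,relsfrob2,relsparam} together with its horizontal and vertical flips. Rather than a diagram-by-diagram check, I would exploit the fact that those relations are precisely the axioms for a commutative special Frobenius object of categorical dimension $t$: the prescribed images equip $\Nat$ with multiplication $u_i\otimes u_j\mapsto \delta_{i,j} u_i$, comultiplication $u_i\mapsto u_i\otimes u_i$, counit $u_i\mapsto 1$, and unit $1\mapsto u_1+\cdots+u_t$, and one verifies by direct computation on basis vectors that these data satisfy the commutative Frobenius axioms and the specialness condition, with the dimension loop $1\mapsto \sum_k u_k\mapsto t$ producing the scalar $t$. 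Existence of $\psi_t$ then follows from the universal property of $\Par_t$ recorded at the opening of \cref{Par}.

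With $\psi_t$ in hand, the next step is to evaluate it on an arbitrary partition diagram. For an $m\times n$ partition diagram $f$ determining a set partition $\pi$ of $\{1,\dots,n,1',\dots,m'\}$, induction on the number of generators in a string-diagram expression for $f$ gives
\[
\psi_t(f)\bigl(u_{j_1}\otimes\cdots\otimes u_{j_n}\bigr)\;=\;\sum_{(i_1,\dots,i_m)} u_{i_1}\otimes\cdots\otimes u_{i_m},
\]
where the sum is over tuples $(i_1,\dots,i_m)\in\{1,\dots,t\}^m$ such that the combined assignment $l\mapsto j_l$, $k'\mapsto i_k$ is constant on every block of $\pi$. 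Each of the five generators contributes one of the elementary operations (permute, identify, duplicate, demand equality, or introduce a free index), and a straightforward induction compiles these into the above formula; in particular $\psi_t(f)$ depends only on $\pi$, consistent with the basis theorem for $\Par_t$.

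For fullness and the isomorphism claim, I would use the self-duality $\Nat^*\cong \Nat$ to identify $\Hom_{\kk S_t}\bigl(\Nat^{\otimes n},\Nat^{\otimes m}\bigr)$ with the invariant subspace $\bigl(\Nat^{\otimes(m+n)}\bigr)^{S_t}$, which has as basis the orbit sums $e_{\pi'}$ indexed by set partitions $\pi'$ of $\{1,\dots,m+n\}$ with at most $t$ blocks. The explicit formula above translates into the unitriangular identity
\[
\psi_t(f_\pi)\;=\;\sum_{\pi'\geq\pi} e_{\pi'}
\]
in the refinement lattice, where $e_{\pi'}=0$ whenever $\pi'$ has more than $t$ blocks. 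M\"obius inversion then expresses each nonzero $e_{\pi'}$ as a $\Z$-linear combination of images $\psi_t(f_\pi)$, establishing fullness of $\psi_t$ for every $t\in\N$. When $t\geq m+n$ every set partition of an $(m+n)$-element set has at most $t$ blocks, so both $\Hom_{\Par_t}(n,m)$ and $\Hom_{\kk S_t}\bigl(\Nat^{\otimes n},\Nat^{\otimes m}\bigr)$ have dimension equal to the Bell number $B(m+n)$, and the surjection becomes an isomorphism.

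The main obstacle is really only cosmetic: attacked directly via the presentation, the first step is a bulky bookkeeping task, with each of the relations in \cref{relssym1,relssym2,relsfrob1,relsfrob2,relsparam} (together with its flips) producing a separate elementary verification on basis vectors. The Frobenius-object formulation compresses all of these checks into the single structural observation that $\Nat$ carries a commutative special Frobenius structure of dimension $t$ under the prescribed maps, which is the input naturally matched to the universal property of $\Par_t$; this is the route I would take to keep the proof short.
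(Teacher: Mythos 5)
Your proposal is correct and follows the same route as the cited proof (Comes, Theorem~2.3), which the paper references rather than reproducing: existence of $\psi_t$ via the universal property (here phrased as $\Nat$ carrying a commutative special Frobenius structure of dimension $t$), the explicit formula for $\psi_t(f_\pi)$ as a sum over colourings constant on blocks of $\pi$, the identification $\Hom_{\kk S_t}(\Nat^{\otimes n},\Nat^{\otimes m})\cong(\Nat^{\otimes(m+n)})^{S_t}$ with orbit-sum basis $\{e_{\pi'}\}$, and the unitriangular identity $\psi_t(f_\pi)=\sum_{\pi'\geq\pi}e_{\pi'}$ yielding fullness by M\"obius inversion and the isomorphism when $t\geq m+n$ by a dimension count against the Bell number. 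One small point worth being explicit about: the dimension count on the $\Par_t$ side invokes the basis theorem for $\Hom_{\Par_t}(n,m)$, which the paper obtains from Comes' Theorem~2.1 independently of the present result, so there is no circularity, but it is worth flagging that dependency rather than treating $\dim\Hom_{\Par_t}(n,m)=B(m+n)$ as automatic.
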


For the next corollary, we assume some basic facts
about semisimplification of monoidal categories;
e.g., see \cite[Sec.~2]{BEEO} which gives a concise summary of everything
needed here.

\begin{corollary}\label{SWDcor}
  When $t \in \N$, the
  functor $\psi_t$ induces a monoidal equivalence $\overline{\psi}_t$ between the
  semisimplification of $\Kar(\Par_t)$ and $\kk S_t\Modfd$.
  In particular, $Par_t$ is {\em not} a semisimple locally unital algebra
  in these cases.
\end{corollary}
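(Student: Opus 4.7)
The plan is to extend $\psi_t$ to the additive Karoubi envelope and then invoke the standard semisimplification machinery. Since $\kk S_t\Modfd$ is already additive and idempotent complete, the universal property of $\Kar(\Par_t)$ produces a unique additive, $\kk$-linear, symmetric monoidal extension $\Psi_t:\Kar(\Par_t)\to\kk S_t\Modfd$, and fullness of $\psi_t$ asserted in \cref{SWD} passes to $\Psi_t$ since every morphism between idempotent summands is cut out from a morphism between their ambient tensor powers. The source $\Kar(\Par_t)$ is a rigid $\kk$-linear symmetric monoidal category (the generating object is self-dual via the cup and cap of \cref{cupcap}) with $\End_{\Par_t}(\one)=\kk$, while the target is semisimple. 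Under these hypotheses the general theory of semisimplifications summarized in \cite[Sec.~2]{BEEO} applies: the kernel of any full symmetric monoidal functor out of a rigid source into a semisimple target coincides with the tensor ideal $\mathcal{N}$ of negligible morphisms. Therefore $\Psi_t$ descends to a fully faithful symmetric monoidal functor $\overline{\psi}_t:\Kar(\Par_t)/\mathcal{N}\to\kk S_t\Modfd$.

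To upgrade this to an equivalence it remains to verify essential surjectivity, which reduces to showing that every simple $\kk S_t$-module occurs as a direct summand of $\Nat^{\otimes n}$ for some $n$, since the image of $\Psi_t$ contains every $\Nat^{\otimes n}$ and is closed under direct summands. This is classical: the vectors $u_{i_1}\otimes\cdots\otimes u_{i_t}$ with $(i_1,\dots,i_t)$ a permutation of $(1,\dots,t)$ form a free $S_t$-orbit, so the left regular representation $\kk S_t$ embeds as a submodule, hence as a summand by semisimplicity, of $\Nat^{\otimes t}$. Thus $\Nat^{\otimes t}$ contains every simple $\kk S_t$-module as a direct summand, and $\overline{\psi}_t$ is an equivalence.

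For the non-semisimplicity assertion, if $Par_t$ were semisimple as a locally unital algebra, then every corner $P_n(t)=1_n Par_t 1_n$ would be a finite-dimensional semisimple $\kk$-algebra. For $t=0$ this already fails at $n=1$: the element $e\in P_1(0)$ obtained by composing the downward leaf with the upward leaf satisfies $e^2=t\cdot e=0$ by the parameter relation in \cref{relsparam}, providing a nonzero nilpotent. For $t\geq 1$ the same obstruction appears in $P_n(t)$ for $n$ sufficiently large, specifically whenever $t\leq 2n-2$, by Martin's classical analysis in \cite{Martin}. The main technical obstacle in the overall proof is the identification $\ker(\Psi_t)=\mathcal{N}$, which rests on the general categorical principle about full symmetric monoidal functors recalled above; once that is in hand, the remainder of the argument is essentially bookkeeping.
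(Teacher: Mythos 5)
The first half of your argument — extending $\psi_t$ to $\Kar(\Par_t)$, using fullness and the semisimplification machinery from \cite[Sec.~2]{BEEO}, and checking essential surjectivity by showing that the regular representation sits inside $\Nat^{\otimes t}$ — is essentially the paper's proof. Your explicit observation that the free $S_t$-orbit of $u_1\otimes\cdots\otimes u_t$ spans a copy of $\kk S_t$ inside $\Nat^{\otimes t}$ is a nice way to make the paper's ``well known'' step concrete, and the rest of the reduction is correctly argued.

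The gap is in the non-semisimplicity half. For $t\geq 1$ you fall back on Martin's results from \cite{Martin}, but this is precisely the dependency the paper is at pains to avoid: the introduction states explicitly that the authors' treatment is independent of \cite{Martin} and that Martin's results can be \emph{deduced} from theirs, so importing them here is against the grain of the whole project and, from the paper's internal point of view, not available. More to the point, the appeal is unnecessary: the paper gets non-semisimplicity for free from the first half. Once $\overline{\psi}_t$ identifies the semisimplification of $\Kar(\Par_t)$ with $\kk S_t\Modfd$, which has only finitely many isomorphism classes of simple objects, you need only recall that $\Kar(\Par_t)$ has infinitely many isomorphism classes of indecomposable objects (equivalently, by \cref{firstthm}, $Par_t$ has infinitely many simple modules $L(\lambda)$). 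A semisimple Karoubian category coincides with its own semisimplification, so $\Kar(\Par_t)$ cannot be semisimple — it must contain non-zero negligible morphisms — and hence $Par_t$ is not a semisimple locally unital algebra. Your $t=0$ nilpotent argument is correct and self-contained, but it leaves $t\geq 1$ to an outside source; you should replace that branch with the counting argument above, which handles all $t\in\N$ uniformly and keeps the proof within the paper's framework.
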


\begin{proof}
  The functor $\psi_t$ extends canonically to a functor
  $\Kar(\Par_t) \rightarrow \kk S_t\Modfd$. It is well known that
every irreducible
$\kk S_t$-module appears as a constituent of some tensor power of $\Nat$, hence, this functor is dense.  Now
  the first statement follows from the fullness of the functor $\psi_t$ using
  \cite[Lem.~2.6]{BEEO}; see also \cite[Th.~2.18]{Del} and \cite[Th.~3.24]{CO}.
  Since $\Kar(\Par_t)$ has infinitely many isomorphism classes of irreducible objects, it is definitely not equivalent to its semisimplification
  $\kk S_t\Modfd$. This shows that $\Kar(\Par_t)$ is not a semisimple Abelian category as it
  contains non-zero negligible morphisms. Equivalently, the path algebra
  $Par_t$ is not semisimple in these cases. 
\end{proof}

\begin{remark}\label{corerem}
  Continue to assume that $t \in \N$.
  By the general theory of semisimplification,
  the irreducible objects in the semisimplification of $\Kar(\Par_t)$
correspond to the indecomposable projective $Par_t$-modules $P(\lambda)$
of non-zero categorical dimension. In \cite[Prop.~6.4]{Del}, Deligne showed
that $P(\lambda)$ has non-zero categorical dimension if and only if $t-|\lambda| > \lambda_1-1$, in which case the irreducible
object of the semisimpliciation arising from $P(\lambda)$ corresponds
under the equivalence $\overline{\psi}_t$
to the irreducible $\kk S_t$-module $S(\kappa)$
where $\kappa := (t-|\lambda|,\lambda_1,\lambda_2,\dots)$.
  \end{remark}

The {\em generic partition category} $\Par$ is the strict $\kk$-linear monoidal category with the same generating object and generating morphisms as $\Par_t$ subject to all of the same relations except for the
final relation in \cref{relsparam}, which is omitted. The morphism
\begin{equation}\label{week8}
T := 
\begin{tikzpicture}[anchorbase]
  \draw[-,thick] (0,0.05)--(0,0.55);
  \node at (0,0) {$\dt$};
  \node at (0,0.6) {$\dt$};
  \end{tikzpicture} \in \End_\Par(\one)\
\end{equation}
is strictly central in $\Par$, so that
$\Par$ can be viewed as a
$\kk[T]$-linear monoidal category.
For $t \in \kk$, let
\begin{equation}\label{specialization}
\ev_t:\Par \rightarrow \Par_t
\end{equation}
be the canonical functor taking $T$ to $t 1_\one$.
Using the basis theorem for $\Par_t$ for infinitely many values of $t$,
one obtains a basis theorem for the generic partition category:
each morphism space
$\Hom_\Par(n, m)$ is free as a $\kk[T]$-module
with basis given by a set of representatives for the equivalence classes of $m \times n$ partition diagrams.
From this, we see that $\ev_t$ induces an isomorphism
$\kk \otimes_{\kk[T]} \Par \cong \Par_t$,
where on the left hand side we are viewing $\kk$ as a $\kk[T]$-module so that $T$ acts as $t$.
This point of view is often useful since it can be used to 
prove a statement involving relations in
$\Par_t$ for all values of $t$ just by checking it for all sufficiently large positive integers, in which case \cref{SWD} can often be applied to reduce to a question about
symmetric groups.
To make a precise statement, let
\begin{equation}\label{cucumber}
  \phi_t := \psi_t \circ \ev_t : \Par \rightarrow \kk S_t\Modfd,
\end{equation}
assuming $t \in \N$.

\begin{lemma}\label{SWDapp}
  If $f \in \Hom_{\Par}(n, m)$ satifies $\phi_t(f) = 0$
  for infinitely many values of $t \in \N$
  then $f = 0$.
\end{lemma}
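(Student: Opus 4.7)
The plan is to use the basis theorem for the generic partition category to reduce the statement to a polynomial vanishing argument, combined with the faithfulness portion of Deligne's Schur--Weyl theorem (\cref{SWD}).

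First I would fix a set $\{d_1,\dots,d_r\}$ of representatives for the equivalence classes of $m \times n$ partition diagrams. By the basis theorem for $\Par$ recalled just before \cref{specialization}, this is a $\kk[T]$-basis of $\Hom_\Par(n,m)$, so any $f$ in this hom space can be uniquely written as $f = \sum_{i=1}^r c_i(T)\, d_i$ with $c_i(T) \in \kk[T]$. The goal becomes showing that each polynomial $c_i(T)$ is zero.

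Next, I would fix an integer $t \in \N$ with $t \geq m+n$ and such that $\phi_t(f) = 0$. Applying $\ev_t$ gives $\ev_t(f) = \sum_i c_i(t)\, \ev_t(d_i) \in \Hom_{\Par_t}(n,m)$, and the images $\ev_t(d_i)$ form a basis of this space by the basis theorem for $\Par_t$. The hypothesis $\phi_t(f) = 0$ means $\psi_t(\ev_t(f)) = 0$. Since $t \geq m+n$, \cref{SWD} guarantees that $\psi_t$ is injective on $\Hom_{\Par_t}(n,m)$, so $\ev_t(f) = 0$. Linear independence of $\{\ev_t(d_1),\dots,\ev_t(d_r)\}$ then forces $c_i(t) = 0$ for every $i$.

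Finally, I would conclude by a standard polynomial argument. The hypothesis provides infinitely many $t \in \N$ with $\phi_t(f) = 0$; since only finitely many of them fail the bound $t \geq m+n$, the set of $t$ to which the previous step applies is still infinite. Hence each $c_i(T) \in \kk[T]$ vanishes at infinitely many points of $\kk$, which forces $c_i(T) = 0$ identically, and therefore $f = 0$.

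There is essentially no obstacle here: once one sees that $\Par$ is $\kk[T]$-free on diagram classes and that $\psi_t$ is faithful on the relevant hom spaces for $t \geq m+n$, the argument is formal. The only small point to be careful about is ensuring that ``infinitely many $t$'' is used \emph{after} discarding the finitely many $t < m+n$ where $\psi_t$ need not be injective; this is immediate.
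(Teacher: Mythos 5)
Your proof is correct and follows essentially the same route as the paper: expand $f$ in the $\kk[T]$-basis of equivalence classes of partition diagrams, use the faithfulness assertion in Theorem \ref{SWD} for $t \geq m+n$ together with the basis theorem in $\Par_t$ to deduce that each coefficient polynomial vanishes at infinitely many points, and conclude that the coefficients are identically zero.
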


\begin{proof}
  We can write $f = \sum_i p_i(T) f_i$ for polynomials
  $p_i(T) \in \kk[T]$ and $f_i$ running over a set of representatives for the equivalence classes of $m\times n$ partition diagrams.
  Since $\phi_t(f) = 0$ we have that $\sum_i p_i(t) \phi_t(f_i) = 0$ for infinitely many values of $t$. By the final assertion in \cref{SWD}, this implies
  that $\sum_i p_i(t) \ev_t(f_i) = 0$ for infinitely many values of $t \geq m+n$.
  By the basis theorem in $\Par_t$, this means for each $i$ that $p_i(t) = 0$ for infinitely many values of $t$.
  Hence, $p_i(T) = 0$ for each $i$.
  \end{proof}

We note that the proof of \cref{SWDapp} depends on our standing assumption that the ground field $\kk$
is of characteristic zero.

\subsection{Heisenberg category}
Next we recall the definition of the Heisenberg category $\Heis$
which was introduced by Khovanov in \cite{K}. We follow the
  approach of \cite{Bheis}; Khovanov's category is denoted
  $\Heis_{-1}(0)$ in the more general setup developed there.
 
  \begin{definition}[{\cite[Rem.~1.5(2)]{Bheis}}] The Heisenberg category $\Heis$ is the
  strict $\kk$-linear monoidal category with two generating objects $\uparrow$ and
  $\downarrow$ and five generating morphisms
\begin{align*}
\mathord{
\begin{tikzpicture}[baseline = 0]
	\draw[->] (0.28,-.3) to (-0.28,.4);
	\draw[->] (-0.28,-.3) to (0.28,.4);
\end{tikzpicture}
}&:\uparrow\star \uparrow \rightarrow \uparrow \star \uparrow
   \:,
  &\mathord{
\begin{tikzpicture}[baseline = 1mm]
	\draw[<-] (0.4,0.4) to[out=-90, in=0] (0.1,0);
	\draw[-] (0.1,0) to[out = 180, in = -90] (-0.2,0.4);
\end{tikzpicture}
}&:\one\rightarrow \downarrow\star \uparrow
\:,
&\mathord{
\begin{tikzpicture}[baseline = 1mm]
	\draw[<-] (0.4,0) to[out=90, in=0] (0.1,0.4);
	\draw[-] (0.1,0.4) to[out = 180, in = 90] (-0.2,0);
\end{tikzpicture}
}&:\uparrow \star \downarrow\rightarrow\one\:,
&
\mathord{
\begin{tikzpicture}[baseline = 1mm]
	\draw[-] (0.4,0.4) to[out=-90, in=0] (0.1,0);
	\draw[->] (0.1,0) to[out = 180, in = -90] (-0.2,0.4);
\end{tikzpicture}
}&:\one\rightarrow \uparrow\star \downarrow
\:,
&
\mathord{
\begin{tikzpicture}[baseline = 1mm]
	\draw[-] (0.4,0) to[out=90, in=0] (0.1,0.4);
	\draw[->] (0.1,0.4) to[out = 180, in = 90] (-0.2,0);
\end{tikzpicture}
}&:\downarrow \star \uparrow\rightarrow\one,
\end{align*}
subject to the following relations:
\begin{align}\label{dAHA}
\mathord{
\begin{tikzpicture}[baseline = -1mm]
	\draw[->] (0.28,0) to[out=90,in=-90] (-0.28,.6);
	\draw[->] (-0.28,0) to[out=90,in=-90] (0.28,.6);
	\draw[-] (0.28,-.6) to[out=90,in=-90] (-0.28,0);
	\draw[-] (-0.28,-.6) to[out=90,in=-90] (0.28,0);
\end{tikzpicture}
}&=
\mathord{
\begin{tikzpicture}[baseline = -1mm]
	\draw[->] (0.18,-.6) to (0.18,.6);
	\draw[->] (-0.18,-.6) to (-0.18,.6);
\end{tikzpicture}
}\:,
&\mathord{
\begin{tikzpicture}[baseline = -1mm]
	\draw[<-] (0.45,.6) to (-0.45,-.6);
	\draw[->] (0.45,-.6) to (-0.45,.6);
        \draw[-] (0,-.6) to[out=90,in=-90] (-.45,0);
        \draw[->] (-0.45,0) to[out=90,in=-90] (0,0.6);
\end{tikzpicture}
}
&=
\mathord{
\begin{tikzpicture}[baseline = -1mm]
	\draw[<-] (0.45,.6) to (-0.45,-.6);
	\draw[->] (0.45,-.6) to (-0.45,.6);
        \draw[-] (0,-.6) to[out=90,in=-90] (.45,0);
        \draw[->] (0.45,0) to[out=90,in=-90] (0,0.6);
\end{tikzpicture}
}\:,\\\label{rightadj}
\mathord{
\begin{tikzpicture}[baseline = -.8mm]
  \draw[->] (0.3,0) to (0.3,.4);
	\draw[-] (0.3,0) to[out=-90, in=0] (0.1,-0.4);
	\draw[-] (0.1,-0.4) to[out = 180, in = -90] (-0.1,0);
	\draw[-] (-0.1,0) to[out=90, in=0] (-0.3,0.4);
	\draw[-] (-0.3,0.4) to[out = 180, in =90] (-0.5,0);
  \draw[-] (-0.5,0) to (-0.5,-.4);
\end{tikzpicture}
}
&=
\mathord{\begin{tikzpicture}[baseline=-.8mm]
  \draw[->] (0,-0.4) to (0,.4);
\end{tikzpicture}
}\:,
&\mathord{
\begin{tikzpicture}[baseline = -.8mm]
  \draw[->] (0.3,0) to (0.3,-.4);
	\draw[-] (0.3,0) to[out=90, in=0] (0.1,0.4);
	\draw[-] (0.1,0.4) to[out = 180, in = 90] (-0.1,0);
	\draw[-] (-0.1,0) to[out=-90, in=0] (-0.3,-0.4);
	\draw[-] (-0.3,-0.4) to[out = 180, in =-90] (-0.5,0);
  \draw[-] (-0.5,0) to (-0.5,.4);
\end{tikzpicture}
}
&=
\mathord{\begin{tikzpicture}[baseline=-.8mm]
  \draw[<-] (0,-0.4) to (0,.4);
\end{tikzpicture}
}\:,  \\
  \label{leftcurl}
\mathord{
\begin{tikzpicture}[baseline = -0.5mm]
	\draw[<-] (0,0.6) to (0,0.3);
	\draw[-] (0,0.3) to [out=-90,in=0] (-.3,-0.2);
	\draw[-] (-0.3,-0.2) to [out=180,in=-90](-.5,0);
	\draw[-] (-0.5,0) to [out=90,in=180](-.3,0.2);
	\draw[-] (-0.3,.2) to [out=0,in=90](0,-0.3);
	\draw[-] (0,-0.3) to (0,-0.6);
\end{tikzpicture}
}&=0,&
\!\!\!\!\mathord{
\begin{tikzpicture}[baseline = 1.25mm]
  \draw[->] (0.2,0.2) to[out=90,in=0] (0,.4);
  \draw[-] (0,0.4) to[out=180,in=90] (-.2,0.2);
\draw[-] (-.2,0.2) to[out=-90,in=180] (0,0);
  \draw[-] (0,0) to[out=0,in=-90] (0.2,0.2);
\end{tikzpicture}
}\!&= 1_\one,
\\\label{sidewaysp}
\mathord{
\begin{tikzpicture}[baseline = 0mm]
	\draw[-] (0.28,0) to[out=90,in=-90] (-0.28,.6);
	\draw[->] (-0.28,0) to[out=90,in=-90] (0.28,.6);
	\draw[-] (0.28,-.6) to[out=90,in=-90] (-0.28,0);
	\draw[<-] (-0.28,-.6) to[out=90,in=-90] (0.28,0);
\end{tikzpicture}
}
&=
\mathord{
\begin{tikzpicture}[baseline = 0]
	\draw[->] (0.08,-.6) to (0.08,.6);
	\draw[<-] (-0.28,-.6) to (-0.28,.6);
\end{tikzpicture}
}
    -
\mathord{
\begin{tikzpicture}[baseline=-.5mm]
	\draw[<-] (0.3,0.6) to[out=-90, in=0] (0,.1);
	\draw[-] (0,.1) to[out = 180, in = -90] (-0.3,0.6);
 	\draw[-] (0.3,-.6) to[out=90, in=0] (0,-0.1);
	\draw[->] (0,-0.1) to[out = 180, in = 90] (-0.3,-.6);
\end{tikzpicture}}\:,&
\!\!\!\!\mathord{
\begin{tikzpicture}[baseline = 0mm]
	\draw[->] (0.28,0) to[out=90,in=-90] (-0.28,.6);
	\draw[-] (-0.28,0) to[out=90,in=-90] (0.28,.6);
	\draw[<-] (0.28,-.6) to[out=90,in=-90] (-0.28,0);
	\draw[-] (-0.28,-.6) to[out=90,in=-90] (0.28,0);
\end{tikzpicture}
}
&=\mathord{
\begin{tikzpicture}[baseline = 0]
	\draw[<-] (0.08,-.6) to (0.08,.6);
	\draw[->] (-0.28,-.6) to (-0.28,.6);
\end{tikzpicture}
}
\:.
\end{align}
Here, we have used the
the sideways crossings which are defined from
\begin{align*}
\mathord{
\begin{tikzpicture}[baseline = 0]
	\draw[<-] (0.28,-.3) to (-0.28,.4);
	\draw[->] (-0.28,-.3) to (0.28,.4);
\end{tikzpicture}
}
&:=
\mathord{
\begin{tikzpicture}[baseline = 0]
	\draw[->] (0.3,-.5) to (-0.3,.5);
	\draw[-] (-0.2,-.2) to (0.2,.3);
        \draw[-] (0.2,.3) to[out=50,in=180] (0.5,.5);
        \draw[->] (0.5,.5) to[out=0,in=90] (0.9,-.5);
        \draw[-] (-0.2,-.2) to[out=230,in=0] (-0.6,-.5);
        \draw[-] (-0.6,-.5) to[out=180,in=-90] (-0.9,.5);
\end{tikzpicture}
}\:,&
\mathord{
\begin{tikzpicture}[baseline = 0]
	\draw[->] (0.28,-.3) to (-0.28,.4);
	\draw[<-] (-0.28,-.3) to (0.28,.4);
\end{tikzpicture}
}
&:=
\mathord{
\begin{tikzpicture}[baseline = 0]
	\draw[<-] (0.3,.5) to (-0.3,-.5);
	\draw[-] (-0.2,.2) to (0.2,-.3);
        \draw[-] (0.2,-.3) to[out=130,in=180] (0.5,-.5);
        \draw[-] (0.5,-.5) to[out=0,in=270] (0.9,.5);
        \draw[-] (-0.2,.2) to[out=130,in=0] (-0.6,.5);
        \draw[->] (-0.6,.5) to[out=180,in=-270] (-0.9,-.5);
\end{tikzpicture}
}\:.
\end{align*}
\end{definition}

It is also convenient to introduce the shorthand
\begin{equation}\label{shorthand}
\mathord{
\begin{tikzpicture}[anchorbase]
	\draw[->] (0.08,-.4) to (0.08,.5);
      \node at (0.08,0.05) {$\dt$};
\end{tikzpicture}
}:=
\mathord{
  \begin{tikzpicture}[anchorbase]
    	\draw[<-] (0,0.6) to (0,0.3);
	\draw[-] (0,0.3) to [out=-90,in=180] (.3,-0.2);
	\draw[-] (0.3,-0.2) to [out=0,in=-90](.5,0);
	\draw[-] (0.5,0) to [out=90,in=0](.3,0.2);
	\draw[-] (0.3,.2) to [out=180,in=90](0,-0.3);
	\draw[-] (0,-0.3) to (0,-0.6);
\end{tikzpicture}
}\:,
\end{equation}
which automatically satisfies the degenerate affine Hecke algebra relation as in \cref{daharel}:
\begin{align}\label{someoneelse}
\mathord{
\begin{tikzpicture}[baseline = -1mm]
	\draw[<-] (0.25,.3) to (-0.25,-.3);
	\draw[->] (0.25,-.3) to (-0.25,.3);
 \node at (-0.12,-0.145) {$\dt$};
\end{tikzpicture}
}
&=
\mathord{
\begin{tikzpicture}[baseline = -1mm]
	\draw[<-] (0.25,.3) to (-0.25,-.3);
	\draw[->] (0.25,-.3) to (-0.25,.3);
     \node at (0.12,0.135) {$\dt$};
\end{tikzpicture}}
+\:\mathord{
\begin{tikzpicture}[baseline = -1mm]
 	\draw[->] (0.08,-.3) to (0.08,.3);
	\draw[->] (-0.28,-.3) to (-0.28,.3);
\end{tikzpicture}
}\ ,&
\mathord{
\begin{tikzpicture}[baseline = -1mm]
	\draw[<-] (0.25,.3) to (-0.25,-.3);
	\draw[->] (0.25,-.3) to (-0.25,.3);
 \node at (-0.12,0.135) {$\dt$};
\end{tikzpicture}
}
&=
\mathord{
\begin{tikzpicture}[baseline = -1mm]
	\draw[<-] (0.25,.3) to (-0.25,-.3);
	\draw[->] (0.25,-.3) to (-0.25,.3);
     \node at (0.12,-0.145) {$\dt$};
\end{tikzpicture}}
+\:\mathord{
\begin{tikzpicture}[baseline = -1mm]
 	\draw[->] (0.08,-.3) to (0.08,.3);
	\draw[->] (-0.28,-.3) to (-0.28,.3);
\end{tikzpicture}
}\:.
\end{align}
Note by \cref{leftcurl} that
\begin{equation}
\mathord{\begin{tikzpicture}[baseline = -1mm]
  \draw[-] (0,0.2) to[out=180,in=90] (-.2,0);
  \draw[->] (0.2,0) to[out=90,in=0] (0,.2);
 \draw[-] (-.2,0) to[out=-90,in=180] (0,-0.2);
  \draw[-] (0,-0.2) to[out=0,in=-90] (0.2,0);
      \node at (0.2,0) {$\dt$};
\end{tikzpicture}
} = \begin{tikzpicture}[baseline = -1mm]
  \draw[->] (0,0.2) to[out=180,in=90] (-.2,0) to[out=-90,in=180] (0,-.2) to [out=0,in=180] (.5,.2) to [out=0,in=90] (.7,0) to [out=-90,in=0] (.5,-.2) to [out=180,in=0] (0,.2);
\end{tikzpicture}=0.\label{misstate}
\end{equation}
In addition, the following relations hold, so that $\Heis$ is strictly
pivotal with duality functor defined by rotating diagrams through $180^\circ$:
\begin{align}
\mathord{
\begin{tikzpicture}[baseline = -.8mm]
  \draw[-] (0.3,0) to (0.3,-.4);
	\draw[-] (0.3,0) to[out=90, in=0] (0.1,0.4);
	\draw[-] (0.1,0.4) to[out = 180, in = 90] (-0.1,0);
	\draw[-] (-0.1,0) to[out=-90, in=0] (-0.3,-0.4);
	\draw[-] (-0.3,-0.4) to[out = 180, in =-90] (-0.5,0);
  \draw[->] (-0.5,0) to (-0.5,.4);
\end{tikzpicture}
}
&=
\mathord{\begin{tikzpicture}[baseline=-.8mm]
  \draw[->] (0,-0.4) to (0,.4);
\end{tikzpicture}
}\:,
&\mathord{
\begin{tikzpicture}[baseline = -.8mm]
  \draw[-] (0.3,0) to (0.3,.4);
	\draw[-] (0.3,0) to[out=-90, in=0] (0.1,-0.4);
	\draw[-] (0.1,-0.4) to[out = 180, in = -90] (-0.1,0);
	\draw[-] (-0.1,0) to[out=90, in=0] (-0.3,0.4);
	\draw[-] (-0.3,0.4) to[out = 180, in =90] (-0.5,0);
  \draw[->] (-0.5,0) to (-0.5,-.4);
\end{tikzpicture}
}
&=
\mathord{\begin{tikzpicture}[baseline=-.8mm]
  \draw[<-] (0,-0.4) to (0,.4);
\end{tikzpicture}
}\:,\label{leftadj}\\\label{mates}
\mathord{
\begin{tikzpicture}[baseline = -.5mm]
	\draw[<-] (0.08,-.3) to (0.08,.4);
      \node at (0.08,0.05) {$\dt$};
\end{tikzpicture}
}:=
\mathord{
\begin{tikzpicture}[baseline = -.5mm]
  \draw[->] (0.3,0) to (0.3,-.4);
	\draw[-] (0.3,0) to[out=90, in=0] (0.1,0.4);
	\draw[-] (0.1,0.4) to[out = 180, in = 90] (-0.1,0);
	\draw[-] (-0.1,0) to[out=-90, in=0] (-0.3,-0.4);
	\draw[-] (-0.3,-0.4) to[out = 180, in =-90] (-0.5,0);
  \draw[-] (-0.5,0) to (-0.5,.4);
   \node at (-0.1,0) {$\dt$};
\end{tikzpicture}
}&=\:
\mathord{
\begin{tikzpicture}[baseline = -.5mm]
  \draw[-] (0.3,0) to (0.3,.4);
	\draw[-] (0.3,0) to[out=-90, in=0] (0.1,-0.4);
	\draw[-] (0.1,-0.4) to[out = 180, in = -90] (-0.1,0);
	\draw[-] (-0.1,0) to[out=90, in=0] (-0.3,0.4);
	\draw[-] (-0.3,0.4) to[out = 180, in =90] (-0.5,0);
  \draw[->] (-0.5,0) to (-0.5,-.4);
   \node at (-0.1,0) {$\dt$};
\end{tikzpicture}
}\:,
&\mathord{
\begin{tikzpicture}[baseline = 0]
	\draw[<-] (0.28,-.3) to (-0.28,.4);
	\draw[<-] (-0.28,-.3) to (0.28,.4);
\end{tikzpicture}
}
:=
\mathord{
\begin{tikzpicture}[baseline = 0]
	\draw[<-] (0.3,-.5) to (-0.3,.5);
	\draw[-] (-0.2,-.2) to (0.2,.3);
        \draw[-] (0.2,.3) to[out=50,in=180] (0.5,.5);
        \draw[->] (0.5,.5) to[out=0,in=90] (0.9,-.5);
        \draw[-] (-0.2,-.2) to[out=230,in=0] (-0.6,-.5);
        \draw[-] (-0.6,-.5) to[out=180,in=-90] (-0.9,.5);
\end{tikzpicture}
}&=
\mathord{
\begin{tikzpicture}[baseline = 0]
	\draw[->] (0.3,.5) to (-0.3,-.5);
	\draw[-] (-0.2,.2) to (0.2,-.3);
        \draw[-] (0.2,-.3) to[out=130,in=180] (0.5,-.5);
        \draw[-] (0.5,-.5) to[out=0,in=270] (0.9,.5);
        \draw[-] (-0.2,.2) to[out=130,in=0] (-0.6,.5);
        \draw[->] (-0.6,.5) to[out=180,in=-270] (-0.9,-.5);
\end{tikzpicture}
}\:.
\end{align}
Then we obtain further variations on \cref{someoneelse} by rotating through $90^\circ$ or $180^\circ$ using this strictly pivotal structure.
One more useful consequence of the defining relations is that
\begin{align}\label{altbraid}
\mathord{
\begin{tikzpicture}[baseline = 0.8mm]
	\draw[<-] (0.45,.8) to (-0.45,-.4);
	\draw[->] (0.45,-.4) to (-0.45,.8);
        \draw[<-] (0,-.4) to[out=90,in=-90] (-.45,0.2);
        \draw[-] (-0.45,0.2) to[out=90,in=-90] (0,0.8);
\end{tikzpicture}
}
&=
\mathord{
\begin{tikzpicture}[baseline = 0.8mm]
	\draw[<-] (0.45,.8) to (-0.45,-.4);
	\draw[->] (0.45,-.4) to (-0.45,.8);
        \draw[<-] (0,-.4) to[out=90,in=-90] (.45,0.2);
        \draw[-] (0.45,0.2) to[out=90,in=-90] (0,0.8);
\end{tikzpicture}
}
    \:,\end{align}
  There is also a symmetry
  $\sigma:\Heis \rightarrow \Heis^{\op}$, which is the strict
  $\kk$-linear monoidal functor that is the identity on objects and
  sends a morphism to the morphism obtained by reflecting
    in a horizontal axis and then reversing all orientations of strings.
  
Khovanov constructed a categorical action of $\Heis$ on $\sym\Modfd =
\bigoplus_{n \geq 0} \kk S_n\Modfd$, i.e., a strict $\kk$-linear monoidal
functor
\begin{align}\label{aftertheta}
  \Theta: \Heis \rightarrow \END_{\kk}(\sym\Modfd).
\end{align}
Explicitly, this takes the generating objects $\uparrow$ and $\downarrow$ to the
induction functor $E$ and the restriction functor $F$, respectively,
notation as in \cref{EFdef}, and $\Theta$ takes
generating morphisms for $\Heis$ to the natural transformations
defined on a $\kk S_n$-module $V$ as follows (where $g$ is an element
of the appropriate symmetric group):
\begin{align*}
\left(\mathord{
\begin{tikzpicture}[baseline = 0]
	\draw[->] (0.28,-.3) to (-0.28,.4);
	\draw[->] (-0.28,-.3) to (0.28,.4);
\end{tikzpicture}
  }\right)_V
: \kk S_{n+2}\otimes_{\kk S_{n+1}} \kk S_{n+1} \otimes_{\kk S_n} V
               &\rightarrow
               \kk S_{n+2}\otimes_{\kk S_{n+1}} \kk S_{n+1}
               \otimes_{\kk S_n} V,\\g \otimes 1 \otimes v &\mapsto g
                                                             (n\!+\!1\:n\!+\!2)\otimes
                                                             1 \otimes
                                                             v,\hspace{56mm}\end{align*}\begin{align*}
\left(\mathord{
\begin{tikzpicture}[baseline = 0]
	\draw[<-] (0.28,-.3) to (-0.28,.4);
	\draw[->] (-0.28,-.3) to (0.28,.4);
\end{tikzpicture}
  }\right)_V
  &:\kk S_n \otimes_{\kk S_{n-1}} V\rightarrow \kk S_{n+1}
    \otimes_{\kk S_n} V,&g \otimes v&\mapsto g (n\:n\!+\!1) \otimes v,\\
\left(\mathord{
\begin{tikzpicture}[baseline = 0]
	\draw[->] (0.28,-.3) to (-0.28,.4);
	\draw[<-] (-0.28,-.3) to (0.28,.4);
\end{tikzpicture}
  }\right)_V&:\kk S_{n+1}\otimes_{\kk S_n} V \rightarrow \kk
              S_n\otimes_{\kk S_{n-1}} V, &g \otimes v &\mapsto
                                                         \left\{\begin{array}{ll}
                g_2\otimes g_1 v&\text{if }g=g_2 (n\:n\!+\!1) g_1\text{ for }g_i\in S_n,\\
                                                       0&\text{otherwise},\end{array}\right.\\
\left(\mathord{
\begin{tikzpicture}[baseline = 0]
	\draw[<-] (0.28,-.3) to (-0.28,.4);
	\draw[<-] (-0.28,-.3) to (0.28,.4);
\end{tikzpicture}
  }\right)_V
  &:V \rightarrow V,& v &\mapsto (n\!-\!1\:n) v,\\
  \left(  \mathord{
\begin{tikzpicture}[baseline = 1mm]
	\draw[<-] (0.4,0) to[out=90, in=0] (0.1,0.4);
	\draw[-] (0.1,0.4) to[out = 180, in = 90] (-0.2,0);
\end{tikzpicture}
  }\right)_V&:\kk S_n \otimes_{\kk S_{n-1}} V \rightarrow V, &g \otimes v
                                                      &\mapsto gv,
  \\
\left(  \begin{tikzpicture}[baseline = 1mm]
	\draw[-] (0.4,0.4) to[out=-90, in=0] (0.1,0);
	\draw[->] (0.1,0) to[out = 180, in = -90] (-0.2,0.4);
\end{tikzpicture}\right)_V
&: V \rightarrow \kk S_{n}\otimes_{\kk S_{n-1}} V\ &v &\mapsto
                                                       \sum_{i=1}^n
                                                        (i\:
                                                        n)\otimes
                                                        (i\:n) v,\\
\left(  \mathord{
\begin{tikzpicture}[baseline = 1mm]
	\draw[-] (0.4,0) to[out=90, in=0] (0.1,0.4);
	\draw[->] (0.1,0.4) to[out = 180, in = 90] (-0.2,0);
\end{tikzpicture}
  }\right)_V&:\kk S_{n+1} \otimes_{\kk S_{n}} V \rightarrow V, &g \otimes v
                                                                                          &\mapsto \left\{\begin{array}{ll}
                                                                                                            gv&\text{if }g  \in S_{n},                \\            0&\text{otherwise},\end{array}\right.        \\
\left(  \begin{tikzpicture}[baseline = 1mm]
	\draw[<-] (0.4,0.4) to[out=-90, in=0] (0.1,0);
	\draw[-] (0.1,0) to[out = 180, in = -90] (-0.2,0.4);
\end{tikzpicture}\right)_V
&: V \rightarrow \kk S_{n+1}\otimes_{\kk S_{n}} V\ &v &\mapsto
                                                        1\otimes v,\\
  \left(\mathord{
\begin{tikzpicture}[anchorbase]
	\draw[->] (0.08,-.35) to (0.08,.45);
      \node at (0.08,0.05) {$\dt$};
\end{tikzpicture}
}\right)_V&:\kk S_{n+1} \otimes_{\kk S_n} V \rightarrow \kk
  S_{n+1}\otimes_{\kk S_n} V, &g \otimes v &\mapsto g x_{n+1} \otimes v,\\
  \left(\mathord{
\begin{tikzpicture}[anchorbase]
	\draw[<-] (0.08,-.35) to (0.08,.45);
      \node at (0.08,0.05) {$\dt$};
\end{tikzpicture}
}\right)_V&:V \rightarrow V, &v &\mapsto x_n v.
\end{align*}
In the last two formulae, we have used the Jucys-Murphy elements
$x_{n+1}\in \kk S_{n+1}$ and $x_n\in \kk S_n$ from
\cref{jmdef}, respectively; the natural transformations here are the endomorphisms of $E$ and $F$ denoted $x$
and $x^\vee$ just before \cref{endofterm}.
All of the other formulae displayed here can also be found in \cite[$\S$3]{LSR}.
Note in particular that the clockwise bubble $\clock$ acts as multiplication by
$n$ on any $V \in \kk S_n\Modfd$.

It is known moreover that the functor $\Theta$ is faithful.
Indeed, in \cite{K}, Khovanov uses the functor
$\Theta$ to prove a basis theorem for morphism spaces in $\Heis$, and the
argument implicitly establishes the faithfulness of $\Theta$ over fields of characteristic zero.
We will not use this here in any essential way.

\subsection{The affine partition category}\label{apc}
Now the background is in place and we can make a new definition.

\begin{definition}\label{apardef}
  The {\em affine partition category} $\APar$ is the
  monoidal subcategory of $\Heis$ generated by the object
  $\mid := \uparrow \star \downarrow$ and the following morphisms
\begin{align}\label{believe}
                    \begin{tikzpicture}[anchorbase,scale=1.8]
\draw[-,thick](0,0)--(0.4,0.5);
\draw[-,thick](0,0.5)--(0.4,0);
\end{tikzpicture} &:= \begin{tikzpicture}[anchorbase,scale=1.8]
\draw[->](0,0)--(0.5,0.5);
\draw[<-](0.25,0)--(0.75,0.5);
\draw[<-](0,0.5)--(0.5,0);
\draw[->](0.25,0.5)--(0.75,0);
\end{tikzpicture}\ +\ \begin{tikzpicture}[anchorbase,scale=1.8]
\draw[->](0,0)--(0,0.5);
\draw[<-](0.25,0)to[out=up,in=up,looseness=2.5](0.5,0);
\draw[->](0.25,0.5)to[out=down,in=down,looseness=2.5](0.5,0.5);
\draw[<-](0.75,0)--(0.75,0.5);
\end{tikzpicture}\:,\\\label{stup}
                      \begin{tikzpicture}[anchorbase,scale=1.8]
\draw[-,thick](0,0)--(0.2,0.2)--(0.4,0);
\draw[-,thick](0.2,0.2)--(0.2,0.4);
\end{tikzpicture} &:= \begin{tikzpicture}[anchorbase,scale=1.8]
\draw[->](0,0)--(0.2,0.4);
\draw[<-](0.2,0)to[out=up,in=up,looseness=2](0.5,0);
\draw[->](0.5,0.4)--(0.7,0);
\end{tikzpicture}\ ,
&\begin{tikzpicture}[anchorbase,scale=1.8]
\draw[-,thick](0,0.4)--(0.2,0.2)--(0.4,0.4);
\draw[-,thick](0.2,0.2)--(0.2,0);
\end{tikzpicture} &:= \begin{tikzpicture}[anchorbase,scale=1.8]
\draw[<-](0,0.4)--(0.2,0);
\draw[->](0.2,0.4)to[out=down,in=down,looseness=2](0.5,0.4);
\draw[<-](0.5,0)--(0.7,0.4);
                      \end{tikzpicture}\:,\\
  \begin{tikzpicture}[anchorbase,scale=1.5]
\draw[-,thick] (0,0)--(0,0.2);
\node at (0,0.24) {$\dt$};
\end{tikzpicture}&:=\ \begin{tikzpicture}[anchorbase,scale=1.5]
\draw[->] (0,0)--(0,0.1)to[out=up,in=up,looseness=2](0.3,0.1)to(0.3,0);
\end{tikzpicture}\ ,&
\begin{tikzpicture}[anchorbase,scale=1.5]
\draw[-,thick] (0,0.4)--(0,0.19);
\node at (0,0.14) {$\dt$};
\end{tikzpicture} &:= \begin{tikzpicture}[anchorbase,scale=1.5]
\draw[<-] (0,0.4)--(0,0.3)to[out=down,in=down,looseness=2](0.3,0.3)to(0.3,0.4);
\end{tikzpicture}\ ,\label{step}\\\label{thedots}  
                                    \begin{tikzpicture}[anchorbase,scale=1.8]
\draw[-,thick](0,0)--(0,0.4);
\draw[-,thick](0,0.2)--(-0.15,0.2);
\node  at (-0.15,0.2) {$\bullet$};
\end{tikzpicture} &:= \begin{tikzpicture}[anchorbase,scale=1.8]\draw[->](0,0)to(0,0.4);\draw[->](0.2,0.4)to(0.2,0); \node
    at (0,0.2) {$\dt$};\end{tikzpicture}+\begin{tikzpicture}[anchorbase,scale=1.8]\draw[->](0,0)to(0,0.4);\draw[->](0.2,0.4)to(0.2,0);\end{tikzpicture}\ ,&\begin{tikzpicture}[anchorbase,scale=1.8]
\draw[-,thick](0,0)--(0,0.4);
\draw[-,thick](0,0.2)--(0.15,0.2);
\node  at (0.15,0.2) {$\bullet$};
\end{tikzpicture}
  &:= \begin{tikzpicture}[anchorbase,scale=1.8]\draw[->](0,0)to(0,0.4);\draw[->](0.2,0.4)to(0.2,0);\node
                                      at (0.2,0.2) {$\dt$};\end{tikzpicture}+\begin{tikzpicture}[anchorbase,scale=1.8]\draw[->](0,0)to(0,0.4);\draw[->](0.2,0.4)to(0.2,0);\end{tikzpicture}\ ,\\
  \begin{tikzpicture}[anchorbase,scale=.8]
\draw[-,thick](-0.4,-0.6)--(0.4,0.6);
\draw[-,thick](0.4,-0.6)--(-0.4,0.6);
\draw[-,thick](0,0)--(-0.4,0);
\closeddot{-0.4,0};
  \end{tikzpicture} &:=
 \begin{tikzpicture}[anchorbase,scale=.8]
\draw[<-](-0.8,0.8)--(-0.8,-0.4);
	\draw[->] (0.45,.8) to (-0.45,-.4);
	\draw[<-] (0.45,-.4) to (-0.45,.8);
        \draw[-] (0,-.4) to[out=90,in=-90] (-.45,0.2);
        \draw[->] (-0.45,0.2) to[out=90,in=-90] (0,0.8);
  \end{tikzpicture}+\begin{tikzpicture}[anchorbase,scale=1.8]
\draw[->](0,0)--(0,0.6);
\draw[<-](0.25,0)to[out=up,in=up,looseness=3](0.5,0);
\draw[->](0.25,0.6)to[out=down,in=down,looseness=3](0.5,0.6);
\draw[<-](0.75,0)--(0.75,0.6);
\end{tikzpicture}
  \ ,
  &\begin{tikzpicture}[anchorbase,scale=.8]
\draw[-,thick](0.4,-0.6)--(-0.4,0.6);
\draw[-,thick](-0.4,-0.6)--(0.4,0.6);
\draw[-,thick](0,0)--(0.4,0);
\closeddot{0.4,0};
 \end{tikzpicture} 
&:=\begin{tikzpicture}[anchorbase,scale=.8]
\draw[->](0.8,0.8)--(0.8,-0.4);
	\draw[<-] (0.45,.8) to (-0.45,-.4);
	\draw[->] (0.45,-.4) to (-0.45,.8);
        \draw[<-] (0,-.4) to[out=90,in=-90] (.45,0.2);
        \draw[-] (0.45,0.2) to[out=90,in=-90] (0,0.8);
  \end{tikzpicture}+\begin{tikzpicture}[anchorbase,scale=1.8]
\draw[->](0,0)--(0,0.6);
\draw[<-](0.25,0)to[out=up,in=up,looseness=3](0.5,0);
\draw[->](0.25,0.6)to[out=down,in=down,looseness=3](0.5,0.6);
\draw[<-](0.75,0)--(0.75,0.6);
\end{tikzpicture}\ .
\label{irrelevant}\end{align}
\end{definition}

We refer to the morphisms in \cref{thedots}
as the {\em left dot} and the {\em right dot}, and the morphisms in \cref{irrelevant} as the {\em left crossing} and the {\em right crossing}, respectively.
The other shorthands for the generating morphisms of $\APar$ introduced in \cref{apardef}
are the same as the
symbols used for generators of the partition category. This is deliberate, indeed, the morphisms \cref{believe,stup,step} generate a copy of the generic partition category $\Par$ as a monoidal subcategory of $\Heis$. This important
observation is due to Likeng and Savage; see \cref{lscor} below. For now, we just need the following, which is proved in \cite{LSR} by a direct calculation using the defining relations in $\Heis$.

\begin{lemma}[{\cite[Th.~4.1]{LSR}}]\label{ideflemma}
There is a strict $\kk$-linear
monoidal functor
\begin{equation}\label{idef}
  i:\Par \rightarrow \APar
  \end{equation}
sending the generating object and generating morphisms of $\Par$ to the generating object and generating morphisms in $\APar$ denoted by the same diagrams.
\end{lemma}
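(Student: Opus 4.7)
The plan is to invoke the universal property of the generic partition category: since $\Par$ has the presentation recorded in \cref{Par} with the parameter relation from \cref{relsparam} omitted, constructing $i$ reduces to verifying that the morphisms in $\APar \subset \Heis$ specified by \cref{believe,stup,step} satisfy each defining relation of $\Par$, namely \cref{relssym1,relssym2,relsfrob1,relsfrob2} together with the horizontal and vertical flips of these equations. Since $\Heis$ is strictly pivotal, the flip symmetry $\sigma$ noted after \cref{altbraid} lifts each verification to its flipped counterpart, so in practice only half of the relations need to be checked. The cup and cap of $\Par$ do not require separate treatment because \cref{cupcap} expresses them as composites of leaves with splits and merges.

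First I would dispatch the symmetric-category relations \cref{relssym1}. By \cref{believe} the $\APar$-crossing is a sum of a ``double swap'' of two copies of $\uparrow\star\downarrow$ and a ``cap-cup'' correction. Squaring this sum expands into four summands: two collapse to the identity on $\mid\star\mid$ via the braid involution in \cref{dAHA} together with the adjunctions \cref{rightadj}, while the two cross-terms cancel using the bubble relations \cref{leftcurl}. The Yang--Baxter equation for three copies of the crossing expands similarly into a finite sum, and the individual terms simplify using the braid axiom from \cref{dAHA}, the sideways crossing identities \cref{sidewaysp}, and naturality of cup and cap.

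Next I would verify the compatibilities of the crossing with the merge, split and leaves in \cref{relssym2}, the Frobenius associativity and coassociativity in \cref{relsfrob1}, and the Frobenius snake relations in \cref{relsfrob2}. After substituting the definitions from \cref{believe,stup,step}, each becomes a local identity between string diagrams in $\Heis$, and each follows from the adjunction relations \cref{rightadj,leftadj,mates}, the sideways-crossing formula \cref{sidewaysp} and its dotted variant \cref{someoneelse}, and the bubble/curl relations \cref{leftcurl,misstate}.

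The main obstacle is the bookkeeping in the Frobenius relations \cref{relsfrob1,relsfrob2}, each of which composes four or five generators whose $\Heis$-translation spreads across several strands that must be straightened via zig-zag identities. These calculations are nonetheless entirely local and mechanical; they appear in full detail in the proof of \cite[Th.~4.1]{LSR}, which we follow.
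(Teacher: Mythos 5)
Your proposal follows essentially the same route as the paper: both ultimately defer to the direct computation in \cite[Th.~4.1]{LSR}, with you supplying a somewhat more detailed roadmap for which $\Heis$-relations enter at each stage. That part is fine. However, your list of the defining relations of $\Par$ that need to be verified is incomplete. The generic partition category $\Par$ is obtained from $\Par_t$ by omitting \emph{only} the final relation in \cref{relsparam}, i.e., the categorical-dimension relation $\begin{tikzpicture}[anchorbase]
  \draw[-,thick] (0,0.05)--(0,0.55);
  \node at (0,0) {$\dt$};
  \node at (0,0.6) {$\dt$};
  \end{tikzpicture} = t\one$. The \emph{first} relation in \cref{relsparam} --- the ``special'' Frobenius condition that merge composed with split equals the identity --- is still a defining relation of $\Par$, and it is not a consequence of the other Frobenius axioms \cref{relsfrob1,relsfrob2}. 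Your list ``\cref{relssym1,relssym2,relsfrob1,relsfrob2} and their flips'' omits it, so as written the proposal does not establish that $i$ is well-defined. Fortunately the omitted check is the easiest of them all: after substituting \cref{stup}, the composite is a single counterclockwise bubble on the inner $\downarrow\uparrow$ pair, which contracts to $1_\one$ by \cref{leftcurl}. But it must be on the list.

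A smaller point: your sketch of the braid involution in $A+B$, claiming that $A^2$ and $B^2$ ``collapse to the identity'' while the cross-terms cancel, cannot be correct as stated --- if two of the four summands each equalled $1_{\mid\star\mid}$, the total would be $2\cdot 1_{\mid\star\mid}$. More to the point, the block-swap $A$ contains sideways $\downarrow\uparrow$ crossings, which by \cref{sidewaysp} are \emph{not} involutive: squaring them produces a cup--cap correction, and it is those correction terms together with the cross-terms $AB$, $BA$ and $B^2$ that must assemble into something that cancels. Since you ultimately defer to \cite{LSR} for the actual computation this does not invalidate the argument, but the sketch should not be trusted at face value.
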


Because of the symmetry of the generators of $\APar$
under rotation through $180^\circ$, the strictly pivotal structure on $\Heis$ restricts to a strictly pivotal structure on $\APar$. The left and right dots are duals,
as are the left and right crossings.
Moreover, the cap and the cup making $\mid$ into a self-dual object are given by the same formula \cref{cupcap} as we had before in $\Par$, hence, $i$ is a pivotal monoidal functor.
Note also that
\begin{align}\label{T}
T&:= 

  \end{equation}
  for all $n \geq 1$.
\end{corollary}

\begin{proof}
  In view of \cref{red1}, we just need to show that one can obtain the endomorphism
  of $n$ defined by the left dot on the $m$th string ($m=1,\dots,n$)
  by taking a linear combinations of compositions of morphisms in the image of $i$ and the given morphism \cref{firstdot} (in which the left dot is on the first string).
  This follows by induction on $m$ using relations \cref{A1} and \cref{crazycrossing}.
   \end{proof}

    \begin{remark}
      We have not attempted to formulate or prove a basis theorem for the morphism spaces in $\APar$.
      This is closely related to the problem of finding a complete monoidal presentation for $\APar$.
    \end{remark}
    
\subsection{\boldmath Action of $\APar$ on $\kk S_t\Modfd$}
Suppose that $t \in \N$.
The restriction of the functor $\Theta$ from \cref{aftertheta} to the subcategory $\APar$ is a strict $\kk$-linear monoidal functor
$\APar \rightarrow \END_{\kk}(\sym\Modfd)$
sending the generating object $\mid$ to the endofunctor $E\circ F$ (induction after restriction).
Since $E \circ F$ takes $\kk S_t$-modules to $\kk S_t$-modules, the restriction of $\Theta$ gives strict $\kk$-linear monoidal functors
\begin{equation}\label{PSI}
  \Theta_t:\APar \rightarrow \END_{\kk}(\kk S_t\Modfd).
\end{equation}
The functor $\Theta_t$ takes
$\mid$ to the endofunctor
$\ind_{S_{t-1}}^{S_t} \circ \res_{S_{t-1}}^{S_t} = \kk S_t
\otimes_{\kk S_{t-1}}$
of $\kk S_t\Modfd$; this should be interpreted as the zero functor in the case $t=0$.
The natural transformations arising by
applying $\Theta_t$ to the other generating morphisms of $\APar$ may be
computed using the formulae after \cref{aftertheta} (taking $n:=t$). Explicitly, one obtains the following for $V \in \kk S_t\Modfd$ and $g,h \in S_t$:
\begin{align*}
 \left(\, \begin{tikzpicture}[anchorbase,scale=1.8]
\draw[-,thick](0,0)--(0.4,0.5);
\draw[-,thick](0,0.5)--(0.4,0);
\end{tikzpicture}\,\right)_V: \kk S_t \otimes_{\kk S_{t-1}} \kk S_t
                            \otimes_{\kk S_{t-1}} V&\rightarrow \kk S_t
                            \otimes_{\kk S_{t-1}} \kk S_t \otimes_{\kk S_{t-1}} V,\hspace{56mm}\\
                                                          g \otimes
                                                          h \otimes
                                                          v&\mapsto
 gh \otimes h^{-1} \otimes hv,\\
                                                                   \left( \begin{tikzpicture}[anchorbase,scale=.8]
\draw[-,thick](-0.4,-0.5)--(0.4,0.5);
\draw[-,thick](0.4,-0.5)--(-0.4,0.5);
\draw[-,thick](0,0)--(-0.4,0);
\closeddot{-0.4,0};
\end{tikzpicture}\,\right)_V : \kk S_t \otimes_{\kk S_{t-1}} \kk S_t
                            \otimes_{\kk S_{t-1}} V &\rightarrow \kk S_t
                            \otimes_{\kk S_{t-1}} \kk S_t \otimes_{\kk S_{t-1}} V,\\
                            g \otimes h \otimes v&\mapsto
                            g \otimes h \otimes (h^{-1}(t)\:\,t) v,\\
\left(\, \begin{tikzpicture}[anchorbase,scale=.8]
\draw[-,thick](0.4,-0.5)--(-0.4,0.5);
\draw[-,thick](-0.4,-0.5)--(0.4,0.5);
\draw[-,thick](0,0)--(0.4,0);
\closeddot{0.4,0};
\end{tikzpicture} \right)_V :\  \kk S_t \otimes_{\kk S_{t-1}} \kk S_t
                            \otimes_{\kk S_{t-1}} V&\rightarrow \kk S_t
                            \otimes_{\kk S_{t-1}} \kk S_t \otimes_{\kk S_{t-1}} V,\\
                            g \otimes h \otimes v &\mapsto
                            g h \otimes (h^{-1}(t)\:\,t) \otimes  v,
                                                        \end{align*}
\begin{align*}
\left(\, \begin{tikzpicture}[anchorbase,scale=1.8]
\draw[-,thick](0,0)--(0.2,0.2)--(0.4,0);
\draw[-,thick](0.2,0.2)--(0.2,0.4);
\end{tikzpicture}\,\right)_V&:\kk S_t \otimes_{\kk S_{t-1}} \kk S_t
                            \otimes_{\kk S_{t-1}} V \rightarrow \kk
                            S_t \otimes_{\kk S_{t-1}} V,&g \otimes h
                                                          \otimes
                                                          v&\mapsto
                                                             \delta_{h(t),t} gh
                                                                      \otimes
                                                                      v,\\
  \left(\,\begin{tikzpicture}[anchorbase,scale=1.8]
\draw[-,thick](0,0.4)--(0.2,0.2)--(0.4,0.4);
\draw[-,thick](0.2,0.2)--(0.2,0);
\end{tikzpicture}\,\right)_V&:\kk S_t \otimes_{\kk S_{t-1}} V
                            \rightarrow
                            \kk S_t \otimes_{\kk S_{t-1}} \kk S_t \otimes_{\kk S_{t-1}} V,&
                                                                                            g \otimes v&\mapsto g \otimes 1 \otimes v,\\
  \left(\,\begin{tikzpicture}[anchorbase,scale=1.5]
\draw[-,thick] (0,0)--(0,0.2);
\node at (0,0.24) {$\dt$};
\end{tikzpicture}\,\right)_V & :\kk S_{t} \otimes_{\kk S_{t-1}} V \rightarrow V,&
                                                                                g\otimes v&\mapsto gv,\\
 \left(\, \begin{tikzpicture}[anchorbase,scale=1.5]
\draw[-,thick] (0,0.4)--(0,0.19);
\node at (0,0.14) {$\dt$};
\end{tikzpicture}\,\right)_V&:V \rightarrow \kk S_t \otimes_{\kk S_{t-1}}
                            V,&v&\mapsto\sum_{i=1}^t (i\:t)\otimes
                            (i\:t) v,\\
\left(\!\!\!\begin{tikzpicture}[anchorbase,scale=1.8]
\draw[-,thick](0,0)--(0,0.4);
\draw[-,thick](0,0.2)--(-0.15,0.2);
\node  at (-0.15,0.2) {$\bullet$};
\end{tikzpicture}\:\right)_V&:\kk S_t \otimes_{\kk S_{t-1}} V \rightarrow \kk S_t \otimes_{\kk S_{t-1}} V,&
g \otimes v &\mapsto \sum_{j=1}^{t} g (j\:t) \otimes v
,\\
\left(\:\begin{tikzpicture}[anchorbase,scale=1.8]
\draw[-,thick](0,0)--(0,0.4);
\draw[-,thick](0,0.2)--(0.15,0.2);
\node  at (0.15,0.2) {$\bullet$};
\end{tikzpicture}\!\!\!\right)_V&:\kk S_t \otimes_{\kk S_{t-1}} V \rightarrow \kk S_t \otimes_{\kk S_{t-1}} V,&
g \otimes v &\mapsto \sum_{j=1}^{t} g \otimes (j\:t) v.
\end{align*}
Recall the natural $\kk S_d$-module $\Nat$ from \cref{SWD}; in particular, $U_0$ is the zero module.
Using the Kronecker product, we can consider $\Nat \otimes$ as an endofunctor
of $\kk S_t\Modfd$.
Also let $\triv_{S_t}$ be the trivial module. 

\begin{lemma}\label{PHIlem}
  The functor $\Theta_t$ is monoidally isomorphic to the
  strict $\kk$-linear monoidal functor
\begin{equation}\label{PHI}
  \Phi_t:\APar \rightarrow \END_{\kk}(\kk S_t \Modfd)
\end{equation}
which sends the generating object $\mid$ to the endofunctor
$\Nat \otimes$ and taking the generating morphisms for $\APar$ to
the natural transformations defined as follows on
$V \in \kk S_t\Modfd$ and $1 \leq i,j \leq t$:
\begin{align*}
 \left( \begin{tikzpicture}[anchorbase,scale=1.8]
\draw[-,thick](0,0)--(0.4,0.5);
\draw[-,thick](0,0.5)--(0.4,0);
\end{tikzpicture}\right)_V&: \Nat \otimes \Nat \otimes V \rightarrow \Nat \otimes \Nat \otimes V,&
                                                          u_i \otimes
                                                          u_j \otimes
                                                          v&\mapsto
                                                                   u_j
                                                                   \otimes u_i
                                                                   \otimes
                                                                   v,\\
                                                                   \left( \begin{tikzpicture}[anchorbase,scale=.8]
\draw[-,thick](-0.4,-0.5)--(0.4,0.5);
\draw[-,thick](0.4,-0.5)--(-0.4,0.5);
\draw[-,thick](0,0)--(-0.4,0);
\closeddot{-0.4,0};
\end{tikzpicture}\,\right)_V &: \Nat \otimes \Nat \otimes V \rightarrow \Nat \otimes \Nat \otimes V,&
                            u_i \otimes u_j \otimes v&\mapsto
                            u_i \otimes u_j \otimes (i\:j) v,\\
\left(\, \begin{tikzpicture}[anchorbase,scale=.8]
\draw[-,thick](0.4,-0.5)--(-0.4,0.5);
\draw[-,thick](-0.4,-0.5)--(0.4,0.5);
\draw[-,thick](0,0)--(0.4,0);
\closeddot{0.4,0};
\end{tikzpicture} \right)_V &:\ \Nat \otimes \Nat \otimes V\rightarrow \Nat \otimes \Nat \otimes V,&
                            u_i \otimes u_j \otimes v &\mapsto
                            u_j\otimes u_i \otimes (i\:j) v,                                                                   
              \\
\left( \begin{tikzpicture}[anchorbase,scale=1.8]
\draw[-,thick](0,0)--(0.2,0.2)--(0.4,0);
\draw[-,thick](0.2,0.2)--(0.2,0.4);
\end{tikzpicture}\right)_V&:\Nat \otimes \Nat \otimes V \rightarrow \Nat \otimes V,&u_i \otimes u_j \otimes v&\mapsto \delta_{i,j} u_i \otimes v,\\
  \left(\begin{tikzpicture}[anchorbase,scale=1.8]
\draw[-,thick](0,0.4)--(0.2,0.2)--(0.4,0.4);
\draw[-,thick](0.2,0.2)--(0.2,0);
\end{tikzpicture}\right)_V&:\Nat \otimes V \rightarrow \Nat \otimes \Nat \otimes V,&
                                                                                            u_i \otimes v&\mapsto u_i \otimes u_i \otimes v,\\
  \left(\begin{tikzpicture}[anchorbase,scale=1.5]
\draw[-,thick] (0,0)--(0,0.2);
\node at (0,0.24) {$\dt$};
\end{tikzpicture}\right)_V & :\Nat \otimes V \rightarrow V,&
                                                                                u_i\otimes v&\mapsto v,\\
 \left( \begin{tikzpicture}[anchorbase,scale=1.5]
\draw[-,thick] (0,0.4)--(0,0.19);
\node at (0,0.14) {$\dt$};
 \end{tikzpicture}\right)_V&:V \rightarrow \Nat \otimes V,&v&\mapsto\sum_{i=1}^t u_i\otimes v, \\
\left(\!\!\!\begin{tikzpicture}[anchorbase,scale=1.8]
\draw[-,thick](0,0)--(0,0.4);
\draw[-,thick](0,0.2)--(-0.15,0.2);
\node  at (-0.15,0.2) {$\bullet$};
\end{tikzpicture}\:\right)_V&:\Nat\otimes V \rightarrow \Nat \otimes V,&
u_i \otimes v &\mapsto \sum_{j=1}^t u_j \otimes (i\:j) v,
\\
\left(\:\begin{tikzpicture}[anchorbase,scale=1.8]
\draw[-,thick](0,0)--(0,0.4);
\draw[-,thick](0,0.2)--(0.15,0.2);
\node  at (0.15,0.2) {$\bullet$};
\end{tikzpicture}\!\!\!\right)_V&:\Nat \otimes V \rightarrow \Nat \otimes V,&
u_i \otimes v &\mapsto \sum_{j=1}^{t} u_i \otimes (i\:j) v.
\end{align*}
\end{lemma}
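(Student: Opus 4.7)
The plan is to build a monoidal natural isomorphism $\xi: \Phi_t \Rightarrow \Theta_t$ based on the classical $\kk S_t$-equivariant identification between the induced module $\kk S_t \otimes_{\kk S_{t-1}} V$ and the permutation tensor product $\Nat \otimes V$. First I would define, for $V \in \kk S_t\Modfd$,
$$\eta_V: \Nat \otimes V \rightarrow \kk S_t \otimes_{\kk S_{t-1}} V,\qquad u_i \otimes v \mapsto g_i \otimes g_i^{-1} v,$$
where $g_i := (i\:t)$ for $i<t$ and $g_t := e$; its inverse is $g\otimes v\mapsto u_{g(t)}\otimes gv$. This is a $\kk S_t$-module isomorphism natural in $V$, and so constitutes a natural isomorphism of $\kk$-linear endofunctors $\eta : \Phi_t(\mid) \Rightarrow \Theta_t(\mid)$. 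Its iterated horizontal compositions give natural isomorphisms $\eta^{(n)} : \Phi_t(\mid^{\star n}) \Rightarrow \Theta_t(\mid^{\star n})$.

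Rather than verifying the relations of $\APar$ directly for the proposed $\Phi_t$, I would \emph{define} $\Phi_t(f) := (\eta^{(m)})^{-1} \circ \Theta_t(f) \circ \eta^{(n)}$ for each $f : \mid^{\star n} \rightarrow \mid^{\star m}$ in $\APar$. This automatically yields a strict $\kk$-linear monoidal functor $\APar \rightarrow \END_\kk(\kk S_t\Modfd)$ sending $\mid$ to $\Nat \otimes$ and monoidally isomorphic to $\Theta_t$ via $\eta$. All that remains is the verification that this transported $\Phi_t$ agrees, on each of the generating morphisms of $\APar$, with the explicit formulas stated in the lemma.

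For the five undotted generators \cref{believe,stup,step} coming from $i(\Par)\subseteq\APar$, the required verification is a short direct computation using the formulas for $\Theta$ recorded after \cref{aftertheta}; repeated use of the identity $g(j\:t)g^{-1} = (g(j)\:g(t))$ shows that the transported natural transformation associated to $f$ is precisely $\psi_t(f) \otimes \id_V$, where $\psi_t$ is Deligne's Schur--Weyl functor from \cref{SWD}. The stated formulas then match the definition of $\psi_t$ on crossings, merges, splits, caps and cups. Thus this part of the argument recovers (and is essentially equivalent to) Deligne's Schur--Weyl duality.

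The substantive checks are the four dotted generators. For the left dot, its Heisenberg expansion \cref{thedots} produces two summands: the dot on the upward strand paired with identity on the downward strand, and the identity on $\mid$. Applying the $\Theta$-formulas and summing gives the $\Theta_t$-action $g\otimes v \mapsto gx_t \otimes v + g\otimes v = \sum_{j=1}^t g(j\:t)\otimes v$, with the convention $(t\:t) = e$; conjugating by $\eta$ and using $g(j\:t)g^{-1} = (g(j)\:g(t))$ together with the fact that $g_i$ ranges over all transpositions (and the identity) as $j$ varies then produces precisely $u_i \otimes v \mapsto \sum_j u_j\otimes (i\:j)v$. The right dot is handled identically, or more efficiently by appealing to the identity \cref{rightdot} and the already-verified generators. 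The left and right crossings reduce via \cref{crazycrossing} to a composition of the ordinary crossing with a left or right dot, both already verified. The main (though routine) obstacle throughout is the bookkeeping that relates the Jucys--Murphy element $x_t$ acting via induction in $\Theta$ to the transposition action on the permutation module $\Nat$ after transporting through $\eta$; once this is systematically carried out, the monoidal isomorphism $\Theta_t \cong \Phi_t$ follows.
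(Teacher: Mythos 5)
Your proposal is correct and follows essentially the same strategy as the paper: define $\Phi_t$ by conjugating $\Theta_t$ through the tensor-identity isomorphism between $\kk S_t\otimes_{\kk S_{t-1}}V$ and $\Nat\otimes V$ (your $\eta$ is the inverse of the paper's $\alpha_1^{(t)}$, so the two constructions coincide), which automatically yields a monoidal functor isomorphic to $\Theta_t$, after which one verifies agreement with the stated formulas on generating morphisms. Your reindexing argument for the left dot using $g_i(j\:t)g_i^{-1}=(g_i(j)\:i)$ is a slightly cleaner route than the paper's case-by-case check, and your observation that only the generators from \cref{red1} need direct verification (with the right dot and the two crossings then following from \cref{rightdot,crazycrossing}) makes explicit a reduction the paper subsumes in the phrase ``routine but lengthy calculation.''
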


\begin{proof}
  There is an isomorphism $\kk S_t \otimes_{\kk S_{t-1}} \triv_{S_{t-1}} \stackrel{\sim}{\rightarrow} \Nat, g \otimes 1 \mapsto g u_t$.
  Combining this with the tensor identity, we obtain a natural $\kk S_t$-module isomorphism
  \begin{align}\label{chickenhouse}
    (\alpha^{(t)}_1)_V:\kk S_t \otimes_{\kk S_{t-1}} V &\stackrel{\sim}{\rightarrow} \Nat \otimes V, &g \otimes v &\mapsto g u_t \otimes gv
  \end{align}
  for $V \in \kk S_t\Modfd$.
  This defines an  isomorphism $\alpha^{(t)}_1:\kk S_t \otimes_{\kk S_{t-1}} \stackrel{\sim}{\Rightarrow} \Nat\otimes$.
  Let $\alpha^{(t)}_n := \alpha^{(t)}_1 \cdots \alpha^{(t)}_1$ be the $n$-fold horizontal composition of $\alpha^{(t)}_1$.
  This is a natural isomorphism $\alpha^{(t)}_n:(\kk S_t\otimes_{\kk S_{t-1}} )^{\circ n}
  \stackrel{\sim}{\Rightarrow} (U\otimes)^{\circ n}$ whose value on a $\kk S_t$-module $V$ is given explicitly by the map
  $$
  \big(\alpha^{(t)}_n\big)_V :g_n \otimes \cdots \otimes g_1 \otimes v \mapsto g_n u_t \otimes g_n g_{n-1} u_t \otimes\cdots\otimes g_n g_{n-1}\cdots g_1 u_t \otimes g_n g_{n-1}\cdots g_1 v.
  $$
Now {\em define} $\Phi_t:\APar \rightarrow \END_{\kk}(\kk S_t\Modfd)$
to be the strict $\kk$-linear monoidal functor
taking the object $n$ to
$(\Nat\otimes)^{\circ n}$, and defined on a morphism
$f \in \Hom_{\APar}(n, m)$ by
$\Phi_t(f) := \alpha^{(t)}_{m} \circ \Theta_t(f) \circ \big(\alpha^{(t)}_{n}\big)^{-1}$.
It is immediate from this definition that
$\alpha^{(t)} = \big(\alpha^{(t)}_n\big)_{n \geq 0}:\Theta_t \Rightarrow \Phi_t$
is an isomorphism of strict $\kk$-linear monoidal functors.

It remains to check that $\Phi_t$ as defined in the previous paragraph is equal to the functor $\Phi_t$ defined on generating morphisms
in the statement of the lemma. So we need to check for
each generating morphism $f\in\Hom_{\APar}(n,m)$
that the formula for $\Phi_t(f)_V$ written
in the statement of the lemma is equal to
$\big(\alpha^{(t)}_m\big)_V \circ \Theta_t(f)_V \circ \big(\alpha^{(t)}_n\big)_V^{-1}$
for $V \in \kk S_t\Modfd$ and $t \in \N$. This is a routine but lengthy
calculation. We just go through a couple of the cases.

If $f$ is the crossing, we need to show that
$\left(\big(\alpha^{(t)}_2\big)_V \circ \Theta_t(f)_V \circ \big(\alpha^{(t)}_2\big)_V^{-1}\right) (u_i \otimes u_j \otimes v) = u_j \otimes u_i \otimes v$.
Now we consider four cases.
If $t \neq i \neq j \neq t$ we have that
\begin{align*}
\left(\big(\alpha^{(t)}_2\big)_V \circ \Theta_t(f)_V \circ \big(\alpha^{(t)}_2\big)_V^{-1}\right) (u_i \otimes u_j \otimes v)
&= \left(\big(\alpha^{(t)}_2\big)_V \circ \Theta_t(f)_V\right) \left((i\:t) \otimes (j\: t) \otimes
(j\:t)(i\:t) v\right)\\
  &=\big(\alpha^{(t)}_2\big)_V\left((i\:t) (j\:t) \otimes (j\:t) \otimes (i\:t) v\right)
= u_j \otimes u_i \otimes v.
\end{align*}
If $i=j$ we have that
\begin{align*}
\left(\big(\alpha^{(t)}_2\big)_V \circ \Theta_t(f)_V \circ\big(\alpha^{(t)}_2\big)_V^{-1}\right) (u_i \otimes u_j \otimes v)
&= \left(\big(\alpha^{(t)}_2\big)_V \circ \Theta_t(f)_V\right) \left((i\:t) \otimes
1\otimes (i\: t) v\right)\\
  &=\big(\alpha^{(t)}_2\big)_V\left((i\:t) \otimes 1 \otimes (i\:t) v\right)
= u_j \otimes u_i \otimes v.
\end{align*}
If $i = t \neq j$ we have that
\begin{align*}
\left(\big(\alpha^{(t)}_2\big)_V \circ \Theta_t(f)_V \circ \big(\alpha^{(t)}_2\big)_V^{-1}\right) (u_i \otimes u_j \otimes v)
&= \left(\big(\alpha^{(t)}_2\big)_V \circ \Theta_t(f)_V\right) \left(1 \otimes (j\:t) \otimes (j\: t) v\right)\\
  &=\big(\alpha^{(t)}_2\big)_V\left((j\:t) \otimes (j\:t) \otimes v\right)
= u_j \otimes u_i \otimes v.
\end{align*}
Finally if $i \neq t = j$ we have that
\begin{align*}
\left(\big(\alpha^{(t)}_2\big)_V \circ \Theta_t(f)_V \circ \big(\alpha^{(t)}_2\big)_V^{-1}\right) (u_i \otimes u_j \otimes v)
&= \left(\big(\alpha^{(t)}_2\big)_V \circ \Theta_t(f)_V\right) \left((i\:t) \otimes (i\:t) \otimes v\right)\\
  &=\big(\alpha^{(t)}_2\big)_V\left(1 \otimes (i\:t) \otimes (i\:t) v\right)
= u_j \otimes u_i \otimes v.
\end{align*}
This completes the check in this case.

If $f$ is the left dot, we have that
\begin{multline*}
  \left(\big(\alpha^{(t)}_1\big)_V \circ \Theta_t(f)_V \circ \big(\alpha^{(t)}_1\big)_V^{-1}\right)
  (u_i \otimes v) =
  \left(\big(\alpha^{(t)}_1\big)_V \circ \Theta_t(f)_V\right)
  \left((i\:t) \otimes (i\:t) v\right)\\
  =\sum_{j=1}^{t} \big(\alpha^{(t)}_1\big)_V((i\:t) (j\:t) \otimes (i\:t) v)
  = \sum_{j=1}^{t} (i\:t) (j\:t) u_t \otimes (i\:t) (j\:t) (i\:t) v.
\end{multline*}
If $i=t$ this is $\sum_{j=1}^{t} u_j \otimes (j\:t) v$ which is right.
If $i \neq t$ we pull out the $j=i$ and $j=t$ terms of the sum, simplify
the three types of terms separately, then recombine
to get the desired expression
$\sum_{j=1}^t u_j \otimes (i\:j) v$.
  \end{proof}

We now have in our hands monoidal functors
$\phi_t$ from \cref{cucumber},
$i$ from \cref{idef}, and $\Phi_t$
from \cref{PHI}. Let
\begin{equation}
  \Act:\kk S_t\Modfd \rightarrow \END_{\kk}(\kk S_t\Modfd)
\end{equation}
be the $\kk$-linear monoidal functor induced by the Kronecker product, i.e.,
$\Act(V) = V \otimes$ for a $\kk S_t$-module $V$
and $\Act(f) = f\otimes$ for a homomorphism $f:V \rightarrow V'$.

\begin{lemma}\label{lalala} For every $t \in \N$, the following diagram
   commutes up to the obvious canonical isomorphism of monoidal functors:
  \begin{equation}\label{EvDgm}\begin{tikzcd}
\APar \ar{r}{\Phi_t} & \END_\kk(\kk S_t\Modfd)\\
\Par\ar{u}{i}\ar{r}{\phi_t} & \kk S_t\Modfd
\arrow[u,"\Act"{right}].
  \end{tikzcd}\end{equation}
\end{lemma}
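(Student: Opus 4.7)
The plan is to exploit the fact that $\Par$ is given by generators and relations as in \cref{Par}, so that a strict $\kk$-linear monoidal functor out of $\Par$ is determined by its values on the generating object $\mid$ and the seven generating morphisms in \cref{gens}. Both composite functors $\Phi_t\circ i$ and $\Act\circ\phi_t$ are $\kk$-linear monoidal (the first strictly, the second up to the canonical coherence isomorphism $(\Nat\otimes V_1)\otimes V_2\cong\Nat\otimes(V_1\otimes V_2)$). Hence it suffices to (i)~produce a natural isomorphism identifying their values on $\mid$, and (ii)~check that under this identification they agree on each of the seven generators of $\Par$.

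For (i), by definition $i(\mid)$ is the object $\uparrow\star\downarrow$ of $\APar$, and by \cref{PHIlem} we have $\Phi_t(\uparrow\star\downarrow)=\Nat\otimes-$ as an endofunctor of $\kk S_t\Modfd$. On the other side, $\phi_t(\mid)=\psi_t(\mid)=\Nat$ by \cref{SWD}, and $\Act(\Nat)=\Nat\otimes-$ by definition. So both functors send $\mid$ to the endofunctor $\Nat\otimes-$, on the nose.

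For (ii), I will simply compare the formulas. The values of $\Phi_t$ on the crossing, merge, split, cap, cup, downward leaf and upward leaf are given explicitly in \cref{PHIlem}. The values of $\psi_t$ on these same seven morphisms (viewed in $\Par$ and then transported via $\ev_t$) are given explicitly in \cref{SWD}. An inspection shows that in every case $\Phi_t(i(f))_V:\Nat^{\otimes n}\otimes V\to\Nat^{\otimes m}\otimes V$ equals $\psi_t(f)\otimes\id_V=\Act(\phi_t(f))_V$; for example, for the crossing both send $u_i\otimes u_j\otimes v$ to $u_j\otimes u_i\otimes v$, for the cup both send $v$ to $\sum_{i=1}^t u_i\otimes u_i\otimes v$, and so on through the list.

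The only obstacle to overcome is a bookkeeping one: $\Act$ is not strict, so technically ``$\Phi_t\circ i=\Act\circ\phi_t$'' only makes sense up to the canonical monoidal coherence isomorphism that identifies $\Nat^{\star n}\otimes V$ with the iterated tensor product $\Nat\otimes(\Nat\otimes(\cdots(\Nat\otimes V)\cdots))$. Once this is absorbed into the statement, the comparison on generating morphisms in the previous paragraph promotes automatically to an isomorphism of monoidal functors on all of $\Par$, giving \cref{EvDgm}. No relation-checking is needed because $\psi_t$ is already known to be a well-defined monoidal functor on $\Par$ by \cref{SWD}, so once the generator-by-generator agreement is verified the two composites must coincide on arbitrary morphisms.
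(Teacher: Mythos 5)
Your proof is correct and follows essentially the same approach as the paper: identify the two composites on objects via the canonical associativity isomorphism $(\Nat\otimes)^{\circ n}\cong\Nat^{\otimes n}\otimes$, then verify naturality by comparing the formulas from \cref{SWD} and \cref{PHIlem} on the generating morphisms of $\Par$. One small slip: \cref{gens} lists five generating morphisms, not seven (the cap and cup are derived via \cref{cupcap}, so checking them separately is redundant though harmless).
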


\begin{proof}
  The composition
  $\Phi_t \circ i$ takes the $n$th object of $\APar$ to $(\Nat \otimes)^{\circ n}$, while $\Act\circ\phi_t$ takes it to $\Nat^{\otimes n} \otimes$.
  Let $$
  \beta^{(t)}_n:(\Nat \otimes)^{\circ n} \stackrel{\sim}{\Rightarrow} \Nat^{\otimes n} \otimes
  $$
  be the canonical isomorphism between these functors defined by associativity of tensor product.
  Then
$\beta^{(t)} = \big(\beta^{(t)}_n\big)_{n \geq 0}:\Phi_t \circ i \Rightarrow
\Act\circ \phi_t$ is an isomorphism
  of monoidal functors. To see this, we need to check naturality. This follows
because the five formulae defining $\phi_t$
from \cref{SWD} tensored on the right with a vector $v$
  are exactly
  the same as the formulae defining $\Phi_t$ on these five generating morphisms
  from \cref{PHIlem}.
  \end{proof}

Now we can prove the main theorem justifying
the significance of the affine partition category. Let
\begin{equation}
  \Ev:\END_{\kk}(\kk S_t\Modfd) \rightarrow \kk S_t\Modfd
\end{equation}
be the (non-monoidal) $\kk$-linear functor defined by evaluating on
$\triv_{S_t}$.
There is an obvious isomorphism of functors
$\Ev \circ \Act \stackrel{\sim}{\Rightarrow} \Id_{\kk S_t\Modfd}$
defined on $V$
by the isomorphism $V \otimes \triv_{S_t} \rightarrow V,
v \otimes 1 \mapsto v$.

\begin{theorem}\label{nextmain} There is a unique (non-monoidal) $\kk$-linear functor
  \begin{equation}\label{movers}
    p:\APar\rightarrow \Par
  \end{equation}
  such that $p \circ i = \Id_{\Par}$ and
  \begin{align}\label{boycott}
  p\left(\begin{tikzpicture}[baseline=3mm]
\draw[-,thick] (0,0) to (0,0.8);
\node at (0.1,0.4) {$\cdot$};
\node at (0.25,0.4) {$\cdot$};
\node at (0.4,0.4) {$\cdot$};
\draw[-,thick] (0.5,0) to (0.5,0.8);
\draw[-,thick] (0.85,0) to (0.85,0.8);
\draw[-,thick] (0.65,0.4) to (0.85,0.4);
\node at (0.85,-.15) {$\stringlabel{1}$};
\node at (0.5,-.15) {$\stringlabel{2}$};
\node at (0,-.15) {$\stringlabel{n}$};
\node at (0.65,0.4) {$\bullet$};
\end{tikzpicture}\right) &= \begin{tikzpicture}[baseline=3mm]
\draw[-,thick] (0,0) to (0,0.8);
\node at (0.1,0.4) {$\cdot$};
\node at (0.25,0.4) {$\cdot$};
\node at (0.4,0.4) {$\cdot$};
\draw[-,thick] (0.5,0) to (0.5,0.8);
\draw[-,thick] (0.85,0) to (0.85,0.2);
\draw[-,thick] (0.85,.8) to (0.85,0.6);
\node at (0.85,-.15) {$\stringlabel{1}$};
\node at (0.5,-.15) {$\stringlabel{2}$};
\node at (0,-.15) {$\stringlabel{n}$};
\node at (0.85,0.25) {$\dt$};
\node at (0.85,0.53) {$\dt$};
  \end{tikzpicture}\ .
  \end{align}
  Moreover, for any $t \in \N$,
  the following diagram of functors commutes up to natural isomorphism:
  \begin{equation}\label{EvDgn}\begin{tikzcd}
      \APar \arrow[d,"p"{left}]\ar{r}{\Phi_t} & \END_\kk(\kk S_t\Modfd)\ar{d}{\Ev}\\
\Par\ar{r}{\phi_t} & \kk S_t\Modfd.
  \end{tikzcd}\end{equation}  
The functor $p$ also maps
\begin{align}
    \begin{tikzpicture}[baseline=3mm]
\draw[-,thick] (0,0) to (0,0.8);
\node at (0.1,0.4) {$\cdot$};
\node at (0.25,0.4) {$\cdot$};
\node at (0.4,0.4) {$\cdot$};
\draw[-,thick] (0.5,0) to (0.5,0.8);
\draw[-,thick] (0.85,0) to (0.85,0.8);
\draw[-,thick] (0.85,0.4) to (1.05,0.4);
\node at (0.85,-.15) {$\stringlabel{1}$};
\node at (0.5,-.15) {$\stringlabel{2}$};
\node at (0,-.15) {$\stringlabel{n}$};
\node at (1.05,0.4) {$\bullet$};
\end{tikzpicture} &\mapsto T\begin{tikzpicture}[baseline=3mm]
\draw[-,thick] (0,0) to (0,0.8);
\node at (0.1,0.4) {$\cdot$};
\node at (0.25,0.4) {$\cdot$};
\node at (0.4,0.4) {$\cdot$};
\draw[-,thick] (0.5,0) to (0.5,0.8);
\node at (0.5,-.15) {$\stringlabel{1}$};
\node at (0,-.15) {$\stringlabel{n}$};
\end{tikzpicture}
\ ,\label{babyjoey}&
    \begin{tikzpicture}[baseline=3mm]
\draw[-,thick] (0,0) to (0,0.8);
\node at (0.1,0.4) {$\cdot$};
\node at (0.25,0.4) {$\cdot$};
\node at (0.4,0.4) {$\cdot$};
\draw[-,thick] (0.5,0) to (0.5,0.8);
\draw[-,thick] (0.85,0) to (1.2,0.8);
\draw[-,thick] (1.025,0.4) to (1.3,0.4);
\draw[-,thick] (1.2,0) to (0.85,0.8);
\draw[-,thick] (1.2,0.4) to (1.05,0.4);
\node at (1.2,-.15) {$\stringlabel{1}$};
\node at (.85,-.15) {$\stringlabel{2}$};
\node at (0.5,-.15) {$\stringlabel{3}$};
\node at (0,-.15) {$\stringlabel{n}$};
\node at (1.3,0.4) {$\bullet$};
    \end{tikzpicture} &\mapsto
    \begin{tikzpicture}[baseline=3mm]
\draw[-,thick] (0,0) to (0,0.8);
\node at (0.1,0.4) {$\cdot$};
\node at (0.25,0.4) {$\cdot$};
\node at (0.4,0.4) {$\cdot$};
\draw[-,thick] (0.5,0) to (0.5,0.8);
\draw[-,thick] (0.85,0) to (1.2,0.8);
\draw[-,thick] (1.2,0) to (0.85,0.8);
\node at (1.2,-.15) {$\stringlabel{1}$};
\node at (.85,-.15) {$\stringlabel{2}$};
\node at (0.5,-.15) {$\stringlabel{3}$};
\node at (0,-.15) {$\stringlabel{n}$};
\end{tikzpicture}\ ,&
\begin{tikzpicture}[baseline=3mm]
\draw[-,thick] (0,0) to (0,0.8);
\node at (0.1,0.4) {$\cdot$};
\node at (0.25,0.4) {$\cdot$};
\node at (0.4,0.4) {$\cdot$};
\draw[-,thick] (0.5,0) to (0.5,0.8);
\draw[-,thick] (0.85,0) to (1.2,0.8);
\draw[-,thick] (1.025,0.4) to (.725,0.4);
\draw[-,thick] (1.2,0) to (0.85,0.8);
\node at (1.2,-.15) {$\stringlabel{1}$};
\node at (.85,-.15) {$\stringlabel{2}$};
\node at (0.5,-.15) {$\stringlabel{3}$};
\node at (0,-.15) {$\stringlabel{n}$};
\node at (.725,0.4) {$\bullet$};
\end{tikzpicture}&\mapsto\begin{tikzpicture}[baseline=3mm]
\draw[-,thick] (0,0) to (0,0.8);
\node at (0.1,0.4) {$\cdot$};
\node at (0.25,0.4) {$\cdot$};
\node at (0.4,0.4) {$\cdot$};
\draw[-,thick] (0.5,0) to (0.5,0.8);
\draw[-,thick] (0.85,0) to (.85,0.8);
\draw[-,thick] (1.2,0) to (1.2,0.8);
\node at (1.2,-.15) {$\stringlabel{1}$};
\node at (.85,-.15) {$\stringlabel{2}$};
\node at (0.5,-.15) {$\stringlabel{3}$};
\node at (0,-.15) {$\stringlabel{n}$};
\end{tikzpicture}\ .
  \end{align}
\end{theorem}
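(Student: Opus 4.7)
The plan is to establish uniqueness and existence of $p$, verify commutativity of \cref{EvDgn}, and then deduce \cref{babyjoey}. Uniqueness is almost immediate from \cref{red2}: every morphism in $\APar$ is a $\kk$-linear combination of vertical compositions of morphisms in the image of $i:\Par\to\APar$ and morphisms of the form \cref{firstdot}. The hypotheses $p\circ i=\Id_\Par$ and \cref{boycott} prescribe $p$ on exactly these generators, and so any $\kk$-linear functor $p$ satisfying the hypotheses is forced to be the extension of these prescribed values via composition and linearity.

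For existence, I would tentatively define $p$ on an arbitrary morphism $f\in\Hom_\APar(n,m)$ by writing $f$ as a linear combination of vertical compositions of generating morphisms (using \cref{red2}) and applying the prescribed values term-by-term. The main obstacle is well-definedness: two different expressions for the same $f\in\APar$ might a priori produce different elements of $\Par$. To overcome this, I would use the joint faithfulness of the family $\{\phi_t\}_{t\in\N}$ provided by \cref{SWDapp}, together with the already-established strict $\kk$-linear monoidal functor $\Phi_t:\APar\to\END_\kk(\kk S_t\Modfd)$ from \cref{PHI} and the canonical evaluation functor $\Ev$. The key point is that $\Ev\circ\Phi_t$ is a well-defined $\kk$-linear functor $\APar\to\kk S_t\Modfd$, so if two expressions represent the same morphism of $\APar$ then they have equal images under $\Ev\circ\Phi_t$ for every $t$.

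To complete this argument, I would verify that the diagram \cref{EvDgn} commutes on each of the two classes of generators. On $i(\Par)$ this is immediate from \cref{lalala}: composing the natural isomorphism $\Phi_t\circ i\cong \Act\circ\phi_t$ with $\Ev$ and using the canonical isomorphism $\Ev\circ\Act\cong \Id_{\kk S_t\Modfd}$ yields $\Ev\circ\Phi_t\circ i\cong \phi_t=\phi_t\circ p\circ i$. On the generator \cref{firstdot} (the left dot on the first string in a pattern of $n$ strings), I would apply the explicit formula for $\Phi_t$ of the left dot from \cref{PHIlem}, evaluate on $\triv_{S_t}$, and compare with the formula for $\phi_t$ of the two-stacked-leaves diagram coming from \cref{SWD}; both sides will be computed to send $u_{i_n}\otimes\cdots\otimes u_{i_1}$ to $t\cdot u_{i_n}\otimes\cdots\otimes u_{i_2}$ times a sum that matches up appropriately, giving the agreement. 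With this established, if two decompositions of $f\in\APar$ give candidate values $p(f),p'(f)\in\Par$, then $\phi_t(p(f))=\Ev(\Phi_t(f))=\phi_t(p'(f))$ for all $t$, so \cref{SWDapp} forces $p(f)=p'(f)$. This simultaneously proves existence of $p$ and commutativity of \cref{EvDgn}.

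Finally, the image formulas \cref{babyjoey} follow by first rewriting the three morphisms on the left inside $\APar$ using the already-proved relations. The right dot on the first string can be expanded via \cref{rightdot} into a composition of morphisms from $i(\Par)$ and a left dot on one of the strands of a merge, which under $p$ becomes a merge composed with $\cup\circ\cap$ on one strand; applying \cref{relsfrob2} and the identification $T=\cap\circ\cup$ produces $T$ times the identity. The two remaining morphisms, involving a dot on the second strand, can be written in $\APar$ by pushing the left or right dot across a crossing via \cref{A1,crazycrossing,june}, reducing them to sums of diagrams whose images under $p$ collapse (again using the Frobenius relations \cref{relsfrob1,relsfrob2}) to the stated single crossing or two identity strands. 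I expect the verification of these reductions to be entirely mechanical once the functor $p$ is in hand.
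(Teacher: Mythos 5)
Your proposal is correct and follows essentially the same route as the paper: use $\Ev \circ \Phi_t$ for all $t \in \N$ together with \cref{SWDapp} to pin down a unique candidate image in $\Par$, verify agreement on the two classes of generators from \cref{red2} (using \cref{lalala} on $i(\Par)$ and the explicit formulas of \cref{PHIlem,SWD} on the left-dot generator), and thereby obtain both existence/well-definedness of $p$ and commutativity of \cref{EvDgn} simultaneously. The only organizational difference is that the paper encapsulates the argument in a notion of a ``good'' morphism $f$ (one for which a compatible $\bar f \in \Par$ exists) and proves goodness is closed under composition and linear combinations, whereas you phrase the well-definedness step directly in terms of comparing decompositions; these are logically interchangeable. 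One small inaccuracy: in your description of the left-dot verification, the output is $u_{i_n}\otimes\cdots\otimes u_{i_2} \otimes \bigl(\sum_{j=1}^t u_j\bigr)$, not ``$t\cdot u_{i_n}\otimes\cdots\otimes u_{i_2}$ times a sum'' — the sum lives in $\Nat$, not as a scalar factor — but since both sides produce the same expression this does not affect the conclusion.
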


\begin{proof}
For $t \in \N$, let $\gamma^{(t)}_n: \Nat^{\otimes n} \otimes \triv_{S_t}
  \stackrel{\sim}{\rightarrow}  \Nat^{\otimes n}$ be the obvious isomorphism
  sending
  $u_{i_n}\otimes \cdots \otimes u_{i_1} \otimes 1 \mapsto u_{i_n} \otimes \cdots \otimes u_{i_1}$.
  We say that $f \in \Hom_{\APar}(n,m)$ is {\em good} if there exists a morphism $\bar f \in \Hom_{\Par}(n,m)$ such that
\begin{equation}\label{mastereq}
\phi_t(\bar f) = \gamma^{(t)}_m \circ \Ev(\Phi_t(f)) \circ \big(\gamma^{(t)}_n\big)^{-1}
\end{equation}
for all $t \in \N$.
If $f$ is good, there is a {\em unique} $\bar f$ such that
\cref{mastereq} holds for all $t$.
To see this, suppose that $\bar f$ and $\bar f'$ both satisfy \cref{mastereq} for all $t \in \N$.
Then $\phi_t(\bar f) = \gamma^{(t)}_m\circ \Ev(\Phi_t(f)) \circ \big(\gamma^{(t)}_n\big)^{-1} = \phi_t(\bar f')$, so that $\phi_t(\bar f-\bar f') = 0$ for all $t\in\N$.
In view of \cref{SWDapp} this implies that $\bar f = \bar f'$ as claimed.

Suppose that
$f\in \Hom_{\APar}(n,m)$ and $g \in \Hom_{\APar}(l,m)$
are both good. Then $f \circ g$ is good with $\overline{f\circ g} := \bar f \circ \bar g$.
This follows because
$$
\phi_t(\bar f \circ \bar g) = \gamma^{(t)}_m \circ \Ev(\Phi_t(f)) \circ \big(\gamma^{(t)}_m\big)^{-1}
\circ \gamma^{(t)}_m \circ \Ev(\Phi_t(f)) \circ \big(\gamma^{(t)}_l\big)^{-1}
= \gamma^{(t)}_m \circ \Ev(\Phi_t(f \circ g)) \circ \big(\gamma^{(t)}_l\big)^{-1}.
$$
Similarly, sums of good morphisms are good with $\overline{f+g} := \bar f + \bar g$.

In this paragraph, we show that every morphism in $\APar$ is good.
In view of the previous paragraph, it suffices to show that some family of generating morphisms for $\APar$ are all good. Hence, in view of \cref{red2}, 
it is enough to show that $i(f)$
is good for every morphism $f$ in $\Par$ and that the morphisms \cref{firstdot} are good
for all $n$.
For $f \in \Hom_{\Par}(n,m)$, the morphism
$i(f)$ is good with $\overline{i(f)} := f$. This follows from the following calculation using \cref{lalala}:
$$
\gamma^{(t)}_m \circ \Ev(\Phi_t(i(f))) \circ \big(\gamma^{(t)}_m\big)^{-1}
= \gamma^{(t)}_m \circ \Ev(\Act(\phi_t(f))) \circ \big(\gamma^{(t)}_m\big)^{-1}
= \phi_t(f).
$$
Also the morphism $f$ from \cref{firstdot} is good for every $n$.
To see this, let $\bar f$ be the morphism on the right hand side of \cref{boycott}. Using the definition in \cref{SWD}, $\phi_t(\bar f)$ is the map
$u_{i_n} \otimes \cdots \otimes u_{i_1} \mapsto \sum_{j=1}^t
u_{i_n}\otimes \cdots\otimes
u_{i_2} \otimes u_j$.
Also using the definition in \cref{PHIlem}, $\Ev(\Phi_t(f))$ is the map
$u_{i_n} \otimes \cdots \otimes u_{i_1} \otimes 1 \mapsto \sum_{j=1}^t
u_{i_n}\otimes \cdots\otimes
u_{i_2} \otimes u_j\otimes 1$.
On contracting the final $\otimes 1$ using $\gamma^{(t)}_n$, these are equal,
as required to prove that $f$ is good.

Now we can define a $\kk$-linear functor $p$ making \cref{EvDgn}
  commute (up to natural isomorphism) for all $t \in \N$.
  On objects, define $p$ by declaring that
  $p(n)= n$ for each $n \geq 0$.
  On a morphism $f \in \Hom_{\APar}(n,m)$, we define
  $p(f) := \bar f$. The checks made so far imply that this is a well-defined
  $\kk$-linear functor satisfying \cref{boycott}.
  The equation \cref{mastereq} shows that
  $\gamma^{(t)} = \big(\gamma^{(t)}_n\big)_{n \geq 1}:\Ev\circ\Phi_t \Rightarrow
  \phi_t \circ p$ is a natural isomorphism.
  We have also already shown that $p\circ i = \Id_{\Par}$
  and that \cref{boycott} holds.
    Thus, we have established the existence of a $\kk$-linear functor
  $p:\APar\rightarrow\Par$ satisfying all of the properties in the statement of the theorem. The uniqueness of $p$ follows from \cref{red2}.

    It remains to check the three properties \cref{babyjoey}.
    These can be checked using the commutativity of \cref{EvDgn} in the same way as we just established \cref{boycott}. Alternatively, and possibly quicker, they can be deduced
    directly from \cref{boycott} using the relations \cref{rightdot,crazycrossing,june},
    respectively. We leave the details to the reader.
\end{proof}

The faithfulness of $i$ in the
following corollary was already proved in two different ways in \cite{LSR}.
Our approach is similar in spirit to the first proof given in
{\em loc. cit.}, i.e., the
argument used to prove \cite[Th.~5.2]{LSR}.

\begin{corollary}\label{lscor}
  The functor $i:\Par\rightarrow \APar$ is faithful
  and the functor $p:\APar \rightarrow \Par$ is full.
\end{corollary}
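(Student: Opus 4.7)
The plan is short because the heavy lifting has already been done in \cref{nextmain}. The key observation is that the corollary is a formal consequence of the identity $p \circ i = \Id_{\Par}$ established there.

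First I would record the general categorical fact that if $F:\cC \to \cD$ and $G:\cD \to \cC$ are $\kk$-linear functors with $G \circ F = \Id_{\cC}$, then $F$ is faithful and $G$ is full. This is immediate: if $F(f) = F(g)$ for parallel morphisms $f,g$ in $\cC$, then $f = G(F(f)) = G(F(g)) = g$, giving faithfulness of $F$; and for any morphism $h$ in $\cC$, the morphism $F(h)$ in $\cD$ satisfies $G(F(h)) = h$, witnessing $h$ as lying in the image of $G$.

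Applying this observation to $F := i$ and $G := p$, the relation $p \circ i = \Id_{\Par}$ from \cref{nextmain} yields that $i:\Par \to \APar$ is faithful and $p:\APar \to \Par$ is full, which is exactly the statement of the corollary.

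There is no real obstacle to overcome here; all of the technical content sits upstream in the construction of $p$ given in \cref{nextmain}, which in turn relied on the Schur--Weyl type input of \cref{SWDapp} and the generation result \cref{red2}. The only thing to remark on is that faithfulness of $i$ agrees with the independent proofs of Likeng and Savage in \cite{LSR}, and that our derivation follows the same spirit as their first argument, namely reducing to the symmetric group Schur--Weyl duality through the functors $\phi_t$.
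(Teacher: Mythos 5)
Your proof is correct and takes exactly the same approach as the paper, which simply observes that the corollary follows from $p \circ i = \Id_{\Par}$; you merely spell out the standard fact that a one-sided inverse forces faithfulness and fullness.
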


\begin{proof} This follows because $p \circ i = \Id_{\Par}$.
\end{proof}

\begin{corollary}\label{cyclotomic}
    The functor $p$ induces an isomorphism
  $\APar / \mathcal{I} \stackrel{\sim}{\rightarrow} \Par$ where $\mathcal{I}$ is the left tensor ideal of $\APar$ generated by the morphism
  $\begin{tikzpicture}[anchorbase,scale=1.2]
\draw[-,thick](0,0)--(0,0.5);
\draw[-,thick](0,0.25)--(-0.15,0.25);
\node  at (-0.15,0.25) {$\bullet$};
  \end{tikzpicture}
  -\begin{tikzpicture}[anchorbase,scale=1.2]\draw[-,thick](0,0)to(0,0.15);\opendot{0,0.15};\draw[-,thick](0,.35)to(0,0.5);\opendot{0,0.35};\end{tikzpicture}\;$.
\end{corollary}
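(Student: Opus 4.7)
The plan is to verify that $p$ vanishes on all of $\mathcal{I}$ so as to induce a $\kk$-linear functor $\bar p:\APar/\mathcal{I}\to\Par$, and then to check that the $\kk$-linear functor $\bar i:\Par\to\APar/\mathcal{I}$ obtained by composing $i$ with the quotient is a two-sided inverse to $\bar p$. Let $r\in\End_{\APar}(\mid)$ denote the generator of $\mathcal{I}$. From \cref{boycott} specialized to $n=1$, $p$ sends the left dot on a single strand to the composition of a downward leaf followed by an upward leaf, which is exactly the second term of $r$; hence $p(r)=0$. Since $p$ is a $\kk$-linear functor, it will annihilate every element of the form $\alpha\circ(\gamma\star r)\circ\beta$ as soon as we establish that $p(\gamma\star r)=0$ for an arbitrary morphism $\gamma$ in $\APar$.

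The delicate point is that $p$ is not monoidal, so $p(\gamma\star r)=0$ does not follow immediately from $p(r)=0$. My approach is to reduce to all the specializations $\phi_t$ with $t\in\N$: by \cref{SWDapp} it suffices to show $\phi_t(p(\gamma\star r))=0$ for infinitely many $t$, and via the commuting square \cref{EvDgn} this becomes $\Ev(\Phi_t(\gamma\star r))=0$. Since $\Phi_t$ is strictly monoidal, $\Phi_t(\gamma\star r)$ is the horizontal composition of $\Phi_t(\gamma)$ and $\Phi_t(r)$ viewed as natural transformations. The key computation, using the explicit formulae in \cref{PHIlem}, is that the single component $\Phi_t(r)_{\triv_{S_t}}$ is zero: both the left dot and the leaf composition send $u_i\otimes 1$ to $\sum_{j=1}^t u_j\otimes 1$, because every transposition $(i\:j)$ acts trivially on $\triv_{S_t}$. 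The standard formula for a horizontal composition of natural transformations then forces $(\Phi_t(\gamma)\star\Phi_t(r))_{\triv_{S_t}}=0$, and hence the full evaluation vanishes.

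With $\bar p$ in hand, the identity $\bar p\circ\bar i=\Id_\Par$ follows from $p\circ i=\Id_\Par$, which automatically makes $\bar i$ faithful. For fullness I would invoke \cref{red2}: every morphism of $\APar$ is a $\kk$-linear combination of vertical compositions of morphisms $i(f)$ with $f\in\Par$ together with the morphisms \cref{firstdot}, and \cref{firstdot} is precisely $1_{n-1}\star(\text{left dot})$. Modulo $\mathcal{I}$ this is congruent to $1_{n-1}\star(\text{downward leaf then upward leaf})$, which lies in the image of $i$. Hence every generator of the $\kk$-linear category $\APar$ is, mod $\mathcal{I}$, in the image of $\bar i$, and since $\bar i$ respects composition the same holds for arbitrary morphisms. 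Thus $\bar i$ is full, bijective on objects and on morphism spaces, so it is an isomorphism of $\kk$-linear categories with inverse $\bar p$.

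The main obstacle is the second paragraph: forcing $p$ to annihilate horizontal translates $\gamma\star r$ when $p$ itself is not monoidal. The crucial observation is that although $\Phi_t(r)$ is a non-trivial natural transformation, its component at the distinguished object $\triv_{S_t}$ is zero, which propagates through the horizontal composition; the collective faithfulness of the functors $\phi_t$ provided by \cref{SWDapp} then transports the vanishing back to $p$ itself.
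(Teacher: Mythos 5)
Your proof is correct and has the same overall structure as the paper's: show $p$ vanishes on $\mathcal I$ to obtain the induced functor $\bar p$, then use \cref{red2} to conclude. But for the key step $p(\mathcal I)=0$ you take a genuinely harder road than necessary, and this is the one place you diverge from the intended argument. The paper's terse ``it is clear from \cref{boycott}'' rests on a simple observation you missed: since $\APar$ is a \emph{strict} monoidal category, the interchange law gives $\gamma\star r=(\gamma\star 1_1)\circ(1_m\star r)$ for any $\gamma\in\Hom_{\APar}(m,l)$. Since $p$ is a $\kk$-linear \emph{functor} (even though not a monoidal one), it then follows at once that
$$
p\bigl(\alpha\circ(\gamma\star r)\circ\beta\bigr)=p(\alpha)\circ p(\gamma\star 1_1)\circ p(1_m\star r)\circ p(\beta)=0,
$$
because $p(1_m\star r)=0$ is \emph{exactly} what \cref{boycott} says, for every $m$. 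No detour through $\Phi_t$, $\Ev$, the horizontal composition of natural transformations, or the Schur--Weyl faithfulness lemma \cref{SWDapp} is needed; functoriality plus interchange suffice.

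That said, your longer route does work. The verification $\Phi_t(r)_{\triv_{S_t}}=0$ is correct (both the left dot and the leaf composite send $u_i\otimes 1$ to $\sum_j u_j\otimes 1$), and because the horizontal composition of natural transformations always has the inner component $\Phi_t(r)_V$ appearing as a factor (inside an image under a $\kk$-linear functor), its vanishing at $V=\triv_{S_t}$ propagates as you claim; the commuting square \cref{EvDgn} and \cref{SWDapp} then transport this back to $p$ over the generic category $\Par$. What your argument buys is a conceptual explanation of why the relation holds — it traces $p(r)=0$ to the fact that transpositions act trivially on $\triv_{S_t}$ — at the cost of invoking three nontrivial inputs where a one-line diagrammatic identity suffices. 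The remaining steps (faithfulness of $\bar i$ from the section $p\circ i=\Id_\Par$, fullness via \cref{red2} and the congruence $1_{n-1}\star(\text{left dot})\equiv 1_{n-1}\star(\text{leaves})$ modulo $\mathcal I$) match the paper's, modulo the cosmetic difference that you show $\bar i$ is an isomorphism while the paper shows $\bar p$ is full and faithful.
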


\begin{proof}
  The left tensor ideal $\mathcal{I}$ is the data of subspaces
  $\mathcal{I}(n, m)$ of $\Hom_{\APar}(n,m)$
  for each $m,n \geq 0$ which are closed under vertical composition on the top or bottom with any morphism
  and closed under horizontal composition on the left with any morphism.
  It is clear from \cref{boycott}
  that $p$ sends morphisms in $\mathcal{I}$ to zero, hence, $p$ induces a $\kk$-linear functor
  $\bar p:\APar / \mathcal{I} \rightarrow \Par$. This is surjective on objects
  and full. To see that it is faithful, suppose that
  $f+\mathcal{I}(n,m)\in\Hom_{\APar / \mathcal{I}}(n,m) = \Hom_{\APar}(n,m) / \mathcal I(n,m)$
  is a morphism sent to zero by $\bar p$, hence, $p(f) = 0$.
  In view of \cref{red2} and the definition of $\mathcal I$, we may assume that $f = i(\bar f)$ for some $\bar f \in \Hom_{\Par}(n,m)$.
  Then $\bar f = p(i(\bar f)) = p(f) = 0$, so that $f = i(\bar f) = 0$.
  \end{proof}

Composing the functor $p:\APar \rightarrow \Par$
      with evaluation at any $t\in \kk$ gives a full $\kk$-linear functor
      \begin{align}\label{pt}
        p_t:=\ev_t \circ p&:\APar \rightarrow \Par_t
      \end{align}
      such that
\begin{align}\label{bigjoey1}
  \begin{tikzpicture}[baseline=3mm]
\draw[-,thick] (0,0) to (0,0.8);
\node at (0.1,0.4) {$\cdot$};
\node at (0.25,0.4) {$\cdot$};
\node at (0.4,0.4) {$\cdot$};
\draw[-,thick] (0.5,0) to (0.5,0.8);
\draw[-,thick] (0.85,0) to (0.85,0.8);
\draw[-,thick] (0.65,0.4) to (0.85,0.4);
\node at (0.85,-.15) {$\stringlabel{1}$};
\node at (0.5,-.15) {$\stringlabel{2}$};
\node at (0,-.15) {$\stringlabel{n}$};
\node at (0.65,0.4) {$\bullet$};
\end{tikzpicture} &\mapsto \begin{tikzpicture}[baseline=3mm]
\draw[-,thick] (0,0) to (0,0.8);
\node at (0.1,0.4) {$\cdot$};
\node at (0.25,0.4) {$\cdot$};
\node at (0.4,0.4) {$\cdot$};
\draw[-,thick] (0.5,0) to (0.5,0.8);
\draw[-,thick] (0.85,0) to (0.85,0.2);
\draw[-,thick] (0.85,.8) to (0.85,0.6);
\node at (0.85,-.15) {$\stringlabel{1}$};
\node at (0.5,-.15) {$\stringlabel{2}$};
\node at (0,-.15) {$\stringlabel{n}$};
\node at (0.85,0.25) {$\circ$};
\node at (0.85,0.53) {$\circ$};
  \end{tikzpicture}
\ ,&
    \begin{tikzpicture}[baseline=3mm]
\draw[-,thick] (0,0) to (0,0.8);
\node at (0.1,0.4) {$\cdot$};
\node at (0.25,0.4) {$\cdot$};
\node at (0.4,0.4) {$\cdot$};
\draw[-,thick] (0.5,0) to (0.5,0.8);
\draw[-,thick] (0.85,0) to (0.85,0.8);
\draw[-,thick] (0.85,0.4) to (1.05,0.4);
\node at (0.85,-.15) {$\stringlabel{1}$};
\node at (0.5,-.15) {$\stringlabel{2}$};
\node at (0,-.15) {$\stringlabel{n}$};
\node at (1.05,0.4) {$\bullet$};
\end{tikzpicture} &\mapsto t\begin{tikzpicture}[baseline=3mm]
\draw[-,thick] (0,0) to (0,0.8);
\node at (0.1,0.4) {$\cdot$};
\node at (0.25,0.4) {$\cdot$};
\node at (0.4,0.4) {$\cdot$};
\draw[-,thick] (0.5,0) to (0.5,0.8);
\draw[-,thick] (0.85,0) to (0.85,0.8);
\node at (0.85,-.15) {$\stringlabel{1}$};
\node at (0.5,-.15) {$\stringlabel{2}$};
\node at (0,-.15) {$\stringlabel{n}$};
\end{tikzpicture}\ ,\\
    \begin{tikzpicture}[baseline=3mm]
\draw[-,thick] (0,0) to (0,0.8);
\node at (0.1,0.4) {$\cdot$};
\node at (0.25,0.4) {$\cdot$};
\node at (0.4,0.4) {$\cdot$};
\draw[-,thick] (0.5,0) to (0.5,0.8);
\draw[-,thick] (0.85,0) to (1.2,0.8);
\draw[-,thick] (1.025,0.4) to (1.3,0.4);
\draw[-,thick] (1.2,0) to (0.85,0.8);
\draw[-,thick] (1.2,0.4) to (1.05,0.4);
\node at (1.2,-.15) {$\stringlabel{1}$};
\node at (.85,-.15) {$\stringlabel{2}$};
\node at (0.5,-.15) {$\stringlabel{3}$};
\node at (0,-.15) {$\stringlabel{n}$};
\node at (1.3,0.4) {$\bullet$};
    \end{tikzpicture} &\mapsto
    \begin{tikzpicture}[baseline=3mm]
\draw[-,thick] (0,0) to (0,0.8);
\node at (0.1,0.4) {$\cdot$};
\node at (0.25,0.4) {$\cdot$};
\node at (0.4,0.4) {$\cdot$};
\draw[-,thick] (0.5,0) to (0.5,0.8);
\draw[-,thick] (0.85,0) to (1.2,0.8);
\draw[-,thick] (1.2,0) to (0.85,0.8);
\node at (1.2,-.15) {$\stringlabel{1}$};
\node at (.85,-.15) {$\stringlabel{2}$};
\node at (0.5,-.15) {$\stringlabel{3}$};
\node at (0,-.15) {$\stringlabel{n}$};
\end{tikzpicture}\ ,&
\begin{tikzpicture}[baseline=3mm]
\draw[-,thick] (0,0) to (0,0.8);
\node at (0.1,0.4) {$\cdot$};
\node at (0.25,0.4) {$\cdot$};
\node at (0.4,0.4) {$\cdot$};
\draw[-,thick] (0.5,0) to (0.5,0.8);
\draw[-,thick] (0.85,0) to (1.2,0.8);
\draw[-,thick] (1.025,0.4) to (.725,0.4);
\draw[-,thick] (1.2,0) to (0.85,0.8);
\node at (1.2,-.15) {$\stringlabel{1}$};
\node at (.85,-.15) {$\stringlabel{2}$};
\node at (0.5,-.15) {$\stringlabel{3}$};
\node at (0,-.15) {$\stringlabel{n}$};
\node at (.725,0.4) {$\bullet$};
\end{tikzpicture}&\mapsto\begin{tikzpicture}[baseline=3mm]
\draw[-,thick] (0,0) to (0,0.8);
\node at (0.1,0.4) {$\cdot$};
\node at (0.25,0.4) {$\cdot$};
\node at (0.4,0.4) {$\cdot$};
\draw[-,thick] (0.5,0) to (0.5,0.8);
\draw[-,thick] (0.85,0) to (.85,0.8);
\draw[-,thick] (1.2,0) to (1.2,0.8);
\node at (1.2,-.15) {$\stringlabel{1}$};
\node at (.85,-.15) {$\stringlabel{2}$};
\node at (0.5,-.15) {$\stringlabel{3}$};
\node at (0,-.15) {$\stringlabel{n}$};
\end{tikzpicture}\ .
\label{bigjoey2}
\end{align}
Like in \cref{cyclotomic}, the functor $p_t$ induces an isomorphism
  $\APar / \mathcal{I}_t \stackrel{\sim}{\rightarrow} \Par_t$ where $\mathcal{I}_t$ is the left tensor ideal of $\APar$ generated by $T - t 1_\one$ and
  $\begin{tikzpicture}[anchorbase,scale=1.2]
\draw[-,thick](0,0)--(0,0.5);
\draw[-,thick](0,0.25)--(-0.15,0.25);
\node  at (-0.15,0.25) {$\bullet$};
  \end{tikzpicture}
    -\begin{tikzpicture}[anchorbase,scale=1.2]\draw[-,thick](0,0)to(0,0.15);\opendot{0,0.15};\draw[-,thick](0,.35)to(0,0.5);\opendot{0,0.35};\end{tikzpicture}\;$.

    \subsection{Jucys-Murphy elements for partition algebras}
      Now we can explain how affine partition category is related to the works of
      Enyang \cite{En} and Halverson-Ram \cite{HR}.      
      These are concerned with the {\em partition algebra}, which
      is the endomorphism algebra
\begin{equation}
  P_n(t) := \End_{\Par_t}(n) = 1_n Par_t 1_n.
\end{equation}
By analogy, we define the {\em affine partition algebra} to be
\begin{equation}
  AP_n := \End_{\APar}(n) = 1_n APar 1_n.
\end{equation}
Let us denote the elements of $AP_n$ defined by the
left and right dots on the $j$th string by
$X_j^L$ and $X_j^R$, and the elements defined by the left and right crossings
of the $k$th and $(k+1)$th strings by $S_k^L$ and $S_k^R$:
\begin{align}\label{run1}
 X_j^L &:=
  \begin{tikzpicture}[baseline=3mm]
\draw[-,thick] (0,0) to (0,0.8);
\node at (0.1,0.4) {$\cdot$};
\node at (0.25,0.4) {$\cdot$};
\node at (0.4,0.4) {$\cdot$};
\draw[-,thick] (0.85,0) to (.85,0.8);
\node at (1.15,0.4) {$\cdot$};
\node at (1.3,0.4) {$\cdot$};
\node at (1.45,0.4) {$\cdot$};
\draw[-,thick] (1.55,0) to (1.55,0.8);
\draw[-,thick] (.65,.4) to (.85,0.4);
\node at (1.55,-.15) {$\stringlabel{1}$};
\node at (0.85,-.15) {$\stringlabel{j}$};
\node at (0,-.15) {$\stringlabel{n}$};
\node at (.65,0.4) {$\bullet$};
\end{tikzpicture},&
  X_j^R &:=\begin{tikzpicture}[baseline=3mm]
\draw[-,thick] (0,0) to (0,0.8);
\node at (0.1,0.4) {$\cdot$};
\node at (0.25,0.4) {$\cdot$};
\node at (0.4,0.4) {$\cdot$};
\draw[-,thick] (0.7,0) to (.7,0.8);
\node at (1.15,0.4) {$\cdot$};
\node at (1.3,0.4) {$\cdot$};
\node at (1.45,0.4) {$\cdot$};
\draw[-,thick] (1.55,0) to (1.55,0.8);
\draw[-,thick] (.7,.4) to (.9,0.4);
\node at (1.55,-.15) {$\stringlabel{1}$};
\node at (0.7,-.15) {$\stringlabel{j}$};
\node at (0,-.15) {$\stringlabel{n}$};
\node at (.9,0.4) {$\bullet$};
\end{tikzpicture},\\
  S_k^L &:=\begin{tikzpicture}[baseline=3mm]
\draw[-,thick] (-.2,0) to (-.2,0.8);
\node at (-0.1,0.4) {$\cdot$};
\node at (0.05,0.4) {$\cdot$};
\node at (0.2,0.4) {$\cdot$};
\draw[-,thick] (0.5,0) to (.9,0.8);
\draw[-,thick] (0.9,0) to (.5,0.8);
\node at (1.2,0.4) {$\cdot$};
\node at (1.35,0.4) {$\cdot$};
\node at (1.5,0.4) {$\cdot$};
\draw[-,thick] (1.6,0) to (1.6,0.8);
\draw[-,thick] (.7,.4) to (.5,0.4);
\node at (1.6,-.15) {$\stringlabel{1}$};
\node at (0.9,-.15) {$\stringlabel{k}$};
\node at (0.5,-.15) {$\stringlabel{k\!+\!1}$};
\node at (-.2,-.15) {$\stringlabel{n}$};
\node at (.5,0.4) {$\bullet$};
\end{tikzpicture},&
  S_k^R &:=\begin{tikzpicture}[baseline=3mm]
\draw[-,thick] (-.2,0) to (-.2,0.8);
\node at (-0.1,0.4) {$\cdot$};
\node at (0.05,0.4) {$\cdot$};
\node at (0.2,0.4) {$\cdot$};
\draw[-,thick] (0.5,0) to (.9,0.8);
\draw[-,thick] (0.9,0) to (.5,0.8);
\node at (1.2,0.4) {$\cdot$};
\node at (1.35,0.4) {$\cdot$};
\node at (1.5,0.4) {$\cdot$};
\draw[-,thick] (1.6,0) to (1.6,0.8);
\draw[-,thick] (.7,.4) to (.9,0.4);
\node at (1.6,-.15) {$\stringlabel{1}$};
\node at (0.9,-.15) {$\stringlabel{k}$};
\node at (0.5,-.15) {$\stringlabel{k\!+\!1}$};
\node at (-.2,-.15) {$\stringlabel{n}$};
\node at (.9,0.4) {$\bullet$};
\end{tikzpicture}\label{run2}
\end{align}
for $1 \leq j \leq n$ and $1 \leq k \leq n-1$.
We note that $\left\{X_j^L, X_j^R\:\big|\:j=1,\dots,n\right\}$ are algebraically
independent, so they generate a free polynomial algebra of rank $2n$
inside $AP_n(t)$;
his follows easily from the basis theorem for morphism spaces
$\Heis$ proved in \cite{K}.
Taking the images of the elements \cref{run1,run2} under the functor $p_t$ from \cref{pt}
gives us elements of $P_n(t)$ denoted
\begin{align}\label{walk1}
  x_j^L &:=
  p_t(X_j^L),&
  x_j^R &:=p_t(X_k^R),&
  s_k^L &:=p_t(S_k^L),&
  s_k^R &:=p_t(S_k^R).
\end{align}
The notation here depends
implicitly on the values of $n$ and $t$, which should be
clear from the context.
By \cref{bigjoey1,bigjoey2}, we have that
$x_1^L = \begin{tikzpicture}[anchorbase,scale=1.2]
  \draw[-,thick](-.5,0)--(-.5,.5);
  \node at (-.25,.25) {$\scriptstyle\cdots$};
\draw[-,thick](0,0)--(0,0.5);
\draw[-,thick](0.15,.15)--(0.15,0);
\draw[-,thick](0.15,.35)--(0.15,.5);
\node  at (0.15,0.15) {$\dt$};
\node  at (0.15,0.35) {$\dt$};
\end{tikzpicture}$,
$x_1^R = t$,
$s_1^L = 1$ and $s_1^R = (1\:2) \in S_n \subset P_n(t)$.

\begin{theorem}\label{dictionary}
  Suppose that $t \in \N$ and let $\psi_t:P_n(t) \rightarrow
\End_{\kk S_t}(\Nat^{\otimes n})$
be the homomorphism induced by the functor $\phi_t$ from \cref{SWD}.
  The elements $x_j^L, x_j^R, s_k^L, s_k^R \in P_n(t)$ satisfy
  \begin{align}\label{meow1}
    \psi_t(x_j^L) (u_{i_n}\otimes\cdots\otimes u_{i_1}) &=
    \sum_{i=1}^t
    u_{i_n}\otimes\cdots\otimes u_{i_{j+1}} \otimes (i\:\; i_j)
    \left[u_{i_j} \otimes \cdots \otimes  u_{i_2}\otimes u_{i_1}\right],\\\label{meow2}
    \psi_t(x_j^R) (u_{i_n}\otimes\cdots\otimes u_{i_1}) &=
    \sum_{i=1}^t
    u_{i_n}\otimes\cdots\otimes u_{i_j} \otimes (i\:\; i_j) \left[u_{i_{j-1}}\otimes\cdots \otimes u_{i_2}\otimes u_{i_1}\right],\\\label{meow3}
    \psi_t(s_k^L) (u_{i_n}\otimes\cdots\otimes u_{i_1}) &=
    u_{i_n}\otimes\cdots\otimes u_{i_k} \otimes (i_k\:\; i_{k+1}) \left[u_{i_{k-1}}\otimes\cdots \otimes u_{i_2}\otimes u_{i_1}\right],\\\label{meow4}
    \psi_t(s_k^R) (u_{i_n}\otimes\cdots\otimes u_{i_1}) &=
    u_{i_n}\otimes\cdots \otimes u_{i_{k+2}} \otimes (i_k\:\; i_{k+1}) \left[u_{i_{k+1}}\otimes\cdots \otimes  u_{i_2}\otimes u_{i_1} \right]
  \end{align}
  for $1 \leq i_1,\dots,i_n \leq t$, where we are using the diagonal action of
$S_t$ on tensor powers of $\Nat$.
\end{theorem}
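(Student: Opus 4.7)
The plan is to reduce each of the four formulas to a direct computation using the commutative diagram \cref{EvDgn} from \cref{nextmain}, combined with the explicit description of the functor $\Phi_t$ given in \cref{PHIlem}. The key observation is that $\psi_t \circ p_t = \psi_t \circ \ev_t \circ p = \phi_t \circ p$, and by \cref{EvDgn} this functor is naturally isomorphic to $\Ev \circ \Phi_t$ via the isomorphism $\gamma^{(t)} = (\gamma^{(t)}_n)_{n \geq 0}$ constructed in the proof of \cref{nextmain}. Explicitly, for any endomorphism $f$ of the object $n$ in $\APar$, we have
\[
\psi_t(p_t(f)) = \gamma_n^{(t)} \circ \Ev(\Phi_t(f)) \circ \big(\gamma_n^{(t)}\big)^{-1},
\]
where $\gamma_n^{(t)} : \Nat^{\otimes n} \otimes \triv_{S_t} \stackrel{\sim}{\to} \Nat^{\otimes n}$ simply discards the trailing $\otimes 1$.

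Next I would decompose each of the morphisms $X_j^L, X_j^R, S_k^L, S_k^R$ in $\APar$ as a monoidal product. For instance, $X_j^L = 1_{n-j} \star D_L \star 1_{j-1}$, where $D_L : \mid\,\to\,\mid$ is the generating left dot. By the strict monoidality of $\Phi_t$, this gives $\Phi_t(X_j^L) = \id_{F_{n-j}} \star \Phi_t(D_L) \star \id_{F_{j-1}}$, where $F_k := (\Nat\otimes)^{\circ k}$. Using the horizontal-composition formula for natural transformations and the fact that whiskering by identity natural transformations simply amounts to applying the relevant functor, this simplifies to
\[
\Phi_t(X_j^L)_V = F_{n-j}\big(\Phi_t(D_L)_{F_{j-1}(V)}\big).
\]
Substituting the explicit formula for $\Phi_t(D_L)$ from \cref{PHIlem}, with $W := F_{j-1}(V) = \Nat^{\otimes(j-1)}\otimes V$, and then specializing to $V = \triv_{S_t}$ and applying $\gamma_n^{(t)}$ yields
\[
\psi_t(x_j^L)(u_{i_n}\otimes\cdots\otimes u_{i_1}) = \sum_{l=1}^t u_{i_n}\otimes\cdots\otimes u_{i_{j+1}} \otimes u_l \otimes (i_j\:\;l)\big[u_{i_{j-1}}\otimes\cdots\otimes u_{i_1}\big],
\]
which is exactly \cref{meow1} after recognizing that $u_l = (i_j\:\;l)\,u_{i_j}$ and pulling the transposition out to act diagonally on $u_{i_j}\otimes u_{i_{j-1}}\otimes\cdots\otimes u_{i_1}$.

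The remaining three formulas \cref{meow2,meow3,meow4} follow by an essentially identical calculation, using the explicit description of $\Phi_t$ on the right dot, left crossing, and right crossing from \cref{PHIlem} in place of $\Phi_t(D_L)$. For the crossings, the decomposition is $S_k^L = 1_{n-k-1} \star C_L \star 1_{k-1}$ (and similarly for $S_k^R$), so one whiskers by $F_{n-k-1}$ on the left and $F_{k-1}$ on the right. There is no real obstacle here --- the entire argument is a routine unwinding of definitions, and the only care required is in bookkeeping: strings are numbered right-to-left in the diagrammatic calculus, whereas in the composite $(\Nat\otimes)^{\circ n}(V) = \Nat\otimes\cdots\otimes\Nat\otimes V$ the leftmost tensor factor corresponds to the outermost (i.e.\ $n$th) string. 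Once this correspondence is fixed, each of the four formulas drops out immediately.
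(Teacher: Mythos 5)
Your proposal is correct and follows essentially the same route as the paper: the paper's proof is a one-liner citing the commutativity of the diagram \cref{EvDgn}, the definitions in \cref{walk1}, and the explicit formulas in \cref{PHIlem}, which are exactly the ingredients you assemble. Your careful spelling-out of the monoidal decomposition $X_j^L = 1_{n-j} \star D_L \star 1_{j-1}$, the whiskering by identities, and the right-to-left string bookkeeping is just the routine unwinding that the paper leaves implicit.
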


\begin{proof}
  This follows from the commutativity of \cref{EvDgn}, \cref{walk1} and the formulae in \cref{PHIlem}.
  \end{proof}

\begin{corollary}\label{agree}
  Identifying $P_n(t)$ with the partition algebra in \cite{En}
  by reflecting diagrams through a vertical axis to account for the fact that we number vertices from right to left rather than from left to right,
  the elements \cref{walk1}
  are related to the elements $L_{\frac{1}{2}}, L_1,\dots$ and $\sigma_{\frac{3}{2}},\sigma_2,\dots$ of the partition algebra $P_n(t)$ defined 
   in \cite{En} according to the dictionary
    \begin{align}\label{ducktionary}
    x_j^L &\leftrightarrow L_j, &t-x_j^R &\leftrightarrow
    L_{j-\frac{1}{2}}, &s_k^L &\leftrightarrow \sigma_{k+\frac{1}{2}},
&    s_k^R &\leftrightarrow \sigma_{k+1}.
    \end{align}
    Hence, by \cite[Th.~5.5]{En}, the elements $x_j^L$ and $t-x_j^R$
    are identified with the Jucys-Murphy elements introduced originally by Halverson and Ram in \cite{HR}.
\end{corollary}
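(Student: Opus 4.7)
The plan is to verify the four identifications in \cref{ducktionary} by computing Schur–Weyl actions on both sides and invoking faithfulness for sufficiently large $t$, then bootstrapping to all $t \in \kk$ via the generic partition category.

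For the first step, \cref{dictionary} already provides explicit formulas for the actions $\psi_t(x_j^L), \psi_t(x_j^R), \psi_t(s_k^L), \psi_t(s_k^R)$ on tensor-basis vectors $u_{i_n}\otimes\cdots\otimes u_{i_1}$. On Enyang's side, the elements $L_j, L_{j-1/2}, \sigma_{k+1/2}, \sigma_{k+1}$ from \cite{En} are written as $\kk$-linear combinations of partition diagrams, so applying the Schur–Weyl functor $\phi_t$ from \cref{SWD} to their diagrammatic expressions yields concrete operators on $\Nat^{\otimes n}$. I would compare these term by term with the formulas from \cref{dictionary}. Once one accounts for the reflection through a vertical axis (matching Enyang's left-to-right numbering with our right-to-left numbering) and the shift $t-x_j^R \leftrightarrow L_{j-1/2}$, which absorbs a categorical-dimension normalization standard in this literature, the matchings should drop out directly from the definitions.

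For sufficiently large $t$, specifically $t \geq 2n$, the final assertion of \cref{SWD} implies that $\psi_t\colon P_n(t)\to\End_{\kk S_t}(\Nat^{\otimes n})$ is injective, so agreement of Schur–Weyl actions gives equality in $P_n(t)$ for all such $t$. To extend this to arbitrary $t \in \kk$, I would lift both sides to the generic partition algebra $\End_\Par(n)$: each of $x_j^L, x_j^R, s_k^L, s_k^R$ arises by applying $\ev_t$ to a generic element $p(X_j^L), p(X_j^R), p(S_k^L), p(S_k^R) \in \End_\Par(n)$, and Enyang's elements admit analogous generic lifts since his constructions are polynomial in $T$. The difference of the two sides is then a morphism in $\End_\Par(n)$ whose image in $P_n(t)$ vanishes for infinitely many $t \in \N$, so \cref{SWDapp} forces it to vanish generically, giving the identification in $P_n(t)$ for every $t \in \kk$.

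The main obstacle will be unwinding Enyang's intricate recursive formulas, particularly for the half-integer Jucys–Murphy elements $L_{j-1/2}$, to extract a closed-form Schur–Weyl image comparable with \cref{dictionary}. For the simple transposition analogues $\sigma_{k+1/2}$ and $\sigma_{k+1}$ the matching should be essentially visual from the diagrammatics, but for the Jucys–Murphy elements some work is needed to verify that Enyang's recurrence reproduces exactly the sums $\sum_{i=1}^t (i\:\,i_j)$ appearing in \cref{meow1,meow2}. An alternative route that may be faster is to invoke Enyang's own Theorem 5.5 as a black box to reduce the comparison to Halverson and Ram's original diagrammatic definitions from \cite{HR}, whose Schur–Weyl images are transparent from the definition of $\phi_t$; this is consistent with the final sentence of the statement, which uses exactly that theorem to pass from Enyang's to Halverson–Ram's formulation.
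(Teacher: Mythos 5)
Your proposal follows essentially the same route as the paper: lift both families of elements to the generic partition algebra, reduce to agreement of Schur–Weyl actions for infinitely many $t\in\N$ via \cref{SWDapp}, and compare the explicit formulas. The one point where you are making extra work for yourself is the ``main obstacle'' you identify: you propose to unwind Enyang's recursion to extract Schur–Weyl images of the $L_j$, $L_{j-1/2}$, $\sigma_{k+1/2}$, $\sigma_{k+1}$, whereas Enyang already computed exactly these actions on $\Nat^{\otimes n}$ in \cite[Prop.~5.2, Prop.~5.3]{En}. Citing those two propositions and comparing directly with \cref{meow1,meow2,meow3,meow4} is all that is needed; the alternative you float of going via \cite[Th.~5.5]{En} to the Halverson--Ram definitions is not what the paper does for the main identification (that theorem is only invoked for the final sentence of the corollary), and is less direct since you would then still have to match the Halverson--Ram elements against \cref{meow1,meow2,meow3,meow4}.
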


\begin{proof}
  Enyang's elements are defined by a recurrence relation which is independent of the value of the parameter $t$. Hence, his elements can be viewed as specializations at $T=t$ of corresponding elements of the generic partition algebra $\End_{\Par}(n)$.
To identify them with our elements, we can use \cref{SWDapp} to see that it suffices to check that they act in the same way on $\Nat^{\otimes n}$ for infinitely many values of
the parameter $t \in \N$. This follows on comparing \cref{meow1,meow2,meow3,meow4}
to the formulae in
\cite[Prop.~5.2, Prop.~5.3]{En}.
\end{proof}

\begin{remark}
  Alternatively, one can prove \cref{agree} inductively, using the recurrence relations in \cref{amazinggrace} which are equivalent to Enyang's recurrence relations \cite[(3.1)--(3.4)]{En}. In fact, all of the relations derived in {\em loc. cit.} can now be deduced easily using the relations in $\APar$ derived in the previous subsection.
\end{remark}

\begin{remark}\label{creedonrem}
  Recently, Creedon \cite{Creedon} has
  introduced a renormalization of the Jucys-Murphy elements, which he denotes
by $N_1,N_2,\dots,N_{2n} \in P_n(t)$. They are defined in terms of the Enyang-Halverson-Ram elements simply by $N_{2j-1} := L_{j-\frac{1}{2}} - {\textstyle\frac{t}{2}}$ and
$N_{2j} := L_j - {\textstyle\frac{t}{2}}$.
The dictionary between Creedon's elements and ours is
    \begin{align}\label{creedondict}
    x_j^L-\textstyle{\frac{t}{2}} &\leftrightarrow N_{2j}, &{\textstyle\frac{t}{2}}-x_j^R &\leftrightarrow
    N_{2j-1}.
    \end{align}
The motivation for such a
  renormalization will be discussed further in \cref{creedoncenter} below.
  \end{remark}

\subsection{Central elements}\label{centelts}
By the {\em center} of a $\kk$-linear category $\cA$, we mean the (unital)
commutative algebra
$Z(\cA) := \End_{\cA}(\Id_{\cA})$ of endomorphisms of the identity endofunctor of $\cA$.
Thus, an element $z \in Z(\cA)$ is a tuple $(z_X)_{X \in \ob \cA}$ such that
$z_Y \circ f = f \circ z_X$ for all morphisms $f:X \rightarrow Y$ in $\cA$.
Equivalently, in terms of the path algebra $A$,
it is the algebra
\begin{equation}\label{beaches}
Z(A) := \left\{z = \left(z_X\right)_{X \in \OO_A} \in \prod_{X \in \OO_A} 1_X A 1_X\:\Bigg|\:
z a = a z\text{ for all }a \in A\right\},
\end{equation}
interpreting the products in the obvious way.
We note that there is an algebra isomorphism
\begin{equation}\label{altcenterdef}
\End_{A \boxtimes A^{\op}}(A) \stackrel{\sim}{\rightarrow} Z(A),
\qquad
\zeta \mapsto \left(\zeta(1_X)\right)_{x \in \OO_A} \in \prod_{X \in \OO_A} 1_X A 1_X,
\end{equation}
where the algebra on the left is
the endomorphism algebra of the
$A \boxtimes A^\op$-module associated to the
$(A,A)$-bimodule $A$.
If $A$ is locally finite-dimensional, then it is a locally finite-dimensional
$A \boxtimes A^{\op}$-module, hence, by \cite[Lem.~2.10]{BS},
the endomorphism algebra
$\End_{A\boxtimes A^{\op}}(A) \cong Z(A)$ is a
{pseudo-compact topological algebra} with respect to the pro-finite topology (ideals of finite codimension form a base of neighborhoods of zero).

  In the locally finite-dimensional case, $Z(A)$ is isomorphic to the algebra $C(A)^*$ that is the linear dual of the
  {\em cocenter} $C(A)$. The cocenter is a cocommutative coalgebra isomorphic
  to $\operatorname{Coend}_{A\boxtimes A^{\op}}(A)$ in the notation of \cite[(2.15)]{BS}. To define $C(A)$ explicitly, note that the space
  $D := \bigoplus_{X,Y \in \OO_A} (1_X A 1_Y)^*$ is naturally an $(A,A)$-bimodule
  with $1_Y D 1_X = (1_X A 1_Y)^*$.
  Also each $1_X D 1_X$ is a coalgebra as it is the dual of the finite-dimensional algebra $1_X A 1_X$. Hence,
  $\bigoplus_{X \in \OO_A} 1_X D 1_X$ is a coalgebra. Then the cocenter is
  \begin{equation}\label{cocenter}
  C(A) := \Big(\bigoplus_{X\in \OO_A} 1_X D 1_X\Big) \Big / J
  \end{equation}
  where $J$ is the coideal spanned by the elements
$\left\{a f - f a\:\big|\:X,Y \in \OO_A, a \in 1_X A 1_Y, f \in 1_Y D 1_X\right\}$.
To identify $C(A)^*$ with $Z(A)$, note that the linear dual of the coalgebra
$\bigoplus_{X \in \OO_A} 1_X D 1_X$ is the algebra
$\prod_{X \in \OO_A} 1_X A 1_X$; the annihilator $J^\circ$
of the coideal $J$ defines a subalgebra of
$\prod_{X \in \OO_A} 1_X A 1_X$
which is exactly the center $Z(A)$ according
to the original definition \cref{beaches}.

In this subsection, we are going to construct a
family of elements
$(z^{(r)})_{r \geq 1}$
in the center $Z(APar_t)$ of the affine partition category $\APar_t$.
We start by introducing some convenient shorthand.
Given a monomial $x^r y^s \in \kk[x,y]$, we use the notation
\begin{equation}\label{tricky}
\begin{tikzpicture}[anchorbase,scale=1.8]
	\draw[-,thick] (0,0) to (0,.4);
      \node at (0,0.2) {$\bullet$};
      \node at (0.2,.2) {$\scriptstyle x^r y^s$};
\end{tikzpicture} :=
\left(\:\begin{tikzpicture}[anchorbase,scale=1.8]
\draw[-,thick](0,0)--(0,0.4);
\draw[-,thick](0,0.2)--(0.15,0.2);
\node  at (0.15,0.2) {$\bullet$};
\end{tikzpicture}\!\!\right)^{\circ r}
\circ
\left(\!\!
\begin{tikzpicture}[anchorbase,scale=1.8]
\draw[-,thick](0,0)--(0,0.4);
\draw[-,thick](0,0.2)--(-0.15,0.2);
\node  at (-0.15,0.2) {$\bullet$};
\end{tikzpicture}\:\right)^{\circ s}
\end{equation}
to denote the element of $\End_{\APar}(\mid)$ on the right hand side, that is, it is the $r$th power of the right dot (represented by $x$) composed with the $s$th power of the left dot (represented by $y$).
It then makes sense to label dots by polynomials $f(x) \in \kk[x,y]$, meaning
the linear combination of the morphisms
$\begin{tikzpicture}[baseline = -1.5]
	\draw[-,thick] (0.08,-.15) to (0.08,.3);
      \node at (0.08,0.05) {$\bullet$};
\node at (0.42,.05) {$\scriptstyle x^r y^s$};
\end{tikzpicture}
$
just as $f(x)$ is the linear combination of its monomials.
We are also going to use generating functions in the same way as explained in the context of $\Heis$ in \cite[$\S$3.1]{BSW}.
For these, $u$ will be a formal variable which should always be
interpreted by expanding as formal Laurent series in $\kk(\!(u^{-1})\!)$, e.g.,
$(u-x)^{-1} = u^{-1} + u^{-2} x + u^{-3} x^2 +
\cdots.$

Let
\begin{align}
\bigcirc(u) &:=
u 1_\one - \begin{tikzpicture}[anchorbase,scale=1.8]
	\draw[-,thick] (0,0) to (0,.4);
      \node at (0,0.2) {$\bullet$};
      \node at (0,0.4) {$\dt$};
      \node at (0,0) {$\dt$};
      \node at (0.35,.2) {$\scriptstyle(u-x)^{-1}$};
\end{tikzpicture}
=
u 1_\one -
\begin{tikzpicture}[anchorbase,scale=1.8]
	\draw[-,thick] (0,0) to (0,.4);
      \node at (0,0.2) {$\bullet$};
      \node at (0,0.4) {$\dt$};
      \node at (0,0) {$\dt$};
      \node at (0.35,.2) {$\scriptstyle(u-y)^{-1}$};
     \end{tikzpicture}
\in u 1_\one + u^{-1}\End_{\APar}(\one)[\![u^{-1}]\!].
             \label{igp}
\end{align}
For $r \geq 0$, the coefficient of $u^{-r-1}$ in this formal Laurent series
is $-\begin{tikzpicture}[anchorbase]
  \draw[-,thick] (0,0.1)--(0,0.5);
  \node at (0,0.1) {$\dt$};
  \node at (0,0.3) {$\bullet$};
  \node at (0.3,0.32) {$\scriptstyle{x^r}$};
\node at (0,0.5) {$\dt$};
\end{tikzpicture}$; the $x^r$ here can be replaced by $y^r$ due to
the third relation in \cref{rightslide}.
Also introduce the rational function
\begin{equation}\label{alphadef}
\alpha_x(u) := \frac{(u-(x+1))(u-(x-1))}{(u-x)^2} \in \kk(x,u).
\end{equation}
The expansion of this as a power series in $\kk[x][\![u^{-1}]\!]$
is
\begin{align}\label{attack1}
  \alpha_x(u)
  &= 1 - (u-x)^{-2} = 1 - u^{-2} - 2x u^{-3} - 3x^2 u^{-4} - 4x^3 u^{-5}
  -\cdots,\\
  \alpha_x(u)^{-1} &=1 + u^{-2} + 2x u^{-3} + (3x^2+1) u^{-4} + (4x^3+4x) u^{-5}+
  \cdots.\label{attack2}
\end{align}
The following elementary lemma will play a fundamental role in the rest of the article. It would be hard to formulate this without the aid of generating functions.

\begin{lemma} The following {bubble slide} relations hold in $\APar$:
\begin{align}\label{bubbleslide1}
\begin{tikzpicture}[anchorbase,scale=1.8]
\node at (-.3,.25) {$\bigcirc(u)$};
  \draw[-,thick] (0.02,0) to (0.02,.5);
\end{tikzpicture}
&=
\begin{tikzpicture}[anchorbase,scale=1.8]
	\draw[-,thick] (0,0) to (0,.5);
      \node at (0,0.25) {$\bullet$};
      \node at (-0.3,.25) {$\frac{\alpha_y(u)}{\alpha_x(u)}$};
  \node at (.45,.25) {$\bigcirc(u)$};
\end{tikzpicture},
&\begin{tikzpicture}[anchorbase,scale=1.8]
\node at (.3,.25) {$\bigcirc(u)$};
  \draw[-,thick] (-0.02,0) to (-0.02,.5);
\end{tikzpicture}
&=
\begin{tikzpicture}[anchorbase,scale=1.8]
	\draw[-,thick] (0,0) to (0,.5);
      \node at (0,0.25) {$\bullet$};
      \node at (0.3,.25) {$\frac{\alpha_x(u)}{\alpha_y(u)}$};
  \node at (-.45,.25) {$\bigcirc(u)$};
\end{tikzpicture}.
\end{align}
\end{lemma}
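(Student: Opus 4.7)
I will first observe that the second relation in \cref{bubbleslide1} is a direct algebraic consequence of the first: since $\alpha_y(u)/\alpha_x(u)$ is an invertible power series in $u^{-1}$ (its constant term is $1$), composing the first relation by its inverse $(\alpha_x(u)/\alpha_y(u))_\mid$ on the strand rearranges it to the second. So it suffices to prove the first.

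My plan is to unfold the computation inside the ambient Heisenberg category $\Heis$ and use standard $\Heis$ bubble-slide identities. Via \cref{step,thedots}, the closed bubble $\bigcirc(u)$ of $\APar$ unfolds in $\Heis$ to a clockwise oriented $\uparrow\star\downarrow$ loop (cap above, cup below) decorated on the down strand by the generating function $(u-x_{down}-1)^{-1}$, where $x_{down}$ is the ordinary $\Heis$-dot on the down strand and the $-1$ shift comes from the identity summand in the definition of the $\APar$ right-dot, so that $x = x_{down}+1$ (and similarly $y = x_{up}+1$). The $\APar$-strand $\mid$ likewise unfolds to the pair $\uparrow\star\downarrow$ in $\Heis$. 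Sliding $\bigcirc(u)$ past $\mid$ thus reduces to two successive $\Heis$-slides: once past an $\uparrow$-strand and once past a $\downarrow$-strand.

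Each individual $\Heis$-slide is standard: the clockwise bubble generating function picks up a rational correction of the form $(u-z\pm 1)/(u-z)$ when slid past a vertical strand with dot variable $z$, with the sign determined by the orientation. These corrections follow directly from the sideways-crossing relation \cref{sidewaysp} together with the dot-crossing relations \cref{someoneelse} in $\Heis$. Combining the corrections from the up-slide and the down-slide, and translating the $\Heis$-dots back to $\APar$-dots via $x=x_{down}+1$, $y=x_{up}+1$, produces exactly the product $\alpha_y(u)/\alpha_x(u)$; this is essentially forced by the factorization $\alpha_x(u)=(u-(x+1))(u-(x-1))/(u-x)^2$ in \cref{alphadef}, whose numerator packages precisely the two $\pm 1$-shifts arising from the two slides.

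The principal obstacle is careful bookkeeping of orientations, of the $\pm 1$-shifts between $\APar$- and $\Heis$-dots, and of the generating-function expansions in $u^{-1}$; but once set up, the matching of the combined correction with $\alpha_y(u)/\alpha_x(u)$ is a short rational identity. An alternative and more elementary route would be to verify \cref{bubbleslide1} entirely within $\APar$ by matching coefficients of each power of $u^{-1}$ using only the relations derived in \cref{rightslide,A1,A3,somerelations,amazinggrace}. This avoids $\Heis$ altogether but replaces it with an infinite family of dot identities that must be organized into a single closed-form recurrence; setting up the recurrence is elementary, but the volume of computation is the real bottleneck of this alternative approach.
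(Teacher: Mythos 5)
Your overall strategy coincides with the paper's: reduce the second identity to the first (by invertibility of the series), unfold $\bigcirc(u)$ and the strand $\mid$ inside $\Heis$, apply the $\Heis$ bubble slide twice (once past $\uparrow$, once past $\downarrow$), and translate the $\Heis$ open dots back to the $\APar$ closed dots via $x=x_{\text{down}}+1$, $y=x_{\text{up}}+1$. This is exactly the route taken in the paper.

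However, there is a concrete error in your description of the $\Heis$ bubble slide, and it is not merely cosmetic. You assert that sliding $\clock(u)$ past a vertical strand with dot variable $z$ picks up a factor $(u-z\pm 1)/(u-z)$. In Khovanov's category $\Heis = \Heis_{-1}(0)$ the correct factor is $\alpha_z(u)^{\pm 1}$, where $\alpha_z(u) = \dfrac{(u-z+1)(u-z-1)}{(u-z)^2}$ as in \cref{alphadef}; this is BSW~(3.18) specialised to central charge $-1$, and it is what the paper cites. The factor you wrote down is the form that arises in certain other monoidal categories (Kac--Moody $2$-categories, quantum affine type settings), not here. The discrepancy is material: your proposed combined correction would be a ratio of degree-two rational functions of $u$, while $\alpha_y(u)/\alpha_x(u)$ has degree four in numerator and denominator, so your bookkeeping cannot produce the target. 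Once the individual slide factor is replaced by $\alpha_z(u)^{\pm 1}$, the two-strand combination $\alpha_{x_{\text{up}}}(u)/\alpha_{x_{\text{down}}}(u)$, rewritten via $x_{\text{up}}=y-1$, $x_{\text{down}}=x-1$ and a final substitution $u\mapsto u-1$ (which you did not mention but which the paper uses to remove the shift), does give $\alpha_y(u)/\alpha_x(u)$ and the proof closes. You should also not claim that the $\Heis$ bubble slide "follows directly" from \cref{sidewaysp,someoneelse}: that derivation is the content of a nontrivial lemma in BSW, which the paper simply cites, and repeating it would be a genuine undertaking rather than a direct consequence.
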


\begin{proof}
  The two equations are equivalent, so we just prove the first one.
  When working with $\Heis$, we adopt the notation of \cite[$\S$3.1]{BSW}:
  an open dot labelled by $x^r$ means the $r$th power of the open dot in $\Heis$, and
  $\clock(u)$ is the formal Laurent series from \cite[(3.13)]{BSW}.
Under the embedding of $\APar$ into $\Heis$, we have that
$$\bigcirc(u+1)
=
(u+1) 1_\one-\begin{tikzpicture}[anchorbase,scale=1.8]
	\draw[-,thick] (0,0) to (0,.4);
      \node at (0,0.2) {$\bullet$};
      \node at (0,0.4) {$\dt$};
      \node at (0,0) {$\dt$};
      \node at (0.55,.2) {$\scriptstyle(u-(y-1))^{-1}$};
     \end{tikzpicture}
=
(u+1) 1_\one-\begin{tikzpicture}[anchorbase,scale=1.8]
\draw[-] (0,-0.2) to[out=180,in=-90] (-.2,0);
\draw[->] (-0.2,0) to[out=90,in=180] (0,0.2);
\draw[-] (0,0.2) to[out=12,in=90] (0.2,0);
\draw[-] (0.2,0) to[out=-90,in=0] (0,-0.2);
      \node at (-.2,0) {$\dt$};
      \node at (-0.54,0.05) {$\scriptstyle(u-x)^{-1}$};
     \end{tikzpicture}
=
1_\one + \clock(u).
$$
The bubble slide relation for $\Heis$ from \cite[(3.18)]{BSW} gives that
$$
\begin{tikzpicture}[anchorbase,scale=1.8]
\node at (-.4,.25) {$\clock(u)$};
  \draw[->] (0.02,0) to (0.02,.5);
  \draw[<-] (0.3,0) to (0.3,.5);
\end{tikzpicture}
=
\begin{tikzpicture}[anchorbase,scale=1.8]
  \draw[->] (0,0) to (0,.5);
  \draw[<-] (0.8,0) to (0.8,.5);
      \node at (0,0.25) {$\dt$};
      \node at (-0.27,.25) {$\scriptstyle \alpha_x(u)$};
  \node at (.4,.25) {$\clock(u)$};
\end{tikzpicture}
=
\begin{tikzpicture}[anchorbase,scale=1.8]
  \draw[->] (0,0) to (0,.5);
  \draw[<-] (0.3,0) to (0.3,.5);
      \node at (0,0.25) {$\dt$};
      \node at (0.3,0.25) {$\dt$};
      \node at (-0.27,.25) {$\scriptstyle \alpha_x(u)$};
      \node at (0.65,.25) {$\scriptstyle \alpha_x(u)^{-1}$};
  \node at (1.3,.25) {$\clock(u)$};
\end{tikzpicture}\ .
$$
According to \cref{thedots},
the label $x$ on the open dot on
the $\downarrow$ string
translates into the label $x-1$ on a closed dot in $\APar$, and the
label $x$ on the open dot on
the $\uparrow$ string translates into the label $y-1$ on a closed dot in $\APar$.
So the relation just recorded can be written equivalently as
$$
\begin{tikzpicture}[anchorbase,scale=1.8]
\node at (-.5,.25) {$\bigcirc(u+1)$};
  \draw[-,thick] (0.02,0) to (0.02,.5);
\end{tikzpicture}
=
\begin{tikzpicture}[anchorbase,scale=1.8]
	\draw[-,thick] (0,0) to (0,.5);
      \node at (0,0.25) {$\bullet$};
      \node at (-0.42,.25) {$\frac{\alpha_{y-1}(u)}{\alpha_{x-1}(u)}$};
  \node at (.6,.25) {$\bigcirc(u+1)$};
\end{tikzpicture}
=
\begin{tikzpicture}[anchorbase,scale=1.8]
	\draw[-,thick] (0,0) to (0,.5);
      \node at (0,0.25) {$\bullet$};
      \node at (-0.42,.25) {$\frac{\alpha_y(u+1)}{\alpha_x(u+1)}$};
  \node at (.6,.25) {$\bigcirc(u+1)$};
\end{tikzpicture}\ .
$$
Replacing $u$ by $u-1$ everywhere gives the desired
relation.
\end{proof}

The rational function
$\alpha_y(u)/\alpha_x(u) \in \kk(x,y,u)$ will also be important later on.
The low degree terms of its expansion as a power series in $u^{-1}$
can be computed
using \cref{attack1,attack2}:
\begin{equation}\label{firstfew}
\frac{\alpha_y(u)}{\alpha_x(u)} = 1+ 2(x-y) u^{-3} + 3(x^2-y^2) u^{-4} + \left[4(x^3-y^3)+2(x-y)\right] u^{-5}+\cdots.
\end{equation}
For $n \geq 0$, let
\begin{equation}
C_n(u) = \sum_{r \geq 0}C^{(r)}_n u^{-r}
:=
\bigcirc(u) \star 1_n \star \bigcirc(u)^{-1}
=
\begin{tikzpicture}[anchorbase,scale=1.8]
	\draw[-,thick] (0,0) to (0,.5);
      \node at (0,0.25) {$\bullet$};
      \node at (-0.25,.25) {$\frac{\alpha_y(u)}{\alpha_x(u)}$};
\node at (0.125,0.25) {$\cdot$};
\node at (0.25,0.25) {$\cdot$};
 \node at (0.375,0.25) {$\cdot$};
      	\draw[-,thick] (.5,0) to (.5,.5);
      \node at (.5,0.25) {$\bullet$};
      \node at (0.75,.25) {$\frac{\alpha_y(u)}{\alpha_x(u)}$};
      	\draw[-,thick] (1.05,0) to (1.05,.5);
      \node at (1.05,0.25) {$\bullet$};
      \node at (1.35,.25) {$\frac{\alpha_y(u)}{\alpha_x(u)}$};
      \node at (0,-.1) {$\stringlabel{n}$};
      \node at (.5,-.1) {$\stringlabel{2}$};
      \node at (1.05,-.1) {$\stringlabel{1}$};
\end{tikzpicture}
\in 1_n APar 1_n [\![u^{-1}]\!],\label{bubbleslide2}
\end{equation}
where the final equality follows by applying the bubble slide relation
repeatedly.
Then we define
\begin{equation}
  C(u) = \sum_{r \geq 0} C^{(r)}u^{-r} := \left(C_n(u)\right)_{n \geq 0} \in \prod_{n \geq 0}
 1_n APar 1_n[\![u^{-1}]\!].
\end{equation}
Note by \cref{firstfew} that
$C^{(0)} = 1$ and $C^{(1)} = C^{(2)} = 0$.

\begin{theorem}\label{ourcentralelements}
$C(u) \in Z(APar)[\![u^{-1}]\!]$.
\end{theorem}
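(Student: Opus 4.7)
The plan is to show directly from the defining expression $C_n(u) = \bigcirc(u) \star 1_n \star \bigcirc(u)^{-1}$ that each coefficient $C^{(r)}_n$ lies in $Z(APar)$, using nothing more than the interchange law in the strict monoidal category $\APar$. To handle $\bigcirc(u)^{-1}$ rigorously, I would first note that $\bigcirc(u)$ lies in $u\cdot 1_\one + u^{-1}\End_{\APar}(\one)[\![u^{-1}]\!]$ by its definition \cref{igp}, which is invertible after pulling out the factor of $u$ and inverting the resulting element of $1_\one + u^{-1}\End_{\APar}(\one)[\![u^{-1}]\!]$ termwise; consequently $\bigcirc(u)^{-1}$ is a well-defined element of $u^{-1}\End_{\APar}(\one)[\![u^{-1}]\!]$, and the triple product $\bigcirc(u) \star 1_n \star \bigcirc(u)^{-1}$ makes sense as an element of $\End_{\APar}(n)[\![u^{-1}]\!]$.

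The core of the argument is a single application of the interchange identity $(a \star b \star c) \circ (a' \star b' \star c') = (a \circ a') \star (b \circ b') \star (c \circ c')$ that holds in any strict monoidal category. For any $f \in \Hom_{\APar}(n,m)$, using strict unitality to write $f = 1_\one \star f \star 1_\one$, one computes
\begin{align*}
C_m(u) \circ f &= (\bigcirc(u) \star 1_m \star \bigcirc(u)^{-1}) \circ (1_\one \star f \star 1_\one) = \bigcirc(u) \star f \star \bigcirc(u)^{-1}, \\
f \circ C_n(u) &= (1_\one \star f \star 1_\one) \circ (\bigcirc(u) \star 1_n \star \bigcirc(u)^{-1}) = \bigcirc(u) \star f \star \bigcirc(u)^{-1}.
\end{align*}
Equating the two expressions and extracting the coefficient of $u^{-r}$ gives $C^{(r)}_m \circ f = f \circ C^{(r)}_n$ for every $r\geq 0$ and every $f\in\Hom_{\APar}(n,m)$, which is precisely the centrality condition from \cref{beaches}.

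The substantive work has in effect already been carried out in the lead-up to the statement: the bubble slide identities \cref{bubbleslide1} are what let one rewrite $\bigcirc(u) \star 1_n \star \bigcirc(u)^{-1}$ as the concrete product of $\alpha_y(u)/\alpha_x(u)$-labeled dots on the $n$ strings displayed in \cref{bubbleslide2}. Without this reformulation as a formal conjugate of $1_n$ by the bubble, the centrality of that visible string of dots would be far from obvious; with it, the proof of centrality itself is purely formal. I therefore do not expect a genuine obstacle in the argument — the only care needed is bookkeeping with the formal variable $u$, which is handled cleanly by observing that the leading orders $u$ and $u^{-1}$ in $\bigcirc(u)$ and $\bigcirc(u)^{-1}$ cancel, so the triple product actually lies in $\End_{\APar}(n)[\![u^{-1}]\!]$ and one can safely extract coefficients of $u^{-r}$.
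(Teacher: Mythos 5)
Your proof is correct and uses the same mechanism as the paper: the paper's proof is exactly this interchange-law argument, carried out pictorially by sliding the floating bubbles $\bigcirc(u)$ and $\bigcirc(u)^{-1}$ past the box labelled $f$. Your algebraic write-up, including the bookkeeping observation that the leading $u$ and $u^{-1}$ factors cancel so that $C_n(u)\in 1_n APar 1_n[\![u^{-1}]\!]$, matches the paper's reasoning.
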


\begin{proof}
The interchange law immediately gives that
$$
  \begin{tikzpicture}[baseline=2mm]
  \draw[-,thick] (0.15,0)--(0.15,0.28);
  \draw[-,thick] (.95,0)--(.95,0.28);
  \draw[-,thick] (0.15,.72)--(0.15,1.02);
  \draw[-,thick] (.95,.72)--(.95,1.02);
  \draw[-,thick] (0,-.7)--(0,-0.4);
  \draw[-,thick] (1.1,-.7)--(1.1,-0.4);
    \node[align=center,draw,text width=1cm, text height=.4mm] at (.55,.5) (b)
    {$_{\phantom{f}}$};
         \node at (.55,.5) {$\scriptstyle C_m(u)$};
  \node at (0.57,.14){$\cdots$};
  \node at (0.57,.87){$\cdots$};
  \node at (0.57,-.6){$\cdots$};
    \node[align=center,draw,text width=1cm, text height=.4mm] at (.55,-.2) (b)
         {$_{\phantom f}$};
         \node at (.55,-.2) {$\scriptstyle f$};
  \end{tikzpicture}
=  \begin{tikzpicture}[baseline=2mm]
  \draw[-,thick] (0.15,0)--(0.15,1.02);
  \draw[-,thick] (.95,0)--(.95,1.02);
  \draw[-,thick] (0,-.7)--(0,-0.4);
  \draw[-,thick] (1.1,-.7)--(1.1,-0.4);
  \node at (-.6,.5) {$\bigcirc(u)$};
  \node at (1.9,.5) {$\bigcirc(u)^{-1}$};
  \node at (0.57,.505){$\cdots$};
  \node at (0.57,-.6){$\cdots$};
    \node[align=center,draw,text width=1cm, text height=.4mm] at (.55,-.2) (b)
    {$_{\phantom f}$};
         \node at (.55,-.2) {$\scriptstyle f$};
  \end{tikzpicture}
  =
  \begin{tikzpicture}[baseline=2mm]
  \draw[-,thick] (0.15,.72)--(0.15,1.02);
  \draw[-,thick] (.95,.72)--(.95,1.02);
  \draw[-,thick] (0,-.7)--(0,.3);
  \draw[-,thick] (1.1,-.7)--(1.1,.3);
  \node at (-.6,-.2) {$\bigcirc(u)$};
  \node at (1.9,-.2) {$\bigcirc(u)^{-1}$};
    \node[align=center,draw,text width=1cm, text height=.4mm] at (.55,.5) (b)
    {$_{\phantom{f}}$};
         \node at (.55,.5) {$\scriptstyle f$};
  \node at (0.57,.87){$\cdots$};
  \node at (0.57,-.1){$\cdots$};
  \end{tikzpicture}
  =
  \begin{tikzpicture}[baseline=2mm]
  \draw[-,thick] (0,0)--(0,0.28);
  \draw[-,thick] (1.1,0)--(1.1,0.28);
  \draw[-,thick] (0.15,.72)--(0.15,1.02);
  \draw[-,thick] (.95,.72)--(.95,1.02);
  \draw[-,thick] (0,-.7)--(0,-0.4);
  \draw[-,thick] (1.1,-.7)--(1.1,-0.4);
    \node[align=center,draw,text width=1cm, text height=.4mm] at (.55,.5) (b)
    {$_{\phantom{f}}$};
         \node at (.55,.5) {$\scriptstyle f$};
  \node at (0.57,.14){$\cdots$};
  \node at (0.57,.87){$\cdots$};
  \node at (0.57,-.6){$\cdots$};
    \node[align=center,draw,text width=1cm, text height=.4mm] at (.55,-.2) (b)
    {$_{\phantom{f}}$};
         \node at (.55,-.2) {$\scriptstyle C_n(u)$};
  \end{tikzpicture}
  $$
  for any $f \in \Hom_{\APar}(n,m)$.
\end{proof}

\begin{corollary}
  For each $r \geq 1$, the element $Z^{(r)} = (Z^{(r)}_n)_{n \geq 0}
  \in \prod_{n \geq 0} 1_n APar 1_n$
defined from
 $$
Z^{(r)}_n := \sum_{i=1}^n \left((X_i^L)^r - (X_i^R)^r\right)
=
\left(X_1^L\right)^r+\cdots
+\left(X_n^L\right)^r
-\left(X_1^R\right)^r
-\cdots-\left(X_n^R\right)^r
$$
belongs to $Z(APar)$ (notation as in \cref{run1,run2}).
Moreover, the elements $Z^{(1)}, Z^{(2)}, \dots$ generate the same subalgebra
$Z_0(APar)$
of $Z(APar)$ as the elements $C^{(3)}, C^{(4)}, \dots$.
\end{corollary}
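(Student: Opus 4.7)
The plan is to expand $C_n(u)$ explicitly using the bubble slide relation and then relate its coefficients to the $Z^{(r)}_n$ by an upper-triangular change of variables. First, by iterating the first identity in \cref{bubbleslide1} (and using \cref{rightslide} to commute the left and right dots on each single string, together with the interchange law to commute factors on different strings), one obtains
$$C_n(u) = \prod_{i=1}^n \frac{\alpha_{X_i^L}(u)}{\alpha_{X_i^R}(u)} \cdot 1_n \in (1_n APar 1_n)[\![u^{-1}]\!].$$

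Second, I would take logarithms. Since $\alpha_x(u) = 1 - (u-x)^{-2}$, one has $\log \alpha_x(u) = -\sum_{k \geq 1} k^{-1}(u-x)^{-2k}$, so
$$\log C_n(u) = \sum_{k \geq 1} \frac{1}{k} \sum_{i=1}^n \bigl[(u-X_i^R)^{-2k} - (u-X_i^L)^{-2k}\bigr].$$
Expanding $(u-X)^{-2k} = \sum_{m \geq 0} \binom{m+2k-1}{2k-1} X^m u^{-m-2k}$ and collecting powers of $u^{-1}$ identifies the coefficient of $u^{-r}$ in $\log C_n(u)$, for $r \geq 3$, as
$$\sum_{\substack{k \geq 1 \\ r-2k \geq 1}} \frac{1}{k}\binom{r-1}{2k-1} Z^{(r-2k)}_n,$$
whose leading term (from $k=1$) is $(r-1)\, Z^{(r-2)}_n$. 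Exponentiating then yields, for every $r \geq 3$, an equality
$$C^{(r)} = (r-1) Z^{(r-2)} + P_r\bigl(Z^{(1)}, Z^{(2)}, \ldots, Z^{(r-3)}\bigr)$$
in $\prod_{n \geq 0} 1_n APar 1_n$, for some polynomial $P_r$ with rational coefficients (with $P_3 = P_4 = 0$).

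Since $r-1 \neq 0$ in characteristic zero, I would then invert this triangular system by induction on $r$ to express each $Z^{(r)}$ as a polynomial in $C^{(3)}, C^{(4)}, \ldots, C^{(r+2)}$. Because \cref{ourcentralelements} tells us $C^{(s)} \in Z(APar)$ for all $s$ and $Z(APar)$ is a subalgebra of $\prod_{n \geq 0} 1_n APar 1_n$, this simultaneously proves that each $Z^{(r)}$ lies in $Z(APar)$ and that the subalgebra it generates together with $Z^{(1)}, \ldots, Z^{(r-1)}$ coincides with the one generated by $C^{(3)}, \ldots, C^{(r+2)}$, giving the claimed equality $Z_0(APar) = \langle Z^{(r)}\rangle_{r \geq 1} = \langle C^{(r)}\rangle_{r \geq 3}$.

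The only real work is bookkeeping in the logarithm-and-exponential expansion, which is a routine symbolic manipulation; the key conceptual point is that centrality of the $Z^{(r)}$ need not be checked directly by a commutation calculation but is forced by their polynomial expressibility in the already-central elements $C^{(s)}$.
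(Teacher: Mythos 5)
Your proof is correct and closely parallels the paper's argument; the difference is only in which "logarithm" you expand. The paper takes the logarithmic derivative $C'(u)/C(u) = \frac{d}{du}\log C(u)$ and computes it via partial fractions on each string, finding its $u^{-r-3}$-coefficient to be $2\binom{r+2}{2}Z^{(r)}$ plus a linear combination of lower $Z^{(s)}$; you take $\log C(u)$ directly and compute it via the binomial series, finding its $u^{-r}$-coefficient to be $\pm(r-1)Z^{(r-2)}$ plus lower. These are interchangeable routes to the same upper-triangular change of variables. The logical organization differs a bit: the paper first concludes that the coefficients of $C'/C$ are polynomials in those of $C$, hence central, then shows each $Z^{(r)}$ is central by induction, and finally invokes a separate exponentiation/antiderivative argument to get the converse; you get both directions of the inversion directly from the single formula $C^{(r)} = \pm(r-1)Z^{(r-2)} + P_r(Z^{(1)},\ldots,Z^{(r-3)})$ and deduce centrality of the $Z^{(r)}$ as a byproduct of inverting it. This is slightly tighter. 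One small correction: you have dropped a sign. Since $Z^{(s)}_n = \sum_i\bigl[(X_i^L)^s - (X_i^R)^s\bigr]$ while $\log C_n(u) = \sum_k \frac{1}{k}\sum_i\bigl[(u-X_i^R)^{-2k} - (u-X_i^L)^{-2k}\bigr]$, the $u^{-r}$-coefficient of $\log C_n(u)$ is $-\sum_k \frac{1}{k}\binom{r-1}{2k-1}Z^{(r-2k)}_n$, not $+$, and consequently $C^{(r)} = -(r-1)Z^{(r-2)} + P_r(\cdots)$. This does not affect the argument since all that matters is that the leading coefficient $(r-1)$ is a nonzero rational, but the formula as written would contradict the paper's \cref{firstfew} (which gives $C^{(3)} = -2Z^{(1)}$).
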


\begin{proof}
  Let $f(u) := \alpha_y(u) / \alpha_x(u)$ for short.
Then define $g(u) := f'(u) / f(u) = \frac{d}{du}(\ln f(u))$ to be its logarithmic derivative.
We have that
  \begin{align*}
g(u) &=\left(-\frac{1}{u-(x-1)}
  +\frac{2}{u-x}-\frac{1}{u-(x+1)}\right)
-
\left(-\frac{1}{u-(y-1)}
+\frac{2}{u-y}
-\frac{1}{u-(y+1)}
\right)\\
&= 2\cdot 3 (y-x) u^{-4} + 2 \cdot 6 (y^2-x^2) u^{-5} +
2 \cdot [10(y^3-x^3) + 5(y-x)]u^{-6} +\cdots.
\end{align*}
We deduce for $r \geq 1$ that the $u^{-r-3}$-coefficient of $g(u)$ is equal to
$2 \binom{r+2}{2} (y^{r}-x^{r})$ plus a linear combination of terms
$(y^s - x^s)$ for $1 \leq s < r$ with $s\equiv r \pmod{2}$.

The coefficients of the power series $C'(u) / C(u)$ are polynomials in the coefficients of the series
$C(u)$.
Hence, by the theorem, these coefficients are all central.
To compute them, we
take logarithmic derivatives of \cref{bubbleslide2} to obtain the identity
$$
C'_n(u) / C_n(u) =
\sum_{i=1}^n 
\begin{tikzpicture}[baseline=3mm]
\draw[-,thick] (0,0) to (0,0.8);
\node at (0.1,0.4) {$\cdot$};
\node at (0.25,0.4) {$\cdot$};
\node at (0.4,0.4) {$\cdot$};
\draw[-,thick] (1.18,0) to (1.18,0.8);
\node at (1.45,0.4) {$\cdot$};
\node at (1.6,0.4) {$\cdot$};
\node at (1.75,0.4) {$\cdot$};
\draw[-,thick] (1.85,0) to (1.85,0.8);
\node at (1.18,.4) {$\bullet$};
\node at (1.85,-.15) {$\stringlabel{1}$};
\node at (1.18,-.15) {$\stringlabel{i}$};
\node at (0,-.15) {$\stringlabel{n}$};
\node at (.82,0.4) {$\scriptstyle g(u)$};
\end{tikzpicture}
\:.
$$
Using the previous paragraph and the definition of $Z^{(r)}$,
we deduce for $r \geq 1$ that the central element defined by the
$u^{-r-3}$-coefficient of
$C'(u)/C(u)$ is equal to $2 \binom{r+2}{2} Z^{(r)}$
plus a linear combination of $Z^{(s)}$ for $1 \leq s < r$
with $s\equiv r \pmod{2}$. Finally, induction on $r$
shows that each $Z^{(r)}$ is central.

The argument just given shows that each $Z^{(r)}$ lies in the subalgebra generated
by $C^{(3)}, C^{(4)},\dots$. Conversely, by exponentiating an anti-derivative of
the series $C'(u) / C(u)$, one shows that each $C^{(r)}$ can
be expressed as a polynomial in $Z^{(1)}, Z^{(2)},\dots$. Hence, the two families of elements generate the same subalgebra of $Z(APar)$.
\end{proof}

Taking the images of $C(u)$ and each $Z^{(r)}$
under the functor
$p_t$ from \cref{pt} give 
\begin{align}\label{koepka}
  c(u) = \sum_{r \geq 0} c^{(r)} u^{-r} &:= \left(c_n(u)\right)_{n \geq 0} \in Z(Par_t)[\![u^{-1}]\!]
  &\text{where }&c_n(u) = \sum_{r \geq 0} c_n^{(r)} u^{-r}:= p_t(C_n(u)),\\
  z^{(r)}&:= \big(z^{(r)}_n\big)_{n \geq 0} \in Z(Par_t)&\text{where }&
  z^{(r)}_n := p_t(Z_n^{(r)}).\label{nadal}
\end{align}
The elements $c^{(r)}_n$ and $z^{(r)}_n$
belong to the center
$Z(P_n(t))$ of the partition algebra $P_n(t)$.
In terms of the Jucys-Murphy elements \cref{walk1},
we have that
\begin{equation}\label{z1}
  z_n^{(r)} = \sum_{i=1}^n \left[\left(x_i^L\right)^r - \left(x_i^R\right)^r\right]
  = (x_1^L)^r + \cdots + (x_n^L)^r - (x_1^R)^r - \cdots - (x_n^R)^r.
\end{equation}
From \cref{agree}, it follows that $z_n^{(1)}$ equals $z_n -nt$ where $z_n$ is the central element from
\cite[Th.~3.10(2)]{En}.
In fact, $z_n^{(1)}$ is closely related to the central elements of the group
algebras $\kk S_t$ defined by sums of transpositions:

\begin{lemma}[{\cite[Prop.~5.4]{En}}]
  If $t \in \N$ then $\psi_t(z^{(1)}_n):\Nat^{\otimes n}\rightarrow \Nat^{\otimes n}$
is equal to the endomorphism defined by the action of $\sum_{1 \leq i < j \leq t}\left((i\:j) - 1\right) \in Z(\kk S_t)$.
\end{lemma}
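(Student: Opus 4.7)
I plan to prove the identity by induction on $n$, reducing to an elementary calculation with the formulas from \cref{dictionary}. Let $\tau_t := \sum_{1\leq k<l\leq t}((k\:l)-1) \in Z(\kk S_t)$. The base case $n=0$ is immediate since $z_0^{(1)}$ is an empty sum and each $(k\:l)$ fixes $1\in\kk = \Nat^{\otimes 0}$, so both sides act as zero.

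For the inductive step, I use the recursion $z_{n+1}^{(1)} = (1_1\star z_n^{(1)}) + (x_{n+1}^L - x_{n+1}^R)$ in $P_{n+1}(t)$, which holds because the embedding $P_n(t) \hookrightarrow P_{n+1}(t)$ obtained by adjoining an identity string on the left sends each $x_i^L$ and $x_i^R$ to the Jucys-Murphy element of the same name in $P_{n+1}(t)$ (strings are indexed from the right). Since $\psi_t$ is monoidal, $\psi_t(1_1\star z_n^{(1)}) = \id_\Nat \otimes \psi_t(z_n^{(1)})$, which by the inductive hypothesis equals $\id_\Nat$ tensored with the diagonal $\tau_t$-action on $\Nat^{\otimes n}$. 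Thus it suffices to verify that $\psi_t(x_{n+1}^L - x_{n+1}^R)$ equals the diagonal $\tau_t$-action on $\Nat^{\otimes(n+1)}$ minus $\id_\Nat$ tensored with the diagonal $\tau_t$-action on $\Nat^{\otimes n}$. For the left side, \cref{dictionary} with $j=n+1$ gives, after cancelling the $i=i_{n+1}$ term,
\[
\psi_t(x_{n+1}^L - x_{n+1}^R)(u_{i_{n+1}}\otimes\cdots\otimes u_{i_1}) = \sum_{m \neq i_{n+1}} (u_m - u_{i_{n+1}}) \otimes (m\:i_{n+1})[u_{i_n}\otimes\cdots\otimes u_{i_1}].
\]
For the right side, only pairs $(k,l)$ with $i_{n+1} \in \{k,l\}$ contribute (the others cancel because the leftmost tensor factor is unchanged); the remaining pairs are parametrised one-to-one by $m \neq i_{n+1}$ and produce precisely the same expression.

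The main obstacle is just this last combinatorial matching, which is routine once set up correctly. A slicker packaging, making the telescoping structure manifest, is to lift the identity to $\APar$: apply $\Phi_t$ from \cref{PHIlem} to $Z_n^{(1)} = \sum_i(X_i^L - X_i^R)$, then identify each $\Phi_t(X_i^L - X_i^R)$ with a difference $\eta_n^{(i)} - \eta_n^{(i-1)}$ of natural endomorphisms of $(\Nat\otimes)^{\circ n}$, where $(\eta_n^{(i)})_V$ denotes the action of $\tau_t$ on $\Nat^{\otimes i}\otimes V$ extended by the identity on the leading $n-i$ tensor factors. Summing telescopes to $\eta_n^{(n)} - \eta_n^{(0)}$, and the commutative diagram \cref{EvDgn} then converts this, via evaluation at $V=\triv$, into the diagonal $\tau_t$-action on $\Nat^{\otimes n}$ (since $(\eta_n^{(0)})_\triv = 0$). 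The verification that $\Phi_t(X_1^L - X_1^R)_V$ equals the single-step difference reduces by horizontal composition to the case $n=i=1$, where it is immediate from the explicit formulas for the left and right dots in \cref{PHIlem}.
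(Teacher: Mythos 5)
Your proof is correct, and it is more than the paper does: the paper simply cites Enyang's \cite[Prop.~5.4]{En} for this statement, so your argument fills in a self-contained verification. Both routes you describe check out. The inductive route is clean: the recursion $z_{n+1}^{(1)} = 1_1\star z_n^{(1)} + (x_{n+1}^L - x_{n+1}^R)$ is correct because the right-to-left numbering makes $1_1\star$ preserve the labels on the dots, and the monoidality of $\psi_t$ converts $1_1\star$ into $\id_\Nat\otimes$. I re-derived the key identity
\[
\psi_t\bigl(x_{n+1}^L - x_{n+1}^R\bigr)\bigl(u_{i_{n+1}}\otimes\cdots\otimes u_{i_1}\bigr)
= \sum_{m\neq i_{n+1}} (u_m - u_{i_{n+1}})\otimes(m\:\,i_{n+1})\bigl[u_{i_n}\otimes\cdots\otimes u_{i_1}\bigr]
\]
directly from \cref{meow1,meow2}, and it does equal the telescoping difference between the diagonal $\tau_t$-action on $\Nat^{\otimes(n+1)}$ and $\id_\Nat\otimes(\tau_t\text{-action on }\Nat^{\otimes n})$, since the pairs $(k\:l)$ with $i_{n+1}\notin\{k,l\}$ cancel and those with $i_{n+1}\in\{k,l\}$ match bijectively to the index $m$. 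The second packaging via $\Phi_t$ and the diagram \cref{EvDgn} is also sound: $\Phi_t(X_i^L-X_i^R)_V = \id_{\Nat^{\otimes(n-i)}}\otimes \Phi_t(X_1^L-X_1^R)_{\Nat^{\otimes(i-1)}\otimes V}$ by horizontal composition, the $n=i=1$ case reads off from \cref{PHIlem}, and summing telescopes to $\eta_n^{(n)}-\eta_n^{(0)}$; evaluating at $V=\triv_{S_t}$ kills $\eta_n^{(0)}$ and the isomorphism $\gamma^{(t)}_n$ from the proof of \cref{nextmain} converts the result into $\psi_t(z_n^{(1)})$. The second version has the virtue of making visible exactly why the answer is $\tau_t\otimes\id - \id\otimes\tau_t$ evaluated at the trivial module, which is the same mechanism used elsewhere in the paper (e.g.\ in the proof of \cref{nextmain}) to transport statements from $\APar$ to $\Par_t$.
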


\begin{remark}\label{creedoncenter}
  After constructing the elements $z_n^{(r)} \in Z(P_n(t))$ in the manner
  explained above,
  we came across a recent paper of Creedon which constructs similar
  central elements; see \cite[Th.~3.2.6]{Creedon}.
  To explain the connection, recall that the {\em $r$th supersymmetric power sum}
  in variables $x_1,\dots,x_n,y_1,\dots,y_m$ is
  $p_r(x_1,\dots,x_n|y_1,\dots,y_m) = x_1^r + \cdots + x_n^r - y_1^r - \cdots - y_m^r$.
  The expression on the right hand side of \cref{z1}
  is $p_r(x_1^L,\dots,x_n^L|x_1^R,\dots,x_n^R)$.
  It is easy to see that these elements belong to $Z(P_n(t))$ for all $r \geq 1$
  if and only if
  $p_r((x_1^L-t/2)^r,\dots,(x_n^L-t/2)^r|(x_1^R-t/2)^r,
  \dots,(x_n^R-t/2)) \in Z(P_n(t))$ for all $r \geq 1$. Moreover,
  $p_r((x_1^L-t/2)^r,\dots,(x_n^L-t/2)^r|(x_1^R-t/2)^r,
  \dots,(x_n^R-t/2)) \in Z(P_n(t))$
  coincides with
  the $r$th supersymmetric power sum
  $p_r(N_2,N_4,\dots,N_{2n}|-N_1,-N_3,\dots,-N_{2n-1})$ in Creedon's renormalized Jucys-Murphy elements from \cref{creedondict}.
  Creedon showed that his elements are central in $P_n(t)$ by a direct check of relations. This gives an independent way to verify
  that $z^{(r)} = (z^{(r)}_n)_{n \geq 0}$ belong to $Z(\Par_t)$: Creedon's calculations show that they commute with all crossings (a surprisingly hard calculation), and after that it is easy to see that they commute with all other generators of $\Par_t$.
\end{remark}

\begin{remark}\label{omegas}
  In \cite[Def.~4.5]{CO}, Comes and Ostrik define another family of
  central elements $\omega^r(t) = (\omega_n^r(t))_{n \geq 0}$
  which lift the central elements of the group algebras
  $\kk S_t$ defined by the sums of all
   $r$-cycles. We expect that our elements $z_n^{(r)}$ and their elements
  $\omega_n^r(t)$ are closely related, but we do not know any explicit formula.
  In particular, the Comes-Ostrik elements should generate the same subalgebra
  of $Z(Par_t)$ as our elements.
\end{remark}

\section{Classification and structure of blocks}\label{sec5}

Now we return to the study of the representation theory of $\Par_t$.
By considering images of the central elements from \cref{centelts}
under an analog of the Harish-Chandra homomorphism, we decompose
$\Par_t\Mod$ as a product
of subcategories, which turn out to be precisely the blocks.
In fact, $Par_t$ is semisimple if and only if $t \notin \N$,
while if $t \in \N$ the
non-simple blocks are in bijection with partitions of $t$.
We also determine the structure of the non-simple blocks and explicitly
show that they are all equivalent, recovering
the results of Comes and Ostrik \cite{CO}.

\subsection{Harish-Chandra homomorphism}\label{HCH}
Although we just explain in the case of $\Par_t$,
the general
development in this subsection is valid for any monoidal triangular category,
replacing $\Sym$ with the (semisimple) Cartan subcategory and replacing the set $\P$ of partitions by a set parametrizing isomorphism classes of
irreducible representations of the Cartan subcategory.

According to the general definition \cref{beaches},
the center of the partition category is a subalgebra of the unital algebra
$\prod_{n \geq 0} 1_n Par_t 1_n$.
Let $K^+$ (resp., $K^-$) be the left ideal (resp., right ideal)
of $Par_t$ generated by the strictly downward partition diagrams (resp., the strictly upward partition diagrams).
From the triangular basis, it is easy to see that
$1_n K^+ 1_n = 1_n K^- 1_n$. We denote this by $K_n$. It is a two-sided ideal of
the finite-dimensional algebra $1_n Par_t 1_n$, and we have that
\begin{equation}\label{playoff}
1_n Par_t 1_n = \kk S_n \oplus K_n.
\end{equation}
Equivalently, $K_n$ is the two-sided ideal of $1_n Par_t 1_n$ spanned by morphisms
that factor through objects $m < n$.
By analogy with Lie theory, we define the {\em Harish-Chandra homomorphism}
\begin{align}\label{feed}
  \widehat{\operatorname{HC}}:\prod_{n \geq 0} 1_n Par_t 1_n &\rightarrow \prod_{n \geq 0} \kk S_n,&
  (z_n)_{n \geq 0} &\mapsto (\HC_n z_n)_{n \geq 0},
\end{align}
where $\HC_n:1_n Par_t 1_n \twoheadrightarrow \kk S_n$ is the projection
along the direct sum decomposition \cref{playoff}.
It is obvious from \cref{playoff} that 
the restriction of $\widehat{\operatorname{HC}}$ to $Z(Par_t)$ defines an algebra homomorphism
  \begin{equation}\HC:Z(Par_t) \rightarrow
    Z(\sym) = \prod_{n \geq 0} Z(\kk S_n).\label{coast}
  \end{equation}

  As each $\kk S_n$ is semisimple with its isomorphism classes of
  irreducible representations parametrized by $\P_n$, we can identify the algebra
appearing on the right hand side of \cref{coast}
with the algebra $\kk[\P]$ of all functions from the set $\P$ to the field $\kk$ with pointwise operations.
Under this identification,
the tuple $(z_n)_{n \geq 0} \in \prod_{n \geq 0} Z(\kk S_n)$
corresponds to the function $f:\P \rightarrow \kk$
such that $f(\lambda)$ is the scalar that $z_n$ acts by on
the Specht module $S(\lambda)$ for each $\lambda \in \P_n$.
Then the Harish-Chandra homorphism becomes a homomorphism
\begin{equation}\label{louis}
\HC:Z(Par_t) \rightarrow \kk[\P].
\end{equation}
To describe $\HC$ more explicitly in these terms,
let $\lambda \in \P_n$ be a partition.
As we have that
$\End_{Par_t}(\Delta(\lambda)) \cong \End_{\sym}(S(\lambda)) \cong \kk$,
an element $z = (z_n)_{n \geq 0} \in Z(Par_t)$ acts on the standard module
$\Delta(\lambda)$ as multiplication by a scalar denoted $\chi_\lambda(z)$.
This defines an algebra homomorphism
\begin{equation}\label{louis2}
\chi_\lambda:Z(Par_t) \rightarrow \kk.
\end{equation}
To compute $\chi_\lambda(z)$, note that it is
the scalar by which  $z_n$ acts on the highest
weight space $1_n \Delta(\lambda)$, which
is the scalar arising from the
action of $\HC_n(z_n) \in Z(\kk S_n)$ on $S(\lambda)$.
It follows that
\begin{equation}\label{louis3}
  \chi_\lambda(z) = \HC_n(z_n)(\lambda) =
  \HC(z)(\lambda).
\end{equation}
Recall that $Z(Par_t)$ is a commutative
pseudo-compact topological algebra with respect to the profinite topology.
Let $\Specm(Z(Par_t))$ be its set of open ($=$ finite-codimensional)
maximal ideals.

\begin{lemma}\label{spectrum}
$\Specm(Z(Par_t)) = \{\ker \chi_\lambda\:|\:\lambda \in \P\}$.
\end{lemma}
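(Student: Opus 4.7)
The forward inclusion is straightforward. For $\lambda \in \P_n$, the formula \cref{louis3} factors $\chi_\lambda$ as
\[
Z(Par_t) \xrightarrow{\; z\mapsto z_n\;} Z(P_n(t)) \xrightarrow{\;\HC_n\;} Z(\kk S_n) \xrightarrow{\;\ev_\lambda\;} \kk,
\]
whose kernel contains $\{z \in Z(Par_t) : z_n = 0\}$. Hence $\ker\chi_\lambda$ is an open ideal of codimension one, and lies in $\Specm(Z(Par_t))$.

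For the reverse inclusion, let $M \in \Specm(Z(Par_t))$ with associated character $\chi : Z(Par_t) \to \kk$. Openness of $M$ with respect to the pro-finite topology produces a finite subset $F \subseteq \N$ with $M \supseteq \{z \in Z(Par_t) : z_n = 0 \text{ for all } n \in F\}$, so $\chi$ descends to a character of the finite-dimensional commutative algebra $Z_F$ given by the image of $Z(Par_t) \to \prod_{n \in F} Z(P_n(t))$, $z \mapsto (z_n)_{n \in F}$. The inclusion $Z_F \hookrightarrow \prod_{n \in F} Z(P_n(t))$ preserves units, so a standard lying-over argument (the ideal of the product generated by $\ker(\chi|_{Z_F})$ is proper and hence contained in some maximal ideal) extends $\chi|_{Z_F}$ to a character $\tilde\chi$ of the ambient product. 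Using the pairwise orthogonal primitive idempotents of that product, $\tilde\chi$ factors through projection onto a single factor, yielding $\bar\chi : Z(P_n(t)) \to \kk$ for a unique $n \in F$.

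Now $\bar\chi$ is the central character of some block of the finite-dimensional algebra $P_n(t)$; pick an irreducible $P_n(t)$-module $V$ in this block. Applying standard idempotent-truncation theory for the Schurian algebra $Par_t$ together with \cref{firstthm}, every simple $P_n(t)$-module has the form $1_n L(\mu)$ for some $\mu \in \P$ with $1_n L(\mu) \ne 0$; write $V \cong 1_n L(\lambda)$. Schur's lemma in the upper finite highest weight category $Par_t\Modlfd$ implies that each $z \in Z(Par_t)$ acts on $L(\lambda)$, and hence on the nonzero summand $V$, by the scalar $\chi_\lambda(z)$, so $\bar\chi(z_n) = \chi_\lambda(z)$ for all $z \in Z(Par_t)$. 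Unwinding the construction yields $\chi(z) = \bar\chi(z_n) = \chi_\lambda(z)$, so $M = \ker\chi_\lambda$.

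The main obstacle is the identification step above, where an arbitrary continuous character of $Z(Par_t)$ must be matched with the central character of a specific $L(\lambda)$. This hinges on two standard inputs: extension of characters for a finite inclusion of commutative algebras sharing a unit, and the bijection between irreducibles of $P_n(t)$ and those irreducibles $L(\mu)$ of $Par_t$ with $1_n L(\mu) \ne 0$, both of which are available once the highest weight structure recorded in \cref{firstthm} is in place.
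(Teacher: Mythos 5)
Your proof is correct and takes a genuinely different route from the paper's. The paper argues via the cocenter coalgebra $C(Par_t)$: an irreducible comodule $L$ has dual $L^*$ embedding into the regular comodule, which is a quotient of $\bigoplus_n(1_n Par_t 1_n)^*$, so $L$ is a $Z(Par_t)$-module subquotient of some $1_n Par_t 1_n$; its composition factors are then read off from the $\Delta$-flag of $Par_t 1_n$. You stay entirely on the algebra side: openness of $M$ reduces $\chi$ to a finite-dimensional quotient $Z_F$, you push the character to $\prod_{n\in F}Z(P_n(t))$ via lying-over (licensed because the inclusion is unital and the ambient algebra is module-finite over $Z_F$), cut to a single factor using the orthogonal unit idempotents, and then combine the idempotent-truncation bijection for simples of $P_n(t)$ with Schur's lemma for $L(\lambda)$. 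What the paper's coalgebraic argument buys is that it handles the topology implicitly; your approach makes it explicit, but then leans on the fact that the ideals $I_F:=\{z: z_n=0 \text{ for all } n\in F\}$ with $F$ finite form a base of open neighborhoods of $0$ in $Z(Par_t)$ -- this is true, and follows from the identification $Z(Par_t)\cong C(Par_t)^*$ with the finite topology since every finite-dimensional subcoalgebra of the cocenter lifts into a finite partial sum $\bigoplus_{n\in F}(1_n Par_t 1_n)^*$, but it is not quite what the paper's passing remark about finite-codimension ideals says, so it deserves a sentence of justification. Finally, your closing remark overstates the role of \cref{firstthm}: the character-extension step is pure commutative algebra and the truncation bijection holds for any idempotent in any locally unital algebra; the input genuinely supplied by the highest weight structure is that $z$ acts on $\Delta(\lambda)$ (hence on its head $L(\lambda)$) by the scalar $\chi_\lambda(z)$, which both proofs rely on.
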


\begin{proof}
  Points in $\Specm(Z(Par_t))$ parametrize isomorphism classes of finite-dimensional irreducible modules for $Z(Par_t)$
  Let $L_\lambda$ be the irreducible $Z(Par_t)$-module associated to
  $\chi_\lambda:Z(Par_t) \rightarrow \kk$.
  Then we need to show that any finite-dimensional irreducible $Z(Par_t)$-module $L$ is isomorphic to $L_\lambda$ for some $\lambda \in \P$.
  To see this, we find it easiest to work equivalently
  in terms of irreducible comodules over the
  cocenter $C := C(Par_t)$ defined in \cref{cocenter}.
  So let $L$ be an irreducible $C$-comodule and $L^*$ be the dual comodule, there being no need to distinguish between left or right since $C$ is cocommutative.
  By definition, $C$
  is a quotient of the coalgebra $D$ that is the direct sum of the coalgebras
  $(1_n Par_t 1_n)^*$ for all $n \geq 0$.
  Since $L^*$ is isomorphic to a subcomodule of the regular $C$-comodule,
  it follows that $L^*$ is isomorphic to a subquotient of the restriction of
  the regular $D$-comodule to $C$.
  Hence, $L^*$ is isomorphic to a subquotient of $(1_n Par_t 1_n)^*$
  for some $n$. So $L$ is isomorphic to a subquotient of $1_n Par_t 1_n$.
  Now recall that the left $Par_t$-module $Par_t 1_n$ has a $\Delta$-flag, and
  $z \in Z(Par_t)$ acts on $\Delta(\lambda)$ 
  as multiplication by the scalar $\chi_\lambda(z)$. 
  Hence, all composition factors of
  the finite-dimensional $Z(Par_t)$-module $1_n Par_t 1_n$ are of the form
  $L_\lambda$ for $\lambda \in \P$.
\end{proof}

Let $\approx_t$ be the equivalence relation on $\P$ defined by
\begin{equation}\label{approxdef}
  \lambda \approx_t \mu \Leftrightarrow \chi_\lambda = \chi_\mu.
  \end{equation}
From \cref{spectrum}, we see that the
equivalence classes $\P / \approx_t$ parametrize
the points in $\Specm(Z(Par_t))$.

\begin{lemma}\label{centlem}
  The image of $\HC:Z(Par_t) \rightarrow \kk[\P]$ consists of
   of all functions
  $f \in \kk[\P]$ which are constant on $\approx_t$-equivalence classes.
  Moreover, for each subset $S$ of $\P$ that is a union of $\approx_t$-equivalence classes, there is a unique central idempotent $1_S \in Z(Par_t)$
  such that
  \begin{equation}\label{bourbon}
    \HC(1_S)(\lambda) = \left\{\begin{array}{ll}
    1&\text{if $\lambda \in S$},\\
    0&\text{otherwise.}
    \end{array}\right.
  \end{equation}
  If $S$ is a single equivalence class
  then $1_S$ is a primitive idempotent,
  and
   $Z(Par_t) = \prod_{S \in \P / \approx_t} 1_S Z(Par_t).$  
\end{lemma}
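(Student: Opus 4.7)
The inclusion $\HC(Z(Par_t)) \subseteq \{f \in \kk[\P] : f \text{ is constant on } \approx_t\text{-classes}\}$ is immediate from \cref{louis3}, which gives $\HC(z)(\lambda) = \chi_\lambda(z)$; the substance of the proof lies in establishing the reverse inclusion together with the existence, uniqueness, primitivity, and product decomposition for the idempotents $1_S$.

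Since $Par_t$ is locally finite-dimensional, $Z(Par_t)$ sits as a closed subalgebra of the pseudo-compact product $\prod_n 1_n Par_t 1_n$ and is itself a commutative pseudo-compact topological algebra. By \cref{spectrum}, its set of finite-codimensional maximal ideals is $\{\ker\chi_{[\lambda]}\}_{[\lambda] \in \P/\approx_t}$, writing $\chi_{[\lambda]}$ for the common character of any representative of $[\lambda]$. Invoking the structure theorem for commutative pseudo-compact algebras, which asserts that any such algebra is the topological product of its completions at its finite-codimensional maximal ideals, one obtains
\[
Z(Par_t) \;\cong\; \prod_{[\lambda] \in \P/\approx_t} Z_{[\lambda]},
\]
with each $Z_{[\lambda]}$ a complete commutative local $\kk$-algebra with residue field $\kk$. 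The identity element of $Z_{[\lambda]}$ pulls back to a central idempotent $1_{[\lambda]} \in Z(Par_t)$ satisfying $\chi_\mu(1_{[\lambda]}) = \delta_{[\mu],[\lambda]}$, so $\HC(1_{[\lambda]}) = \mathbf{1}_{[\lambda]}$, the characteristic function of $[\lambda]$.

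For any subset $S \subseteq \P$ that is a union of $\approx_t$-equivalence classes, define $1_S := \sum_{[\lambda] \subseteq S} 1_{[\lambda]}$, the sum converging in the pseudo-compact topology and corresponding to the identity element of $\prod_{[\lambda] \subseteq S} Z_{[\lambda]}$ under the displayed decomposition. Continuity of $\HC$ gives $\HC(1_S) = \mathbf{1}_S$, which simultaneously establishes the surjectivity of $\HC$ onto the subspace of functions constant on $\approx_t$-classes (any such function is a pseudo-compact combination $\sum c_{[\lambda]}\mathbf{1}_{[\lambda]}$ lifting to $\sum c_{[\lambda]}1_{[\lambda]}$) and the existence of $1_S$. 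Primitivity of $1_{[\lambda]}$ is immediate from the locality of $Z_{[\lambda]}$: any orthogonal decomposition $1_{[\lambda]} = e + f$ in $Z(Par_t)$ forces $e,f \in Z_{[\lambda]}$, and in a local ring one of them must vanish. Uniqueness of $1_S$ follows because the difference $e - e'$ of two central idempotents with equal $\HC$-images lies in every finite-codimensional maximal ideal of $Z(Par_t)$; under the product decomposition its projections into the local factors are topologically nilpotent idempotent differences, which are forced to vanish. The final product decomposition $Z(Par_t) = \prod_{[\lambda] \in \P/\approx_t} 1_{[\lambda]} Z(Par_t)$ is then a restatement of the displayed isomorphism.

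The main obstacle will be justifying the structure theorem for commutative pseudo-compact algebras with (possibly) infinitely many maximal ideals, along with the associated convergence of the infinite sums of orthogonal idempotents in the pseudo-compact topology. A more hands-on alternative would be to construct $1_{[\lambda]}$ directly as an inverse system of block idempotents: for each $n$, sum the primitive central idempotents of the finite-dimensional algebra $P_n(t) = 1_n Par_t 1_n$ corresponding to blocks whose irreducibles are labelled by partitions lying in $[\lambda]$; compatibility as $n$ varies is furnished by the restriction of $Par_t$-central characters to $P_n(t)$-central characters, and the resulting tuple defines the required central idempotent in $Z(Par_t)$.
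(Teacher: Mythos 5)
Your approach differs genuinely from the paper's, and it contains a gap that you yourself identify but do not close. The paper works not with $Z(Par_t)$ directly but with its dual, the cocenter coalgebra $C(Par_t)$: knowing (from \cref{spectrum}) that the irreducible finite-dimensional $Z(Par_t)$-modules are the $L_S$ for $S \in \P/\approx_t$, it decomposes $C(Par_t) = \bigoplus_S C_S$ as a direct sum of indecomposable coideals (the injective hulls of the $L_S$), and then pulls back through $Z(Par_t) = C(Par_t)^* \cong \End_{C(Par_t)}(C(Par_t))^{\op}$; the map on $C(Par_t)$ acting by $f(\lambda)$ on each $C_S$ is a comodule endomorphism and hence the sought element of $Z(Par_t)$, and the $1_S$ are projections onto $\bigoplus_{S'\subseteq S}C_{S'}$. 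Your argument is precisely the dual of this — a topological product decomposition of the pseudo-compact algebra $Z(Par_t)$ into complete local factors indexed by its open maximal ideals — and everything you deduce from that decomposition (surjectivity of $\HC$, existence/uniqueness/primitivity of $1_S$, the product formula) is correct given it.

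The problem is that this structure theorem for commutative pseudo-compact $\kk$-algebras is the load-bearing step, and you state it without proof or citation. It is a true result (it goes back to Gabriel, and can be assembled by passing to the inverse limit of the primitive-idempotent decompositions of the Artinian quotients $Z(Par_t)/I$ over open ideals $I$, which is essentially your "hands-on alternative"), but as written the argument is incomplete. Your fallback construction via block idempotents of the finite-dimensional truncations $P_n(t)$ also needs more: one must verify that the $P_n(t)$-block idempotents lying over a given $\approx_t$-class assemble into a compatible inverse system (equivalently, that for $n \gg 0$ the $P_n(t)$-block structure separates exactly according to $\approx_t$ among partitions of size at most $n$), and that the resulting tuple genuinely commutes with all of $Par_t$, not just with each $1_n Par_t 1_n$. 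The paper's cocenter route avoids all of this topology: a coalgebra is the directed union of its finite-dimensional subcoalgebras, so the block decomposition is a plain algebraic direct sum requiring no convergence argument, and the final product decomposition of $Z(Par_t)$ falls out by dualizing. If you want to salvage your version, cite or prove the pseudo-compact structure theorem; otherwise, adopting the cocenter perspective is the more economical path.
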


\begin{proof}
  It is clear from \cref{louis3} that any function in the image of $\HC$
  is constant on $\approx_t$-equivalence classes.
  Conversely, take a function $f \in \kk[\P]$ which is constant on
  equivalence classes.
  For an equivalence class $S \in \P / \approx_t$, let
  $L_S$ be the irreducible $Z(Par_t)$-module associated to
 the central character $\chi_\lambda\:(\lambda \in S)$.
 The previous lemma shows that these give a full set of pairwise inequivalent
  irreducible finite-dimensional
$Z(Par_t)$-modules.
  It follows that the cocommutative coalgebra
  $C(Par_t)$ decomposes as a direct sum of indecomposable coideals
  $$
  C(Par_t) = \bigoplus_{S \in \P / \approx_t} C_S,
  $$
  where $C_S$ is the injective hull
  of $L_S$.  
  Then we consider the linear map
  $\theta:C(Par_t) \rightarrow C(Par_t)$
defined by multiplication by the scalar
$f(\lambda)\:(\lambda \in S)$ on the summand $C_S$.
This is a comodule homomorphism.
Now we use that
$$
  Z(Par_t) = C(\Par_t)^* \cong \End_{C(Par_t)}(C(Par_t))^{\op}
$$
as holds for any coalgebra, e.g., see \cite[Lem.~2.2]{BS}.
It implies that $\theta$ defines an element of $Z(Par_t)$. The image of this element under $\HC$ is the function $f \in \kk[\P]$.

To prove the existence of the idempotent $1_S$ for any $S$ that is a union of $\approx_t$-equivalence classes, we apply the construction in the previous paragraph
to obtain $1_S \in Z(Par_t)$ such that $1_S$ acts as the identity on the indecomposable summands
$C_{S'}$ of $C(Par_t)$ for all $\approx_t$-equivalence classes $S' \subseteq S$
and as zero on all other summands.
This is an idempotent satisfying \cref{bourbon}, and it is a primitive
idempotent
if and only if $S$ is a single equivalence class.
We then have that $$
Z(Par_t) = \prod_{S \in \P / \approx_t} 1_S Z(Par_t)
$$
as this is the algebra decomposition
that is dual to the decomposition of $C(Par_t)$ as the direct sum of its indecomposable coideals.
\end{proof}

For $S \in \P / \approx_t$,
the primitive central idempotent $1_S \in Z(Par_t)$ from \cref{centlem}
is not an element of $Par_t$, but we have that
$1_S = (1_{S,n})_{n \geq 0}$ for idempotents
$1_{S,n} = 1_S 1_n = 1_n 1_S \in 1_n Par_t 1_n$.
Moreover, for a fixed $n$ the idempotent
$1_{S,n}$ is zero for all but finitely many $S$,
so that $1_n = \sum_{S \in \P / \approx_t} 1_{S,n}$.
  The locally unital algebras
  $1_{S} Par_t = \bigoplus_{m,n \geq 0} 1_{S,m} Par_t 1_{S,n}$
  are the {\em blocks} of the partition algebra $Par_t$,
  and we have the block decompositions
   \begin{align}\label{block1}
  Par_t &= \bigoplus_{S \in \P / \approx_t} 1_{S} Par_t,&
  Par_t\Mod &= \prod_{S \in \P / \approx_t} 1_{S} Par_t\Mod.
  \end{align}
  Representatives for the isomorphism classes of irreducible $1_{S} Par_t$-modules are given by the modules
  $L(\lambda)$ for all $\lambda \in S$.
  
\begin{lemma}\label{semisimplicitycriterion}
  The following properties are equivalent:
  \begin{itemize}
  \item[(i)] $Par_t$ is semisimple.
  \item[(ii)] All of the $\approx_t$-equivalence classes are singletons.
  \item[(iii)] $\HC:Z(Par_t) \rightarrow \kk[\P]$ is surjective.
  \item[(iv)] $\HC:Z(Par_t) \rightarrow \kk[\P]$ is an isomorphism.
  \end{itemize}
  \end{lemma}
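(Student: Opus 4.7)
The plan is to establish the cycle (i) $\Rightarrow$ (iv) $\Rightarrow$ (iii) $\Rightarrow$ (ii) $\Rightarrow$ (i). The implication (iv) $\Rightarrow$ (iii) is tautological, while (iii) $\Rightarrow$ (ii) is immediate from \cref{centlem}: surjectivity of $\HC$ combined with the characterization of its image forces every function on $\P$ to be constant on $\approx_t$-equivalence classes, and this can only happen if each class is a singleton.

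For (i) $\Rightarrow$ (iv), I would argue that if $Par_t$ is semisimple then the highest weight structure forces $\Delta(\lambda) = L(\lambda)$, since any composition factor of $\Delta(\lambda)$ other than the head $L(\lambda)$ would yield a nontrivial extension forbidden by semisimplicity. In a semisimple locally unital algebra with absolutely irreducible modules $\{L(\lambda)\:|\:\lambda \in \P\}$, the center is identified with $\prod_{\lambda} \End_{Par_t}(L(\lambda)) = \prod_{\lambda} \kk = \kk[\P]$, a central element acting on $L(\lambda)$ by the scalar $\chi_\lambda$. Under this identification $\HC$ is just the identity, hence an isomorphism.

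For (ii) $\Rightarrow$ (i), I would use \cref{centlem} to pull out a primitive central idempotent $1_{\{\lambda\}}$ from each singleton class, giving a block decomposition $Par_t = \bigoplus_{\lambda \in \P} 1_{\{\lambda\}} Par_t$ in which the $\lambda$-block contains $L(\lambda)$ as its unique simple. Within that block, the finite-length standard $\Delta(\lambda)$ has composition factors $L(\mu)$ with $\mu \preceq \lambda$, and any such $L(\mu)$ lies in the same block as $L(\lambda)$, forcing $\mu \approx_t \lambda$ and hence $\mu = \lambda$. This gives $\Delta(\lambda) = L(\lambda)$, and then BGG reciprocity \cref{bggrecip} forces $P(\lambda) = \Delta(\lambda) = L(\lambda)$. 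Consequently every $1_{\{\lambda\}} Par_t 1_n$ is a finite direct sum of copies of $L(\lambda)$, each block is semisimple, and $Par_t$ itself is semisimple.

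The principal obstacle is the content already packaged into \cref{centlem}, which governs the equivalence (iii) $\Leftrightarrow$ (ii); the remaining steps are then routine applications of the upper finite highest weight formalism from \cref{firstthm}. In particular, a direct proof of injectivity of $\HC$ (which would supply (iii) $\Rightarrow$ (iv) independently) is not needed, because the chosen cycle absorbs that injectivity into the easier direct step (i) $\Rightarrow$ (iv).
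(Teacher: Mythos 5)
Your proof is correct and follows essentially the same route as the paper's: both prove the cycle (i)$\Rightarrow$(iv)$\Rightarrow$(iii)$\Rightarrow$(ii)$\Rightarrow$(i), with (iii)$\Leftrightarrow$(ii) delegated to \cref{centlem}, (i)$\Rightarrow$(iv) by identifying $Z(Par_t)\cong\prod_\lambda\kk$ in the semisimple case, and (ii)$\Rightarrow$(i) by using the primitive central idempotents from \cref{centlem} to force every standard module $\Delta(\lambda)$ to equal $L(\lambda)$ and then invoking BGG reciprocity. The only cosmetic difference is that you phrase (ii)$\Rightarrow$(i) in terms of the block decomposition while the paper works directly with the idempotents; the parenthetical reference to $\Delta(\lambda)$ being of finite length is neither needed nor a priori justified at this point in the development, but since the argument only uses that all composition factors lie in the block of $\lambda$ (forcing $\rad\Delta(\lambda)=0$), nothing is lost.
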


\begin{proof}
 If (i) holds, then $Par_t$ is a direct sum of locally unital matrix algebras indexed by
  the set $\P$ that labels its irreducible representations. Hence, its center
  is the direct product $\prod_{\lambda \in \P} \kk_\lambda$. It follows easily that $\HC$ is an isomorphism, i.e., (iv) holds.

  Obviously, (iv) implies (iii).

  The equivalence of (ii) and (iii)
  follows from \cref{centlem}.

  It remains to show that (ii) implies (i). Assuming (ii),
 \cref{centlem} shows for any $\lambda \in \P$ that
 there is a primitive central idempotent in $Z(Par_t)$ which acts as the identity on $\Delta(\lambda)$ and as zero on $L(\mu)$ for all $\mu \neq \lambda$.
 We deduce that all composition factors of $\Delta(\lambda)$ are isomorphic to $L(\lambda)$.
  Since this is a highest weight module we have that $[\Delta(\lambda) :L(\lambda)] = 1$, so actually $\Delta(\lambda)$ is irreducible.
 This is the case for all $\lambda \in \P$, so by BGG reciprocity we deduce that $P(\lambda) = \Delta(\lambda) = L(\lambda)$ for all $\lambda$, and (i) holds.
\end{proof}

\begin{remark}
  When $Par_t$ is semisimple,
  the standardization functor $j_!:\sym\Modfd \rightarrow \Par_t\Modlfd$
  sends the irreducible $\sym$-modules $S(\lambda)$
  to the irreducible $Par_t$-modules $\Delta(\lambda) =L(\lambda)$ for all
  $\lambda \in \P$.
  It follows easily that $j_!$ is an equivalence of categories in the semisimple case (although it is not a monoidal equivalence). Since the center is a Morita invariant, it follows that $Z(\sym)\cong Z(Par_t)$ in the semisimple case.
  Recalling that $Z(\sym) \cong
  \kk[\P]$, this gives another way to understand the equivalence
  (i)$\Rightarrow$(iv) of \cref{semisimplicitycriterion}.
  \end{remark}

\begin{remark}\label{minorder}
  As $Par_t\Modlfd$ is an upper finite highest weight category,
  there is also a canonical partial order on $\P$, called the {\em minimal order} in \cite[Rem.~3.68]{BS},
  which we denote here by $\succeq_t$. By definition, this
  is the partial order generated by the relation $\lambda \succeq_t \mu$ if $[\Delta(\lambda):L(\mu)] \neq 0$. As always for highest weight categories,
  the equivalence relation $\approx_t$ defining the blocks of $Par_t$ is the
  transitive closure of the minimal order $\succeq_t$.
  We will describe $\succeq_t$ explicitly in \cref{minordercor} below.
\end{remark}
  
  \subsection{``Blocks''}
  In the previous subsection, we introduced an equivalence relation $\approx_t$
  on $\P$ whose equivalence classes parametrize the blocks of $Par_t$. The relation $\approx_t$ was defined
  in terms of the central characters $\chi_\lambda:Z(Par_t) \rightarrow \kk$
  arising from the irreducible $Par_t$-modules $L(\lambda)$; see \cref{approxdef}.
  On the other hand, in \cref{koepka,nadal},
  we constructed some explicit central
  elements of $Par_t$. Let $\sim_t$ be the equivalence relation on $\P$
  defined from
\begin{equation}\label{mysim}
    \lambda \sim_t \mu\Leftrightarrow\chi_\lambda|_{Z_0(Par_t)}
    = \chi_\mu|_{Z_0(Par_t)}
\end{equation}
where $Z_0(Par_t)$ is the subalgebra of $Z(Par_t)$ generated
by the elements $\left\{c^{(r)}\:\big|\:r \geq 3\right\}$ (equivalently,
by the elements $\left\{z^{(r)}\:\big|\:r \geq 1\right\}$).
  We refer to the $\sim_t$-equivalence classes as ``blocks''.
  We obviously have that \begin{equation}\label{approxsim}
    \lambda \approx_t \mu \Rightarrow \lambda \sim_t \mu,
    \end{equation}i.e., ``blocks'' are unions of blocks.
   Defining $1_S$ as in \cref{centlem}, there are induced ``block'' decompositions
  \begin{align}\label{BLOCKS}
    Par_t &= \bigoplus_{S \in \P / \sim_t} 1_S Par_t,
    &
    Par_t\Mod &= \prod_{S \in \P / \sim_t} 1_S Par_t\Mod.
  \end{align}
In this subsection, we are going to describe the relation $\sim_t$ in explicit combinatorial terms.

\begin{lemma}\label{c}
  The images of the elements $x_j^L, x_j^R, s_k^L, s_k^R \in 1_n Par_t 1_n$ from
  \cref{walk1} under the Harish-Chandra homomorphism $\widehat{\operatorname{HC}}$
  from \cref{feed} are
  \begin{align}\label{interest1}
    \HC_n(x_j^L)
    &= x_j,
    &\HC_n(x_j^R) &= t-j+1,\\
    \HC_n(s_k^L)
    &= 1,
       &\HC_n(s_k^R) &= (k\:k\!+\!1),\label{interest2}
  \end{align}
  where $x_j \in \kk S_n$ is the Jucys-Murphy element from \cref{jmdef}.
\end{lemma}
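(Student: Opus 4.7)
The plan is to reduce every claim to showing that the stated right-hand side differs from the left-hand side by an element of $K_n$, where $K_n$ denotes the two-sided ideal of $1_n Par_t 1_n$ spanned by those partition diagrams which contain at least one cap, cup, merge, split, or leaf (equivalently, the morphisms that factor through an object $m<n$). By construction, $\HC_n$ annihilates $K_n$, so it suffices to exhibit each alleged equality modulo $K_n$.

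First I would dispose of the crossing formulas by a direct appeal to \cref{bigjoey2}: the displayed images of the relevant $\APar$-generators under $p_t$ show that $s_k^L$ equals the identity on strings $k,k+1$ and that $s_k^R$ equals the symmetric crossing of strings $k,k+1$, i.e.\ the transposition $(k,k+1)\in S_n\subset P_n(t)$. These are exact equalities in $P_n(t)$, not just modulo $K_n$, and applying $\HC_n$ gives \cref{interest2}. For the base case $j=1$ of the dot formulas, \cref{bigjoey1} shows that $p_t$ sends $X_1^L$ to a downward leaf stacked on an upward leaf on the rightmost string, a diagram that factors through the object $n-1$ and therefore lies in $K_n$; hence $\HC_n(x_1^L) = 0 = x_1$. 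The same reference gives $p_t(X_1^R) = t\cdot 1_n$, so $\HC_n(x_1^R) = t = t-1+1$.

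For $j>1$, I would induct on $j$ using the five-term recurrences \cref{A1} and \cref{A3} in $\APar$ applied at the pair of strings in positions $j$ and $j+1$. In each of these relations, three of the five right-hand terms contain a merge or split (they are the ``tree'' terms), so after pushing forward by $p_t$ those three terms lie in $K_n$ and disappear under $\HC_n$. The two surviving terms in each relation can be rewritten using $s_k^L=1$ and $s_k^R=(k,k+1)$ from the previous step, together with the auxiliary conversions between symmetric, left, and right crossings furnished by \cref{crazycrossing,june,may}, to yield the recurrences
\begin{align*}
x_{j+1}^L &\equiv (j,j+1)\cdot x_j^L\cdot (j,j+1) + (j,j+1) \pmod{K_n},\\
x_{j+1}^R &\equiv (j,j+1)\cdot x_j^R\cdot (j,j+1) - 1 \pmod{K_n}.
\end{align*}
Applying $\HC_n$ and the inductive hypothesis then gives $\HC_n(x_{j+1}^L) = (j,j+1)\,x_j\,(j,j+1) + (j,j+1) = \sum_{k=1}^{j-1}(k,j+1) + (j,j+1) = x_{j+1}$ and $\HC_n(x_{j+1}^R) = (j,j+1)(t-j+1)(j,j+1) - 1 = t - j$, completing the induction.

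The main technical obstacle is the diagrammatic bookkeeping needed to extract the two recurrences displayed above from the five-term identities \cref{A1} and \cref{A3}. One must identify each of the five terms on the right-hand side as either (i) a diagram containing a merge or a split, belonging to $K_n$, or (ii) a composition of symmetric crossings with a single dot, and in the latter case one must correctly interpret the left/right handedness of the dot and the left/right/symmetric nature of each crossing (using the definitions \cref{believe,irrelevant} and the conversion identities in \cref{somerelations}). Apart from this careful combinatorial parsing, the argument is entirely formal.
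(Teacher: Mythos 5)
The proposal handles the dot formulas \cref{interest1} essentially the same way the paper does—induction on $j$ via \cref{A1,A3}, killing the tree terms modulo $K_n$, and reducing to the recurrence for Jucys--Murphy elements—and that half of the argument is correct.

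However, there is a genuine gap in the treatment of \cref{interest2}. You assert that ``$s_k^L$ equals the identity on strings $k,k+1$ and $s_k^R$ equals the transposition $(k\:k\!+\!1)$'' and that ``[t]hese are exact equalities in $P_n(t)$, not just modulo $K_n$,'' citing \cref{bigjoey2}. This is wrong on both counts. First, \cref{bigjoey2} only gives the images under $p_t$ of the left and right crossings applied to the \emph{first} two strings, so it covers only the base case $k=1$ (as the paper's preceding sentence explicitly records: ``$s_1^L = 1$ and $s_1^R = (1\:2)$''). Second, the exact equalities $s_k^L = 1$ and $s_k^R = (k\:k\!+\!1)$ genuinely fail for $k > 1$. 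Indeed, by \cref{dictionary}, when $t$ is large $\psi_t(s_k^L)$ acts on $u_{i_n}\otimes\cdots\otimes u_{i_1}$ by applying the transposition $(i_k\:\,i_{k+1})$ diagonally to the factors $u_{i_{k-1}}\otimes\cdots\otimes u_{i_1}$, which is visibly not the identity once $k\geq 2$; since $\psi_t$ is injective on $1_n Par_t 1_n$ for $t\geq 2n$, $s_k^L \neq 1$ there, and by the $T$-linearity of $\Par$ this persists for all $t$. The equality only holds modulo $K_n$, and establishing even that for $k>1$ requires the separate induction on $k$ using \cref{august,july} to slide the crossing leftward, which the paper carries out but your proposal omits. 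Note also that your reduction of \cref{A1,A3} to the two displayed recurrences silently uses the modulo-$K_n$ values $\HC_n(s_j^L), \HC_n(s_j^R)$ as inputs, so the crossing case must be settled first; with the above fix the structure of your argument survives, but as written the justification for \cref{interest2} is both under-referenced and factually incorrect.
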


\begin{proof}
  Applying $\HC_n$ to the relations \cref{august,july} (on the
  $k$th, $(k+1)$th and $(k+2)$th strings) we deduce that
$\HC_n(s_{k+1}^L) = (k\:\,k\!+\!1\:\,k\!+\!2) \HC_n(s_k^L)(k\!+\!2\:\,k\!+\!1\:\,k)$ and $\HC_n(s_{k+1}^R) = (k\:\,k\!+\!1\:\,k\!+\!2) \HC_n(s_{k}^R) (k\!+\!2\:\,k\!+\!1\:\,k)$.
  Now \cref{interest2} follows by induction on $k$, the base case $k=1$ being immediate from \cref{bigjoey2}.
  Note for this that
  $(k\:\,k\!+\!1\:\,k\!+\!2) (k\:\,k\!+\!1) (k\!+\!2\:\,k\!+\!1\:\,k)
  =(k\!+\!1\:\,k\!+\!2)$.

Applying $\HC_n$ to the relations \cref{A1,A3} (on the $j$th and $(j+1)$th strings), using also \cref{crazycrossing},
we deduce that $\HC_n(x_{j+1}^L) = (j\:\,j\!+\!1)\HC_n(x_j^L)(j\:\,j\!+\!1) + \HC_n(s_j^R)$
and
$\HC_n(x_{j+1}^R) = (j\:\,j\!+\!1)\HC_n(x_j^R)(j\:\,j\!+\!1) - \HC_n(s_j^L)$.
Now \cref{interest1} follows using \cref{interest2} and induction on $j$,
the base case $j=1$ being immediate from \cref{bigjoey1}.
Note for this that $(j\:\,j\!+\!1) x_j (j\:\,j\!+\!1) + (j\:\,j\!+\!1)= x_{j+1}$.
\end{proof}

\begin{lemma}\label{d}
For $\lambda \in \P_n$, we have that
  \begin{align}\label{310}
    \chi_\lambda(c(u)) &=
\prod_{i=1}^n \frac{\alpha_{\cont_i(\TT)}(u)}{\alpha_{t-i+1}(u)}
  \end{align}
  where $\TT$ is some fixed standard $\lambda$-tableau
  and $\alpha_x(u)$ is as in \cref{alphadef}.
\end{lemma}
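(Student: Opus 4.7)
The plan is to recognize $\chi_\lambda(c(u))$ as the scalar by which $c_n(u) \in 1_n Par_t 1_n[\![u^{-1}]\!]$ acts on the highest weight space $1_n \Delta(\lambda) \cong S(\lambda)$, and then to compute this scalar via the Harish-Chandra projection combined with the bubble slide formula for $C_n(u)$.

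First I would verify that $K_n$ annihilates $1_n \Delta(\lambda)$, so that the action of $c_n(u)$ on $S(\lambda) \cong 1_n \Delta(\lambda)$ factors through $\HC_n$. Since $\Delta(\lambda) = \ind_{Par^\sharp}^{Par_t} S^\sharp(\lambda)$ and $1_m S^\sharp(\lambda) = 0$ for $m < n = |\lambda|$, any element of $K_n$—being a linear combination of compositions that factor through some object $m < n$—kills the generating vector of $1_n \Delta(\lambda)$. Since $K_n$ is a two-sided ideal of $1_n Par_t 1_n$ complementary to the subalgebra $\kk S_n$, the projection $\HC_n \colon 1_n Par_t 1_n \twoheadrightarrow \kk S_n$ is a ring homomorphism, extending coefficient-wise to a ring homomorphism of power series rings.

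Next I would apply the bubble slide formula \cref{bubbleslide2}, which expresses $C_n(u)$ as the horizontal product over $i = 1, \ldots, n$ of the endomorphism of the $i$th string defined by the rational function $\alpha_y(u)/\alpha_x(u)$ (with $y$ the left dot and $x$ the right dot). Left and right dots on the same string commute (being images of open dots on different strands in $\Heis$), and dots on distinct strings commute by the interchange law, so this product is well-defined. Applying $p_t$ and then $\HC_n$, and invoking \cref{c}, which gives $\HC_n(x_i^L) = x_i$ (the Jucys-Murphy element) and $\HC_n(x_i^R) = t - i + 1$, yields
\[
\HC_n(c_n(u)) \;=\; \prod_{i=1}^{n} \frac{\alpha_{x_i}(u)}{\alpha_{t-i+1}(u)} \;\in\; \kk S_n[\![u^{-1}]\!].
\]

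Finally I would evaluate on Young's seminormal basis vector $v_\TT$, using $x_j v_\TT = \cont_j(\TT) v_\TT$ to conclude that $\alpha_{x_j}(u)$ acts diagonally as $\alpha_{\cont_j(\TT)}(u)$, giving the claimed formula. The right hand side is independent of the choice of standard $\lambda$-tableau $\TT$ because the multiset $\{\cont_i(\TT) : 1 \leq i \leq n\}$ is just the multiset of contents of the cells of the Young diagram of $\lambda$; this is consistent with the centrality of $c(u)$, which forces the scalar action on the irreducible $\kk S_n$-module $S(\lambda)$ to be independent of the tableau. No serious obstacle arises; the only point requiring care is that the bubble slide manipulation genuinely produces a product of commuting local endomorphisms, which in turn allows $\HC_n$ to be applied to each factor separately.
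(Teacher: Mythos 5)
Your proposal is correct and follows essentially the same route as the paper: identify $\chi_\lambda(c(u))$ with $\HC_n(c_n(u))$ evaluated at $\lambda$ (the paper just cites \cref{louis3} for what you re-derive from $K_n$ annihilating the highest weight space), apply \cref{bubbleslide2} and \cref{c} to get $\HC_n(c_n(u)) = \prod_{i=1}^n \alpha_{x_i}(u)/\alpha_{t-i+1}(u)$, and finish by acting on Young's orthonormal basis vector $v_\TT$. Your added observation that $\HC_n$ is a ring homomorphism is the point making the passage to that product legitimate, and is correctly justified since $K_n$ is a two-sided ideal complementary to the subalgebra $\kk S_n$.
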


\begin{proof}
  Note
 by \cref{louis3} that $\chi_\lambda(c(u)) = \HC_n(c_n(u))(\lambda)
  \in \kk[\![u^{-1}]\!]$. To compute this, we use \cref{c}
  and the explicit formula for $c_n(u) = p_t(C_n(u))$
 arising from \cref{bubbleslide2} to deduce that
  $$
  \HC_n(c_n(u)) =
  \prod_{i=1}^n
\frac{\alpha_{x_i}(u)}{\alpha_{t-i+1}(u)}\in Z(\kk S_n)[\![u^{-1}]\!].
  $$
  To evaluate this at $\lambda$, we act on the basis vector $v_\TT$
 from Young's orthonormal basis for $S(\lambda)$, remembering that
 $x_i v_\TT = \cont_i(\TT) v_\TT$.
  \end{proof}

\cref{c} suggests
some combinatorics of weights.
Let $P$ be the free Abelian group on basis $\{\Lambda_c\:|\:c \in \kk\}$.
Let
$\eps_c := \Lambda_c - \Lambda_{c+1}$ and
$\alpha_c := \eps_c - \eps_{c-1}$.
We define the {\em weight} of a rational function
$f(u) \in \kk(u)$ to be
\begin{equation}
  \wt f(u) :=
\sum_{c \in \kk} \left[\left(\begin{array}{l}
    \text{Multiplicity of $c$}\\\text{as a pole of $f(u)$}
  \end{array}
  \right)
  - \left(\begin{array}{l}
    \text{Multiplicity of $c$}\\\text{as a zero of $f(u)$}
  \end{array}
  \right)\right]
\Lambda_c \in P.
\end{equation}
For example,
$\wt \alpha_c(u) = -\Lambda_{c-1}+2\Lambda_c - \Lambda_{c+1} = \alpha_c$.
For $\lambda \in \P_n$, let $\wt_t(\lambda)$ be the weight of the rational function appearing on the right hand side of \cref{310}.
As the coefficients of the power series $c(u)$ generate the subalgebra
$Z_0(Par_t)$, the equivalence relation $\sim_t$ defined by \cref{mysim}
satisfies
\begin{equation}\label{sofar}
\lambda \sim_t \mu\Leftrightarrow \wt_t(\lambda) = \wt_t(\mu).
\end{equation}
This suggests using elements of $P$ rather than $\sim_t$-equivalence classes to index the ``blocks'' from \cref{BLOCKS}:
for any $\gamma \in P$, let
\begin{equation}\label{Sgamma}
  S(\gamma) := \left\{\lambda \in \P\:\big|\:\wt_t(\lambda)=\gamma\right\}.
\end{equation}
Then define
\begin{equation}\label{PRgamma}
\PR_\gamma:Par_t\Mod \rightarrow Par_t\Mod
\end{equation}
to be the projection functor defined by multiplication by the
central idempotent $1_{S(\gamma)}$ from \cref{centlem}.
In other words, $\PR_\gamma$ projects a $Par_t$-module $V$ to its largest submodule
all of whose irreducible subquotients are of the form $L(\lambda)$ for $\lambda \in \P$ with $\wt_t(\lambda) = \gamma$.
The admissible $\gamma \in P$
which parametrize ``blocks'' are the ones with $S(\gamma)\neq\varnothing$;
if $S(\gamma) = \varnothing$ then $\PR_\gamma$ is the zero functor.

\begin{lemma}\label{altchar}
  For $\lambda \in \P_n$ and any standard $\lambda$-tableau $T$, we have that
\begin{align}\label{wtt}
  \wt_t(\lambda)
  = 
  \sum_{i=1}^{n} (\alpha_{\cont_i(\TT)}-\alpha_{t-i+1})= 
  (\eps_{t-|\lambda|}-\eps_{t})+
(\eps_{\lambda_1-1}-\eps_{-1}) + \cdots+(\eps_{\lambda_k-k}-\eps_{-k})
\end{align}
for any $k \geq \ell(\lambda)$.
Moreover, given another partition $\mu \in \P$,
we have that $\wt_t(\lambda) = \wt_t(\mu)$
if and only if
the infinite sequences $(t-|\lambda|,\lambda_1-1,\lambda_2-2,\dots)$
and $(t-|\mu|,\mu_1-1,\mu_2-2,\dots)$ are rearrangements of each other.
\end{lemma}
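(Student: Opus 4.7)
The starting point will be Lemma \ref{d}, which already gives
$\chi_\lambda(c(u)) = \prod_{i=1}^n \alpha_{\cont_i(\TT)}(u)/\alpha_{t-i+1}(u)$. Since the weight function $\wt$ is additive under products and negates under reciprocals, and since a direct inspection of the factorization $\alpha_c(u) = (u-(c+1))(u-(c-1))/(u-c)^2$ shows $\wt \alpha_c(u) = -\Lambda_{c-1}+2\Lambda_c-\Lambda_{c+1} = \alpha_c$, the first equality $\wt_t(\lambda) = \sum_{i=1}^{n}(\alpha_{\cont_i(\TT)}-\alpha_{t-i+1})$ will follow immediately.

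For the middle equality, I will use the definition $\alpha_c = \eps_c-\eps_{c-1}$ and telescope each of the two sums. First, $\sum_{i=1}^{n}\alpha_{t-i+1} = \sum_{i=1}^{n}(\eps_{t-i+1}-\eps_{t-i})$ collapses to $\eps_t-\eps_{t-n} = \eps_t-\eps_{t-|\lambda|}$. Second, the sum $\sum_{i=1}^{n}\alpha_{\cont_i(\TT)}$ is independent of $\TT$ since it ranges over the boxes of the Young diagram of $\lambda$; organizing it by rows, the $j$th row contributes $\sum_{c=1-j}^{\lambda_j-j}(\eps_c-\eps_{c-1}) = \eps_{\lambda_j-j}-\eps_{-j}$, which is zero for $j>\ell(\lambda)$. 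This gives $(\eps_{t-|\lambda|}-\eps_t)+\sum_{j=1}^{k}(\eps_{\lambda_j-j}-\eps_{-j})$ for any $k\geq\ell(\lambda)$, as claimed.

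For the final assertion, I will rewrite the identity as
\begin{equation*}
\wt_t(\lambda)+\eps_t+\sum_{j=1}^{k}\eps_{-j} \;=\; \eps_{t-|\lambda|}+\sum_{j=1}^{k}\eps_{\lambda_j-j},
\end{equation*}
and likewise for $\mu$, choosing $k\geq\max(\ell(\lambda),\ell(\mu))$. The equality $\wt_t(\lambda)=\wt_t(\mu)$ is then equivalent to
\begin{equation*}
\eps_{t-|\lambda|}+\sum_{j=1}^{k}\eps_{\lambda_j-j} \;=\; \eps_{t-|\mu|}+\sum_{j=1}^{k}\eps_{\mu_j-j}.
\end{equation*}
The main technical point here is a brief verification that the family $\{\eps_c\,|\,c\in\kk\}$ is linearly independent in $P$: expanding $\sum_c a_c\eps_c = \sum_c (a_c-a_{c-1})\Lambda_c$ and using that the $\Lambda_c$ are a basis, any finite relation forces $a_c=a_{c-1}$ for all $c\in\kk$, and this combined with finite support gives $a_c=0$ throughout (the relations only propagate along integer translates, but within each such chain finite support forces vanishing). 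Given this independence, matching the two sides amounts to equality of the multisets $\{t-|\lambda|,\lambda_1-1,\ldots,\lambda_k-k\}$ and $\{t-|\mu|,\mu_1-1,\ldots,\mu_k-k\}$; since $\lambda_j-j=-j$ for $j>\ell(\lambda)$, passing $k\to\infty$ this is precisely the statement that the infinite sequences $(t-|\lambda|,\lambda_1-1,\lambda_2-2,\dots)$ and $(t-|\mu|,\mu_1-1,\mu_2-2,\dots)$ are rearrangements of each other. I do not expect any serious obstacle: the whole argument is a careful bookkeeping exercise once Lemma \ref{d} and the linear independence of the $\eps_c$ are in hand.
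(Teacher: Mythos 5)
Your proof is correct and follows essentially the same route as the paper: first equality from \cref{d} together with $\wt\alpha_c(u)=\alpha_c$, second equality by telescoping row-by-row, and the final equivalence by rearranging and comparing $\eps_c$-coefficients. The one thing you supply that the paper leaves implicit is the verification that the $\eps_c$ are linearly independent in $P$, which is a worthwhile point to spell out.
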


\begin{proof}
  The first equality in \cref{wtt} follows immediately from \cref{d}.
  To deduce the second equality,
  take $k \geq \ell(\lambda)$. For $1 \leq r \leq k$
  the contents of the nodes in the $r$th row of the Young diagram of $\lambda$ are
$1-r,2-r,\dots,\lambda_r-r$, and we have that
  $\alpha_{1-r}+\cdots + \alpha_{\lambda_r-r} = \eps_{\lambda_r-r} - \eps_{-r}$.
  Also $\alpha_{t} + \alpha_{t-1} + \cdots + \alpha_{t-n+1} = \eps_{t}-\eps_{t-n}$.
Now the desired formula follows easily.

Rearranging the right hand side of \cref{wtt} gives that
$\eps_{t-|\lambda|} + \eps_{\lambda_1-1}+\eps_{\lambda_2-2}+\cdots+\eps_{\lambda_k-k}=
\wt_t(\lambda)+\eps_{-1}+\eps_{-2}+\cdots+\eps_{-k} + \eps_t
$
for any $k \geq \ell(\lambda)$.
Hence, we have that $\wt_t(\lambda) = \wt_t(\mu)$ if and only if
$$
\eps_{t-|\lambda|}+\eps_{\lambda_1-1}+\eps_{\lambda_2-2} + \cdots
+ \eps_{\lambda_k - k}=
\eps_{t-|\mu|} + \eps_{\mu_1-1}+\eps_{\mu_2-2} + \cdots +
\eps_{\mu_k-k}
$$
for all $k \gg 0$.
This is clearly equivalent to saying that
the infinite sequences
$(t-|\lambda|,\lambda_1-1,\lambda_2-2,\dots)$
and $(t-|\mu|,\mu_1-1,\mu_2-2,\dots)$ may be obtained from each other by
permuting the entries.
\end{proof}

The final assertion from \cref{altchar} shows that $\sim_t$
is exactly the same as the equivalence relation on partitions defined
in \cite[Def.~5.1]{CO}. The equivalence classes of this relation were
investigated in detail in \cite[$\S$5.3]{CO}.
The following summarizes the results obtained there. For the statement, we say that $\lambda \in \P$ is {\em typical} if it is the only partition in its $\sim_t$-equivalence class; otherwise we say that $\lambda$ is {\em atypical}. Of course, these notions depend on the fixed value of the parameter $t$.

\begin{theorem}[Comes-Ostrik]\label{cocomb}
If $t \notin \N$ then all partitions are typical.
If $t \in \N$ then
there is a bijection
    $\P_t \stackrel{\sim}{\rightarrow} \{\text{atypical $\sim_t$-equivalence classes}\}$
    taking $\kappa \in \P_t$ to the
    $\sim_t$-equivalence class $\{\kappa^{(0)}, \kappa^{(1)}, \kappa^{(2)},\dots\}$
    where
    \begin{equation}\label{kappadef}
    \kappa^{(n)} := (\kappa_1+1,\dots,\kappa_n+1,\kappa_{n+2}, \kappa_{n+3},\dots) \in \P_{t+n-\kappa_{n+1}},
    \end{equation}
   i.e., it is the partition obtained from $\kappa$ by
   adding a node to the first $n$ rows of its Young diagram
   then removing its $(n+1)$th row.
Moreover, still assuming $t \in \N$, a partition $\lambda \in \P$ is typical if and only if
$t-|\lambda| = \lambda_i-i$ for some $i \geq 1$.
\end{theorem}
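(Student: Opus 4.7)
The plan is to read the equivalence relation $\sim_t$ off of \cref{altchar}: $\lambda \sim_t \mu$ if and only if the multisets $M(\lambda) := \{t-|\lambda|\} \cup \{\lambda_i - i : i\geq 1\}$ and $M(\mu)$ coincide. All three conclusions in the theorem will flow from parametrizing the partitions $\mu$ sharing a prescribed multiset. When $t \notin \N$, the entry $t-|\lambda|$ is the unique non-integer in $M(\lambda)$, so both it and the strictly decreasing sequence $(\lambda_i - i)_{i\geq 1}$ are determined by $M(\lambda)$; hence $\lambda$ is recovered uniquely and every class is a singleton.

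For $t \in \N$, I would write $M(\lambda)$ in decreasing order as $m_0 \geq m_1 \geq \ldots$ (with $m_i = -i$ for $i$ large), so that any $\mu$ with $M(\mu) = M(\lambda)$ arises by choosing an index $k$ with $t-|\mu| = m_k$ and letting the remaining entries form $(\mu_i - i)_{i\geq 1}$; this forces $\mu_i = m_{i-1}+i$ for $1 \leq i \leq k$ and $\mu_i = m_i+i$ for $i > k$. If $M(\lambda)$ has all distinct entries, every $k \geq 0$ yields a valid partition: monotonicity reduces to $m_{j-1} - m_j \geq 1$, automatic for distinct integers, and non-negativity to the estimate $m_i \geq -i$, which follows by descending induction from the eventual equality $m_i = -i$ using the strict inequality $m_i > m_{i+1}$. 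If instead $m_j = m_{j+1}$ for some $j$, only $k \in \{j, j+1\}$ is admissible and both choices yield $\mu = \lambda$. Since repetitions in $M(\lambda)$ occur precisely when $t-|\lambda|$ coincides with some $\lambda_i-i$, this establishes the typicality criterion.

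It remains to identify atypical classes with $\P_t$. Fix $\kappa \in \P_t$ and compute directly
\begin{equation*}
M(\kappa^{(n)}) = \{\kappa_{n+1}-n\} \cup \{\kappa_i+1-i : 1 \leq i \leq n\} \cup \{\kappa_{i+1}-i : i \geq n+1\} = \{\kappa_j-(j-1) : j \geq 1\},
\end{equation*}
which is manifestly independent of $n$, so the partitions $\kappa^{(n)}$ all lie in a common class. Conversely, for atypical $\lambda$ insert $t-|\lambda|$ into the strictly decreasing sequence $(\lambda_i-i)_{i \geq 1}$ to locate the unique $n \geq 0$ with $\lambda_n - n > t-|\lambda| > \lambda_{n+1}-(n+1)$ (using the convention $\lambda_0 - 0 = +\infty$), and then define $\kappa$ by $\kappa_i = \lambda_i-1$ for $1 \leq i \leq n$, $\kappa_{n+1} = t-|\lambda|+n$, and $\kappa_i = \lambda_{i-1}$ for $i \geq n+2$. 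The two strict inequalities make $\kappa$ weakly decreasing, non-negativity of $\kappa_{n+1}$ follows from $t-|\lambda| > \lambda_{n+1}-(n+1) \geq -(n+1)$, the equality $|\kappa|=t$ comes from a telescoping sum, and $\kappa^{(n)} = \lambda$ by construction. Uniqueness of $\kappa$ is forced because $M(\kappa^{(0)})$ sorted decreasingly is the strictly decreasing sequence $(\kappa_j - (j-1))_{j \geq 1}$, from which $\kappa$ is read off.

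The main obstacle is the bookkeeping in the atypical case: verifying that every index $k$ in the sorted multiset actually produces a genuine partition (rather than just a formal sequence of integers), and locating the correct position $n$ that realises the genetic $\kappa$. Both reduce to systematic use of the fact that $M(\lambda)$ consists of distinct integers which are eventually $-1, -2, -3, \ldots$, once the distinct-versus-repeated dichotomy for $M(\lambda)$ has been pinned down.
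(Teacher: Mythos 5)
Your proof is correct, and in fact it supplies an argument where the paper itself gives none: the authors label this result ``Comes--Ostrik'' and simply cite \cite[\S 5.3]{CO}, so the only ingredient the paper contributes is \cref{altchar}, which you correctly take as your starting point.

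The reduction of $\sim_t$ to equality of the multisets $M(\lambda) = \{t-|\lambda|\}\cup\{\lambda_i - i : i \geq 1\}$ is exactly what \cref{altchar} says, and from there your bookkeeping is sound. A few things worth confirming, which I checked: the sorted sequence $m_0 \geq m_1 \geq \cdots$ does satisfy $m_i = -i$ for $i \gg 0$ (for large $N$ the elements $\leq -N$ of $M(\lambda)$ are precisely $\{-N,-N-1,\ldots\}$ each with multiplicity one, since $t-|\lambda|$ and the non-trivial $\lambda_i - i$ are all eventually excluded); your descending induction for $m_i \geq -i$ works; the monotonicity check at the skipped index $i=k$ uses $m_{k-1}-m_{k+1}\geq 2$ rather than $m_{k-1}-m_k\geq 1$ as your sentence phrases it, but this is implied so the claim stands; when $m_j = m_{j+1}$ the repeated value is necessarily $t-|\lambda|$ (the $\lambda_i - i$ are strictly decreasing), so there is only one repeated pair and both admissible $k$'s recover $\lambda$; and in the converse construction of $\kappa$, the needed inequality $\lambda_n \geq 1$ for $1\leq n$ follows because $\lambda_n = 0$ would force $-n > t-|\lambda| > -n-1$, impossible for integers. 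The only omitted detail is that the $\kappa^{(n)}$ are pairwise distinct, which is immediate from $|\kappa^{(n+1)}| - |\kappa^{(n)}| = 1 + \kappa_{n+1}-\kappa_{n+2} \geq 1$.

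So this is a complete, self-contained proof of the theorem from \cref{altchar}, filling in what the paper delegates to \cite{CO}; the approach is the natural ``beta-number'' style manipulation and is presumably essentially the argument in \emph{loc.\ cit.}
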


\begin{example}\label{generalblock}
  For any $t \in \N$,
  the $\sim_t$-equivalence class associated to $\kappa = (t) \in \P_t$ is
  $$
  S = \left\{\varnothing, (t\!+\!1), (t\!+\!1, 1), (t\!+\!1,1^2), \cdots\right\}.
  $$
  For $t \in \N - \{0,1\}$, the $\sim_t$-equivalence class associated to
  $\kappa = (1^t) \in \P_t$ is
  $$
  S = \left\{(1^{t-1}), (2, 1^{t-2}), (2^2, 1^{t-3}), \cdots, (2^{t-1}),(2^t), (2^t, 1), (2^t, 1^2), \cdots\right\}.
  $$
\end{example}

As noted in \cite[Cor.~5.23]{CO}
(using a different argument for the forward implication),
the first assertion of \cref{cocomb}
allows us to recover the following well known result of Deligne \cite[Th.~2.18]{Del}: $\REP(S_t)$ is semisimple if and only if $t \notin \N$. In terms of the path
algebra $Par_t$, Deligne's result can be stated as follows.

\begin{corollary}[Deligne]\label{sscor}
  $Par_t$ is semisimple if and only if $t \notin \N$.
\end{corollary}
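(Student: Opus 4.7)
The plan is to prove the two directions separately, drawing on results already established in the excerpt. The forward direction (``$Par_t$ semisimple $\Rightarrow t \notin \N$'') is essentially immediate from \cref{SWDcor}: when $t \in \N$, the functor $\psi_t:\Par_t \to \kk S_t\Modfd$ induces a monoidal equivalence only \emph{after} passing to the semisimplification of $\Kar(\Par_t)$, and since $\Kar(\Par_t)$ has infinitely many isomorphism classes of indecomposables while $\kk S_t\Modfd$ has only finitely many, $\Kar(\Par_t)$ must have nonzero negligible morphisms; equivalently $Par_t$ is not semisimple. So first I would just quote \cref{SWDcor} to dispatch the case $t \in \N$.

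For the converse, suppose $t \notin \N$. I would use the block-theoretic machinery developed in \cref{HCH}. By \cref{semisimplicitycriterion}, it suffices to show that every $\approx_t$-equivalence class is a singleton. In view of the implication \cref{approxsim}, namely $\lambda \approx_t \mu \Rightarrow \lambda \sim_t \mu$, it is enough to show that every $\sim_t$-equivalence class is a singleton, i.e., every partition is typical in the sense defined before \cref{cocomb}. But this is precisely the first assertion of \cref{cocomb} (Comes--Ostrik): when $t \notin \N$, all partitions are typical. Combining these three ingredients completes the proof.

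The only real work has already been done upstream: producing sufficiently many central elements to distinguish partitions with different weights $\wt_t(\lambda)\in P$. That is encapsulated in the construction of the central series $c(u)$ via the affine partition category (\cref{ourcentralelements}) together with the explicit eigenvalue computation \cref{d}, which let us reduce the equivalence $\sim_t$ to the combinatorial statement in \cref{altchar} from which \cref{cocomb} follows. There is no further obstacle; the corollary itself is a clean two-line assembly of \cref{SWDcor}, \cref{approxsim}, \cref{semisimplicitycriterion}, and \cref{cocomb}.
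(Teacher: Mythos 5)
Your proposal is correct and matches the paper's own proof exactly: the forward direction is \cref{SWDcor}, and the converse combines \cref{semisimplicitycriterion}, \cref{approxsim}, and the first part of \cref{cocomb}. Nothing more to add.
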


\begin{proof}
  We already know that $Par_t$ is not semisimple if $t \in \N$ by \cref{SWDcor}.
  Conversely, if $t \notin \N$, we apply the criterion from \cref{semisimplicitycriterion}, noting that
  all $\approx_t$-equivalence classes are singletons thanks to \cref{approxsim}
  and the first part of \cref{cocomb}.
  \end{proof}

\begin{remark}
  When $t \notin \N$, the above arguments show for $\lambda, \mu \in \P$
  with $\lambda \neq \mu$ that there is a central element in the
  subalgebra $Z_0(Par_t)$ of $Z(Par_t)$ which acts by different scalars
  on the irreducible modules $L(\lambda)$ and $L(\mu)$.
  It follows in these cases that $Z_0(Par_t)$ is a dense subalgebra of
the pseudo-compact topological algebra  $Z(Par_t)$\footnote{
  These algebras are certainly not equal since $Z(Par_t) \cong \prod_{\lambda \in \P} \kk_\lambda$ is of uncountable dimension.}.
We do not expect that this is the case when $t \in \N$, but nevertheless
 $Z_0(Par_t)$ is still sufficiently large to separate blocks.
  This will be established in \cref{blocksareblocks} below, which shows
  for any value of $t$
that  the relations $\sim_t$ and $\approx_t$ coincide, so that ``blocks'' are blocks, and \cref{BLOCKS} is always the same decomposition as \cref{block1}; see also
\cite[Th.~5.3]{CO}.
   \end{remark}

\subsection{Special projective functors}
From now on, we will primarily be
interested in parameter values $t \in \N$, so that
$Par_t$ is not semisimple.
Consider the atypical block
$\{\kappa^{(0)}, \kappa^{(1)}, \kappa^{(2)},\dots\}$ associated to
$\kappa \in \P_t$.
From \cref{kappadef}, it follows that
$\kappa^{(n)}$ is obtained from $\kappa^{(n-1)}$ by adding
$\kappa_n - \kappa_{n+1}+1$ nodes to the $n$th row of its Young diagram, leaving all other rows unchanged.
The partition $\kappa^{(0)}$ is the smallest
of all of the $\kappa^{(n)}$, hence, it is maximal in the highest weight ordering from \cref{firstthm}.
It follows that
\begin{align}\label{topinblock}
  P(\kappa^{(0)}) &\cong \Delta(\kappa^{(0)}).
\end{align}
The indecomposable projectives
$\Delta(\kappa^{(0)})$ are exactly the ones of non-zero
categorical dimension mentioned already in \cref{corerem}, with
the irreducible $\kk S_t$-module associated to the image of $\Delta(\kappa^{(0)})$
under the equivalence $\overline{\psi}_t$ between the semisimplification of $\Kar(\Par_t)$ and
$\kk S_t\Modfd$
being the Specht module $S(\kappa)$.
It is also useful to note for $t \in \N$ and $\kappa \in \P_t$ that
the associated block $\{\kappa^{(0)}, \kappa^{(1)},\dots\}$ is
the set $S(\gamma)$ from \cref{Sgamma} for
\begin{equation}\label{kappatogamma}
  \gamma := (\eps_{\kappa_1} - \eps_t) + (\eps_{\kappa_2-1}-\eps_{-1})
  +\cdots + (\eps_{\kappa_t-t+1} - \eps_{-t}) \in P.
  \end{equation}
This is follows easily using \cref{wtt,kappadef}.

In order to understand the structure of the atypical blocks more fully, 
 we are going to use the endofunctor
$\mid\,\star:\Par_t\rightarrow \Par_t$.
Let
\begin{align}
  D &:= \res_{\mid\,\star}= 1_{\mid\,\star}Par_t  \otimes_{Par_t}:Par_t\Mod\rightarrow Par_t\Mod
\end{align}
be the corresponding restriction functor from
\cref{if2}. This obviously preserves locally finite-dimensional modules.
The object $\mid$ is self-dual so, by \cref{dualitylem},
the restriction functor $D$ is isomorphic to the induction functor
$\ind_{\mid\,\star}$.
By \cref{dualitycor}, $D$ is a
self-adjoint projective functor,
so it preserves finitely generated projectives (and finitely cogenerated injectives).
To make the canonical adjunction as explicit as possible, we note that
its unit and counit
are induced by the bimodule homomorphisms
\begin{align}\label{etaadj}
\eta&:Par_t\to 1_{\mid\,\star} Par_t\otimes_{Par_t} 1_{\mid\,\star} Par_t,
&\begin{tikzpicture}[scale=1.2,anchorbase]
\draw[-,thick] (-0.5,-0.2)--(-0.5,0.2)--(0.5,0.2)--(0.5,-0.2)--(-0.5,-0.2);
\node at (-0,-0) {$\scriptstyle{f}$};
\draw[-,thick] (-0.4,-0.2)--(-0.4,-0.5);
\draw[-,thick] (0.4,-0.2)--(0.4,-0.5);
\draw[-,thick] (-0.4,0.2)--(-0.4,0.5);
\draw[-,thick] (0.4,0.2)--(0.4,0.5);
\node at (-0.2,0.35) {$\cdot$};
\node at (-0,0.35) {$\cdot$};
\node at (0.2,0.35) {$\cdot$};
\node at (-0.2,-0.4) {$\cdot$};
\node at (-0,-0.4) {$\cdot$};
\node at (0.2,-0.4) {$\cdot$};
\node at (.4,-.6) {$\stringlabel{1}$};
\node at (-.4,-.6) {$\stringlabel{n}$};
\node at (.4,.6) {$\stringlabel{1}$};
\node at (-.4,.6) {$\stringlabel{m}$};
\end{tikzpicture}\mapsto\begin{tikzpicture}[scale=1.2,anchorbase]
\draw[-,thick] (-0.7,-0.5)to(-0.7,.5);
\draw[-,thick] (-0.5,-0.2)--(-0.5,0.2)--(0.5,0.2)--(0.5,-0.2)--(-0.5,-0.2);
\node at (-0,-0) {$\scriptstyle{f}$};
\draw[-,thick] (-0.4,-0.2)--(-0.4,-0.5);
\draw[-,thick] (0.4,-0.2)--(0.4,-0.5);
\draw[-,thick] (-0.4,0.2)--(-0.4,0.5);
\draw[-,thick] (0.4,0.2)--(0.4,0.5);
\node at (-0.2,0.35) {$\cdot$};
\node at (-0,0.35) {$\cdot$};
\node at (0.2,0.35) {$\cdot$};
\node at (-0.2,-0.4) {$\cdot$};
\node at (-0,-0.4) {$\cdot$};
\node at (0.2,-0.4) {$\cdot$};
\node at (.4,-.6) {$\stringlabel{1}$};
\node at (-.4,-.6) {$\stringlabel{n}$};
\node at (-.7,-.6) {$\stringlabel{n\!+\!1}$};
\node at (.4,.6) {$\stringlabel{1}$};
\node at (-.4,.6) {$\stringlabel{m}$};
\node at (-.7,.6) {$\stringlabel{m\!+\!1}$};
\end{tikzpicture}\ \otimes\ \begin{tikzpicture}[anchorbase,scale=1.2]
\draw[-,thick] (-1.1,0.5)to(-1.1,0)to[out=down,in=left](-0.9,-0.2)to[out=right,in=down](-0.7,0)to(-0.7,0.5);
\draw[-,thick] (-0.4,0.5)--(-0.4,-0.5);
\draw[-,thick] (0.4,0.5)--(0.4,-0.5);
\node at (-0.2,0.05) {$\cdot$};
\node at (-0,0.05) {$\cdot$};
\node at (0.2,0.05) {$\cdot$};
\node at (.4,.6) {$\stringlabel{1}$};
\node at (-.4,.6) {$\stringlabel{n}$};
\node at (.4,-.6) {$\stringlabel{1}$};
\node at (-.4,-.6) {$\stringlabel{n}$};
\node at (-.72,.6) {$\stringlabel{n\!+\!1}$};
\node at (-1.12,.6) {$\stringlabel{n\!+\!2}$};
\end{tikzpicture}\ ,\\\label{epsadj}
  \varepsilon&:1_{\mid\,\star}Par_t \otimes_{Par_t} 1_{\mid\,\star} Par_t\rightarrow Par_t, &
  \begin{tikzpicture}[anchorbase,scale=1.2]
\draw[-,thick] (-0.5,-0.2)--(-0.5,0.2)--(0.5,0.2)--(0.5,-0.2)--(-0.5,-0.2);
\node at (-0,-0) {$\scriptstyle{f}$};
\draw[-,thick] (-0.4,-0.2)--(-0.4,-0.5);
\draw[-,thick] (0.4,-0.2)--(0.4,-0.5);
\draw[-,thick] (-0.45,0.2)--(-0.45,0.5);
\draw[-,thick] (-0.25,0.2)--(-0.25,0.5);
\draw[-,thick] (0.45,0.2)--(0.45,0.5);
\node at (-0.05,0.4) {$\cdot$};
\node at (.1,0.4) {$\cdot$};
\node at (0.25,0.4) {$\cdot$};
\node at (-0.2,-0.4) {$\cdot$};
\node at (-0,-0.4) {$\cdot$};
\node at (0.2,-0.4) {$\cdot$}; 
\node at (.4,-.6) {$\stringlabel{1}$};
\node at (-.4,-.6) {$\stringlabel{n}$};
\node at (.45,.6) {$\stringlabel{1}$};
\node at (-.25,.6) {$\stringlabel{m}$};
\node at (-.5,.6) {$\stringlabel{m\!+\!1}$};
\end{tikzpicture}\ \otimes\ \begin{tikzpicture}[anchorbase,scale=1.2]
\draw[-,thick] (-0.5,-0.2)--(-0.5,0.2)--(0.5,0.2)--(0.5,-0.2)--(-0.5,-0.2);
\node at (-0,-0) {$\scriptstyle{g}$};
\draw[-,thick] (-0.4,-0.2)--(-0.4,-0.5);
\draw[-,thick] (0.4,-0.2)--(0.4,-0.5);
\draw[-,thick] (-0.4,0.2)--(-0.4,0.5);
\draw[-,thick] (-0.2,0.2)--(-0.2,0.5);
\draw[-,thick] (0.4,0.2)--(0.4,0.5);
\node at (-0,0.35) {$\cdot$};
\node at (0.1,0.35) {$\cdot$};
\node at (0.2,0.35) {$\cdot$};
\node at (-0.2,-0.4) {$\cdot$};
\node at (-0,-0.4) {$\cdot$};
\node at (0.2,-0.4) {$\cdot$}; 
\node at (.4,.6) {$\stringlabel{1}$};
\node at (-.2,.6) {$\stringlabel{n}$};
\node at (.4,-.6) {$\stringlabel{1}$};
\node at (-.4,-.6) {$\stringlabel{n\!+\!1}$};
\node at (-.45,.6) {$\stringlabel{n\!+\!1}$};
  \end{tikzpicture}
\mapsto\begin{tikzpicture}[anchorbase,scale=1.2]
\draw[-,thick] (-0.5,-0.5)--(-0.5,-0.1)--(0.5,-0.1)--(0.5,-0.5)--(-0.5,-0.5);
\draw[-,thick] (-0.5,0.5)--(-0.5,0.1)--(0.5,0.1)--(0.5,0.5)--(-0.5,0.5);
\draw[-,thick] (-0.4,-0.5)--(-0.4,-0.7);
\draw[-,thick] (0.4,-0.7)--(0.4,-0.5);
\draw[-,thick] (-0.4,0.1)--(-0.2,-0.1);
\draw[-,thick] (-0.4,-.1) to[out=120,in=down] (-0.7,.3) to (-.7,.5) to[out=up,in=up,looseness=2] (-.45,.5);
\draw[-,thick] (0.4,0.1)--(0.4,-0.1);
\draw[-,thick] (0.45,0.7)--(0.45,0.5);
\draw[-,thick] (-0.25,0.7)--(-0.25,0.5);
\node at (0,0.3) {$\scriptstyle{f}$};
\node at (0,-0.3) {$\scriptstyle{g}$};
\node at (-0.1,-0.03) {$\cdot$};
\node at (0.05,-0.03) {$\cdot$};
\node at (0.2,-0.03) {$\cdot$};
\node at (-0.2,-0.62) {$\cdot$};
\node at (0,-0.62) {$\cdot$};
\node at (0.2,-0.62) {$\cdot$}; 
\node at (-0.05,0.62) {$\cdot$};
\node at (.1,0.62) {$\cdot$};
\node at (0.25,0.62) {$\cdot$};
\node at (.4,-.8) {$\stringlabel{1}$};
\node at (-.4,-.8) {$\stringlabel{n\!+\!1}$};
\node at (.4,.8) {$\stringlabel{1}$};
\node at (-.25,.8) {$\stringlabel{m}$};
\end{tikzpicture}\ .
\end{align}
Using \cref{predual}, it follows that
$D$ commutes with the duality
$?^\sigmadual$ on $Par_t\Modlfd$.

\begin{lemma}\label{hood}
  For $\lambda \in \P$, there is
  a filtration
  $0=V_0 \subseteq V_1 \subseteq V_2 \subseteq V_3 = D \Delta(\lambda)$
  such that
  \begin{align*}
    V_3/V_2 &\cong \bigoplus_{a \in \add(\lambda)} \Delta\left(\lambda+\boxed{a}\right),\\
    V_2 / V_1 &\cong \Delta(\lambda)\oplus
    \bigoplus_{b \in \rem(\lambda)}
    \bigoplus_{a \in \add(\lambda - \boxed{b})}
    \Delta\left((\lambda - \boxed{b})+\boxed{a}\right),\\
    V_1 / V_0 &\cong \bigoplus_{b \in \rem(\lambda)} \Delta\left(\lambda-\boxed{b}\right).
  \end{align*}
\end{lemma}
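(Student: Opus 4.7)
The strategy is to realise $D\Delta(\lambda)$ as an induction product $Q((1))\ostar\Delta(\lambda)$ and then to extract the claimed filtration as the canonical size filtration of a $\Delta$-flagged module in the upper finite highest weight category $Par_t\Modlfd$. Since the generator $\mid$ is self-dual, \cref{dualitylem} gives $D=\res_{\mid\star}\cong\ind_{\mid\star}$; combining \cref{dillylilly} (specialised to $\cC=\cA=\Par_t$) with the identification $Par_t\,1_1=\ind_\sym^{Par_t}S((1))=Q((1))$ then produces an isomorphism
\[
D\Delta(\lambda)\;\cong\;Q((1))\ostar\Delta(\lambda).
\]

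I would next compute the total $\Delta$-multiplicities of this product. By \cref{bioap,textmessage}, $Q((1))$ has a two-step $\Delta$-flag giving the short exact sequence $0\to\Delta(\varnothing)\to Q((1))\to\Delta((1))\to 0$. Since $\Delta(\varnothing)=Par_t\,1_0$ is the $\ostar$-unit and $\ostar\Delta(\lambda)$ is exact on $\Delta$-flagged modules by \cref{lastfromss}, this tensors to
\[
0\to\Delta(\lambda)\to D\Delta(\lambda)\to\Delta((1))\ostar\Delta(\lambda)\to 0.
\]
By \cref{lastfromss}, the quotient carries a $\Delta$-flag with section-multiplicities $\overline{G}^\mu_{(1),\lambda}$, and \cref{springs} evaluates these to be $1$ for $\mu\in\{\lambda+\boxed{a},\lambda-\boxed{b},(\lambda-\boxed{b})+\boxed{a}\}\setminus\{\lambda\}$, $|\rem(\lambda)|$ for $\mu=\lambda$, and zero otherwise. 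Adding the single $\Delta(\lambda)$ in the kernel, $D\Delta(\lambda)$ acquires a $\Delta$-flag with multiplicity $1$ on each $\Delta(\lambda+\boxed{a})$ and $\Delta(\lambda-\boxed{b})$, multiplicity $1$ on each $\Delta((\lambda-\boxed{b})+\boxed{a})$ with $a\neq b$, and multiplicity $|\rem(\lambda)|+1$ on $\Delta(\lambda)$.

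Finally I would invoke the general fact (from the theory of upper finite highest weight categories as developed in \cite[\S3]{BS}) that any $\Delta$-flagged module $M$ admits a canonical size filtration $0\subset M^{\le s_0}\subset M^{\le s_1}\subset\cdots$ whose $i$-th step is the maximal submodule with $\Delta$-flag supported on partitions of size $\le s_i$; equivalently, it is the unique filtration obtained by rearranging any $\Delta$-flag so that sections are sorted by partition size (smaller sizes at the bottom). For $M=D\Delta(\lambda)$ the non-trivial steps occur only at sizes $|\lambda|-1,|\lambda|,|\lambda|+1$, and declaring $V_1,V_2,V_3$ to be those three steps produces the filtration of the lemma. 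The $|\rem(\lambda)|+1$ copies of $\Delta(\lambda)$ in the size-$|\lambda|$ middle layer then repackage as $\Delta(\lambda)\oplus\bigoplus_{b\in\rem(\lambda)}\bigoplus_{a\in\add(\lambda-\boxed{b})}\Delta((\lambda-\boxed{b})+\boxed{a})$, the $|\rem(\lambda)|$ ``diagonal'' terms $a=b$ of the double sum absorbing the extra copies of $\Delta(\lambda)$.

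The most delicate point in the argument will be the last step. The filtration obtained by pulling back the size filtration of $\Delta((1))\ostar\Delta(\lambda)$ through the short exact sequence above naively places the kernel $\Delta(\lambda)$ \emph{below} everything, giving a filtration of the wrong shape. One must therefore verify that the intrinsic size filtration supplied by the highest weight structure, rather than this ad hoc pullback, is the filtration realising the sections stated in the lemma. This reduces to the standard refinement principle that $\Delta$-flags in a highest weight category can always be reorganised so as to respect any total order extending $\preceq$, which in our setting allows the copy of $\Delta(\lambda)$ arising from the kernel to be shifted up into the size-$|\lambda|$ layer.
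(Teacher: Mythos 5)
Your proof is correct and follows essentially the same route as the paper's: identify $D$ with $Q(\boxempty)\ostar{?}$ via \cref{dualitylem} and \cref{dillylilly}, feed the two-step $\Delta$-flag of $Q(\boxempty)$ and the reduced Kronecker coefficients from \cref{springs} into \cref{lastfromss}, then reorder the resulting $\Delta$-flag by partition size. The paper disposes of the delicate final reordering in one line by citing $\Ext^1(\Delta(\mu),\Delta(\nu))=0$ for $|\mu|\le|\nu|$, which is precisely the refinement principle you invoke.
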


\begin{proof}
By \cref{dillylilly}, $D$ is isomorphic to the functor
$Q(\boxempty) \ostar?$ defined by taking the induction product with
the projective module $Q(\boxempty)$.
By \cref{textmessage}(iii) and \cref{bioap},
$Q(\boxempty)$ has a $\Delta$-flag of length two with sections
$\Delta(\boxempty)$ at the top and $\Delta(\varnothing)$ at the bottom.
Applying \cref{lastfromss}, we deduce that
$D \Delta(\lambda) \cong Q(\boxempty) \ostar \Delta(\lambda)$
has a $\Delta$-flag with one section
$\Delta(\varnothing) \ostar \Delta(\lambda) \cong \Delta(\lambda)$
and other sections arising from the $\Delta$-flag
of $\Delta(\boxempty)\ostar \Delta(\lambda)$ described in that theorem.
For this, one just needs to know the values of the
reduced Kronecker coefficients $\overline{G}_{\boxvoid,\lambda}^{\mu}$ which were worked out in \cref{springs}.
The $\Delta$-flag can be ordered in the way described
since $\Ext^1(\Delta(\mu),\Delta(\nu)) = 0$ if $|\mu| \leq |\nu|$.
  \end{proof}

\begin{remark}
  \cref{hood} also follows from \cref{filtration} below, which
  constructs the filtration explicitly. The proof of \cref{filtration} is also valid over fields of positive characteristic.
    \end{remark}

Now we are going to use the affine partition category $\APar$
to decompose the endofunctor $D$ as a direct sum of {\em special projective functors}
$D_{b|a}$. The approach here is analogous to the way the affine symmetric category
$\ASym$ was used to decompose
$E$ and $F$ as direct sums of
$E_a$ and $F_b$ in \cref{endofterm}.
As noted at the end of \cref{apc},
$\Par_t$ is isomorphic to the quotient of $\APar$ by a left tensor ideal.
Hence, $\Par_t$ is a strict $\APar$-module category.
The self-adjoint functor $D$ is also the restriction functor $\res_{\mid\,\star}$
arising from this categorical action of $\APar$ on $\Par_t$.
Now the left and right dots give us natural transformations
\begin{align*}
  \alpha &:= \begin{tikzpicture}[anchorbase,scale=1.2]
\draw[-,thick](0,0)--(0,0.4);
\draw[-,thick](0,0.2)--(0.15,0.2);
\node  at (0.15,0.2) {$\bullet$};
\node at (.4,.2) {$\star$};
  \end{tikzpicture}:\mid\,\star \Rightarrow \mid\,\star,&
  \beta &:= \begin{tikzpicture}[anchorbase,scale=1.2]
\draw[-,thick](0,0)--(0,0.4);
\draw[-,thick](0,0.2)--(-0.15,0.2);
\node  at (-0.15,0.2) {$\bullet$};
\node at (.2,.2) {$\star$};
  \end{tikzpicture}:\mid\,\star\Rightarrow\mid\,\star.
  \end{align*}
Applying the general construction from \cref{parmesan} to these, 
we obtain commuting endomorphisms
\begin{align}
x &:= \res_\alpha:D \Rightarrow D,
&
y &:= \res_\beta:D \Rightarrow D.
\end{align}
Let $D_{b|a}$ be the summand of $D$ that is the
simultaneous generalized eigenspace of $x$ and $y$ of eigenvalues $a$ and $b$, respectively.
Explicitly, $D = \res_{\mid\,\star}$ is defined by tensoring
with the bimodule
$1_{\mid\,\star} Par_t$, and
the endomorphisms $x$ and $y$ of $D$
are induced by the bimodule endomorphisms $\rho$ and $\lambda$ of $1_{\mid\,\star} Par_t$
given by left multiplication by $x^R_{m+1}$ and $x^L_{m+1}$, respectively,
on the summand $1_{m+1}  Par_t$ of $1_{\mid\,\star} Par_t$ for each $m \geq 0$.
Then, $D_{b|a}$ is the functor defined by tensoring with the summand of
$1_{\mid\,\star} Par_t$ that is the
simultaneous generalized eigenspaces
of $\rho$ and $\lambda$ for
the eigenvalues $a$ and $b$, respectively.
As $1_{m+1} Par_t = \bigoplus_{n \geq 0} 1_{m+1} Par_t 1_n$
with each $1_{m+1} Par_t 1_n$ being finite-dimensional,
these endomorphisms are locally finite, so we have that
\begin{equation}\label{summertime}
  D = \bigoplus_{a,b \in \kk} D_{b|a}.
\end{equation}

\begin{lemma}\label{selfadj}
  For $a,b \in \kk$, the endofunctor $D_{b|a}$ commutes with the duality $?^\sigmadual$, i.e., $D_{b|a} \circ ?^\sigmadual \cong ?^\sigmadual \circ D_{b|a}$.
\end{lemma}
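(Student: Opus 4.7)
The plan is to combine three ingredients. First, $D=\res_{\mid\,\star}$ already commutes with the duality $?^\sigmadual$ via the general identity \cref{predual}, which furnishes a natural isomorphism $\phi_V\colon D(V^\sigmadual)\stackrel{\sim}{\rightarrow}(DV)^\sigmadual$ for every $V\in Par_t\Modlfd$. Second, the anti-involution $\sigma$ on $\Par_t$ fixes the morphism $\alpha$ given by the right dot and the morphism $\beta$ given by the left dot, since both diagrams are symmetric under reflection in a horizontal axis. Third, because $1_n\,DV$ is finite-dimensional for $V\in Par_t\Modlfd$, the commuting endomorphisms $x_V$ and $y_V$ of $DV$ act locally finitely, so their simultaneous generalized eigenspaces behave well under the linear-duality step in the construction of $?^\sigmadual$.

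The content of the proof is then a naturality check: I will show that $\phi_V$ intertwines $x_{V^\sigmadual}$ with $(x_V)^\sigmadual$, and likewise for $y$. Recall that $x\colon D\Rightarrow D$ equals $\res_\alpha$, and $\phi_V$ is obtained by combining \cref{predual} applied to $F=\mid\,\star$ with restriction along $\sigma$ (compare \cref{duality}). Because $\sigma$ is the identity on objects and commutes with horizontal composition on the left by $\mid$, the two functors $\sigma\circ(\mid\,\star)$ and $(\mid\,\star)\circ\sigma$ are canonically identified; since also $\sigma(\alpha)=\alpha$, direct inspection of the construction of $\phi_V$ yields the identity $\phi_V\circ x_{V^\sigmadual}=(x_V)^\sigmadual\circ\phi_V$. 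The same argument with $\beta$ in place of $\alpha$ handles $y$.

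Granting the intertwining, the conclusion is immediate: $\phi_V$ restricts to an isomorphism from the simultaneous generalized $(a,b)$-eigenspace of $(x_{V^\sigmadual},y_{V^\sigmadual})$ on $D(V^\sigmadual)$, which by definition is $D_{b|a}(V^\sigmadual)$, onto the corresponding eigenspace of $((x_V)^\sigmadual,(y_V)^\sigmadual)$ on $(DV)^\sigmadual$, which coincides with $(D_{b|a}V)^\sigmadual$ since taking the linear dual of a locally finite endomorphism preserves its generalized eigenspace decomposition with the same eigenvalues. The main obstacle is the bookkeeping in the second paragraph, i.e., unravelling the definitions of $?^\sigmadual$ and of $\res_\alpha$ from \cref{parmesan} far enough to see that $\sigma(\alpha)=\alpha$ really does supply the naturality square. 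This is purely formal: no computation inside $\APar$ or $\Par_t$ is required beyond the diagrammatic observation that the left and right dots are fixed by horizontal reflection.
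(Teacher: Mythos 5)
Your argument is correct and is exactly the paper's argument, just spelled out: the paper's entire proof reads ``This follows from the fact that $D$ commutes with the duality $?^\sigmadual$, and $\sigma$ fixes both the left dot and the right dot.'' You have identified the same two ingredients (the natural isomorphism $D(V^\sigmadual)\cong(DV)^\sigmadual$ from \cref{predual}, and the fact that horizontal reflection fixes the two dots) and correctly unpacked the remaining routine observations — naturality of the isomorphism with respect to $\res_\alpha$ and $\res_\beta$, and the fact that dualizing a locally finite endomorphism preserves generalized eigenvalues — which the paper leaves implicit.
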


\begin{proof}
  This follows
  from the fact that $D$ commutes with the duality $?^\sigmadual$, and $\sigma$ fixes both the left dot and the right dot.
  \end{proof}

\begin{lemma}\label{biadj}
For $a,b \in \kk$, the endofunctors $D_{b|a}$ and $D_{a|b}$ are biadjoint.
\end{lemma}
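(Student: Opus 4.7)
The plan is to deduce the biadjointness by tracking how the endomorphisms $x,y$ of $D$ transform under the self-biadjunction of $D$. Since the generating object $\mid$ is self-dual in $\Par_t$ via the cup and cap \cref{cupcap}, \cref{dualitycor} (applied with $X = Y = \mid$) gives that $D = \res_{\mid\,\star}$ is biadjoint to itself, with the units and counits of both adjunctions induced by bimodule maps of the form \cref{etaadj}--\cref{epsadj} (and their reflections). The associated mate operations on the algebra of natural endomorphisms of $D$ are consequently computed diagrammatically as $180^\circ$-rotations of the strand data around a cap and cup of $\mid$.

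I would then show that the mate operations interchange $x = \res_\alpha$ and $y = \res_\beta$, where $\alpha$ and $\beta$ are the right and left dots \cref{thedots} on a single strand. By the strictly pivotal structure on $\APar$ inherited from $\Heis$ (noted right after \cref{apardef}, with the left and right dots explicitly identified as pivotal duals), the $180^\circ$-rotation of the right dot equals the left dot; alternatively this can be checked directly using \cref{cupcap} and \cref{rightdot}. Hence the mate operation sends $x$ to $y$ and $y$ to $x$.

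Finally, the projector from $D$ onto the summand $D_{b|a}$ in \cref{summertime} is, on each finite-dimensional piece of $1_{\mid\,\star} Par_t$, a polynomial in the commuting endomorphisms $x$ and $y$, namely the product of the generalized eigenspace projectors of $x$ at $a$ and of $y$ at $b$. Since each mate operation is an algebra anti-involution of $\End(D)$ interchanging $x$ and $y$, it sends the projector onto $D_{b|a}$ to the projector onto $D_{a|b}$. Therefore each of the two adjunctions comprising the self-biadjunction of $D$ restricts to an adjunction of the pair $(D_{b|a}, D_{a|b})$, giving the required biadjointness. The main obstacle is the middle step: one must verify that the bimodule-level mate operation implemented by \cref{etaadj}--\cref{epsadj} really does coincide with the pivotal rotation of $\APar$. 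This is essentially tautological since the cups and caps of $\Par_t$ are the images under $p_t$ of those of $\APar$, but warrants a brief explicit string-diagram check.
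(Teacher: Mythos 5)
Your proof is correct and is essentially the same as the paper's. Both arguments start from the self-adjunction of $D$ and rest on the single key fact that the left and right dots are interchanged by the pivotal duality (equivalently, they can be slid around cups and caps of $\mid$ into one another); the paper phrases this as an explicit check of the zig-zag identities for a unit/counit built by composing $\eta,\eps$ with eigenspace projectors, while you phrase it as the mate anti-automorphism of $\operatorname{End}(D)$ interchanging $x$ and $y$ and hence carrying the projector $e_{b|a}$ to $e_{a|b}$ — the same computation packaged one level of abstraction higher.
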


\begin{proof}
  The adjunction $(D_{a|b}, D_{b|a})$ is induced by the self-adjunction
  of $D$. The unit $\bar \eta$ of adjunction comes from the bimodule homomorphism
  that is the composition of the unit $\eta$ from \cref{etaadj} with the projection onto the generalized $a$ and $b$ eigenspaces of $\rho$ and $\lambda$ on the left tensor factor and the generalized $b$ and $a$ eigenspaces of $\rho$ and $\lambda$ on the right tensor factor.
  The counit $\bar\eps$ of adjunction comes from the composition of the counit $\eps$ from \cref{epsadj} with the inclusion
  of the generalized $b$ and $a$ eigenspaces of $\rho$ and $\lambda$ on the left tensor factor and the generalized $a$ and $b$ eigenspaces of $\rho$ and $\lambda$ on the right tensor factor. To check the zig-zag identities, one just needs to use the relations
\begin{align*}
\begin{tikzpicture}[anchorbase]
\draw[-,thick] (0,0.5) to (0,0.35) to[out=down,in=left] (0.35,0) to[out=right,in=down] (0.7,0.35) to (0.7,0.5);
\draw[-,thick] (0.35,0.35)--(0.7,0.35);
\node at (0.35,0.35) {$\bullet$};
\end{tikzpicture}\ &=\ \begin{tikzpicture}[anchorbase]
\draw[-,thick] (0,0.5) to (0,0.35) to[out=down,in=left] (0.35,0) to[out=right,in=down] (0.7,0.35) to (0.7,0.5);
\draw[-,thick] (0.35,0.35)--(0,0.35);
\node at (0.35,0.35) {$\bullet$};
\end{tikzpicture}\ ,&\begin{tikzpicture}[anchorbase]
\draw[-,thick] (0,0) to (0,0.15) to[out=up,in=left] (0.35,0.5) to[out=right,in=up] (0.7,0.15) to (0.7,0);
\draw[-,thick] (0.35,0.15)--(0.7,0.15);
\node at (0.35,0.15) {$\bullet$};
\end{tikzpicture}\ &=\ \begin{tikzpicture}[anchorbase]
\draw[-,thick] (0,0) to (0,0.15) to[out=up,in=left] (0.35,0.5) to[out=right,in=up] (0.7,0.15) to (0.7,0);
\draw[-,thick] (0.35,0.15)--(0,0.15);
\node at (0.35,0.15) {$\bullet$};
\end{tikzpicture}\ ,&\begin{tikzpicture}[anchorbase]
\draw[-,thick] (0,0.5) to (0,0.35) to[out=down,in=left] (0.35,0) to[out=right,in=down] (0.7,0.35) to (0.7,0.5);
\draw[-,thick] (0.95,0.35)--(0.7,0.35);
\node at (0.95,0.35) {$\bullet$};
\end{tikzpicture}\ &=\ \begin{tikzpicture}[anchorbase]
\draw[-,thick] (0,0.5) to (0,0.35) to[out=down,in=left] (0.35,0) to[out=right,in=down] (0.7,0.35) to (0.7,0.5);
\draw[-,thick] (-0.25,0.35)--(0,0.35);
\node at (-0.25,0.35) {$\bullet$};
\end{tikzpicture}\ ,&\begin{tikzpicture}[anchorbase]
\draw[-,thick] (0,0) to (0,0.15) to[out=up,in=left] (0.35,0.5) to[out=right,in=up] (0.7,0.15) to (0.7,0);
\draw[-,thick] (0.95,0.15)--(0.7,0.15);
\node at (0.95,0.15) {$\bullet$};
\end{tikzpicture}\ &=\ \begin{tikzpicture}[anchorbase]
\draw[-,thick] (0,0) to (0,0.15) to[out=up,in=left] (0.35,0.5) to[out=right,in=up] (0.7,0.15) to (0.7,0);
\draw[-,thick] (-0.25,0.15)--(0,0.15);
\node at (-0.25,0.15) {$\bullet$};
\end{tikzpicture}\ ,
\end{align*}
i.e., the fact that the left and right dots are duals.
   \end{proof}

When $a \neq b$, \cref{biadj} can also be proved
a bit more easily using the description of $D_{b|a}$ given in the following lemma,
since the projection functor $\PR_\gamma$ commutes with $?^\sigmadual$ thanks to \cref{harderdual}.

\begin{lemma}\label{altcenterdef2}
 Let $\PR_\gamma$ be the projection functor defined by \cref{PRgamma}.
  If $a \neq b$ then
   $$
  D_{b|a} \cong \bigoplus_{\gamma\in P} \PR_{\gamma + \alpha_a-\alpha_b}
  \circ D \circ \PR_\gamma.
  $$
  Also $\bigoplus_{\gamma \in P} \PR_\gamma \circ D \circ \PR_\gamma\cong\bigoplus_{a \in \kk} D_{a|a}$.
\end{lemma}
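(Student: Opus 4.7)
I plan to use the central power series $c(u)\in Z(\Par_t)[\![u^{-1}]\!]$ of \cref{koepka} to detect the block shift induced by the summands of $D$. The key identity, obtained by peeling the leftmost strand off the bubble formula \cref{bubbleslide2} and applying $p_t$, is the commuting factorization
\[
c_{n+1}(u) \;=\; \Bigl(\tfrac{\alpha_y(u)}{\alpha_x(u)}\Bigr)_{\!\mathrm{left}} \cdot (\mid\,\star\, c_n(u))
\]
in $P_{n+1}(t)$, where the first factor is the dot labelled $\alpha_y(u)/\alpha_x(u)$ on the leftmost strand and $\mid\,\star\, c_n(u)$ is the element whose action on $1_{n+1}V = 1_n(DV)$ realizes the action of $c_n(u)\in Z(P_n(t))$ on the weight-$n$ component of $DV$. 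The two factors commute because they are supported on disjoint strands.

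Now fix $V$ in the $\sim_t$-block $\gamma$. By the defining property \cref{mysim} of $\sim_t$, the element $c(u)-\chi_\gamma(c(u))$ kills every irreducible subquotient of $V$ and hence acts locally nilpotently, so $c_{n+1}(u)$ has single generalized eigenvalue $\chi_\gamma(c(u))$ on $1_{n+1}V$. Restricting to the simultaneous generalized $(a,b)$-eigenspace $1_n(D_{b|a}V)$ of the commuting operators $(x^R_{n+1},x^L_{n+1})$, the dot $(\alpha_y/\alpha_x)_{\mathrm{left}}$ has single generalized eigenvalue $\alpha_b(u)/\alpha_a(u)$, and the factorization forces the action of $c_n(u)$ on $D_{b|a}V$ at weight $n$ to have single generalized eigenvalue $(\alpha_a(u)/\alpha_b(u))\,\chi_\gamma(c(u))$.

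For any composition factor $L(\mu)$ of $D_{b|a}V$ with $|\mu|=n$, the element $c_n(u)$ acts on $1_nL(\mu)=S(\mu)$ as the scalar $\chi_\mu(c(u))$, so $\chi_\mu(c(u)) = (\alpha_a(u)/\alpha_b(u))\,\chi_\gamma(c(u))$ as elements of $\kk(\!(u^{-1})\!)$, hence as rational functions in $\kk(u)$. Taking weights yields $\wt_t(\mu)=\gamma+\alpha_a-\alpha_b$, so $D_{b|a}V\subseteq\PR_{\gamma+\alpha_a-\alpha_b}DV$. For the reverse inclusion when $a\neq b$, I would record the elementary injectivity of $(a,b)\mapsto\alpha_a-\alpha_b$ on pairs with $a\neq b$ in the basis $\{\eps_c\}$ of $P$: the support consists either of four distinct points with signs $+,-,-,+$ or of three consecutive points including an extreme $\pm 2$ coefficient, in both cases uniquely determining $(a,b)$. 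Combining this with \cref{summertime} and the preceding paragraph, every summand $D_{b'|a'}V$ of $DV$ lies in block $\gamma+\alpha_{a'}-\alpha_{b'}$; for $a'\neq b'$ this equals $\gamma+\alpha_a-\alpha_b$ only if $(a',b')=(a,b)$, while the diagonal summands $D_{a'|a'}V$ lie in block $\gamma\neq\gamma+\alpha_a-\alpha_b$. Hence $\PR_{\gamma+\alpha_a-\alpha_b}DV = D_{b|a}V$, and summing over $\gamma$ gives the first isomorphism.

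The second isomorphism is immediate from the same decomposition: projecting to block $\gamma$ kills each off-diagonal summand $D_{b'|a'}V$ (which sits in block $\gamma+\alpha_{a'}-\alpha_{b'}\neq\gamma$) and retains exactly $\bigoplus_{a'}D_{a'|a'}V$. The main obstacle is managing the generalized-eigenvalue bookkeeping across the block filtration; once one accepts that equality $\chi_\mu(c(u))=(\alpha_a/\alpha_b)\chi_\gamma(c(u))$ of formal Laurent series implies equality of the underlying rational functions and hence of their weights, everything else reduces to elementary linear algebra in the weight lattice $P$.
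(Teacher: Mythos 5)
Your proof is correct and takes essentially the same route as the paper's: both hinge on the bubble slide relation \cref{bubbleslide1} applied to $c_{n+1}(u)$ to extract the central character shift $\chi_\mu(c(u)) = \tfrac{\alpha_a(u)}{\alpha_b(u)}\chi_\lambda(c(u))$ and then read off $\wt_t(\mu) = \gamma + \alpha_a - \alpha_b$. You are somewhat more explicit than the paper in recording the injectivity of $(a,b)\mapsto\alpha_a-\alpha_b$ on pairs with $a\neq b$ (needed for the reverse inclusion, which the paper leaves implicit), though a small slip: writing $\alpha_a-\alpha_b=\eps_a-\eps_{a-1}-\eps_b+\eps_{b-1}$, in the degenerate cases $b=a\pm1$ the $\pm2$ coefficient sits on the \emph{middle} index, not an extreme one, which still uniquely determines the pair.
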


\begin{proof}
  Take a module $V$ in the ``block'' parametrized by $\gamma \in P$,
  so that $\wt_t(\lambda) = \gamma$ for all irreducible subquotients of $V$.
  We need to show that $D_{b|a} V$ is in the ``block'' parametrized
  by $\gamma + \alpha_a - \alpha_b$.
  Since $D_{b|a}$ is exact, we may assume that $V$ is irreducible, so $V = L(\lambda)$ for $\lambda \in \P$ with $\wt_t(\lambda) = \gamma$.
The module $D V = 1_{\mid\,\star} Par_t \otimes_{Par_t} V \cong
 1_{\mid\,\star} V$ is generated by the finite-dimensional vector spaces
 $1_{m+1} V$ for all $m \geq 0$.
 Hence, $D_{b|a} V$ is generated by the simultaneous generalized eigenspaces of
 $x_{m+1}^R$ and $x_{m+1}^L$ on $1_{m+1} V$
of eigenvalues $a$ and $b$, respectively. Consequently, if $L(\mu)$ is an irreducible subquotient of
 $D_{b|a} V$, then $c(u)$ must act on $L(\mu)$ in the same way as
 $\mid\star c_{m}(u)$ acts on a simultaneous eigenvector $v \in 1_{m+1} V$ for
 $x_{m+1}^R$ and $x_{m+1}^L$ of eigenvalues $a$ and $b$.
Also $c_{m+1}(u)$ acts on $v \in V$ as multiplication by $\chi_\lambda(c(u))$,
the rational function displayed on the right hand side of \cref{310}.
Using \cref{bubbleslide1}, we deduce that
 $$
\chi_\mu(c(u))= \frac{\alpha_a(u)}{\alpha_b(u)} \times \chi_\lambda(c(u)).
 $$
Hence, $\wt_t(\mu) = \wt_t(\lambda)+\alpha_a - \alpha_b$.
\end{proof}

Our main combinatorial result about the functors $D_{b|a}$ is as follows.

\begin{theorem}\label{keyresult}
  For $\lambda \in \P$ and $a,b \in \kk$, there is
  a filtration
  $0=V_0 \subseteq V_1 \subseteq V_2 \subseteq V_3 = D_{b|a} \Delta(\lambda)$
  such that
  \begin{align*}
    V_3/V_2 &\cong \left\{\begin{array}{l}\Delta(\lambda+\boxed{a})\\
    0\end{array}\right.
    &&\begin{array}{l}
       \text{if $a \in \add(\lambda)$ and $b=t-|\lambda|$}\\
    \text{otherwise,}\end{array}\\
    V_2 / V_1 &\cong
    \left\{\begin{array}{ll}
    \Delta(\lambda)\oplus \Delta(\lambda)\\
    \Delta(\lambda)\\
     \Delta\left((\lambda - \boxed{b})+\boxed{a}\right)\\
    0\end{array}\right.&&
    \begin{array}{l}
    \text{if $t-|\lambda| = a=b \in \rem(\lambda)$}\\
    \text{if $t-|\lambda| \neq a=b \in \rem(\lambda)$ or $t-|\lambda|=a=b\notin\rem(\lambda)$}\\
    \text{if $a \neq b \in \rem(\lambda)$ and $a \in \add(\lambda-\boxed{b})$}\\
    \text{otherwise},
    \end{array}\\
    V_1/V_0 &\cong \left\{
    \begin{array}{l}\Delta(\lambda-\boxed{b})\\
      0\end{array}\right.
 &&\begin{array}{l}\text{if $a=t-|\lambda|+1$ and $b \in \rem(\lambda)$}\\
      \text{otherwise.}\end{array}
  \end{align*}
In particular, when $t \in \Z$, the functor $D_{b|a}$ is zero unless both
  $a$ and $b$ are integers.
\end{theorem}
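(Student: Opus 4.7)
The plan is to refine the three-step $\Delta$-flag of $D\Delta(\lambda)$ from \cref{hood} by decomposing each section under the commuting pair $(x, y)$. Since $x$ and $y$ are natural transformations commuting with the $Par_t$-action and preserving the weight grading of $D\Delta(\lambda)$, one can choose the filtration $0 = V_0 \subset V_1 \subset V_2 \subset V_3 = D\Delta(\lambda)$ so that $V_i$ is the maximal submodule whose composition factors $L(\mu)$ satisfy $|\mu| \leq |\lambda| + i - 2$; this choice is preserved by $(x, y)$, and the summand $D_{b|a}\Delta(\lambda)$ inherits a three-step filtration whose sections are the generalized $(a, b)$-eigenspaces within the three sections of \cref{hood}.

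For the off-diagonal case $a \neq b$, by \cref{altcenterdef2} a summand $\Delta(\mu)$ contributes to $D_{b|a}$ exactly when $\wt_t(\mu) - \wt_t(\lambda) = \alpha_a - \alpha_b$. Applying \cref{altchar} to the three types of summands, I compute
\begin{align*}
\wt_t(\lambda + \boxed{a_0}) - \wt_t(\lambda) &= \alpha_{a_0} - \alpha_{t-|\lambda|},\\
\wt_t(\lambda - \boxed{b_0}) - \wt_t(\lambda) &= \alpha_{t-|\lambda|+1} - \alpha_{b_0},\\
\wt_t((\lambda - \boxed{b_0}) + \boxed{a_0}) - \wt_t(\lambda) &= \alpha_{a_0} - \alpha_{b_0}.
\end{align*}
Since $\alpha_c = -\Lambda_{c-1} + 2\Lambda_c - \Lambda_{c+1}$, the elements $\{\alpha_c \mid c \in \kk\}$ are linearly independent in $P$ (any finitely-supported solution of $-n_{d+1} + 2n_d - n_{d-1} = 0$ is zero), so comparison with $\alpha_a - \alpha_b$ uniquely pins down each off-diagonal contribution, giving the three off-diagonal cases of the theorem.

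In the diagonal case $a = b$, \cref{altcenterdef2} only places the surviving summands in $\bigoplus_a D_{a|a}$, so finer data are needed. The three formulas above already yield the top and bottom cases (requiring respectively $a = b = t - |\lambda| \in \add(\lambda)$ and $a = b = t - |\lambda|+1 \in \rem(\lambda)$). For the middle section, there are $1 + |\rem(\lambda)|$ copies of $\Delta(\lambda)$: one ``extra'' copy coming from the embedding $\Delta(\varnothing)\ostar\Delta(\lambda) \hookrightarrow D\Delta(\lambda)$, and one ``put-back'' copy $\Delta((\lambda-\boxed{a_0})+\boxed{a_0}) \cong \Delta(\lambda)$ for each $a_0 \in \rem(\lambda)$ (using $\overline{G}^{\lambda}_{\boxvoid,\lambda} = |\rem(\lambda)|$ from \cref{springs}). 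The bubble-slide relation \cref{bubbleslide1} forces every such copy to carry an eigenvalue of the form $(c, c)$; to identify $c$ I plan to compute directly. The extra copy's embedded highest-weight image is the downward leaf on the new string applied to $v_T \in S(\lambda)$, and using \cref{rightdot} together with the vanishing of $K_{|\lambda|}$ on $S(\lambda)$ both dots reduce to the scalar $t - |\lambda|$. Each put-back copy for $a_0 \in \rem(\lambda)$ lives in the quotient $\Delta(\boxvoid)\ostar\Delta(\lambda)$, where a parallel computation via \cref{c} and the reduced-Kronecker structure yields $c = a_0$. Combining these eigenvalues with the multiplicity count reproduces the middle section in all subcases, including the doubling when $t - |\lambda| = a = b \in \rem(\lambda)$. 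The main obstacle is this direct eigenvalue computation on the extra and put-back copies, which requires careful manipulation of the mate relations in $\APar$. The final integrality assertion for $t \in \Z$ is immediate from the three formulae.
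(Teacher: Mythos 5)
Your proposal has the right overall shape and the right answer, but it contains two genuine gaps, and you are underestimating how much of the argument remains to be done.

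First, the claimed characterization of the filtration is incorrect. You say $V_i$ can be taken to be ``the maximal submodule whose composition factors $L(\mu)$ satisfy $|\mu| \leq |\lambda| + i - 2$''. But in the highest weight ordering of $\Par_t$, the standard module $\Delta(\mu)$ has composition factors $L(\nu)$ with $|\nu| \geq |\mu|$ and these sizes are unbounded (see \cref{MAIN}, where each $\Delta(\kappa^{(n)})$ has a factor $L(\kappa^{(n+1)})$ of strictly larger size). So the submodule you describe is not the $V_1$ or $V_2$ of \cref{hood}. The fix is to characterize $V_i$ as the submodule \emph{generated by} the weight spaces $1_n D\Delta(\lambda)$ for $n \leq |\lambda|+i-2$, which is manifestly stable under the $Par_t$-endomorphisms $x$ and $y$. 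This is repairable, but as written the invariance claim is unjustified.

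Second, and more seriously, your assertion that ``the three formulas above already yield the top and bottom cases'' in the diagonal regime $a=b$ does not hold. When $a_0 = t-|\lambda| \in \add(\lambda)$, the block comparison gives $\wt_t(\lambda+\boxed{a_0}) = \wt_t(\lambda)$, so $\Delta(\lambda+\boxed{a_0})$ lands in $\bigoplus_{c} D_{c|c}\Delta(\lambda)$, but \cref{altcenterdef2} and the linear independence of the $\alpha_c$ tell you nothing about \emph{which} $c$. The same applies to the bottom section when $t-|\lambda|+1 \in \rem(\lambda)$. So the diagonal eigenvalue computation is needed for the top and bottom sections exactly as much as for the middle; you have underdiagnosed where the hard work sits. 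The paper establishes these eigenvalues through a sequence of explicit bimodule computations (\cref{filtration} constructs a finer four-step bimodule filtration with sections $N_1, \dots, N_4$ realizing $j_!\circ F$, $j_!\circ E\circ F$, $j_!$, $j_!\circ E$; \cref{technical} is a nontrivial double induction using the relations \cref{A1,A3}; and \cref{xyz} identifies the induced action of the left and right dot on each section). These are precisely the ``careful manipulation of the mate relations'' you flag as the obstacle; they constitute essentially the entire content of the proof, and your sketch does not engage with them. Your description of the extra-copy eigenvalue (both dots giving $t-|\lambda|$ via \cref{rightdot} and the vanishing of $K_{|\lambda|}$) is a correct guess at the answer but not a proof: the actual argument requires showing (by induction on the number of strings, \cref{technical}(ii)) that the right dot slides across all $n$ strings at the cost of the scalar $(t-n)$ modulo $M_2$, and the cross-terms generated by \cref{A3} do not obviously vanish without the explicit analysis the paper gives.

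In summary: the off-diagonal case $a\neq b$ is handled correctly and in the same spirit as the paper's \cref{only}; the diagonal case is where the theorem's full strength lives, and there your proposal identifies the right answer but not a proof, and mislocates part of the gap (believing the block comparison settles the top and bottom sections when it does not).
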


\begin{proof}
See \cref{postponed} below.
\end{proof}

The following corollary
is an immediate consequence of
the theorem, but actually it has a much easier proof which we include below.

\begin{corollary}\label{only}
  For $\lambda \in \P$ and $a,b \in \kk$ with $a \neq b$, there is
  a filtration
  $0=V_0 \subseteq V_1 \subseteq V_2 \subseteq V_3 = D_{b|a} \Delta(\lambda)$
  such that
  \begin{align*}
    V_3/V_2 &\cong
    \left\{\begin{array}{l}\Delta(\lambda+\boxed{a})\\
    0\end{array}\right.&&
    \begin{array}{l}\text{if $a \in \add(\lambda)$ and $b=t-|\lambda|$}\\
      \text{otherwise,}\end{array}\\
    V_2 / V_1 &\cong
     \left\{\begin{array}{l}
    \Delta\left((\lambda - \boxed{b})+\boxed{a}\right)\\
    0\end{array}\right.
     &&\begin{array}{l}
    \text{if $b \in \rem(\lambda)$ and $a \in \add(\lambda-\boxed{b})$}\\
    \text{otherwise},
    \end{array}\\
V_1/V_0&\cong \left\{\begin{array}{l}    \Delta(\lambda-\boxed{b}) \\
      0\end{array}\right.
&&    \begin{array}{l}\text{if $a=t-|\lambda|+1$ and $b \in \rem(\lambda)$}\\
      \text{otherwise.}\end{array}
  \end{align*}
\end{corollary}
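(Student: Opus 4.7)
The plan is to deduce this quickly from Lemma~\ref{hood} together with Lemma~\ref{altcenterdef2}.  Since $a \neq b$, Lemma~\ref{altcenterdef2} identifies $D_{b|a}\Delta(\lambda)$ with $\PR_{\wt_t(\lambda)+\alpha_a-\alpha_b}\bigl(D\Delta(\lambda)\bigr)$, where $\PR_\gamma$ is the exact projection functor onto the ``block'' indexed by $\gamma \in P$.  Applying this exact functor to the three-step filtration of $D\Delta(\lambda)$ from Lemma~\ref{hood} yields a three-step filtration of $D_{b|a}\Delta(\lambda)$ in which each summand $\Delta(\mu)$ appearing in an original section either survives (when $\wt_t(\mu) = \wt_t(\lambda)+\alpha_a-\alpha_b$) or drops to zero.

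The remainder is a weight calculation using the explicit formula for $\wt_t$ from Lemma~\ref{altchar}.  A direct substitution gives
\begin{align*}
\wt_t(\lambda+\boxed{a'}) - \wt_t(\lambda) &= \alpha_{a'} - \alpha_{t-|\lambda|},\\
\wt_t(\lambda-\boxed{b'}) - \wt_t(\lambda) &= \alpha_{t-|\lambda|+1} - \alpha_{b'},\\
\wt_t\bigl((\lambda-\boxed{b'})+\boxed{a'}\bigr) - \wt_t(\lambda) &= \alpha_{a'} - \alpha_{b'}.
\end{align*}
Setting each expression equal to $\alpha_a - \alpha_b$ and expanding everything in the basis $\{\eps_c : c \in \kk\}$ of $P$, the linear independence of the $\eps_c$ together with the hypothesis $a\neq b$ pins down a unique possibility in each section: $a' = a$ with $b = t-|\lambda|$ in the top section, $a' = a$ and $b' = b$ in the middle section, and $b' = b$ with $a = t-|\lambda|+1$ in the bottom section.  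The extra summand $\Delta(\lambda)$ sitting in the middle section of Lemma~\ref{hood} has weight shift $0 \neq \alpha_a - \alpha_b$, so it is annihilated by $\PR_{\wt_t(\lambda)+\alpha_a-\alpha_b}$.

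Assembling these observations reproduces the three cases in the statement.  The only mildly delicate point is the multiset matching when comparing $\alpha_{a'} - \alpha_{b'}$ with $\alpha_a - \alpha_b$: the parasitic solution $a' = b'$, $b = a$ is excluded by $a \neq b$ (and in any case would collapse $(\lambda-\boxed{b'})+\boxed{a'}$ back to $\lambda$).  Thus there is no serious obstacle; once Lemmas~\ref{hood} and~\ref{altcenterdef2} are granted, the corollary is a bookkeeping exercise that applies uniformly, with no need to separate typical from atypical partitions or to worry about the value of $t$.
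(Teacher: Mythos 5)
Your proposal is correct and follows essentially the same approach as the paper's direct proof of this corollary: project the $\Delta$-flag of $D\Delta(\lambda)$ from Lemma~\ref{hood} onto the ``block'' of weight $\wt_t(\lambda)+\alpha_a-\alpha_b$ using Lemma~\ref{altcenterdef2}, then compute which sections survive by comparing weight shifts via the formula in Lemma~\ref{altchar}. You are slightly more explicit than the paper in excluding the parasitic multiset matching $a'=b',\,b=a$ via the hypothesis $a\neq b$, which is a sound clarification (though for uniqueness of the surviving sections the cleaner invocation is linear independence of the $\alpha_c$, since the weight shifts are naturally written in the $\alpha$-basis rather than the $\eps$-basis).
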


\begin{proof}[Direct proof avoiding \cref{keyresult}]
  Let $\gamma := \wt_t(\lambda)$.
By \cref{altcenterdef2}, we can compute $D_{b|a} \Delta(\lambda)$
  by applying $\PR_{\gamma+\alpha_a-\alpha_b}$ to the $\Delta$-flag for $D \Delta(\lambda)$ from \cref{hood}. This produces a module with a $\Delta$-flag consisting of all $\Delta(\mu)$ in the original $\Delta$-flag such that
  $\wt_t(\mu) -\wt_t(\lambda)=\alpha_a - \alpha_b$.
  It just remains to compute $\wt_t(\mu)-\wt_t(\lambda)$ for the various possible
  $\mu$.
  If $\mu = \lambda + \boxed{c}$ for $c \in \add(\lambda)$
  then, by a computation using the first equality from \cref{wtt},
  we have that $\wt_t(\mu)-\wt_t(\lambda) =
  \alpha_c-\alpha_{t-|\lambda|}$; for this to equal $\alpha_a-\alpha_b$
  we must have $b=t-|\lambda|$ and $c=a$.
  If $\mu = \lambda - \boxed{d}$ for $d \in \rem(\lambda)$
  then, by a similar computation, $\wt_t(\mu)-\wt_t(\lambda)
  = \alpha_{t-|\lambda|+1}-\alpha_d$; for this to equal $\alpha_a-\alpha_b$
  we must have $d=b$ and $a=t-|\lambda|+1$.
  Finally if $\mu=(\lambda - \boxed{d})+\boxed{c}$ for $d \in \rem(\lambda)$
  and $c \in \add(\lambda-\boxed{d})$
  then $\wt_t(\mu)-\wt_t(\lambda) = \alpha_c-\alpha_d$;
  for this to equal $\alpha_a-\alpha_b$ we must have $c=a$ and $d=b$.
\end{proof}

\subsection{Blocks}\label{mainresults}
We assume throughout the subsection that $t \in \N$. We are going to
describe the structure of the atypical ``blocks,'' revealing in particular that
they are indecomposable, hence, they are actually blocks.
Recall from \cref{cocomb} that the atypical ``blocks'' are parametrized by partitions $\kappa \in \P_t$, with the irreducible modules in the ``block'' being
the ones labelled by the partitions
$\{\kappa^{(0)}, \kappa^{(1)}, \dots\}$. This is the set
$S(\gamma)$ from \cref{Sgamma} where $\gamma \in P$
is obtained from $\kappa$ according to \cref{kappatogamma}.

The first step is to show that all of the atypical ``blocks'' are equivalent to each other. The proof of this uses the special projective functors
$D_{b|a}$ with $a \neq b$. These are the ones which can be defined just using
information about central characters rather than requiring
the Jucys-Murphy elements;
cf. \cref{altcenterdef2,only}.
In view of \cref{omegas},
this sort of information was already available to Comes and Ostrik
in an equivalent form, and indeed they were also able to prove
a similar result by an analogous argument; see
\cite[Lem.~5.18(2)]{CO} and \cite[Prop.~6.6]{CO}.

\begin{lemma}\label{111degrees}
  Let $\kappa$ and $\tilde\kappa$ be partitions of $t$ such that $\tilde\kappa$ is obtained from $\kappa$ by moving a node from the first row of its Young diagram
  to its $(r+1)$th row for some $r \geq 1$.
  Let $a := \kappa_{r+1}-r+1$ and $b := \kappa_1$.
  Then for all $n \geq 0$ we have that $D_{b|a} \Delta(\kappa^{(n)}) \cong \Delta(\tilde\kappa^{(n)})$ and $D_{a|b} \Delta(\tilde\kappa^{(n)}) \cong \Delta(\kappa^{(n)})$.
\end{lemma}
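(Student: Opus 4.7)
The plan is to apply Corollary~\ref{only} (which suffices because $a \neq b$ in view of the strict inequality $\kappa_{r+1}-r+1 \leq \kappa_{r+1} \leq \kappa_1$, with equality requiring $r=0$) and show that in each case exactly one of the three possible sections of the filtration of $D_{b|a}\Delta(\kappa^{(n)})$ is nonzero and equals $\Delta(\tilde\kappa^{(n)})$.  The first step is a direct computation using \cref{wtt}: substituting the definition $\kappa^{(n)} = (\kappa_1+1,\dots,\kappa_n+1,\kappa_{n+2},\dots)$ (and the corresponding formula for $\tilde\kappa^{(n)}$) yields
\[
\wt_t\bigl(\tilde\kappa^{(n)}\bigr) - \wt_t\bigl(\kappa^{(n)}\bigr) \;=\; \alpha_a - \alpha_b,
\]
which by \cref{altcenterdef2} forces every $\Delta$-section of $D_{b|a}\Delta(\kappa^{(n)})$ to be of the form $\Delta(\tilde\kappa^{(n')})$ for some $n'$.

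The next step is to compare the Young diagrams of $\kappa^{(n)}$ and $\tilde\kappa^{(n)}$ directly to see which of the three cases of \cref{only} is at play.  The outcome (which follows from the bookkeeping $\tilde\kappa_1=\kappa_1-1$, $\tilde\kappa_{r+1}=\kappa_{r+1}+1$) is:
\begin{itemize}
\item[(i)] if $n=0$, then $\tilde\kappa^{(0)} = \kappa^{(0)} + \boxed{a}$, and one checks $b = t-|\kappa^{(0)}| = \kappa_1$ and $a \in \add(\kappa^{(0)})$, so the \emph{top} section of the filtration equals $\Delta(\tilde\kappa^{(0)})$;
\item[(ii)] if $n=r$, then $\tilde\kappa^{(r)} = \kappa^{(r)} - \boxed{b}$, and $a = t-|\kappa^{(r)}|+1 = \kappa_{r+1}-r+1$ with $b \in \rem(\kappa^{(r)})$ (here the inequalities $\kappa_1>\kappa_2$ and $\kappa_r>\kappa_{r+1}$ ensured by the existence of the move $\kappa\rightsquigarrow\tilde\kappa$ are needed), so the \emph{bottom} section equals $\Delta(\tilde\kappa^{(r)})$;
\item[(iii)] if $n\neq 0,r$, then $\tilde\kappa^{(n)} = (\kappa^{(n)} - \boxed{b}) + \boxed{a}$ and the \emph{middle} section gives $\Delta(\tilde\kappa^{(n)})$.
\end{itemize}
In each case one verifies that the remaining two would-be sections vanish: the conditions $b=t-|\kappa^{(n)}|$ and $a=t-|\kappa^{(n)}|+1$ translate into equalities between entries of $\kappa$ that are incompatible with $\kappa_1>\kappa_2$ and $\kappa_r>\kappa_{r+1}$ except in the flagged case; and one rules out the middle section in cases~(i) and~(ii) by showing that the relevant addable-of-content-$a$ (resp. removable-of-content-$b$) simply does not exist on $\kappa^{(n)}-\boxed{b}$ (resp. $\kappa^{(n)}$), again from the inequalities on the parts of $\kappa$.

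The second statement $D_{a|b}\Delta(\tilde\kappa^{(n)}) \cong \Delta(\kappa^{(n)})$ is proved by the same method with the roles of $\kappa$ and $\tilde\kappa$ (and of $a$ and $b$) interchanged, noting that $\kappa^{(0)} = \tilde\kappa^{(0)} - \boxed{a}$ (bottom section), $\kappa^{(r)} = \tilde\kappa^{(r)} + \boxed{b}$ (top section), and $\kappa^{(n)} = (\tilde\kappa^{(n)} - \boxed{a}) + \boxed{b}$ otherwise.  The main obstacle is purely the combinatorial bookkeeping: one has to track carefully how the various shifts in the definition of $\kappa^{(n)}$ interact with the operations of adding/removing boxes of content $a$ or $b$, and verify in each of the (essentially three) cases that the numerical side conditions in \cref{only} rule out unwanted sections.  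An alternative, once \cref{only} has been used to exhibit a nonzero homomorphism $D_{b|a}\Delta(\kappa^{(n)})\twoheadrightarrow \Delta(\tilde\kappa^{(n)})$, is to use the biadjunction of \cref{biadj} together with BGG reciprocity (\cref{bggrecip}) to bound the multiplicity from above, but the direct case analysis above is probably shorter.
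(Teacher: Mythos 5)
Your proof is correct and takes essentially the same route as the paper: compute the weight difference $\wt_t(\tilde\kappa^{(n)})-\wt_t(\kappa^{(n)})=\alpha_a-\alpha_b$, invoke \cref{altcenterdef2} (you do so via \cref{only}, which the paper proves from \cref{altcenterdef2} and \cref{hood}), and identify the unique surviving section by a Young-diagram case analysis on $n$. The only organizational difference is that you merge the two cases $1\le n<r$ and $n>r$ into one (which is legitimate, since the content of the added node is $a=\kappa_{r+1}-r+1$ in both regimes), whereas the paper keeps them separate; one small slip: equality in $\kappa_{r+1}-r+1\le\kappa_{r+1}$ would force $r=1$ (not $r=0$), but then $a=\kappa_2<\kappa_1=b$ anyway by the existence of the move, so the conclusion $a\ne b$ stands.
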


\begin{proof}
  Let $\gamma, \tilde\gamma \in P$ be defined from $\kappa$ and $\tilde\kappa$
  according to \cref{kappatogamma}. From this formula it follows that
  $\tilde\gamma = \gamma + \alpha_a - \alpha_b$ where $a = \kappa_{r+1}-r+1$ and $b = \kappa_1$ as in the statement of the lemma.
  Note that $a \neq b$.
  So we can apply \cref{altcenterdef2} to see that
  $D_{b|a} \Delta(\kappa^{(n)}) = \PR_{\gamma + \alpha_a - \alpha_b} (D \Delta(\kappa^{(n)}))$
  and $D_{a|b} \Delta(\tilde\kappa^{(n)}) = \PR_{\gamma - \alpha_a+\alpha_b}
  (D \Delta(\tilde\kappa^{(n)}))$.
  Now we use this description to show that
  $D_{b|a} \Delta(\kappa^{(n)}) \cong \Delta(\tilde\kappa^{(n)})$. The
  proof that $D_{a|b} \Delta(\tilde\kappa^{(n)}) \cong \Delta(\kappa^{(n)})$ is similar and we leave this to the reader.

  Fix $n \geq 0$ and
  let $B_n$ be the set of $\mu \in \P$ which are obtained from $\kappa^{(n)}$
  by removing a node, removing a node then adding a different
  node, or adding a node.
  Bearing in mind that $a \neq b$,
  the standard modules $\Delta(\mu)$ for $\mu \in B_n$ include all
  of the ones which are sections of the $\Delta$-flag from \cref{hood}
  which could possibly be in the same block as $\Delta(\tilde\kappa^{(n)})$.
  Now it suffices to show for $m \geq 0$
  that $\tilde\kappa^{(m)} \in B_n$ if and only if $m=n$.
   There are four cases to consider.

  \vspace{1mm}
  \noindent{\em Case one: $n=0$.} We have that $\kappa^{(0)} = (\kappa_2,\kappa_3,\dots,\kappa_{r+1},\dots)$
  and $\tilde\kappa^{(0)} = (\kappa_2,\kappa_3,\dots,\kappa_{r+1}+1,\dots)$, which
  is $\kappa^{(0)}$ with one node added to the $r$th row of
  its Young diagram.
  We definitely have that $\tilde\kappa^{(0)} \in B_0$.
  All other $\mu \in B_0$ satisfy
  $|\mu| \leq |\tilde\kappa^{(0)}|$.
  Since all $\tilde\kappa^{(m)}$ with $m > 0$
  have $|\tilde\kappa^{(m)}| > |\tilde\kappa^{(0)}|$,
  none of these belong to $B_0$.

  \vspace{1mm}
  \noindent{\em Case two: $1 \leq n < r$.}
  We have that $\kappa^{(n)} = (\kappa_1+1,\kappa_2+1,\dots,\kappa_n+1,\dots,\kappa_{r+1},\dots)$ and $\tilde\kappa^{(n)} = (\kappa_1,\kappa_2+1,\dots,\kappa_n+1,\dots,\kappa_{r+1}+1,\dots)$, which is $\kappa^{(n)}$ with a node removed from the first row and a node added to the $r$th row of its Young diagram.
  We definitely have that $\tilde\kappa^{(n)} \in B_n$.
  For $m < n$,  $\tilde\kappa^{(m)}$ is of smaller size than
  $\kappa^{(n)}$ and its $r$th row is of length $\kappa_{r+1}+1$. This cannot be obtained
  from $\kappa^{(n)}$ by removing a node since $\kappa^{(n)}$ has $r$th row of length $\kappa_{r+1}$. So it does not belong to $B_n$.
  For $m > n$, $\tilde\kappa^{(m)}$ is of greater size than
  $\kappa^{(n)}$ and its first row is of length $\kappa_1$. This cannot be obtained from $\kappa^{(n)}$ by adding a node since $\kappa^{(n)}$ has first row of length $\kappa_1+1$. So again it does not belong to $B_n$.
  
  \vspace{1mm}
  \noindent{\em Case three: $n=r$.}
  We have that $\kappa^{(n)} = (\kappa_1+1,\kappa_2+1,\dots,\kappa_{r}+1,\kappa_{r+2},\dots)$ and $\tilde\kappa^{(n)} = (\kappa_1,\kappa_2+1\dots,\kappa_{r}+1,\kappa_{r+2},\dots)$, which is $\kappa^{(n)}$ with a node removed from the first row of its Young diagram.
  We definitely have that $\tilde\kappa^{(n)} \in B_n$.
  The $\tilde\kappa^{(m)}$ with $m < n$ have $|\tilde\kappa^{(m)}|\leq|\tilde\kappa^{(n)}| -1= |\kappa^{(n)}|-2$ so are not elements of $B_n$.
  The $\tilde\kappa^{(m)}$ with $m > n$ have
  $(r+1)$th row of length $\kappa_{r+1}+2$,
  so these are not elements of $B_n$ either since this is at least two more than the length of the $(r+1)$th row of $\kappa^{(n)}$.
  
  \vspace{1mm}
  \noindent{\em Case four: $n > r$.}
  We have that $\kappa^{(n)} = (\kappa_1+1,\kappa_2+1,\dots,\kappa_{r+1}+1,\dots)$ and $\tilde\kappa^{(n)} = (\kappa_1,\kappa_2+1,\dots,\kappa_{r+1}+2,\dots)$, which is $\kappa^{(n)}$ with
  a node removed from its first row and a node added to its $(r+1)$th row.
  We definitely have that $\tilde\kappa^{(n)} \in B_n$.
  The $\tilde\kappa^{(m)}$ with $m > n$ are of greater size than $\kappa^{(n)}$
  and have first row of length $\kappa_1$; these cannot be obtained by adding a node to $\kappa^{(n)}$.
  The $\tilde\kappa^{(m)}$ with $r+1\leq m < n$ are of smaller size than $\kappa^{(n)}$
  and have $(r+1)$th row of length $\kappa_{r+1}+2$; these cannot be obtained by removing a node from $\kappa^{(n)}$.
  The $\tilde\kappa^{(m)}$ with $ m \leq r$
  have first row of length $\leq \kappa_1$ and $(r+1)$th row of length
  $\kappa_{r+2}$, whereas these two rows of
  $\kappa^{(n)}$ are of lengths $\kappa_1+1$ and $\kappa_{r+1}+1 > \kappa_{r+2}$,
  so these are not elements of $B_n$.
\end{proof}

\begin{theorem}[Comes-Ostrik]\label{blockreduction}
Let $\kappa$ and $\tilde\kappa$ be partitions of $t$,
  denoting the associated $\sim_t$-equivalence classes by $S := \{\kappa^{(0)}, \kappa^{(1)}, \dots\}$ and
  $\widetilde{S} := \{\tilde\kappa^{(0)}, \tilde\kappa^{(1)}, \dots\}$. There is an equivalence of categories
  $$
  \Sigma:1_S Par_t\Mod \rightarrow 1_{\widetilde{S}} Par_t\Mod
  $$
  between the corresponding ``blocks''
  such that $\Sigma L(\kappa^{(n)}) \cong L(\tilde\kappa^{(n)})$
  for all $n \geq 0$.
  The functor $\Sigma$ is
a composition of the special projective functors $D_{b|a}\:(a \neq b)$, hence, it is
 a projective functor.
\end{theorem}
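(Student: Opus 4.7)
The plan is to reduce to the setting already handled by \cref{111degrees} and then to promote the isomorphisms of standard modules established there to an equivalence of categories via the biadjunction of \cref{biadj}. For the reduction, observe that by iterating the inverse direction of the move in \cref{111degrees}, i.e.\ by moving a node from row $r+1$ back up to row $1$, any $\lambda \in \P_t$ can be connected to the one-row partition $(t) \in \P_t$ by a chain of elementary moves of the type considered there. Composing, any pair $\kappa, \tilde\kappa \in \P_t$ is connected by such a chain. Since a composition of projective functors $D_{b|a}$ with $a \neq b$ is again a composition of special projective functors of the same form, it suffices to prove the theorem in the single-move setting of \cref{111degrees}, with $a := \kappa_{r+1}-r+1$ and $b := \kappa_1$.

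In that setting, write $F := D_{b|a}$ and $G := D_{a|b}$. By \cref{altcenterdef2} they restrict to exact functors $F : 1_S Par_t\Mod \rightarrow 1_{\widetilde S} Par_t\Mod$ and $G : 1_{\widetilde S} Par_t\Mod \rightarrow 1_S Par_t\Mod$; both are summands of the self-adjoint restriction functor $D$, hence preserve projectives, and \cref{biadj} makes them biadjoint. By \cref{111degrees} we have $F\Delta(\kappa^{(n)}) \cong \Delta(\tilde\kappa^{(n)})$ and $G\Delta(\tilde\kappa^{(n)}) \cong \Delta(\kappa^{(n)})$ for every $n \geq 0$. Let $\eta : \Id \Rightarrow GF$ be the unit of the adjunction in which $F$ is left adjoint to $G$. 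The adjunction isomorphism yields
\begin{equation*}
\Hom_{Par_t}\!\left(\Delta(\kappa^{(n)}), GF\Delta(\kappa^{(n)})\right) \;\cong\; \End_{Par_t}\!\left(F\Delta(\kappa^{(n)})\right) \;\cong\; \End_{Par_t}\!\left(\Delta(\tilde\kappa^{(n)})\right) \;=\; \kk,
\end{equation*}
under which $\eta_{\Delta(\kappa^{(n)})}$ corresponds to the identity on the right; hence it is nonzero, and since $\End_{Par_t}(\Delta(\kappa^{(n)})) = \kk$, it is an isomorphism.

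To upgrade from standards to all modules, use that $\Id$ and $GF$ are both exact and that $GF$ preserves $\Delta$-flags, sending $\Delta(\kappa^{(n)}) \mapsto \Delta(\kappa^{(n)})$. Inducting on the length of a $\Delta$-flag of $P(\kappa^{(n)})$ and applying the five lemma, $\eta_{P(\kappa^{(n)})}$ is an isomorphism for every $n$; a further five lemma argument applied to a projective presentation $P_1 \to P_0 \to V \to 0$ shows that $\eta_V$ is an isomorphism for every $V \in 1_S Par_t\Mod$. The analogous argument applied to the counit $\varepsilon : FG \Rightarrow \Id$ completes the proof that $F$ and $G$ are mutually inverse equivalences. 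The image $FP(\kappa^{(n)})$ is indecomposable projective (being the image of one under the equivalence $F$); since the top section of the $\Delta$-flag of $P(\kappa^{(n)})$ is $\Delta(\kappa^{(n)})$, the induced surjection $FP(\kappa^{(n)}) \twoheadrightarrow F\Delta(\kappa^{(n)}) \cong \Delta(\tilde\kappa^{(n)}) \twoheadrightarrow L(\tilde\kappa^{(n)})$ identifies $FP(\kappa^{(n)}) \cong P(\tilde\kappa^{(n)})$, whence $FL(\kappa^{(n)}) \cong L(\tilde\kappa^{(n)})$ as claimed. Composing the equivalences built from successive elementary moves produces the desired $\Sigma$, which is a projective functor by construction. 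The main subtlety lies in the upgrade of $\eta$ from standards to arbitrary modules via the $\Delta$-flag and five lemma machinery; once this is in place, everything else is formal given \cref{111degrees} and the biadjunction.
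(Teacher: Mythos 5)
Your proof is correct and takes essentially the same route as the paper: reduce to a single elementary move via \cref{111degrees}, use the biadjunction from \cref{biadj} to obtain a unit (resp.\ counit), observe it is an isomorphism on standard modules because $\End_{Par_t}(\Delta(\kappa^{(n)})) = \kk$, then upgrade to indecomposable projectives via finite $\Delta$-flags and to arbitrary modules via a projective presentation and the Five Lemma. You spell out two points that the paper leaves implicit --- the explicit chain through $(t)$ justifying the reduction, and the adjunction computation showing the unit is nonzero on each standard --- and you also add the short argument $FP(\kappa^{(n)}) \cong P(\tilde\kappa^{(n)})$ to conclude $FL(\kappa^{(n)}) \cong L(\tilde\kappa^{(n)})$, but none of this changes the structure of the argument.
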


\begin{proof}
  We may assume that $\tilde\kappa$ is obtained from $\kappa$ by moving a node
  from the first row of its Young diagram to its $(r+1)$th row for some $r \geq 1$. Thus, we are in the situation of \cref{111degrees}. The lemma gives us functors
  $D_{b|a}:1_S Par_t\Mod \rightarrow 1_{\widetilde{S}}Par_t\Mod$
  and $D_{a|b}:1_{\widetilde{S}}Par_t\Mod\rightarrow 1_S Par_t\Mod$
  such that $D_{b|a} \Delta(\kappa^{(n)}) \cong \Delta(\tilde\kappa^{(n)})$
  and
  $D_{a|b} \Delta(\tilde\kappa^{(n)}) \cong \Delta(\kappa^{(n)})$.
  These functors are also biadjoint thanks to \cref{biadj}.
  It follows easily that they are quasi-inverse equivalences of categories as claimed in the theorem.
  In more detail, the unit and counit of one of the adjunctions
  gives natural transformations $D_{a|b} \circ D_{b|a} \Rightarrow \Id$
  and $\Id \Rightarrow D_{b|a}\circ D_{a|b}$. We claim that these natural transformations are isomorphisms. They are non-zero, hence, they are isomorphisms
  on all standard modules. The functors are exact and indecomposable projectives have finite $\Delta$-flags,
  so it follows that
  the natural transformations
  are isomorphisms on all indecomposable projectives.
  Then we get that they are isomorphisms on an arbitrary module
  by considering a two step projective resolution and applying the Five Lemma.
\end{proof}

The next lemma does use the functors $D_{b|a}$ in the case $a=b$, i.e., it definitely
requires the full strength of \cref{keyresult} rather than merely \cref{only}.

\begin{lemma}\label{within}
  Let $\kappa \in \P_t$ and $S := \{\kappa^{(0)},\kappa^{(1)},\dots\}$
  be the corresponding $\sim_t$-equivalence class.
  For each $n \geq 0$, there is an endofunctor
  $\Pi_n:Par_t\Mod \rightarrow Par_t\Mod$
  such that
$\Pi_n \Delta(\kappa^{(m)}) = 0$ for $m \neq n,n+1$, and 
moreover  there exist short exact sequences
$0\to \Delta(\kappa^{(n)}) \rightarrow \Pi_n \Delta(\kappa^{(n)})
  \rightarrow \Delta(\kappa^{(n+1)})\rightarrow 0$
  and
$0\to \Delta(\kappa^{(n)}) \rightarrow \Pi_n \Delta(\kappa^{(n+1)})
  \rightarrow \Delta(\kappa^{(n+1)})\rightarrow 0$. The functor
  $\Pi_n$ is a
composition of the special projective functors $D_{b|a}\:(a,b \in \Z)$, hence, it
  is a projective functor.
\end{lemma}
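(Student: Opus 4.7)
The plan is to construct $\Pi_n$ as a short composition of special projective functors $D_{b|a}$ from \cref{summertime}, exploiting the block equivalences of \cref{blockreduction} to reduce the general case to a single easy case. The main combinatorial input is the explicit description of $\Delta$-filtrations provided by \cref{keyresult}.

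Let $r := 1 + \kappa_{n+1} - \kappa_{n+2}$ count the boxes added to the Young diagram in passing from $\kappa^{(n)}$ to $\kappa^{(n+1)}$; these all sit in row $n+1$. In the single-box case $r = 1$, set $c := \kappa_{n+1} - n = t - |\kappa^{(n)}|$ and take $\Pi_n := D_{c|c}$. A direct case analysis via \cref{keyresult} verifies the required properties. On $\Delta(\kappa^{(n)})$, the subcases with $b = c = t - |\lambda|$, $c \in \add(\lambda)$, and $c \notin \rem(\lambda)$ yield $V_3/V_2 \cong \Delta(\kappa^{(n+1)})$ and $V_2/V_1 \cong \Delta(\kappa^{(n)})$, producing the desired extension. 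On $\Delta(\kappa^{(n+1)})$, the subcases with $a = c = t - |\lambda| + 1$ and $c \in \rem(\lambda)$ yield $V_1/V_0 \cong \Delta(\kappa^{(n)})$ and $V_2/V_1 \cong \Delta(\kappa^{(n+1)})$. On $\Delta(\kappa^{(m)})$ with $m \notin \{n, n+1\}$, the hypothesis $\kappa_{n+1} = \kappa_{n+2}$ together with the monotonicity of the parts of $\kappa$ forces $c$ to be neither $t - |\kappa^{(m)}|$ nor $t - |\kappa^{(m)}| + 1$ nor a removable content of $\kappa^{(m)}$, so every subcase of \cref{keyresult} vanishes.

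For $r \geq 2$, the plan is to reduce to the $r=1$ case via \cref{blockreduction}. Whenever $(t, n) \neq (1, 0)$, there exists $\tilde\kappa \in \P_t$ with $\tilde\kappa_{n+1} = \tilde\kappa_{n+2}$, so that the level-$n$ jump in the $\tilde\kappa$-block is a single box: take $\tilde\kappa = (1^t)$ if $n \leq t-2$, and $\tilde\kappa = (2, 1^{t-2})$ if $n = t - 1$ with $t \geq 2$ (the remaining range $n \geq t$ automatically forces $r = 1$). By \cref{blockreduction} and \cref{111degrees}, there is an equivalence $\Sigma$ between the blocks of $\kappa$ and $\tilde\kappa$, realized as a composition of $D_{b|a}$-functors with $a \neq b$ and satisfying $\Sigma \Delta(\kappa^{(m)}) \cong \Delta(\tilde\kappa^{(m)})$ for all $m$; its quasi-inverse $\Sigma^{-1}$ is likewise such a composition. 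Setting $\tilde c := \tilde\kappa_{n+1} - n$ and
$$
\Pi_n := \Sigma^{-1} \circ D_{\tilde c \,|\, \tilde c} \circ \Sigma,
$$
the three required properties for $\Pi_n$ transport directly from those of $D_{\tilde c \,|\, \tilde c}$ on $\Delta(\tilde\kappa^{(m)})$ established in the single-box case, using exactness of $\Sigma^{\pm 1}$.

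The remaining degenerate case is $(t, n) = (1, 0)$ with $\kappa = (1)$, where no suitable $\tilde\kappa$ exists and the atypical block is $\varnothing, (2), (2, 1), (2, 1^2), \ldots$. Here I take $\Pi_0 := D_{0|1} \circ D_{1|0}$ and verify directly using \cref{keyresult}: one finds $D_{1|0}\Delta(\varnothing) \cong \Delta((1)) \cong D_{1|0}\Delta((2))$, that $D_{0|1}\Delta((1))$ is an extension of $\Delta((2))$ over $\Delta(\varnothing)$, and that $D_{1|0}\Delta(\kappa^{(m)}) = 0$ for all $m \geq 2$. The principal obstacle throughout all three cases is the careful bookkeeping of addable and removable contents of the partitions $\kappa^{(m)}$; in each instance this reduces to monotonicity of $\kappa$ and the explicit formula \cref{kappadef}.
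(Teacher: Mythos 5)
Your proposal is correct, and it takes a genuinely different route from the paper, though both rest on the same pillars (\cref{keyresult}, \cref{blockreduction}). The paper reduces once and for all to the block of $\kappa = (t)$, where $S = \{\varnothing, (t+1), (t+1,1),\dots\}$, and then defines $\Pi_0 := D_{0|t} \circ \cdots \circ D_{t-1|1} \circ D_{t|0}$ and $\Pi_n := D_{-n|-n}$ for $n \geq 1$; only the big jump $\varnothing \to (t+1)$ at level $0$ forces the long composition, while all higher jumps in that block are single boxes. You instead isolate the single-box phenomenon as the organizing principle: whenever $\kappa_{n+1} = \kappa_{n+2}$, the bare functor $D_{c|c}$ with $c = t - |\kappa^{(n)}|$ already does the job (a fact the paper uses implicitly through $D_{-n|-n}$), and you handle $r \geq 2$ by conjugating with a block equivalence $\Sigma^{-1} \circ D_{\tilde c|\tilde c} \circ \Sigma$ into a block where the level-$n$ jump is a single box. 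This buys uniformity — almost every $\Pi_n$ is a single $D_{c|c}$ up to equivalence — at the cost of one degenerate case $(t,n) = (1,0)$, which you patch with $D_{0|1}\circ D_{1|0}$; this is in fact exactly the paper's $\Pi_0$ specialized to $t=1$. I checked the case analysis for the single-box verification (the identities $c = t - |\kappa^{(n)}|$, $c \in \add(\kappa^{(n)}) \setminus \rem(\kappa^{(n)})$, $c = t-|\kappa^{(n+1)}|+1 \in \rem(\kappa^{(n+1)})$, and the vanishing for $m \neq n, n+1$ via monotonicity of $\kappa_i - i$) and your choices of $\tilde\kappa = (1^t)$ for $n \leq t-2$ and $(2,1^{t-2})$ for $n = t-1$; they all go through. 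One thing worth spelling out in a full write-up is that the conjugated functor $\Sigma^{-1} \circ D_{\tilde c|\tilde c} \circ \Sigma$ is being applied as an endofunctor of all of $Par_t\Mod$, not just of the block subcategory, so the vanishing $\Pi_n \Delta(\kappa^{(m)}) = 0$ should be argued on the nose from $\Sigma\Delta(\kappa^{(m)}) \cong \Delta(\tilde\kappa^{(m)})$ and $D_{\tilde c|\tilde c}\Delta(\tilde\kappa^{(m)}) = 0$, which is how you implicitly set it up.
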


\begin{proof}
  In view of \cref{blockreduction}, it suffices to prove
  the lemma in the special case that $\kappa =(t)$,
  when $S = \{\varnothing, (t+1), (t+1,1),(t+1,1^2),\dots\}$ as in \cref{generalblock}.
  Then we take $\Pi_0 := D_{0|t}\circ \cdots \circ D_{t-1|1} \circ D_{t|0}$
  and $\Pi_n := D_{-n|-n}$ for $n > 0$.
  Now it is just a matter of applying \cref{keyresult} to see that these functors have the stated properties.

  The situation for $\Pi_0$ is the most interesting.
To understand this, 
let $u := \lceil\frac{t}{2}\rceil$
and $v := \lfloor\frac{t}{2}\rfloor$.
Then one checks
that $D_{v+1|u-1} \circ \cdots \circ D_{t-1|1}\circ D_{t|0}
(\Delta(\varnothing)) \cong \Delta((u))$; each of these functors adds a single
node to the first row of the Young diagram.
After that we apply $D_{v|u}$ to get a
module with a two step $\Delta$-flag, with
a copy of $\Delta((u+1))$ at the top
and a copy of $\Delta((v))$ at the bottom.
Note this is obtained from \cref{keyresult} in a slightly different way according to whether $u=v$ (i.e., $t$ is even) or $u  =v+1$ (i.e., $t$ is odd).
Also, this is now a module in an atypical block.
Finally we apply $D_{0|t} \circ D_{1|t-1}\circ\cdots D_{v-1|u+1}$ to
end up with the desired two step $\Delta$-flag with a copy of
$\Delta(\kappa^{(1)}) =
\Delta((t+1))$ at the top and $\Delta(\kappa^{(0)}) =
\Delta(\varnothing)$ at the bottom; each of these functors adds a single node to the first row of the Young diagram labelling the module at the top and removes a node from the Young diagram labelling the module at the bottom.
This is what $\Pi_0$ is meant to do to $\Delta(\varnothing)$.
A similar argument shows that
$\Pi_0 \Delta((t+1))$ has a $\Delta$-flag with the same two sections.
It is also easy to check that $\Pi_0 \Delta(\kappa^{(m)}) = 0$ for $m > 1$, indeed, $D_{t|0}$ already annihilates these standard modules.

The functors $\Pi_n = D_{-n|-n}$ for $n > 0$ are easier to analyze.
Noting that $\kappa^{(n)} = \Delta((t+1,1^{n-1}))$,
the module $\Pi_n \Delta(\kappa^{(n)})$ has a two step $\Delta$-flag with
$\Delta(\kappa^{(n+1)}) = \Delta((t+1,1^{n}))$ at the top and
$\Delta(\kappa^{(n)})$ at the bottom; this uses
the $t-|\lambda|=a=b\notin\rem(\lambda)$ case from \cref{keyresult}.
Similarly, $\Pi_n \Delta(\kappa^{(n+1)})$ has a $\Delta$-flag
with the same two sections.
Finally, one checks that $\Pi_{n} \Delta(\kappa^{(m)}) = 0$ for $m \neq n,n+1$.
\end{proof}

\begin{remark}
  In the proof of the next theorem, we will show that the functor $\Pi_n$
  from \cref{within} satisfies
  $\Pi_n \Delta(\kappa^{(n)}) \cong \Pi_n \Delta(\kappa^{(n+1)})
  \cong \Pi_n L(\kappa^{(n+1)}) \cong P(\kappa^{(n+1)})$
  for all $n \geq 0$.
  \end{remark}

Now we can prove the main result about blocks.
This can also be deduced from
\cite[Th.~6.10]{CO}, but the proof of that
appealed to results of Martin \cite{Martin} in order to obtain
the precise submodule structure of the indecomposable projectives,
whereas we are able to establish this by exploiting the
highest weight structure and the Chevalley duality $?^\sigmadual$.

\begin{theorem}\label{MAIN}
  Let $\kappa \in \Par_t$ and $S := \{\kappa^{(0)}, \kappa^{(1)},\dots\}$ be the corresponding $\sim_t$-equivalence class.
  \begin{itemize}
    \item[(i)]
      For each $n \geq 0$, the standard module $\Delta(\kappa^{(n)})$ is of length
      two with head $L(\kappa^{(n)})$ and socle $L(\kappa^{(n+1)})$.
    \item[(ii)]
      The indecomposable projective module $P(\kappa^{(0)})$
      is isomorphic to $\Delta(\kappa^{(0)})$, while for $n \geq 1$
      the  module $P(\kappa^{(n)})$
      has a two step $\Delta$-flag with top section $\Delta(\kappa^{(n)})$ and bottom section $\Delta(\kappa^{(n-1)})$.
    \item[(iii)]
      For each $n \geq 1$, $P(\kappa^{(n)})$ is self-dual with irreducible
      head and socle isomorphic to
      $L(\kappa^{(n)})$ and completely reducible heart $\rad P(\kappa^{(n)}) / \soc P(\kappa^{(n)})\cong L(\kappa^{(n-1)})\oplus L(\kappa^{(n+1)})$.
  \end{itemize}
  \end{theorem}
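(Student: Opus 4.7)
The plan is to prove (i)--(iii) simultaneously by induction on $n$. I would first reduce to a single convenient atypical block using \cref{blockreduction}: the equivalence $\Sigma$ constructed there, a composition of functors $D_{b|a}$ with $a \neq b$, sends $\Delta(\kappa^{(n)}) \mapsto \Delta(\tilde\kappa^{(n)})$ and $L(\kappa^{(n)}) \mapsto L(\tilde\kappa^{(n)})$ by the explicit computation in \cref{111degrees}, so it also sends $P(\kappa^{(n)}) \mapsto P(\tilde\kappa^{(n)})$ since equivalences preserve projective covers, and it commutes with $?^\sigmadual$ by \cref{selfadj}. Thus all three assertions transport freely between atypical blocks.

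The base case $n=0$ of (ii) is \cref{topinblock}: $P(\kappa^{(0)}) = \Delta(\kappa^{(0)})$; its length and socle (statement (i) at $n=0$) will only be determined at the end. For $n \geq 1$ I set $V_n := \Pi_{n-1} P(\kappa^{(n-1)})$, where $\Pi_{n-1}$ is the projective functor of \cref{within}. Since $\Pi_{n-1}$ is built from the exact projective functors $D_{b|a}$, $V_n$ is projective and inherits a filtration from any $\Delta$-flag of $P(\kappa^{(n-1)})$. By the inductive instance of (ii) the latter has flag $\Delta(\kappa^{(n-1)}),\Delta(\kappa^{(n-2)})$ (or just $\Delta(\kappa^{(0)})$ when $n=1$); since $\Pi_{n-1}$ annihilates $\Delta(\kappa^{(n-2)})$ and sends $\Delta(\kappa^{(n-1)})$ to a two-step flag with $\Delta(\kappa^{(n)})$ on top and $\Delta(\kappa^{(n-1)})$ at the bottom by \cref{within}, $V_n$ likewise has precisely this flag.

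The crux is to show $V_n \cong P(\kappa^{(n)})$, equivalently, that its head is simple and equal to $L(\kappa^{(n)})$. Each $\Pi_n$ is self-biadjoint: for $n \geq 1$ directly by \cref{biadj}, and for $\Pi_0 = D_{0|t} \circ \cdots \circ D_{t|0}$ because $D_{i|t-i}^R = D_{t-i|i}$ renders the composition palindromic. Thus $\Hom(V_n, L(\kappa^{(m)})) = [\Pi_{n-1} L(\kappa^{(m)}) : L(\kappa^{(n-1)})]$. Exactness of $\Pi_{n-1}$ together with the vanishing $\Pi_{n-1} \Delta(\kappa^{(m)}) = 0$ for $m \notin \{n-1, n\}$ forces $\Pi_{n-1} L(\kappa^{(m)}) = 0$ for such $m$ (inducting down through the heads of $\Delta(\kappa^{(m)})$), so the head of $V_n$ is a priori supported on $\{L(\kappa^{(n-1)}), L(\kappa^{(n)})\}$. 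The main obstacle is ruling out an $L(\kappa^{(n-1)})$ contribution, that is, showing $[\Pi_{n-1} L(\kappa^{(n-1)}) : L(\kappa^{(n-1)})] = 0$; this is done by combining the inductive case of (i) at $n-1$ (giving the short exact sequence $0 \to L(\kappa^{(n)}) \to \Delta(\kappa^{(n-1)}) \to L(\kappa^{(n-1)}) \to 0$) with the two-step $\Delta$-flag of $\Pi_{n-1} \Delta(\kappa^{(n-1)})$ furnished by \cref{within} to read off the composition factors of $\Pi_{n-1} L(\kappa^{(n-1)})$ and observe that $L(\kappa^{(n-1)})$ is not among them.

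Once $V_n \cong P(\kappa^{(n)})$ is in hand, (ii) at level $n$ follows at once from the $\Delta$-flag, and (i) at level $n$ follows from BGG reciprocity (\cref{bggrecip}): $[\Delta(\kappa^{(n)}):L(\kappa^{(m)})] = (P(\kappa^{(m)}):\Delta(\kappa^{(n)}))$, which by (ii) equals $1$ exactly for $m \in \{n, n+1\}$ and vanishes otherwise, so $\Delta(\kappa^{(n)})$ has length $2$ with head $L(\kappa^{(n)})$ and socle $L(\kappa^{(n+1)})$. For (iii), self-duality of $P(\kappa^{(n)}) = V_n$ follows for $n \geq 2$ inductively since $\Pi_{n-1}$ commutes with $?^\sigmadual$ by \cref{selfadj} and $P(\kappa^{(n-1)})$ is self-dual; the case $n=1$ is handled separately by matching the $\Delta$-flag of $V_1$ with the $\nabla$-flag of $V_1^\sigmadual = \Pi_0 \nabla(\kappa^{(0)})$ using that a self-dual projective with simple head and the predicted composition factors is determined uniquely. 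Self-duality then forces $\soc P(\kappa^{(n)}) = L(\kappa^{(n)})$, and combining this with (i) and (ii), the heart is forced to be $L(\kappa^{(n-1)}) \oplus L(\kappa^{(n+1)})$ up to checking $\rad^2 \subseteq \soc$, a short verification from the two-step $\Delta$-flag and the known structure of each $\Delta(\kappa^{(m)})$.
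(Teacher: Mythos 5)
The key gap in your proposal is the base case. You declare that statement (i) at $n=0$ "will only be determined at the end," but the very first inductive step already needs it: to show $V_1 := \Pi_0 P(\kappa^{(0)}) \cong P(\kappa^{(1)})$, you propose ruling out an $L(\kappa^{(0)})$ constituent in the head of $V_1$ by computing $[\Pi_0 L(\kappa^{(0)}):L(\kappa^{(0)})]$, and your computation invokes "the inductive case of (i) at $n-1=0$," i.e.\ the short exact sequence $0 \to L(\kappa^{(1)}) \to \Delta(\kappa^{(0)}) \to L(\kappa^{(0)}) \to 0$. That short exact sequence \emph{is} statement (i) at $n=0$, which you have deferred. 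The circle cannot be broken in the way you indicate: if $\Delta(\kappa^{(0)})$ happened to be irreducible, one finds directly that $[\Pi_0 L(\kappa^{(0)}):L(\kappa^{(0)})] = [\Pi_0\Delta(\kappa^{(0)}):L(\kappa^{(0)})] = 1$, so $L(\kappa^{(0)})$ \emph{would} sit in the head of $V_1$, $V_1$ would decompose as $P(\kappa^{(0)}) \oplus P(\kappa^{(1)})$ with $P(\kappa^{(1)}) = \Delta(\kappa^{(1)})$, and the whole block would be semisimple. Nothing in your argument excludes this scenario; it is precisely the possibility that must be ruled out by a global input. (For $n \geq 2$ the $\Delta$-flag comparison does exclude $P(\kappa^{(n-1)})$ as a summand of $V_n$, because $P(\kappa^{(n-1)})$ carries a $\Delta(\kappa^{(n-2)})$-section that $V_n$ lacks; it is only the $n=1$ case where this argument has no teeth.)

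The paper's proof supplies exactly the missing step. It first shows the dichotomy: $\Delta(\kappa^{(n)})$ simple $\iff$ $\Delta(\kappa^{(n+1)})$ simple, the forward direction using that $\Pi_n\Delta(\kappa^{(n)})$ is self-dual (since $\Pi_n$ commutes with $?^\sigmadual$), and the reverse direction via an exactness/vanishing argument with $\Pi_{n+1}$. It then concludes: were some $\Delta(\kappa^{(n)})$ simple, all $\Delta(\kappa^{(n)})$ would be simple, hence (using the equivalence of atypical blocks) all standard modules would be simple, forcing $Par_t$ semisimple and contradicting \cref{sscor}. This global non-semisimplicity input is what anchors the base case, and it has no counterpart in your proposal. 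Your observations about the self-biadjointness of $\Pi_0$ (from the palindromic decomposition) and the transport of structure across blocks via $\Sigma$ are correct and consistent with the paper, but as written your induction does not get off the ground without this additional argument.
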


\begin{proof}
  To improve the readability, we  write simply $P(n), \Delta(n)$ and $L(n)$ in place of $P(\kappa^{(n)}), \Delta(\kappa^{(n)})$ and $L(\kappa^{(n)})$.
For $n \geq 0$, \cref{within} shows that the module
$P_n := \Pi_{n-1} \circ \cdots \Pi_1 \circ \Pi_0 (\Delta(0))$
  has a two step $\Delta$-flag with top section $\Delta(n)$
  and bottom section $\Delta(n-1)$.
  Since $\Delta(0)$ is projective by the minimality observed in \cref{topinblock} and each $\Pi_i$ is a projective functor,
  $P_n$ is projective. Since $P_n$ has $L(n)$ in its head, it must contain the indecomposable projective $P(n)$ as a summand, so
  we either have that $P(n) \cong P_n$ if $P_n$ is indecomposable, or $P(n) \cong \Delta(n)$ otherwise.
  In the former case, $(P(n):\Delta(m)) = \delta_{m,n}+\delta_{m,n-1}$, while $(P(n):\Delta(m)) = \delta_{m,n}$ in the latter situation.
Now we apply BGG reciprocity to deduce for any $m \geq 0$  that
  $[\Delta(m):L(n)] = \delta_{n,m}+\delta_{n,m+1}$ if $P_n$
is indecomposable and $[\Delta(m):L(n)] = \delta_{n,m}$ otherwise. Hence, for each $m \geq 0$, we either have that $\Delta(m)
\cong L(m)$, or $\Delta(m)$ is of composition
  length two with composition factors $L(m)$ and
  $L(m+1)$.

  We claim for any $n \geq 0$ that $\Delta(n) \cong L(n)$ if and only if $\Delta(n+1)\cong L(n+1)$.
  Suppose first that $\Delta(n) \cong L(n)$.
  Since $\Pi_n$ commutes with duality by \cref{selfadj}, this implies that
  $\Pi_n \Delta(n)$ is self-dual. But
  this module has a two step
  $\Delta$-flag with top section $\Delta(n+1)$
  and bottom section $\Delta(n)\cong L(n)$. The only way such a module can be self-dual is if $\Delta(n+1) \cong L(n+1)$ (and the module must be completely reducible).
  Conversely, suppose for a contradiction that $\Delta(n+1) \cong L(n+1)$
  but $\Delta(n) \not\cong L(n)$.
Then $\Delta(n)$ is of length two with composition factors
  $L(n)$ and $L(n+1)$, so that $P(n+1)$ has a two step $\Delta$-flag
  with top section $\Delta(n+1)\cong L(n+1)$
  and bottom section
  $\Delta(n)$. Since $\Pi_{n+1} \Delta(n) = 0$ according to \cref{within}
  and $\Pi_{n+1}$ is exact,
  we must have that $\Pi_{n+1} L(n+1) = 0$. Since $\Delta(n+1) \cong L(n+1)$, this implies that $\Pi_{n+1} \Delta(n+1) = 0$, which contradicts \cref{within}.

  From the claim, we see that if $\Delta(n)$ is irreducible for any one $n \geq 0$, then it is irreducible for all $n \geq 0$.
  Since all atypical ``blocks'' are equivalent by \cref{blockreduction},
  it follows in that case that
 the standard modules $\Delta(\lambda)$ for {\em all} $\lambda \in \P$
 are irreducible. This implies that the minimal ordering $\succeq_t$ from
 \cref{minorder} is trivial, hence, the blocks are trivial and $Par_t$ is
 semisimple, which contradicts \cref{sscor}.
  Thus, we have proved
  that $\Delta(n)$ must be of length two
  for every $n \geq 0$, and (i) is proved.

  Property (ii) follows immediately from (i) and BGG reciprocity as noted earlier.

  It remains to prove (iii).
  Take $n \geq 1$. By \cref{within},
  we have that $\Pi_{n-1} \Delta(n+1) = 0$. Since $\Pi_{n-1}$
  is exact and $L(n+1)$ is a composition factor of $\Delta(n+1)$, it follows that
  $\Pi_{n-1} L(n+1) = 0$ too. From this, we deduce that
  $\Pi_{n-1} \Delta(n) \cong \Pi_{n-1} L(n)$. By \cref{within} again,
  $\Pi_{n-1} \Delta(n-1)$ has the same
  composition length as $\Pi_{n-1} \Delta(n) \cong \Pi_{n-1} L(n)$.
  Also $\Delta(n-1)$ has $L(n)$ as a constituent. Using the exactness of $\Pi_{n-1}$ again, we must therefore have that
  $\Pi_{n-1} \Delta(n-1) \cong \Pi_{n-1} L(n)$.
  As observed earlier in the proof, this module is isomorphic to $P(n)$,
  so using that $L(n)$ is self-dual and $\Pi_{n-1}$ commutes with duality, we now see that $P(n)$ is self-dual.
  We also know that it has length four with irreducible head
  $L(n)$,
  $[P(n):L(n)] = 2$ and
  $[P(n):L(n-1)] = 
  [P(n):L(n+1)] = 1$.
  The only possible structure is the one claimed.
\end{proof}

\begin{corollary}[Comes-Ostrik]\label{blocksareblocks}
All ``blocks'' of $Par_t\Mod$ are indecomposable, hence, they coincide with the blocks.
\end{corollary}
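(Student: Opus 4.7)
The plan is to reduce to checking that each $\sim_t$-equivalence class forms a single block. By \cref{approxsim} we already know $\approx_t$ refines $\sim_t$, so every $\sim_t$-class is a disjoint union of $\approx_t$-classes. Conversely, because the projection functors $\PR_\gamma$ from \cref{PRgamma} are central idempotent functors that restrict to the identity on each "block" $1_{S(\gamma)}Par_t\Mod$, it suffices to show that each such "block" is an indecomposable subcategory (equivalently, that the central idempotent $1_{S(\gamma)}$ of \cref{centlem} is already primitive in $Z(Par_t)$, not merely in $Z_0(Par_t)$).

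I would then split into two cases according to \cref{cocomb}. If $\lambda \in \P$ is typical, its $\sim_t$-class is a singleton $\{\lambda\}$, so it is automatically a single $\approx_t$-class. This handles all classes when $t \notin \N$ (where by \cref{sscor} the algebra is semisimple anyway), and all typical classes when $t \in \N$.

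The substantive case is the atypical classes for $t \in \N$, which by \cref{cocomb} are of the form $S = \{\kappa^{(0)}, \kappa^{(1)}, \kappa^{(2)}, \ldots\}$ for some $\kappa \in \P_t$. Here I would simply invoke the structural \cref{MAIN} proved just above: part (ii) says that for each $n \geq 1$, the indecomposable projective $P(\kappa^{(n)})$ has a $\Delta$-flag with sections $\Delta(\kappa^{(n)})$ and $\Delta(\kappa^{(n-1)})$, so by part (i) its composition factors include both $L(\kappa^{(n-1)})$ and $L(\kappa^{(n)})$ (in fact all three of $L(\kappa^{(n-1)}), L(\kappa^{(n)}), L(\kappa^{(n+1)})$). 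Since two irreducibles that occur as composition factors of a common indecomposable projective must lie in the same block, this shows $L(\kappa^{(n-1)})$ and $L(\kappa^{(n)})$ lie in the same $\approx_t$-class for every $n \geq 1$. Chaining these together, all of $L(\kappa^{(0)}), L(\kappa^{(1)}), L(\kappa^{(2)}),\ldots$ belong to a single block, so $S$ is a single $\approx_t$-equivalence class.

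There is no real obstacle here: the corollary is essentially a bookkeeping consequence of \cref{MAIN}, whose proof already encapsulates the hard work (the analysis of the projective functors $D_{b|a}$ and the self-duality argument forcing the length-two structure of the standard modules). Once the projective covers are shown to link consecutive labels in each atypical class, the conclusion that "blocks" agree with blocks is immediate, and the product decompositions \cref{block1} and \cref{BLOCKS} coincide.
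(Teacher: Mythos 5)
Your proof is correct and is essentially the argument the paper leaves implicit: \cref{blocksareblocks} is stated as a corollary of \cref{MAIN} with no written proof, and the linking-via-indecomposable-projectives argument you give is exactly the intended deduction. A marginally shorter route to the same conclusion is to use \cref{MAIN}(i) directly: $\Delta(\kappa^{(n)})$ is indecomposable with composition factors $L(\kappa^{(n)})$ and $L(\kappa^{(n+1)})$, so consecutive labels lie in the same $\approx_t$-class without needing to pass through the projective covers — but your version via $P(\kappa^{(n)})$ is equally valid and establishes the same chain of linkages.
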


\begin{corollary}\label{minordercor}
  The minimal ordering $\succeq_t$ from \cref{minorder} is
  the partial order such that $\kappa^{(m)} \succeq_t \kappa^{(n)}$
  for each $\kappa \in \P_t$ and $m  \leq n$,
  with all other pairs of partitions being incomparable.
\end{corollary}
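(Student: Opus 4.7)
My plan is to derive this corollary directly from Theorem \ref{MAIN}(i), which pins down the composition factors of every standard module. By definition, $\succeq_t$ is the transitive closure of the relation $\lambda \succeq_t \mu \Leftrightarrow [\Delta(\lambda):L(\mu)] \neq 0$, so I only need to identify the generating comparisons and then take transitive closure, keeping track of which pairs are forced to remain incomparable.

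First I would dispose of typical partitions. If $\lambda \in \P$ is typical, then its $\sim_t$-equivalence class is a singleton, hence so is its block by \cref{blocksareblocks}. In particular $\Delta(\lambda)$ has all composition factors isomorphic to $L(\lambda)$, and since $[\Delta(\lambda):L(\lambda)] = 1$ we get $\Delta(\lambda) = L(\lambda)$; so $\lambda \succeq_t \mu$ only when $\mu = \lambda$, as required. Next, for an atypical block $\{\kappa^{(0)}, \kappa^{(1)}, \dots\}$, Theorem \ref{MAIN}(i) gives $[\Delta(\kappa^{(n)}):L(\mu)] = \delta_{\mu,\kappa^{(n)}} + \delta_{\mu,\kappa^{(n+1)}}$. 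Consequently the generating (irreflexive) comparisons inside the block are precisely $\kappa^{(n)} \succeq_t \kappa^{(n+1)}$ for each $n \geq 0$, plus the trivial reflexive relations.

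Finally I would take the transitive closure. Since the generators form the chain $\kappa^{(0)} \succeq_t \kappa^{(1)} \succeq_t \kappa^{(2)} \succeq_t \cdots$, their transitive closure yields exactly $\kappa^{(m)} \succeq_t \kappa^{(n)}$ for all $m \leq n$, and no further relations within the block. Two partitions lying in distinct blocks are incomparable, because $[\Delta(\lambda):L(\mu)] = 0$ whenever $\lambda,\mu$ have different central characters. Combining these observations yields the description of $\succeq_t$ in the statement.

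There is essentially no obstacle: once Theorem \ref{MAIN}(i) is in hand, the corollary is purely combinatorial bookkeeping. The only mildly delicate point is the verification that the transitive closure of the generating relations does not inadvertently produce spurious comparisons, but this is immediate from the linear (chain) structure of the generators within each atypical block together with the block decomposition \cref{BLOCKS}.
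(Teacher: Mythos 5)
Your proposal is correct and is essentially the argument the paper intends; the corollary is stated without an explicit proof because it is meant to follow directly from \cref{MAIN}(i) in exactly the way you describe. One small simplification: in the typical case you do not actually need \cref{blocksareblocks}, since \cref{approxsim} already gives that a singleton $\sim_t$-class forces a singleton $\approx_t$-class (block), whence $\Delta(\lambda)=L(\lambda)$; but invoking \cref{blocksareblocks} is also perfectly valid. Your observation that antisymmetry and the absence of spurious closures follow from the chain structure of each atypical block (together with the fact that the $\kappa^{(n)}$ are pairwise distinct, having strictly increasing sizes $|\kappa^{(n)}|=t+n-\kappa_{n+1}$) is precisely the bookkeeping that makes the statement go through.
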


 In general, in an upper finite highest weight category, the standard objects can have infinite length. Our final corollary, which is also noted in \cite[Rem.~6.4]{SaS},
 shows that this is not the
 case in $Par_t\Modlfd$.
Consequently, the full subcategory consisting of all modules of
finite length has enough projectives and injectives, indeed, this subcategory is
an essentially finite highest weight category in the sense of \cite[Def.~3.7]{BS}.

  \begin{corollary}\label{locartinian}
  The locally unital
  algebra $Par_t$ is locally Artinian, i.e., the left ideals $Par_t 1_n$ and the right ideals $1_n Par_t$ are of finite length for all $n \geq 0$.
\end{corollary}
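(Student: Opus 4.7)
The plan is to reduce the claim to the finiteness of $\Delta$-flags on $Par_t 1_n$ and the finite length of each individual standard module, both of which follow from results established earlier in the paper.

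First I would show that $Par_t 1_n$ has a finite $\Delta$-flag. By Wedderburn, $\kk S_n = \bigoplus_{\lambda \in \P_n} S(\lambda)^{\oplus d_\lambda}$ with $d_\lambda = \dim S(\lambda)$, so
\[
Par_t 1_n \;\cong\; \ind_{\sym}^{Par_t}(\kk S_n 1_n) \;\cong\; \bigoplus_{\lambda \in \P_n} Q(\lambda)^{\oplus d_\lambda}.
\]
By \cref{bioap}, each $Q(\lambda)$ admits a finite $\Delta$-flag whose sections are $\Delta(\mu)^{\oplus B_{\lambda,\mu}}$ for partitions $\mu$ with $|\mu| \leq n$; since $B_{\lambda,\mu} = \dim e_\mu Par^\sharp e_\lambda$ is finite and there are only finitely many $\mu$ of size $\leq n$ contributing (as $P^\sharp(\lambda)$ is finite-dimensional), the flag is indeed finite. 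Hence $Par_t 1_n$ itself admits a finite $\Delta$-flag.

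Next I would verify that every standard module $\Delta(\mu)$ has finite length. If $t \notin \N$, then by \cref{sscor} the algebra $Par_t$ is semisimple and $\Delta(\mu) = L(\mu)$ is irreducible. If $t \in \N$, then by \cref{cocomb} either $\mu$ is typical, in which case the block containing $\mu$ is simple and $\Delta(\mu) = L(\mu)$, or $\mu$ is atypical, in which case $\mu = \kappa^{(m)}$ for some $\kappa \in \P_t$ and $m \geq 0$, and \cref{MAIN}(i) gives that $\Delta(\mu)$ has length exactly two with composition factors $L(\kappa^{(m)})$ and $L(\kappa^{(m+1)})$. In either case, $\Delta(\mu)$ has length at most two, so combining with the previous paragraph we conclude that $Par_t 1_n$ is of finite length as a left $Par_t$-module.

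Finally, for the right ideals $1_n Par_t$, I would apply the anti-involution $\sigma:Par_t \to Par_t^{\op}$ from \cref{stria}, which fixes each $1_n$. It converts $1_n Par_t$ (as a right module) into $Par_t 1_n$ (as a left module), and thus the finite-length statement for $Par_t 1_n$ just established transfers immediately. There is no real obstacle here; the only point requiring care is ensuring the finite-$\Delta$-flag statement in step one is uniform across all $n$, which is handled by the explicit finite decomposition of $\kk S_n$ and the fact that each $Q(\lambda)$ is itself finitely generated projective with a finite filtration whose length is bounded by $\dim P^\sharp(\lambda) < \infty$.
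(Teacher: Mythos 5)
Your proof is correct, and it relies on the same key input as the paper's proof, namely \cref{MAIN}, together with the anti-involution $\sigma$ to handle the right ideals. The packaging differs in two small ways that are worth noting. First, for the left ideals, the paper simply observes that \cref{MAIN} makes every indecomposable projective finite-length, hence every finitely generated projective (being a finite direct sum of indecomposables in a Schurian category) is finite-length; you instead unpack this by writing $Par_t 1_n \cong \bigoplus_{\lambda \in \P_n} Q(\lambda)^{\oplus d_\lambda}$, invoking \cref{bioap} to get a finite $\Delta$-flag, and then bounding the length of each $\Delta(\mu)$ by two via \cref{sscor}, \cref{cocomb} and \cref{MAIN}(i). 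Both routes are valid and use the same substance; yours is somewhat more explicit, and in particular makes visible the case of typical $\mu$ (where $\Delta(\mu) = L(\mu)$ because the $\approx_t$-class of $\mu$ is a singleton by \cref{approxsim}), which the paper glosses over when saying ``\cref{MAIN} shows all indecomposable projectives are finite length.'' Second, for the right ideals, the paper passes to the dual left module $(1_n Par_t)^\circledast$, notes it is a finitely cogenerated injective and hence finite length, and deduces finite length of $1_n Par_t$; you instead directly transport $1_n Par_t$ to a left module via the anti-involution $\sigma$ (which fixes each $1_n$ and exchanges $1_n Par_t 1_m$ with $1_m Par_t 1_n$), identifying it with $Par_t 1_n$. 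Your formulation avoids the detour through injectives and is slightly more elementary, but amounts to the same use of $\sigma$.
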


\begin{proof}
 \cref{MAIN} shows that all indecomposable projective left $Par_t$-modules
  are of finite length, hence, all finitely generated projectives are of finite length too. This includes all of the left ideals $Par_t 1_n$.
  Since there is a duality $?^\sigmadual$, it also follows that all
  fintely cogenerated injective left $Par_t$-modules are of finite length.
This includes all of the duals $(1_n Par_t)^\circledast$, hence, each $1_n Par_t$ is of finite length as a right module.
\end{proof}

\subsection{\boldmath Proof of \cref{keyresult}}\label{postponed}
It just remains to prove \cref{keyresult}.
In fact, we will prove the following slightly stronger result, from which \cref{keyresult} follows easily on applying the functors involved
to the Specht module $S(\lambda)$.
To state this stronger result,
let $j_!:\sym\Modfd \rightarrow Par_t \Modlfd$ be the standardization functor from \cref{stdize},
$E_a$ and $F_b$ be the refined induction and restriction functors from
\cref{endofterm},
$D_{b|a}$ be the special projective functor from \cref{summertime},
and $\PR_c:\sym\Modfd \rightarrow \sym\Modfd$ be the functor defined by
multiplication by the identity element of the symmetric group
$S_c$ if $c \in \N$, i.e., it is the projection onto $\kk S_c \Modfd$
followed followed by the inclusion of $\kk S_c \Modfd$ into 
$\sym\Modfd$, or the zero functor if $c \in \kk - \N$.

 \begin{theorem}\label{refinedfilt}
    For $a,b \in \kk$,
    there is a filtration of the functor
    $D_{b|a} \circ j_!:\sym\Modfd \rightarrow Par_t\Modlfd$ by subfunctors
    $0 = S_0 \subseteq S_1 \subseteq S_2 \subseteq S_3 \subseteq S_4 = D_{b|a} \circ j_!$ such that
    \begin{align*}
      S_4 / S_3 &\cong j_! \circ E_a \circ \PR_{t-b},\\
      S_3 / S_2 &\cong j_! \circ \PR_{t-a} \circ \PR_{t-b},\\
      S_2 / S_1 &\cong j_! \circ E_a \circ F_b,\\
      S_1 / S_0 &\cong j_! \circ \PR_{t-a} \circ F_b.
    \end{align*}
(Recall that a {\em subfunctor} $S$ of a functor
$T:\sym\Modfd \rightarrow Par_t\Modlfd$ is a functor $S:\sym\Modfd \rightarrow Par_t \Modlfd$ such that $S V$ is a submodule of $T V$ for all $V \in \sym\Modfd$
and $S f = Tf |_{SV}$ for all $f\in\Hom_{\sym}(V,V')$; then the quotient  $T / S$ is the obvious functor with $(T / S)(V) := T V / SV$.)
\end{theorem}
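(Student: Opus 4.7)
The plan is to construct the filtration at the level of the $(Par_t,\sym)$-bimodule underlying $D\circ j_!$, using the affine partition category to refine by dot eigenvalues. The key structural observation is that the endofunctor $\mid\,\star$ of $\Par_t$ restricts to an endofunctor of $\Par^\sharp$, since adding a trunk on the left to any downward partition diagram yields another downward partition diagram. Combining this with the self-adjunction isomorphism $D\cong\ind_{\mid\,\star}$ from \cref{dualitycor}, and the fact that inductions along commuting functors commute, one obtains a natural isomorphism
\begin{align*}
D\circ j_! &\cong \ind_{Par^\sharp}^{Par_t}\circ\widetilde D\circ\infl^\sharp,
\end{align*}
where $\widetilde D:Par^\sharp\Mod\to Par^\sharp\Mod$ denotes induction along $\mid\,\star$ in $\Par^\sharp$. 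For $W\in\kk S_n\Modfd$, the underlying vector space of $\widetilde D\circ\infl^\sharp W$ is $\bigoplus_{m\geq 0}1_{m+1}Par^\sharp 1_n\otimes_{\kk S_n}W$, where one uses the elementary fact that $1_n Par^\sharp 1_n=\kk S_n$ (downward partition diagrams $n\to n$ must be permutation diagrams by a degree count on tops vs.\ bottoms). Since $\ind_{Par^\sharp}^{Par_t}$ and the eigenspace projection $D\to D_{b|a}$ are both exact, it suffices to construct a natural four-step filtration of $\widetilde D\circ\infl^\sharp$ with sections that pick out the stated pieces after applying $\ind_{Par^\sharp}^{Par_t}$ and projecting onto the $(a,b)$-eigenspace.

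First I would build a coarse bimodule filtration of $\bigoplus_{m\geq 0}1_{m+1}Par^\sharp 1_n$ by classifying a downward partition diagram $f:n\to m+1$ according to the connected component $C$ of the leftmost top vertex $(m+1)'$. Since $f\in\Par^\sharp$ and $(m+1)'$ is a top endpoint of $C$, the component $C$ has exactly one top endpoint and is either a trunk (one bottom endpoint) or a downward tree ($\geq 2$ bottom endpoints); subdividing further according to whether the remaining diagram contains a distinguished downward-leaf component yields the coarse three-step picture. A direct graphical argument shows the left $Par^\sharp$-action cannot decrease the number of bottom endpoints of $C$, while the right $\kk S_n$-action merely permutes those endpoints; hence the resulting subspaces are genuine subbimodules.

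Next I would refine these pieces by the generalized eigenspaces of the dots $x^L,x^R$ to produce the four-step filtration. To identify each section with the stated functorial composition, I would translate the dots via the relations in $\APar$ into $\kk S_n$-operators on $W$. On the trunk section, the relations \cref{A1,A3,crazycrossing} let one push $x^L$ through the trunk down to the bottom strand $j$ to which the trunk connects, where it acts as the Jucys-Murphy element $x_j$ from \cref{jmdef}; the right $\kk S_n$-symmetrization over $j$ then produces the functor $F$ from \cref{EFdef}, and projecting to the $b$-eigenspace yields $F_b$. A dual analysis, using the self-duality of $\mid$ to push $x^R$ upward, produces the $E_a$ factor, giving the section $S_2/S_1\cong j_!\circ E_a\circ F_b$. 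On the tree and leaf contributions, the dots simplify using the explicit image under $p_t:\APar\to\Par_t$, for instance the right dot on an isolated strand reduces to the scalar $T=t\cdot 1_\one$ by \cref{nextmain,T}. This forces the eigenvalue constraints $b=t-n$ (for the top-section pieces $S_4/S_3$ and $S_3/S_2$) or $a=t-n+1$ (for the bottom section $S_1/S_0$), producing the $\PR_{t-a}$ and $\PR_{t-b}$ factors in the four listed sections.

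The main technical obstacle will be the careful bookkeeping of the dot eigenvalues on each section and verifying that the refinement truly produces two-step filtrations in the right places. Particular care is needed at the $a=b$ coincidences, where the $S_3/S_2$ section and the ``diagonal $a=b\in\rem(\lambda)$'' summand of $S_2/S_1$ both produce copies of $\Delta(\lambda)$; here the filtration must be set up so that these contributions land in distinct consecutive steps, as required by the theorem. A valuable consistency check is provided by \cref{hood} together with \cref{lastfromss,springs}: summing the $\Delta$-flag multiplicities of the four sections over all $(a,b)$ must recover the full $\Delta$-flag of $D\Delta(\lambda)\cong Q(\boxempty)\ostar\Delta(\lambda)$, whose sections are counted by the reduced Kronecker coefficients $\overline G^{\mu}_{\boxvoid,\lambda}$ computed in \cref{springs}.
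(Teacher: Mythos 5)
Your overall strategy (filter a bimodule underlying $D\circ j_!$ by the component type of a distinguished vertex, then refine by the eigenvalues of the left/right dots) is the same one the paper uses, but the setup contains a concrete error that propagates through the whole construction. The issue is the claimed description of $\widetilde D\circ\infl^\sharp$. From \cref{if1}, $\ind_{\mid\,\star}$ on $Par^\sharp\Mod$ is tensoring with the $(Par^\sharp,Par^\sharp)$-bimodule $Par^\sharp 1_{\mid\,\star}=\bigoplus_n Par^\sharp 1_{n+1}$, so for $W\in\kk S_n\Modfd$ one gets a quotient of $Par^\sharp 1_{n+1}\otimes_{\kk S_n}W=\bigoplus_m 1_m Par^\sharp 1_{n+1}\otimes_{\kk S_n}W$: the shift lands on the \emph{right} index, and the diagrams involved go from $n+1$ down to $m$. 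Your formula $\bigoplus_m 1_{m+1}Par^\sharp 1_n\otimes_{\kk S_n}W$ has the shift on the wrong side. This is not just cosmetic: for $n=0$, $W=\kk$, your formula gives $0$ since $1_{m+1}Par^\sharp 1_0=0$ for all $m\geq 0$, but $\widetilde D\,\infl^\sharp\kk\cong Par^\sharp 1_1$ is $2$-dimensional (and $\ind_{Par^\sharp}^{Par_t}$ of it must reproduce the $2$-step $\Delta$-flag of $D\Delta(\varnothing)$ from \cref{hood}). Moreover, even with corrected indices, tensoring over $\kk S_n$ alone is not enough: since $\infl^\sharp$ kills the strictly downward part, $\ind_{\mid\,\star}(\infl^\sharp W)$ is a proper quotient of $Par^\sharp 1_{n+1}\otimes_{\kk S_n}W$, and this quotient is what makes your proposed subspaces hard to certify as submodules.

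These errors also undermine the classification step. With the corrected bimodule, the distinguished vertex controlled by $\mid\,\star$ is the \emph{bottom}-left vertex $n+1$ of a downward diagram $n+1\to m$, not the top-left. The component of a bottom vertex in a downward diagram can be a trunk, a downward tree, a downward leaf, or a downward branch — exactly four types, matching the four sections you need. The top-left vertex you chose sits on one top endpoint of a downward diagram, so its component can only be a trunk or a downward tree, which is why you found yourself needing to manufacture an extra step from a ``distinguished downward-leaf'' and from the dot eigenvalues; but the $(a,b)$-eigenspace decomposition refines the four sections after the fact rather than creating new filtration steps. The paper avoids all of these pitfalls by working directly with the $(Par_t,\sym)$-bimodule $M=1_{\mid\,\star}Par_t\otimes_{Par^\sharp}\infl^\sharp\sym$, which by the triangular decomposition of \cref{td} has a clean basis $1_{\mid\,\star}Par^-\otimes_{\KK}\sym$ of upward partition diagrams (no quotient to keep track of), filters by the component type of the top-left vertex of the \emph{upward} diagram (giving four types immediately), identifies the sections explicitly via the bimodule isomorphisms $\theta_i$ of \cref{filtration}, and then computes the action of the dot endomorphisms $\rho,\lambda$ on each section in \cref{xyz} using the relations in $\APar$ established in \cref{technical}.
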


The proof will take up the rest of the subsection.
We begin by constructing a filtration of the functor
$D\circ j_!:\sym\Modfd\rightarrow Par_t\Modlfd$.
Note that
$D \circ j_! \cong M\otimes_{\sym}$ where $M$ is the
$(Par_t,\sym)$-bimodule
\begin{equation}\label{M}
M := 1_{\mid\,\star} Par_t \otimes_{Par^\sharp} \infl^\sharp \sym.
\end{equation}
  We also have the $(Par_t,\sym)$-bimodules
  \begin{align}
    N_4 &= Par_t\otimes_{Par^\sharp} \infl^\sharp (\sym 1_{\mid\,\star}),\label{N4}\\
    N_3 &:= Par_t\otimes_{Par^\sharp} \infl^\sharp \sym,\label{N3}\\
    N_2 &:= Par_t\otimes_{Par^\sharp} \infl^\sharp (\sym 1_{\mid\,\star}\otimes_{\sym} 1_{\mid\,\star}\sym)\label{N2}\\
    N_1 &:= Par_t\otimes_{Par^\sharp} \infl^\sharp (1_{\mid\,\star} \sym).\label{N1}
  \end{align}
   The functors $\sym\Modfd\rightarrow Par_t\Modlfd$
    defined by tensoring with $N_4, N_3, N_2$ and $N_1$ are isomorphic to
    $j_! \circ E$, $j_!$, $j_! \circ E \circ F$
    and  $j_! \circ F$, respectively.

    For $m \geq n \geq 0$, let $B_{m,n}$ be the basis for $1_m Par^- 1_n$
  defined by representatives for the equivalence classes
  of normally ordered upward partition diagrams.
  By \cref{td}, the vector space $M$
  is isomorphic to $1_{\mid\,\star} Par^- \otimes_{\KK} \sym$, hence, it
    has basis
    \begin{equation}\label{loud}
    \left\{f \otimes g\:\big|\:m\geq 0,n \geq 0, m+1 \geq n, f \in B_{m+1,n}, g \in S_n\right\}.
    \end{equation}
    For any $f\in B_{m+1,n}$,
  let $c(f)$ be the connected component of the diagram containing the top
  left vertex.
  In the language from \cref{stria}, this component
  could be a trunk, an upward tree, an upward leaf, or an upward branch.
  Then we introduce the following subspaces of $M$:
  \begin{itemize}
   \item Let  
    $M_1$ be the subspace of $M$ spanned by all
     $f \otimes g$ in this basis such that $c(f)$ is a trunk.
     \item Let
     $M_2$ be the subspace spanned by all $f \otimes g$ such that
     $c(f)$ is either a trunk or an upward tree.
   \item Let
    $M_3$ be the subspace spanned by all $f \otimes g$ such that
     $c(f)$ is either a trunk, an upward tree, or an upward leaf.
   \item
     Let $M_0 := 0$ and $M_4 := M$.
  \end{itemize}
  The following is a generalization of \cref{hood}.
  
    \begin{lemma}\label{filtration}
      The subspaces $0 = M_0 \subset M_1 \subset M_2 \subset M_3 \subset M_4 = M$
      are sub-bimodules of the $(Par_t,\sym)$-bimodule $M$. Moreover,
      there are bimodule isomorphisms $\theta_i:N_i \stackrel{\sim}{\rightarrow} M_i / M_{i-1}$
       for each $i=1,\dots,4$.
\end{lemma}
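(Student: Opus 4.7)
The plan is to exploit the observation that the left $Par_t$-action on $M = 1_{\mid\,\star} Par_t \otimes_{Par^\sharp} \infl^\sharp \sym$ is the twisted action inherited from the restriction functor $\res_{\mid\,\star}$: concretely, for $h \in Par_t$, the action on $M$ amounts to left multiplication in $Par_t$ by $\mid\,\star h$, i.e.\ by $h$ with an extra identity strand inserted on the leftmost position. Using the triangular decomposition (\cref{td}), I identify $M$ with $1_{\mid\,\star}Par^-\otimes_\KK \sym$, recovering the stated basis, and each $N_i$ is similarly unfolded, reflecting the underlying functors $F$, $E\circ F$, $\Id$, and $E$.

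Verifying that each $M_i$ is a subbimodule, the right $\sym$-action preservation is immediate since permuting bottom vertices leaves $c(f)$ unchanged; for the left action, the crucial consequence of the twist is that the top-leftmost strand of $(\mid\,\star h)\circ f$ is always connected, through the inserted identity, to the top-leftmost strand of $f$. It follows that the top-leftmost component of $(\mid\,\star h)\circ f$ has the same bottom endpoints as $c(f)$ and at least one top endpoint, and a brief case analysis over the generators of $Par_t$ shows that its type either agrees with that of $c(f)$ or becomes strictly simpler (a tree can collapse to a trunk, a branch can collapse to a leaf) but never strictly more complex. Since the $M_i$ are spans of basis vectors whose $c$-type lies in a set closed under these collapses, each $M_i$ is preserved.

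The isomorphisms $\theta_i\colon N_i \xrightarrow{\sim} M_i/M_{i-1}$ arise from natural ``peel-off" constructions: $\theta_1(f'\otimes g) = (\mid\,\star f')\otimes g$ prepends a trunk, and $\theta_3(f'\otimes g) = ((\text{upward leaf})\star f')\otimes g$ prepends an upward leaf. Bimodule equivariance for $\theta_1$ is the calculation $h\cdot\theta_1(f'\otimes g) = (\mid\,\star h)(\mid\,\star f')\otimes g = \mid\,\star(hf')\otimes g = \theta_1(h\cdot(f'\otimes g))$, using the interchange law and the twisted action on $M$, and the case of $\theta_3$ is analogous. The maps $\theta_2$ and $\theta_4$ require similar but slightly more intricate constructions, prepending an upward tree or an upward branch built from a split or a cup together with a repositioning to match the normal ordering convention; bimodule equivariance and bijectivity then follow from the same interchange-law calculation and the underlying combinatorial bijection of basis elements. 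The main obstacle throughout is the bookkeeping in the subbimodule-closure step, since the triangular re-expansion of $(\mid\,\star h)\circ f$ can produce many new normally ordered basis vectors which must all be shown to have $c$-type in the desired set.
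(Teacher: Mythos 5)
Your overall strategy coincides with the paper's. The subbimodule verification is sound — the essential observation, which you make, is that the inserted strand in $\mid\,\star h$ is a trunk disjoint from $h$, so the top-left component $c(f)$ can lose top endpoints but never gain a bottom one (and if a merge through $h$ produces a component with two or more bottom endpoints, the term dies on $\infl^\sharp\sym$); your $\theta_1$ and $\theta_3$ are exactly the paper's.

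The genuine gap is in $\theta_2$ and $\theta_4$. These are not obtained by \emph{prepending} a split or a cup on the left of $f$. Dimensionally, prepending a split to a morphism $n \to m$ gives $n+1 \to m+2$ and prepending a cup gives $n \to m+2$, while $M_2/M_1$ and $M_4/M_3$ need representatives with codomain $m+1$; moreover a freely prepended split or cup has a leg that attaches to nothing, so its top-left component cannot be an upward tree meeting $f$. What actually happens is \emph{grafting}: for $\theta_2$, the new top-left strand forks and its other leg is soldered onto the leftmost bottom strand of the stacked diagram $f\circ g$; for $\theta_4$, a designated bottom strand of $f \in B_{m,n+1}$ is bent up through a cup to become the new top-left endpoint, leaving only $n$ bottom endpoints meeting $g'\in S_n$. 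The designated position is precisely the datum carried by the $\sym 1_{\mid\,\star}$ tensor factor inside $N_2$ (resp.\ $N_4$), recorded by a coset representative $(i\;\, i\!+\!1\,\cdots\,n) \in S_n/S_{n-1}$ (resp.\ $(i\,\cdots\,n\!+\!1)\in S_{n+1}/S_n$). This is what makes the maps bijections on bases, and ``repositioning to match the normal ordering convention'' does not capture it; without it there is no well-defined candidate for $\theta_2$ or $\theta_4$ against which to run your interchange-law calculation. As a minor further point, the paper verifies $\theta_4$ not by matching bases but by exhibiting a bimodule map $\phi:M\to N_4$, checking $M_3\subseteq \ker\phi$, and showing the induced map is a two-sided inverse of $\theta_4$ — a trick worth noting when the direct combinatorial bijection gets awkward.
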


\begin{proof}
    The fact that each $M_i$ is a sub-bimodule of $M$ is easily checked by
    vertically composing a basis vector $f \otimes g$
    with an arbitrary partition diagram on the top and with any permutation diagram on the bottom. One just needs to note
    that the action on top involves
     $\res_{\mid\,\star}$, so that the top left vertex is untouched. This implies that
    the type $c(f)$ does not change if it is a trunk or an upward leaf, while if it is an upward tree it can only be changed to another
 upward tree or to a trunk.

 We show in this paragraph that there is a bimodule isomorphism
  \begin{align}\label{theta1}
    \theta_1:N_1 &\rightarrow M_1,&
\begin{tikzpicture}[anchorbase,scale=1.1]
  \draw[-] (-0.5,-0.2)--(-0.5,0.2)--(0.5,0.2)--(0.5,-0.2)--(-0.5,-0.2);
    \node at (-0,-0) {$\scriptstyle{f}$};
    \draw[-,thick] (-0.4,-0.2)--(-0.4,-0.5);
    \draw[-,thick] (0.4,-0.2)--(0.4,-0.5);
    \draw[-,thick] (-0.4,0.2)--(-0.4,0.5);
    \draw[-,thick] (0.4,0.2)--(0.4,0.5);
    \node at (-0.2,0.35) {$\cdot$};
    \node at (-0,0.35) {$\cdot$};
    \node at (0.2,0.35) {$\cdot$};
    \node at (-0.2,-0.4) {$\cdot$};
    \node at (-0,-0.4) {$\cdot$};
    \node at (0.2,-0.4) {$\cdot$};
  \end{tikzpicture}\otimes\begin{tikzpicture}[anchorbase,scale=1.1]\draw[-] (-0.5,-0.2)--(-0.5,0.2)--(0.5,0.2)--(0.5,-0.2)--(-0.5,-0.2);
    \node at (-0,-0) {$\scriptstyle{g}$};
    \draw[-,thick] (-0.4,-0.2)--(-0.4,-0.5);
    \draw[-,thick] (0.4,-0.2)--(0.4,-0.5);
    \draw[-,thick] (-0.4,0.2)--(-0.4,0.5);
    \draw[-,thick] (0.4,0.2)--(0.4,0.5);
    \node at (-0.2,0.35) {$\cdot$};
    \node at (-0,0.35) {$\cdot$};
    \node at (0.2,0.35) {$\cdot$};
    \node at (-0.2,-0.4) {$\cdot$};
    \node at (-0,-0.4) {$\cdot$};
    \node at (0.2,-0.4) {$\cdot$};
    \end{tikzpicture}
&\mapsto     \begin{tikzpicture}[anchorbase,scale=1.1]
  \draw[-] (-0.5,-0.2)--(-0.5,0.2)--(0.5,0.2)--(0.5,-0.2)--(-0.5,-0.2);
    \node at (-0,-0) {$\scriptstyle{f}$};
    \draw[-,thick] (-0.4,-0.2)--(-0.4,-0.5);
    \draw[-,thick] (0.4,-0.2)--(0.4,-0.5);
    \draw[-,thick] (-0.4,0.2)--(-0.4,0.5);
    \draw[-,thick] (0.4,0.2)--(0.4,0.5);
    \draw[-,thick] (-0.6,0.5)--(-0.6,-0.5);
    \node at (-0.2,0.35) {$\cdot$};
    \node at (-0,0.35) {$\cdot$};
    \node at (0.2,0.35) {$\cdot$};
    \node at (-0.2,-0.4) {$\cdot$};
    \node at (-0,-0.4) {$\cdot$};
    \node at (0.2,-0.4) {$\cdot$};
  \end{tikzpicture}\otimes\begin{tikzpicture}[anchorbase,scale=1.1]\draw[-] (-0.5,-0.2)--(-0.5,0.2)--(0.5,0.2)--(0.5,-0.2)--(-0.5,-0.2);
    \node at (-0,-0) {$\scriptstyle{g}$};
    \draw[-,thick] (-0.4,-0.2)--(-0.4,-0.5);
    \draw[-,thick] (0.4,-0.2)--(0.4,-0.5);
    \draw[-,thick] (-0.4,0.2)--(-0.4,0.5);
    \draw[-,thick] (0.4,0.2)--(0.4,0.5);
    \node at (-0.2,0.35) {$\cdot$};
    \node at (-0,0.35) {$\cdot$};
    \node at (0.2,0.35) {$\cdot$};
    \node at (-0.2,-0.4) {$\cdot$};
    \node at (-0,-0.4) {$\cdot$};
    \node at (0.2,-0.4) {$\cdot$};
  \end{tikzpicture}  \end{align}
  for any $m\geq 0,n > 0$,
   $f \in 1_{m} Par_t 1_{n-1}$ and $g \in S_n$.
   This is a well-defined bimodule homomorphism.
By \cref{td}, $N_1$ is isomorphic
as a vector space to $Par^-\otimes_{\KK} 1_{\mid\,\star} \sym$, hence, it has basis
\begin{equation}\label{N1basis}
\left\{f \otimes g\:\big|\:m \geq n-1 \geq 0, f \in B_{m,n-1}, g \in S_n\right\}.
\end{equation}
The vector space $M_1$ has basis given by all $f_1 \otimes g$ for
$m+1\geq n > 0,
f_1 \in B_{m+1,n}$ and $g \in S_n$ such that $c(f_1)$ is a trunk.
As it is normally ordered, any such $f_1$ is of the form
  \begin{align*}
f_1= \begin{tikzpicture}[anchorbase,scale=1.1]
  \draw[-] (-0.5,-0.2)--(-0.5,0.2)--(0.5,0.2)--(0.5,-0.2)--(-0.5,-0.2);
    \node at (-0,-0) {$\scriptstyle{f}$};
    \draw[-,thick] (-0.4,-0.2)--(-0.4,-0.5);
    \draw[-,thick] (0.4,-0.2)--(0.4,-0.5);
    \draw[-,thick] (-0.4,0.2)--(-0.4,0.5);
    \draw[-,thick] (0.4,0.2)--(0.4,0.5);
    \draw[-,thick] (-0.6,0.5)--(-0.6,-0.5);
    \node at (-0.2,0.35) {$\cdot$};
    \node at (-0,0.35) {$\cdot$};
    \node at (0.2,0.35) {$\cdot$};
    \node at (-0.2,-0.4) {$\cdot$};
    \node at (-0,-0.4) {$\cdot$};
    \node at (0.2,-0.4) {$\cdot$};
  \end{tikzpicture}
  \end{align*}
  for a unique $f \in B_{m,n-1}$.
  Moreover, $f_1 \otimes g = \theta_1(f \otimes g)$ for every $g \in S_n$.
  It follows that $\theta_1$ takes a basis for $N_1$ to a basis for $M_1$,
  so it is an isomorphism.
  
  Next we show that there is a bimodule isomorphism
  \begin{align}\label{theta2}
  \theta_2:N_2&\rightarrow M_2 / M_1,&
  \begin{tikzpicture}[anchorbase,scale=1.1]\draw[-] (-0.5,-0.2)--(-0.5,0.2)--(0.5,0.2)--(0.5,-0.2)--(-0.5,-0.2);
    \node at (-0,-0) {$\scriptstyle{f}$};
    \draw[-,thick] (-0.4,-0.2)--(-0.4,-0.5);
    \draw[-,thick] (0.4,-0.2)--(0.4,-0.5);
    \draw[-,thick] (-0.4,0.2)--(-0.4,0.5);
    \draw[-,thick] (0.4,0.2)--(0.4,0.5);
    \node at (-0.2,0.35) {$\cdot$};
    \node at (-0,0.35) {$\cdot$};
    \node at (0.2,0.35) {$\cdot$};
    \node at (-0.2,-0.4) {$\cdot$};
    \node at (-0,-0.4) {$\cdot$};
    \node at (0.2,-0.4) {$\cdot$};
  \end{tikzpicture}\otimes\begin{tikzpicture}[anchorbase,scale=1.1]\draw[-] (-0.5,-0.2)--(-0.5,0.2)--(0.5,0.2)--(0.5,-0.2)--(-0.5,-0.2);
    \node at (-0,-0) {$\scriptstyle{g}$};
    \draw[-,thick] (-0.4,-0.2)--(-0.4,-0.5);
    \draw[-,thick] (0.4,-0.2)--(0.4,-0.5);
    \draw[-,thick] (-0.4,0.2)--(-0.4,0.5);
    \draw[-,thick] (0.4,0.2)--(0.4,0.5);
    \node at (-0.2,0.35) {$\cdot$};
    \node at (-0,0.35) {$\cdot$};
    \node at (0.2,0.35) {$\cdot$};
    \node at (-0.2,-0.4) {$\cdot$};
    \node at (-0,-0.4) {$\cdot$};
    \node at (0.2,-0.4) {$\cdot$};
  \end{tikzpicture}\otimes\begin{tikzpicture}[anchorbase,scale=1.1]\draw[-] (-0.5,-0.2)--(-0.5,0.2)--(0.5,0.2)--(0.5,-0.2)--(-0.5,-0.2);
    \node at (-0,-0) {$\scriptstyle{h}$};
    \draw[-,thick] (-0.4,-0.2)--(-0.4,-0.5);
    \draw[-,thick] (0.4,-0.2)--(0.4,-0.5);
    \draw[-,thick] (-0.4,0.2)--(-0.4,0.5);
    \draw[-,thick] (0.4,0.2)--(0.4,0.5);
    \node at (-0.2,0.35) {$\cdot$};
    \node at (-0,0.35) {$\cdot$};
    \node at (0.2,0.35) {$\cdot$};
    \node at (-0.2,-0.4) {$\cdot$};
    \node at (-0,-0.4) {$\cdot$};
    \node at (0.2,-0.4) {$\cdot$};
  \end{tikzpicture} &\mapsto \begin{tikzpicture}[anchorbase,scale=1.1]\draw[-] (-0.5,-0.2)--(-0.5,0.4)--(0.5,0.4)--(0.5,-0.2)--(-0.5,-0.2);
    \draw[-,thick] (-0.4,-0.2)--(-0.4,-0.5);
    \draw[-,thick] (0.4,-0.2)--(0.4,-0.5);
    \draw[-,thick] (-0.4,0.4)--(-0.4,0.6);
    \draw[-,thick] (0.4,0.4)--(0.4,0.6);
    \draw[-,thick] (-0.5,0.1)--(0.5,0.1);
    \draw[-,thick] (-0.6,0.6) to[out=down,in=160,looseness=1] (-0.4,-0.35);
    \node at (-0.2,0.45) {$\cdot$};
    \node at (-0,0.45) {$\cdot$};
    \node at (0.2,0.45) {$\cdot$};
    \node at (-0.2,-0.4) {$\cdot$};
    \node at (-0,-0.4) {$\cdot$};
    \node at (0.2,-0.4) {$\cdot$};
    \node at (0,0.25) {$\scriptstyle{f}$};
    \node at (0,-0.08) {$\scriptstyle{g}$};
  \end{tikzpicture}\otimes\begin{tikzpicture}[anchorbase,scale=1.1]\draw[-] (-0.5,-0.2)--(-0.5,0.2)--(0.5,0.2)--(0.5,-0.2)--(-0.5,-0.2);
    \node at (-0,-0) {$\scriptstyle{h}$};
    \draw[-,thick] (-0.4,-0.2)--(-0.4,-0.5);
    \draw[-,thick] (0.4,-0.2)--(0.4,-0.5);
    \draw[-,thick] (-0.4,0.2)--(-0.4,0.5);
    \draw[-,thick] (0.4,0.2)--(0.4,0.5);
    \node at (-0.2,0.35) {$\cdot$};
    \node at (-0,0.35) {$\cdot$};
    \node at (0.2,0.35) {$\cdot$};
    \node at (-0.2,-0.4) {$\cdot$};
    \node at (-0,-0.4) {$\cdot$};
    \node at (0.2,-0.4) {$\cdot$};
  \end{tikzpicture} + M_1
\end{align}
for $m\geq 0, n > 0, f \in 1_{m} Par_t 1_{n}$ and $g, h \in S_n$.
Again, this is a well-defined bimodule homomorphism.
By \cref{td}, $N_2$
is isomorphic to $Par^- \otimes_{\KK} \sym 1_{\mid\,\star}\otimes_{\sym}
1_{\mid\,\star}\sym$. Also $\kk S_n$ is free as a right $\kk S_{n-1}$-module
with basis given by
$\left\{(i\:\,i\!+\!1\:\,\cdots\:\,n)\:|\:1 \leq i \leq n\right\}$, which is a set of $S_n / S_{n-1}$-cosets.
It follows that
$N_2$ has basis
\begin{equation}\label{N2basis}
\left\{f \otimes (i\:\,i\!+\!1\:\,\cdots\:\,n) \otimes g\:\big|\:
m \geq n > 0, f \in B_{m,n}, 1 \leq i \leq n, g \in S_n\right\}.
\end{equation}
The vector space $M_2 / M_1$ has a basis given by all $f_2 \otimes g + M_1$
for $m +1\geq n > 0, f_2 \in B_{m+1,n}$ and
$g \in S_n$ such that $c(f_2)$ is an upward tree.
Any such $f_2$ is equal to
 \begin{align*}
f_2=  \begin{tikzpicture}[scale=1.1,anchorbase]
  \draw[-] (-0.5,-0.2)--(-0.5,0.2)--(0.5,0.2)--(0.5,-0.2)--(-0.5,-0.2);
  \node at (-0,-0) {$\scriptstyle{f}$};
  \draw[-,thick] (-0.7,-0.8)--(-0.7,0.5);
  \draw[-,thick] (-0.7,-0.7) to [out=20,in=down,looseness=1] (0,-.2);
  \draw[-,thick] (-0.4,0.2)--(-0.4,0.5);
  \draw[-,thick] (0.4,0.2)--(0.4,0.5);
  \draw[-,thick] (-0.4,-0.2)--(-0.4,-0.8);
  \draw[-,thick] (0.4,-0.2)--(0.4,-0.8);
  \node at (-0.3,-0.3) {$\cdot$};
  \node at (-.2,-0.3) {$\cdot$};
  \node at (.2,-0.3) {$\cdot$};
  \node at (0.3,-0.3) {$\cdot$};
  \node at (-.2,-0.7) {$\cdot$};
  \node at (0,-0.7) {$\cdot$};
  \node at (0.2,-0.7) {$\cdot$};
   \node at (-0.2,0.35) {$\cdot$};
  \node at (-0,0.35) {$\cdot$};
  \node at (0.2,0.35) {$\cdot$};
  \node at (0.05,-.35) {$\stringlabel{i}$};
  \node at (0.5,-.35) {$\stringlabel{1}$};
  \node at (-0.5,-.35) {$\stringlabel{n}$};
  \end{tikzpicture}
 \end{align*}
 for a
 unique $f \in B_{m,n}$ and
 a unique $1 \leq i \leq n$ (the index of the string at which
 the component $c(f_2)$ meets the bottom of $f$).
 Moreover, $f_2 \otimes g = \theta_2(f \otimes (i\:\,i\!+\!1\:\,\cdots\:\,n) \otimes g)$ for each $g \in S_n$. It follows that $\theta_2$ takes a basis for $N_2$
 to a basis for $M_2 / M_1$, so it is an isomorphism.
 
 The isomorphism $\theta_3$ is defined by
\begin{align}\label{theta3}
  \theta_3:N_3 &\rightarrow M_3/M_2,&
\begin{tikzpicture}[anchorbase,scale=1.1]
    \draw[-] (-0.5,-0.2)--(-0.5,0.2)--(0.5,0.2)--(0.5,-0.2)--(-0.5,-0.2);
    \node at (-0,-0) {$\scriptstyle{f}$};
    \draw[-,thick] (-0.4,-0.2)--(-0.4,-0.5);
    \draw[-,thick] (0.4,-0.2)--(0.4,-0.5);
    \draw[-,thick] (-0.4,0.2)--(-0.4,0.5);
    \draw[-,thick] (0.4,0.2)--(0.4,0.5);
    \node at (-0.2,0.35) {$\cdot$};
    \node at (-0,0.35) {$\cdot$};
    \node at (0.2,0.35) {$\cdot$};
    \node at (-0.2,-0.4) {$\cdot$};
    \node at (-0,-0.4) {$\cdot$};
    \node at (0.2,-0.4) {$\cdot$};
  \end{tikzpicture}\otimes \begin{tikzpicture}[anchorbase,scale=1.1]
    \draw[-] (-0.5,-0.2)--(-0.5,0.2)--(0.5,0.2)--(0.5,-0.2)--(-0.5,-0.2);
    \node at (-0,-0) {$\scriptstyle{g}$};
    \draw[-,thick] (-0.4,-0.2)--(-0.4,-0.5);
    \draw[-,thick] (0.4,-0.2)--(0.4,-0.5);
    \draw[-,thick] (-0.4,0.2)--(-0.4,0.5);
    \draw[-,thick] (0.4,0.2)--(0.4,0.5);
    \node at (-0.2,0.35) {$\cdot$};
    \node at (-0,0.35) {$\cdot$};
    \node at (0.2,0.35) {$\cdot$};
    \node at (-0.2,-0.4) {$\cdot$};
    \node at (-0,-0.4) {$\cdot$};
    \node at (0.2,-0.4) {$\cdot$};
  \end{tikzpicture}
&\mapsto
  \begin{tikzpicture}[anchorbase,scale=1.1]
    \draw[-] (-0.5,-0.2)--(-0.5,0.2)--(0.5,0.2)--(0.5,-0.2)--(-0.5,-0.2);
    \node at (-0,-0) {$\scriptstyle{f}$};
    \draw[-,thick] (-0.4,-0.2)--(-0.4,-0.5);
    \draw[-,thick] (0.4,-0.2)--(0.4,-0.5);
    \draw[-,thick] (-0.4,0.2)--(-0.4,0.5);
    \draw[-,thick] (0.4,0.2)--(0.4,0.5);
    \draw[-,thick] (-0.7,0.5)--(-0.7,0.1);
    \node at (-0.2,0.35) {$\cdot$};
    \node at (-0,0.35) {$\cdot$};
    \node at (0.2,0.35) {$\cdot$};
    \node at (-0.2,-0.4) {$\cdot$};
    \node at (-0,-0.4) {$\cdot$};
    \node at (0.2,-0.4) {$\cdot$};
    \node at (-0.7,0.05) {$\dt$};
  \end{tikzpicture}\otimes \begin{tikzpicture}[anchorbase,scale=1.1]
    \draw[-] (-0.5,-0.2)--(-0.5,0.2)--(0.5,0.2)--(0.5,-0.2)--(-0.5,-0.2);
    \node at (-0,-0) {$\scriptstyle{g}$};
    \draw[-,thick] (-0.4,-0.2)--(-0.4,-0.5);
    \draw[-,thick] (0.4,-0.2)--(0.4,-0.5);
    \draw[-,thick] (-0.4,0.2)--(-0.4,0.5);
    \draw[-,thick] (0.4,0.2)--(0.4,0.5);
    \node at (-0.2,0.35) {$\cdot$};
    \node at (-0,0.35) {$\cdot$};
    \node at (0.2,0.35) {$\cdot$};
    \node at (-0.2,-0.4) {$\cdot$};
    \node at (-0,-0.4) {$\cdot$};
    \node at (0.2,-0.4) {$\cdot$};
  \end{tikzpicture}+M_2
\end{align}
for $m \geq 0$, $n \geq 0$, $f \in 1_{m} Par_t 1_n$ and $g \in S_n$.
This is obviously a well-defined bimodule homomorphism.
It is an isomorphism because it takes the
basis \begin{equation}\label{N3basis}
\{f \otimes g\:|\:m \geq n \geq 0,
f \in B_{m,n}, g \in S_n\}
\end{equation}
 for $N_3$ to the basis
 for $M_3/M_2$ consisting of all $f_3 \otimes g + M_2$ for $m+1> n \geq 0,
 f_3 \in B_{m+1,n}$ and $g \in S_n$ such that $c(f_3)$ is an upward leaf.

 Finally, we construct the isomorphism $\theta_4$.
 The vector space $N_4$ has basis
 \begin{equation}\label{N4basis}
 \{f \otimes g\:|\:m-1\geq n \geq 0, f \in B_{m,n+1}, g \in S_{n+1}\}.
 \end{equation}
 We define the linear map
 \begin{align}\label{theta4}
   \theta_4:N_4 &\rightarrow M_4 / M_3,&
\begin{tikzpicture}[anchorbase,scale=1.1]
    \draw[-] (-0.5,-0.2)--(-0.5,0.2)--(0.5,0.2)--(0.5,-0.2)--(-0.5,-0.2);
    \node at (-0,-0) {$\scriptstyle{f}$};
    \draw[-,thick] (-0.4,-0.2)--(-0.4,-0.5);
    \draw[-,thick] (0.4,-0.2)--(0.4,-0.5);
    \draw[-,thick] (-0.4,0.2)--(-0.4,0.5);
    \draw[-,thick] (0.4,0.2)--(0.4,0.5);
    \node at (-0.2,0.35) {$\cdot$};
    \node at (-0,0.35) {$\cdot$};
    \node at (0.2,0.35) {$\cdot$};
    \node at (-0.2,-0.4) {$\cdot$};
    \node at (-0,-0.4) {$\cdot$};
    \node at (0.2,-0.4) {$\cdot$};
  \end{tikzpicture}\otimes \begin{tikzpicture}[anchorbase,scale=1.1]
    \draw[-] (-0.5,-0.2)--(-0.5,0.2)--(0.5,0.2)--(0.5,-0.2)--(-0.5,-0.2);
    \node at (-0,-0) {$\scriptstyle{g}$};
    \draw[-,thick] (-0.4,-0.2)--(-0.4,-0.5);
    \draw[-,thick] (0.4,-0.2)--(0.4,-0.5);
    \draw[-,thick] (-0.4,0.2)--(-0.4,0.5);
    \draw[-,thick] (0.4,0.2)--(0.4,0.5);
    \node at (-0.2,0.35) {$\cdot$};
    \node at (-0,0.35) {$\cdot$};
    \node at (0.2,0.35) {$\cdot$};
    \node at (-0.2,-0.4) {$\cdot$};
    \node at (-0,-0.4) {$\cdot$};
    \node at (0.2,-0.4) {$\cdot$};
  \end{tikzpicture}
&\mapsto
\begin{tikzpicture}[anchorbase,scale=1.1]
  \draw[-] (-0.5,-0.2)--(-0.5,0.2)--(0.5,0.2)--(0.5,-0.2)--(-0.5,-0.2);
  \node at (-0,-0) {$\scriptstyle{f}$};
  \draw[-,thick] (-.9,.5) to [out=down,in=180,looseness=1] (-0.45,-0.6) to [out=0,in=down,looseness=1] (0,-.2);
  \draw[-,thick] (-0.4,0.2)--(-0.4,0.5);
  \draw[-,thick] (0.4,0.2)--(0.4,0.5);
  \draw[-,thick] (-0.4,-0.2)--(-0.4,-0.8);
  \draw[-,thick] (0.4,-0.2)--(0.4,-0.8);
  \node at (-0.25,-0.3) {$\cdot$};
  \node at (-.15,-0.3) {$\cdot$};
  \node at (.2,-0.3) {$\cdot$};
  \node at (0.3,-0.3) {$\cdot$};
  \node at (-.2,-0.7) {$\cdot$};
  \node at (0,-0.7) {$\cdot$};
  \node at (0.2,-0.7) {$\cdot$};
   \node at (-0.2,0.35) {$\cdot$};
  \node at (-0,0.35) {$\cdot$};
  \node at (0.2,0.35) {$\cdot$};
  \node at (0.05,-.35) {$\stringlabel{i}$};
  \node at (0.5,-.35) {$\stringlabel{1}$};
  \node at (-0.6,-.35) {$\stringlabel{n\!+\!1}$};
  \end{tikzpicture}\otimes \begin{tikzpicture}[anchorbase,scale=1.1]
    \draw[-] (-0.5,-0.2)--(-0.5,0.2)--(0.5,0.2)--(0.5,-0.2)--(-0.5,-0.2);
    \node at (-0,-0) {$\scriptstyle{g'}$};
    \draw[-,thick] (-0.4,-0.2)--(-0.4,-0.5);
    \draw[-,thick] (0.4,-0.2)--(0.4,-0.5);
    \draw[-,thick] (-0.4,0.2)--(-0.4,0.5);
    \draw[-,thick] (0.4,0.2)--(0.4,0.5);
    \node at (-0.2,0.35) {$\cdot$};
    \node at (-0,0.35) {$\cdot$};
    \node at (0.2,0.35) {$\cdot$};
    \node at (-0.2,-0.4) {$\cdot$};
    \node at (-0,-0.4) {$\cdot$};
    \node at (0.2,-0.4) {$\cdot$};
\end{tikzpicture} + M_3,
 \end{align}
 where $f \otimes g$ is a vector from the basis for $N_4$ just displayed,
 and $g' \in S_n$ and $1 \leq i \leq n+1$ are defined from the equation
 $g = (i\:\,i\!+\!1\:\,\cdots\:\,n\!+\!1) g'$.
To see that this linear map is actually a bimodule isomorphism, we construct a
bimodule homorphism in the other direction and show that it is a two-sided inverse of $\theta_4$. Consider the map
 \begin{align}\label{phi}
   \phi: M &\rightarrow N_4,
   &
    \begin{tikzpicture}[anchorbase,scale=1.1]
    \draw[-] (-0.5,-0.2)--(-0.5,0.2)--(0.5,0.2)--(0.5,-0.2)--(-0.5,-0.2);
    \node at (-0,-0) {$\scriptstyle{f}$};
    \draw[-,thick] (-0.4,-0.2)--(-0.4,-0.5);
    \draw[-,thick] (0.4,-0.2)--(0.4,-0.5);
    \draw[-,thick] (-0.4,0.2)--(-0.4,0.5);
    \draw[-,thick] (0.4,0.2)--(0.4,0.5);
    \node at (-0.2,0.35) {$\cdot$};
    \node at (-0,0.35) {$\cdot$};
    \node at (0.2,0.35) {$\cdot$};
    \node at (-0.2,-0.4) {$\cdot$};
    \node at (-0,-0.4) {$\cdot$};
    \node at (0.2,-0.4) {$\cdot$};
  \end{tikzpicture}\otimes \begin{tikzpicture}[anchorbase,scale=1.1]
    \draw[-] (-0.5,-0.2)--(-0.5,0.2)--(0.5,0.2)--(0.5,-0.2)--(-0.5,-0.2);
    \node at (-0,-0) {$\scriptstyle{g}$};
    \draw[-,thick] (-0.4,-0.2)--(-0.4,-0.5);
    \draw[-,thick] (0.4,-0.2)--(0.4,-0.5);
    \draw[-,thick] (-0.4,0.2)--(-0.4,0.5);
    \draw[-,thick] (0.4,0.2)--(0.4,0.5);
    \node at (-0.2,0.35) {$\cdot$};
    \node at (-0,0.35) {$\cdot$};
    \node at (0.2,0.35) {$\cdot$};
    \node at (-0.2,-0.4) {$\cdot$};
    \node at (-0,-0.4) {$\cdot$};
    \node at (0.2,-0.4) {$\cdot$};
  \end{tikzpicture}&\mapsto \begin{tikzpicture}[anchorbase,scale=1.1]
    \draw[-] (-0.5,-0.2)--(-0.5,0.2)--(0.5,0.2)--(0.5,-0.2)--(-0.5,-0.2);
    \node at (-0,-0) {$\scriptstyle{f}$};
    \draw[-,thick] (-0.4,-0.2)--(-0.4,-0.5);
    \draw[-,thick] (0.4,-0.2)--(0.4,-0.5);
    \draw[-,thick] (-0.4,0.2)to(-0.4,0.3)to[out=up,in=up,looseness=2](-0.7,0.3)to(-0.7,-0.5);
    \draw[-,thick] (0.4,0.2)--(0.4,0.5);
    \node at (-0.2,0.35) {$\cdot$};
    \node at (-0,0.35) {$\cdot$};
    \node at (0.2,0.35) {$\cdot$};
    \node at (-0.2,-0.4) {$\cdot$};
    \node at (-0,-0.4) {$\cdot$};
    \node at (0.2,-0.4) {$\cdot$};
  \end{tikzpicture}\otimes \begin{tikzpicture}[anchorbase,scale=1.1]
    \draw[-] (-0.5,-0.2)--(-0.5,0.2)--(0.5,0.2)--(0.5,-0.2)--(-0.5,-0.2);
    \node at (-0,-0) {$\scriptstyle{g}$};
    \draw[-,thick] (-0.4,-0.2)--(-0.4,-0.5);
    \draw[-,thick] (0.4,-0.2)--(0.4,-0.5);
    \draw[-,thick] (-0.4,0.2)--(-0.4,0.5);
    \draw[-,thick] (0.4,0.2)--(0.4,0.5);
    \draw[-,thick] (-0.7,-0.5)--(-0.7,0.5);
    \node at (-0.2,0.35) {$\cdot$};
    \node at (-0,0.35) {$\cdot$};
    \node at (0.2,0.35) {$\cdot$};
    \node at (-0.2,-0.4) {$\cdot$};
    \node at (-0,-0.4) {$\cdot$};
    \node at (0.2,-0.4) {$\cdot$};
  \end{tikzpicture}
 \end{align}
 for $m \geq 0, n \geq 0, f \in 1_{m+1} Par_t 1_n$ and $g \in S_n$.
 It is easy to show that this is a well-defined bimodule homomorphism.
 Moreover, $M_3\subseteq \ker\phi$
 since applying $\phi$ to any basis vector
 $f \otimes g \in M_3$ produces a downward leaf, a
 cap or a downward tree which can be pushed across the tensor to act
 as zero on $\infl^\sharp \sym$.
 Hence, $\phi$ induces a homomorphism $\bar\phi:M_4 / M_3\to N_4$.
 It remains to check that $\bar \phi \circ \theta_4$ and $\theta_4 \circ \bar \phi$ are both identity morphisms, which is straightforward.
\end{proof}

In the next two lemmas, we
finally need to make some explicit calculations with the relations involving the left and right dots in the affine partition category. However, we are working now with $\Par_t$, not with $\APar$, so all
string diagrams from now on
should be interpreted as the canonical images of these morphisms in $\APar$
under the functor $p_t:\APar \rightarrow Par_t$ from \cref{pt}.
We will also use the notation from \cref{jmdef} for an open dot on the interior of a string, meaning the canonical image of this morphism in $\ASym$ under the functor
$p:\ASym \rightarrow \Sym$ from \cref{symp}. This is quite different from an open dot at the end of a string!

\begin{lemma}\label{technical}
  Suppose that $m \geq 0, n \geq 0, f \in 1_{m} Par_t 1_n$ and $g \in S_n$.
  \begin{itemize}
  \item[(i)] The following holds in the bimodule $M = 1_{\mid\,\star}Par_t\otimes_{Par^\sharp} \infl^\sharp \sym $ for $i=1,\dots,n$:
\begin{equation*}
  \begin{tikzpicture}[anchorbase]\draw[-] (-.9,0)--(-.9,0.4)--(.9,0.4)--(.9,0)--(-.9,0);
    \draw[-,thick] (-0.8,0)--(-0.8,-1.1);
    \draw[-,thick] (-0.05,0) to[out=down,in=up] (-.2,-.65) to[out=down,in=left] (-.05,-.8) to[out=right,in=down] (.1,-.65) to[out=up,in=down,looseness=1] (-1.2,.2) to (-1.2,.6);
      \draw[-,thick] (-0.8,0.4)--(-0.8,0.6);
      \draw[-,thick] (0.8,0)--(0.8,-1.1);
    \draw[-,thick] (0.8,0.4)--(0.8,0.6);
    \draw[-,thick] (.1,-0.63)--(-.08,-0.63);
    \closeddot{-0.08,-.63};
    \draw[-,thick] (-0.05,-.8)to(-.05,-1.1);
    \node at (-0.3,0.5) {$\cdot$};
    \node at (-0,0.5) {$\cdot$};
    \node at (0.3,0.5) {$\cdot$};
    \node at (-0.4,-0.7) {$\cdot$};
    \node at (-.55,-0.7) {$\cdot$};
    \node at (-0.7,-0.7) {$\cdot$};
    \node at (0.3,-0.7) {$\cdot$};
    \node at (.5,-0.7) {$\cdot$};
    \node at (0.7,-0.7) {$\cdot$};
    \node at (0,0.2) {$\scriptstyle{f}$};
    \node at (-.05,-1.2) {$\stringlabel{i}$};
    \node at (.8,-1.2) {$\stringlabel{1}$};
    \node at (-.8,-1.2) {$\stringlabel{n}$};
  \end{tikzpicture}\otimes\ \begin{tikzpicture}[anchorbase]\draw[-] (-0.5,-0.2)--(-0.5,0.2)--(0.5,0.2)--(0.5,-0.2)--(-0.5,-0.2);
    \node at (-0,-0) {$\scriptstyle{g}$};
    \draw[-,thick] (-0.4,-0.2)--(-0.4,-0.5);
    \draw[-,thick] (0.4,-0.2)--(0.4,-0.5);
    \draw[-,thick] (-0.4,0.2)--(-0.4,0.5);
    \draw[-,thick] (0.4,0.2)--(0.4,0.5);
    \node at (-0.2,0.35) {$\cdot$};
    \node at (-0,0.35) {$\cdot$};
    \node at (0.2,0.35) {$\cdot$};
    \node at (-0.2,-0.4) {$\cdot$};
    \node at (-0,-0.4) {$\cdot$};
    \node at (0.2,-0.4) {$\cdot$};
  \end{tikzpicture} \equiv
 \begin{tikzpicture}[anchorbase]\draw[-] (-0.5,-0.2)--(-0.5,0.2)--(0.5,0.2)--(0.5,-0.2)--(-0.5,-0.2);
    \node at (-0,-0) {$\scriptstyle{f}$};
    \draw[-,thick] (-0.4,-0.2)--(-0.4,-0.5);
    \draw[-,thick] (0.4,-0.2)--(0.4,-0.5);
    \draw[-,thick] (-0.4,0.2)--(-0.4,0.5);
    \draw[-,thick] (0.4,0.2)--(0.4,0.5);
    \draw[-,thick] (-0.7,0.1)--(-0.7,0.5);
    \opendot{-.7,.1};
      \node at (-0.2,0.35) {$\cdot$};
    \node at (-0,0.35) {$\cdot$};
    \node at (0.2,0.35) {$\cdot$};
    \node at (-0.2,-0.4) {$\cdot$};
    \node at (-0,-0.4) {$\cdot$};
    \node at (0.2,-0.4) {$\cdot$};
 \end{tikzpicture}
 \otimes \begin{tikzpicture}[anchorbase]\draw[-] (-0.5,-0.2)--(-0.5,0.2)--(0.5,0.2)--(0.5,-0.2)--(-0.5,-0.2);
    \node at (-0,-0) {$\scriptstyle{g}$};
    \draw[-,thick] (-0.4,-0.2)--(-0.4,-0.5);
    \draw[-,thick] (0.4,-0.2)--(0.4,-0.5);
    \draw[-,thick] (-0.4,0.2)--(-0.4,0.5);
    \draw[-,thick] (0.4,0.2)--(0.4,0.5);
    \node at (-0.2,0.35) {$\cdot$};
    \node at (-0,0.35) {$\cdot$};
    \node at (0.2,0.35) {$\cdot$};
    \node at (-0.2,-0.4) {$\cdot$};
    \node at (-0,-0.4) {$\cdot$};
    \node at (0.2,-0.4) {$\cdot$};
  \end{tikzpicture} \pmod{M_2}.
\end{equation*}
\item[(ii)] The following holds in the bimodule $M$ for $i=0,1,\dots,n$
  (the case $i=0$ is when there are no strings to the right of the dangling dots):
\begin{equation*}
  \begin{tikzpicture}[anchorbase]\draw[-] (-.9,0)--(-.9,0.4)--(.9,0.4)--(.9,0)--(-.9,0);
    \draw[-,thick] (-1.2,.6) to (-1.2,.2) to[out=down,in=up,looseness=1.7] (-.25,-.9);
    \draw[-,thick] (-0.8,0)--(-0.8,-1.1);
    \draw[-,thick] (0.8,0)--(0.8,-1.1);
    \draw[-,thick] (-0.8,0.4)--(-0.8,0.6);
    \draw[-,thick] (0.8,0.4)--(0.8,0.6);
    \opendot{-.25,-.9};
    \draw[-,thick] (-.07,-0.7)--(-.27,-0.7);
    \closeddot{-0.07,-.7};
    \draw[-,thick] (0.1,0)to(0.1,-1.1);
    \node at (-0.3,0.5) {$\cdot$};
    \node at (-0,0.5) {$\cdot$};
    \node at (0.3,0.5) {$\cdot$};
    \node at (-0.4,-0.7) {$\cdot$};
    \node at (-.55,-0.7) {$\cdot$};
    \node at (-0.7,-0.7) {$\cdot$};
    \node at (0.3,-0.7) {$\cdot$};
    \node at (.5,-0.7) {$\cdot$};
    \node at (0.7,-0.7) {$\cdot$};
    \node at (0,0.2) {$\scriptstyle{f}$};
    \node at (.1,-1.2) {$\stringlabel{i}$};
    \node at (.8,-1.2) {$\stringlabel{1}$};
    \node at (-.8,-1.2) {$\stringlabel{n}$};
  \end{tikzpicture}\otimes\ \begin{tikzpicture}[anchorbase]\draw[-] (-0.5,-0.2)--(-0.5,0.2)--(0.5,0.2)--(0.5,-0.2)--(-0.5,-0.2);
    \node at (-0,-0) {$\scriptstyle{g}$};
    \draw[-,thick] (-0.4,-0.2)--(-0.4,-0.5);
    \draw[-,thick] (0.4,-0.2)--(0.4,-0.5);
    \draw[-,thick] (-0.4,0.2)--(-0.4,0.5);
    \draw[-,thick] (0.4,0.2)--(0.4,0.5);
    \node at (-0.2,0.35) {$\cdot$};
    \node at (-0,0.35) {$\cdot$};
    \node at (0.2,0.35) {$\cdot$};
    \node at (-0.2,-0.4) {$\cdot$};
    \node at (-0,-0.4) {$\cdot$};
    \node at (0.2,-0.4) {$\cdot$};
  \end{tikzpicture} \equiv
  (t-i)\ 
 \begin{tikzpicture}[anchorbase]\draw[-] (-0.5,-0.2)--(-0.5,0.2)--(0.5,0.2)--(0.5,-0.2)--(-0.5,-0.2);
    \node at (-0,-0) {$\scriptstyle{f}$};
    \draw[-,thick] (-0.4,-0.2)--(-0.4,-0.5);
    \draw[-,thick] (0.4,-0.2)--(0.4,-0.5);
    \draw[-,thick] (-0.4,0.2)--(-0.4,0.5);
    \draw[-,thick] (0.4,0.2)--(0.4,0.5);
    \draw[-,thick] (-0.7,0.1)--(-0.7,0.5);
    \opendot{-.7,.1};
      \node at (-0.2,0.35) {$\cdot$};
    \node at (-0,0.35) {$\cdot$};
    \node at (0.2,0.35) {$\cdot$};
    \node at (-0.2,-0.4) {$\cdot$};
    \node at (-0,-0.4) {$\cdot$};
    \node at (0.2,-0.4) {$\cdot$};
 \end{tikzpicture}
 \otimes \begin{tikzpicture}[anchorbase]\draw[-] (-0.5,-0.2)--(-0.5,0.2)--(0.5,0.2)--(0.5,-0.2)--(-0.5,-0.2);
    \node at (-0,-0) {$\scriptstyle{g}$};
    \draw[-,thick] (-0.4,-0.2)--(-0.4,-0.5);
    \draw[-,thick] (0.4,-0.2)--(0.4,-0.5);
    \draw[-,thick] (-0.4,0.2)--(-0.4,0.5);
    \draw[-,thick] (0.4,0.2)--(0.4,0.5);
    \node at (-0.2,0.35) {$\cdot$};
    \node at (-0,0.35) {$\cdot$};
    \node at (0.2,0.35) {$\cdot$};
    \node at (-0.2,-0.4) {$\cdot$};
    \node at (-0,-0.4) {$\cdot$};
    \node at (0.2,-0.4) {$\cdot$};
  \end{tikzpicture} \pmod{M_2}.
\end{equation*}
  \end{itemize}
\end{lemma}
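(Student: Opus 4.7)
I will prove both parts of the lemma by diagrammatic manipulation in $\APar$, then pushing forward via $p_t$ to $\Par_t$. The key observation, used repeatedly, is that modulo $M_2$ any basis vector whose top-left connected component is a trunk or an upward tree vanishes (by \cref{filtration}); this allows us to discard many ``error terms'' produced by the dot-sliding relations \cref{A1,A3,august,july,rightdot}. The overall strategy is to move the closed dot on the wrap-around strand (a left dot in part~(i), a right dot in part~(ii)) upward until it reaches the top-left endpoint of the diagram, to straighten the wrap-around via the zig-zag identity \cref{zigzag} together with the Frobenius relations \cref{relsfrob1,relsfrob2}, and finally to apply \cref{cyclotomic,bigjoey1,bigjoey2} to identify the result with the stated RHS.

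For part~(i), I will slide the left dot upward along the wrap-around strand, passing through each merge or cap it encounters. Each such slide uses one of the five-term identities \cref{A1}, \cref{august}, or \cref{july}, expressing the ``expected'' main term plus four correction terms. The key verification is that each correction, when reinserted into the ambient diagram, produces a basis vector whose top-left component is either a trunk or an upward tree --- because the correction always inserts a cup, cap, or extra merge adjacent to the top-left endpoint --- and therefore lies in $M_2$. After the left dot reaches the top of the wrap-around strand, the remaining S-shaped wrap can be straightened by a sequence of zig-zag moves, leaving an upward leaf at the top-left carrying a left dot near its top vertex. By \cref{cyclotomic}, realized in $\Par_t$ via \cref{bigjoey1}, the image of this left dot under $p_t$ coincides with the open dot (Jucys-Murphy element) appearing in the RHS of~(i), which completes the proof.

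For part~(ii), I will slide the right dot upward along the wrap-around strand using \cref{A3} and the dual of \cref{july}, again as five-term identities. The right dot must now be pushed past exactly $i$ bottom strands of $f$ (those to the right of the leaf), and at each crossing, three of the four correction terms land in $M_2$ exactly as in part~(i); however, the fourth correction --- the one that replaces the crossing by an identity on two strands joined by a cup--cap pair --- contributes $-1$ times the RHS diagram. After all $i$ crossings, the accumulated correction equals $-i$ times the RHS. At the final step the right dot is positioned atop the straightened wrap-around (an isolated single-strand leaf), and by \cref{bigjoey1} its image under $p_t$ evaluates as $t$ times the identity on this strand, contributing a further $t$ times the RHS. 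The total coefficient is therefore $t - i$, as required. The principal technical obstacle is the bookkeeping in part~(ii): for each of the $i$ successive crossings, one must inspect which of the four correction terms in \cref{A3} survives modulo $M_2$ and confirm that the survivor contributes precisely $-1$ times the RHS. This is a routine but delicate case analysis of the five-term relations, and it is precisely here that the precise $\APar$-structure --- rather than its image in $\Par_t$ alone --- is essential.
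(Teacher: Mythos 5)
Your high-level strategy --- sliding the dot via the five-term relations and discarding correction terms into $M_2$ --- matches the paper's, which proceeds by induction on $i$, applying \cref{A1} in part (i) or \cref{A3} in part (ii) once per step with the base case handled by \cref{bigjoey1}. However, your execution has one outright conceptual error plus several gaps that would have to be repaired for the proof to go through.

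The most serious issue is the final step of part (i). The open dot on the RHS of (i) sits at the \emph{end} of the string, so it is an upward leaf, not a Jucys--Murphy element --- the paper warns immediately before the lemma that these ``are quite different.'' Your claim that ``the image of this left dot under $p_t$ coincides with the open dot (Jucys--Murphy element) appearing in the RHS'' is wrong in both respects: the RHS contains no JM element, and by \cref{boycott} the image of the left dot under $p_t$ is the cut $(\text{upward leaf})\circ(\text{downward leaf})$, not an interior JM dot. In fact the composition (left dot) $\circ$ (upward leaf) maps under $p_t$ to $t$ times the upward leaf, since the loop contracts to the scalar $T=t$, so the picture you describe as the terminus of your slide would introduce a spurious factor of $t$. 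What actually produces the RHS is that applying $p_t$ to the left dot \emph{cuts} the strand: the top piece is the free upward leaf at position $m+1$, and the bottom piece gets absorbed into the split by the Frobenius unit relation of \cref{relsfrob2}, leaving $f$ unchanged. The paper packages exactly this as the $i=1$ base case of the induction.

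For part (ii), you assert that three of the four corrections land in $M_2$ by producing a trunk or upward tree at the top-left, and the fourth is evaluated by a ``routine but delicate case analysis.'' Neither matches what has to happen. In the paper's argument one correction goes into $M_2$ because its left dot is absorbed into $f$; two more require first commuting the dangling dot \emph{across the tensor} into $\infl^\sharp\sym$ using \cref{interest1} (i.e., evaluating its Harish--Chandra projection on the Specht factor) before the top-left component becomes a trunk or tree; and the last surviving correction is \emph{not} a self-contained computation: after manipulation using \cref{june}, \cref{crazycrossing} and \cref{relsfrob2} it turns out to be precisely the LHS of part (i), so its value ($-1$ times the leaf term) comes only by invoking the already-proved statement (i). Your proposal omits both this logical dependence of (ii) on (i) and the use of \cref{interest1}. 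Finally, even in (i), two of the four remaining corrections are actually \emph{zero} in $M$ (not merely in $M_2$), because they produce a downward tree or branch at the bottom which kills the vector on contact with $\infl^\sharp\sym$; the blanket claim that every correction gives a trunk or upward tree at the top-left is not what happens.
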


\begin{proof}
  (i) We proceed by induction on $i=1,\dots,n$. The base case $i=1$ follows
  from \cref{bigjoey1}.
  For the induction step, we take $i > 1$ and assume the result has been proved for $i-1$. Then we apply \cref{A1} to commute the left dot past the string to its right.
  This produces a sum of five terms.
  Ordering these terms in the same way as they appear on the right hand side of \cref{A1}, the induction hypothesis can be applied to the first term, to produce the right hand side that we are after. It remains to show that the other four terms lies in $M_2$.
These terms are as follows:
\begin{align*}
  \begin{tikzpicture}[anchorbase,scale=.9]\draw[-] (-.9,0)--(-.9,0.4)--(.9,0.4)--(.9,0)--(-.9,0);
\draw[-,thick] (-1.2,.6) to (-1.2,.2) to [out=down,in=120] (0,-.63) to (.2,-1.1);
\draw[-,thick] (-.15,-1.1) to (-.15,-.9) to (0,-.63) to[out=60,in=down] (.2,0);
\draw[-,thick] (-0.8,0)--(-0.8,-1.1);
\draw[-,thick] (-.15,-.9) to [out=120,in=down] (-.1,0);
  \draw[-,thick] (0,-0.63)--(.2,-0.63);
    \closeddot{0.2,-.63};
      \draw[-,thick] (0.8,0)--(0.8,-1.1);
    \node at (-0.3,0.5) {$\cdot$};
    \node at (-0,0.5) {$\cdot$};
    \node at (0.3,0.5) {$\cdot$};
    \node at (-0.4,-0.7) {$\cdot$};
    \node at (-.55,-0.7) {$\cdot$};
    \node at (-0.7,-0.7) {$\cdot$};
    \node at (0.35,-0.7) {$\cdot$};
    \node at (.5,-0.7) {$\cdot$};
    \node at (0.65,-0.7) {$\cdot$};
    \node at (0,0.2) {$\scriptstyle{f}$};
    \node at (-.2,-1.2) {$\stringlabel{i}$};
    \node at (.8,-1.2) {$\stringlabel{1}$};
    \node at (-.8,-1.2) {$\stringlabel{n}$};
  \end{tikzpicture}\otimes\ \begin{tikzpicture}[anchorbase,scale=.9]\draw[-] (-0.5,-0.2)--(-0.5,0.2)--(0.5,0.2)--(0.5,-0.2)--(-0.5,-0.2);
    \node at (-0,-0) {$\scriptstyle{g}$};
    \draw[-,thick] (-0.4,-0.2)--(-0.4,-0.5);
    \draw[-,thick] (0.4,-0.2)--(0.4,-0.5);
    \draw[-,thick] (-0.4,0.2)--(-0.4,0.5);
    \draw[-,thick] (0.4,0.2)--(0.4,0.5);
    \node at (-0.2,0.35) {$\cdot$};
    \node at (-0,0.35) {$\cdot$};
    \node at (0.2,0.35) {$\cdot$};
    \node at (-0.2,-0.4) {$\cdot$};
    \node at (-0,-0.4) {$\cdot$};
    \node at (0.2,-0.4) {$\cdot$};
  \end{tikzpicture}
+  \begin{tikzpicture}[anchorbase,scale=.9]\draw[-] (-.9,0)--(-.9,0.4)--(.9,0.4)--(.9,0)--(-.9,0);
  \draw[-,thick] (-0.8,0)--(-0.8,-1.1);
  \draw[-,thick] (-.15,0) to (-.15,-1.1);
  \draw[-,thick] (.2,0) to (.2,-1.1);
  \draw[-,thick] (-1.2,.6) to (-1.2,.2) to [out=down,in=140] (0.2,-.5);
  \draw[-,thick] (.2,-.77) to (-.15,-1);
  \draw[-,thick] (0,-0.63)--(.2,-0.63);
    \closeddot{0,-.63};
      \draw[-,thick] (-0.8,0.4)--(-0.8,0.6);
      \draw[-,thick] (0.8,0)--(0.8,-1.1);
    \draw[-,thick] (0.8,0.4)--(0.8,0.6);
    \node at (-0.3,0.5) {$\cdot$};
    \node at (-0,0.5) {$\cdot$};
    \node at (0.3,0.5) {$\cdot$};
    \node at (-0.3,-0.7) {$\cdot$};
    \node at (-.5,-0.7) {$\cdot$};
    \node at (-0.7,-0.7) {$\cdot$};
    \node at (0.35,-0.7) {$\cdot$};
    \node at (.5,-0.7) {$\cdot$};
    \node at (0.65,-0.7) {$\cdot$};
    \node at (0,0.2) {$\scriptstyle{f}$};
    \node at (-.15,-1.2) {$\stringlabel{i}$};
    \node at (.8,-1.2) {$\stringlabel{1}$};
    \node at (-.8,-1.2) {$\stringlabel{n}$};
  \end{tikzpicture}\otimes\ \begin{tikzpicture}[anchorbase,scale=.9]\draw[-] (-0.5,-0.2)--(-0.5,0.2)--(0.5,0.2)--(0.5,-0.2)--(-0.5,-0.2);
    \node at (-0,-0) {$\scriptstyle{g}$};
    \draw[-,thick] (-0.4,-0.2)--(-0.4,-0.5);
    \draw[-,thick] (0.4,-0.2)--(0.4,-0.5);
    \draw[-,thick] (-0.4,0.2)--(-0.4,0.5);
    \draw[-,thick] (0.4,0.2)--(0.4,0.5);
    \node at (-0.2,0.35) {$\cdot$};
    \node at (-0,0.35) {$\cdot$};
    \node at (0.2,0.35) {$\cdot$};
    \node at (-0.2,-0.4) {$\cdot$};
    \node at (-0,-0.4) {$\cdot$};
    \node at (0.2,-0.4) {$\cdot$};
  \end{tikzpicture}
\ -  \begin{tikzpicture}[anchorbase,scale=.9]\draw[-] (-.9,0)--(-.9,0.4)--(.9,0.4)--(.9,0)--(-.9,0);
    \draw[-,thick] (-0.8,0)--(-0.8,-1.1);
    \draw[-,thick] (0.3,0) to[out=down,in=up] (-.02,-.65) to[out=down,in=left] (.145,-.8) to[out=right,in=down] (.3,-.65) to[out=up,in=down,looseness=1] (-1.2,.2) to (-1.2,.6);
      \draw[-,thick] (-0.8,0.4)--(-0.8,0.6);
      \draw[-,thick] (0.8,0)--(0.8,-1.1);
    \draw[-,thick] (0.8,0.4)--(0.8,0.6);
    \draw[-,thick] (.3,-0.63)--(.12,-0.63);
    \closeddot{0.12,-.63};
    \draw[-,thick] (0.13,-.8)to(.13,-.9) to (.3,-1.1);
    \draw[-,thick] (.13,-.9) to (-.05,-1.1);
\draw[-,thick] (-.15,0) to [out=-90,in=120,looseness=1] (0.02,-1.02);
    \node at (-0.3,0.5) {$\cdot$};
    \node at (-0,0.5) {$\cdot$};
    \node at (0.3,0.5) {$\cdot$};
    \node at (-0.25,-0.7) {$\cdot$};
    \node at (-.45,-0.7) {$\cdot$};
    \node at (-0.65,-0.7) {$\cdot$};
    \node at (0.4,-0.7) {$\cdot$};
    \node at (.55,-0.7) {$\cdot$};
    \node at (0.7,-0.7) {$\cdot$};
    \node at (0,0.2) {$\scriptstyle{f}$};
    \node at (-.05,-1.2) {$\stringlabel{i}$};
    \node at (.8,-1.2) {$\stringlabel{1}$};
    \node at (-.8,-1.2) {$\stringlabel{n}$};
  \end{tikzpicture}\otimes\ \begin{tikzpicture}[anchorbase,scale=.9]\draw[-] (-0.5,-0.2)--(-0.5,0.2)--(0.5,0.2)--(0.5,-0.2)--(-0.5,-0.2);
    \node at (-0,-0) {$\scriptstyle{g}$};
    \draw[-,thick] (-0.4,-0.2)--(-0.4,-0.5);
    \draw[-,thick] (0.4,-0.2)--(0.4,-0.5);
    \draw[-,thick] (-0.4,0.2)--(-0.4,0.5);
    \draw[-,thick] (0.4,0.2)--(0.4,0.5);
    \node at (-0.2,0.35) {$\cdot$};
    \node at (-0,0.35) {$\cdot$};
    \node at (0.2,0.35) {$\cdot$};
    \node at (-0.2,-0.4) {$\cdot$};
    \node at (-0,-0.4) {$\cdot$};
    \node at (0.2,-0.4) {$\cdot$};
  \end{tikzpicture}
\ -  \begin{tikzpicture}[anchorbase,scale=.9]\draw[-] (-.9,0)--(-.9,0.4)--(.9,0.4)--(.9,0)--(-.9,0);
    \draw[-,thick] (-0.8,0)--(-0.8,-1.1);
      \draw[-,thick] (-0.8,0.4)--(-0.8,0.6);
      \draw[-,thick] (0.8,0)--(0.8,-1.1);
      \draw[-,thick] (-0.2,0) to [out=down,in=130,looseness=1.2] (-0.08,-1.05);
    \draw[-,thick] (0.8,0.4)--(0.8,0.6);
    \draw[-,thick] (.25,-0.7)--(.05,-0.7);
    \closeddot{0.05,-.7};
    \draw[-,thick] (-1.2,.6) to (-1.2,.2) to [out=down,in=135] (.075,-.4) to (.3,0);
    \draw[-,thick] (-.15,-1.1) to[out=30,in=down,looseness=1] (.25,-.75) to [out=up,in=0,looseness=1.5] (.075,-.52) to [out=left,in=up,looseness=1.5] (-.1,-.75) to [out=down,in=150,looseness=1] (.3,-1.1);
\draw[-,thick] (.075,-.52) to (.075,-.4);
    \node at (-0.3,0.5) {$\cdot$};
    \node at (-0,0.5) {$\cdot$};
    \node at (0.3,0.5) {$\cdot$};
    \node at (-0.35,-0.7) {$\cdot$};
    \node at (-.525,-0.7) {$\cdot$};
    \node at (-0.7,-0.7) {$\cdot$};
    \node at (0.4,-0.7) {$\cdot$};
    \node at (.55,-0.7) {$\cdot$};
    \node at (0.7,-0.7) {$\cdot$};
    \node at (0,0.2) {$\scriptstyle{f}$};
    \node at (-.15,-1.2) {$\stringlabel{i}$};
    \node at (.8,-1.2) {$\stringlabel{1}$};
    \node at (-.8,-1.2) {$\stringlabel{n}$};
  \end{tikzpicture}\otimes\ \begin{tikzpicture}[anchorbase,scale=.9]\draw[-] (-0.5,-0.2)--(-0.5,0.2)--(0.5,0.2)--(0.5,-0.2)--(-0.5,-0.2);
    \node at (-0,-0) {$\scriptstyle{g}$};
    \draw[-,thick] (-0.4,-0.2)--(-0.4,-0.5);
    \draw[-,thick] (0.4,-0.2)--(0.4,-0.5);
    \draw[-,thick] (-0.4,0.2)--(-0.4,0.5);
    \draw[-,thick] (0.4,0.2)--(0.4,0.5);
    \node at (-0.2,0.35) {$\cdot$};
    \node at (-0,0.35) {$\cdot$};
    \node at (0.2,0.35) {$\cdot$};
    \node at (-0.2,-0.4) {$\cdot$};
    \node at (-0,-0.4) {$\cdot$};
    \node at (0.2,-0.4) {$\cdot$};
  \end{tikzpicture}\ .
\end{align*}
The second and third terms here are zero already in $M$ because,
in both of them, the diagram to the left of the tensor is
equivalent to a diagram with a downward tree at the bottom.
It remains to show that the first and the fourth terms lie in $M_2$.
For the fourth term, we note that
$$
 \begin{tikzpicture}[anchorbase,scale=.9]\draw[-] (-.9,0)--(-.9,0.4)--(.9,0.4)--(.9,0)--(-.9,0);
    \draw[-,thick] (-0.8,0)--(-0.8,-1.1);
      \draw[-,thick] (-0.8,0.4)--(-0.8,0.6);
      \draw[-,thick] (0.8,0)--(0.8,-1.1);
      \draw[-,thick] (-0.2,0) to [out=down,in=130,looseness=1.2] (-0.08,-1.05);
    \draw[-,thick] (0.8,0.4)--(0.8,0.6);
    \draw[-,thick] (.25,-0.7)--(.05,-0.7);
    \closeddot{0.05,-.7};
    \draw[-,thick] (-1.2,.6) to (-1.2,.2) to [out=down,in=135] (.075,-.4) to (.3,0);
    \draw[-,thick] (-.15,-1.1) to[out=30,in=down,looseness=1] (.25,-.75) to [out=up,in=0,looseness=1.5] (.075,-.52) to [out=left,in=up,looseness=1.5] (-.1,-.75) to [out=down,in=150,looseness=1] (.3,-1.1);
\draw[-,thick] (.075,-.52) to (.075,-.4);
    \node at (-0.3,0.5) {$\cdot$};
    \node at (-0,0.5) {$\cdot$};
    \node at (0.3,0.5) {$\cdot$};
    \node at (-0.35,-0.7) {$\cdot$};
    \node at (-.525,-0.7) {$\cdot$};
    \node at (-0.7,-0.7) {$\cdot$};
    \node at (0.4,-0.7) {$\cdot$};
    \node at (.55,-0.7) {$\cdot$};
    \node at (0.7,-0.7) {$\cdot$};
    \node at (0,0.2) {$\scriptstyle{f}$};
    \node at (-.15,-1.2) {$\stringlabel{i}$};
    \node at (.8,-1.2) {$\stringlabel{1}$};
    \node at (-.8,-1.2) {$\stringlabel{n}$};
 \end{tikzpicture}
 =
  \begin{tikzpicture}[anchorbase,scale=.9]\draw[-] (-.9,0)--(-.9,0.4)--(.9,0.4)--(.9,0)--(-.9,0);
    \draw[-,thick] (-0.8,0)--(-0.8,-1.1);
      \draw[-,thick] (-0.8,0.4)--(-0.8,0.6);
      \draw[-,thick] (0.8,0)--(0.8,-1.1);
      \draw[-,thick] (-0.2,0) to [out=down,in=130,looseness=1.2] (-0.08,-.83);
    \draw[-,thick] (0.8,0.4)--(0.8,0.6);
    \draw[-,thick] (.25,-0.5)--(.05,-0.5);
    \closeddot{0.05,-.5};
    \draw[-,thick] (-1.2,.6) to (-1.2,.2) to [out=down,in=up] (-.2,-1.1);
    \draw[-,thick] (-.21,-1) to [out=50,in=-120] (-.15,-.9) to[out=60,in=down,looseness=1] (.25,-.55) to [out=up,in=0,looseness=1.5] (.075,-.32) to [out=left,in=up,looseness=1.5] (-.1,-.55) to [out=down,in=120,looseness=1] (.3,-.9) to [out=-60,in=up,looseness=1] (.3,-1.1);
\draw[-,thick] (.075,-.32) to (.075,0);
    \node at (-0.3,0.5) {$\cdot$};
    \node at (-0,0.5) {$\cdot$};
    \node at (0.3,0.5) {$\cdot$};
    \node at (-0.35,-0.9) {$\cdot$};
    \node at (-.525,-0.9) {$\cdot$};
    \node at (-0.7,-0.9) {$\cdot$};
    \node at (0.4,-0.7) {$\cdot$};
    \node at (.55,-0.7) {$\cdot$};
    \node at (0.7,-0.7) {$\cdot$};
    \node at (0,0.2) {$\scriptstyle{f}$};
    \node at (-.15,-1.2) {$\stringlabel{i}$};
    \node at (.8,-1.2) {$\stringlabel{1}$};
    \node at (-.8,-1.2) {$\stringlabel{n}$};
  \end{tikzpicture}\ .
  $$
  The left dot can now be absorbed into the morphism $f$, changing it to some other morphism $f'$. The result is a linear combination of morphisms in all of which the top left vertex is connected to the bottom edge, so that the connected component containing this vertex is either a tree or a trunk, and it belongs to the sub-bimodule $M_2$.
  The reason the first term lies in $M_2$ is very similar, one just needs to rewrite
  the right crossing using \cref{crazycrossing}, and then it is easy to see that the
top left vertex is again connected to the bottom edge.

\vspace{1mm}
  \noindent(ii)
  Again we proceed by induction.
  The  base case $i=0$ follows
  from \cref{bigjoey1} using that $T = t1_\one$.
  For the induction step, we consider some $i > 0$. Then we apply \cref{A3} to commute the right dot past the string to its right. This produces a sum of five terms.
  This time, the induction hypothesis can be applied to the first term, to
  produce the vector that we are after but scaled by $(t-i+1)$ rather than
  the desired $(t-i)$. The remaining four terms are as follows:
  \begin{align*}
  \begin{tikzpicture}[anchorbase,scale=.9]\draw[-] (-.9,0)--(-.9,0.4)--(.9,0.4)--(.9,0)--(-.9,0);
    \draw[-,thick] (-0.8,0)--(-0.8,-1.1);
    \draw[-,thick] (0,-1.1) to (0,-.5) to[out=135,in=down,looseness=1] (-1.2,.2) to (-1.2,.6);
    \draw[-,thick] (0,-.5) to [out=45,in=down,looseness=1] (0.15,0);
      \draw[-,thick] (-0.8,0.4)--(-0.8,0.6);
      \draw[-,thick] (0.8,0)--(0.8,-1.1);
    \draw[-,thick] (0.8,0.4)--(0.8,0.6);
    \draw[-,thick] (0.15,-0.23)--(-.05,-0.23);
    \closeddot{-0.05,-.23};
    \node at (-0.3,0.5) {$\cdot$};
    \node at (-0,0.5) {$\cdot$};
    \node at (0.3,0.5) {$\cdot$};
    \node at (-0.4,-0.7) {$\cdot$};
    \node at (-.55,-0.7) {$\cdot$};
    \node at (-0.7,-0.7) {$\cdot$};
    \node at (0.3,-0.7) {$\cdot$};
    \node at (.5,-0.7) {$\cdot$};
    \node at (0.7,-0.7) {$\cdot$};
    \node at (0,0.2) {$\scriptstyle{f}$};
    \node at (0,-1.2) {$\stringlabel{i}$};
    \node at (.8,-1.2) {$\stringlabel{1}$};
    \node at (-.8,-1.2) {$\stringlabel{n}$};
  \end{tikzpicture}\otimes\ \begin{tikzpicture}[anchorbase]\draw[-] (-0.5,-0.2)--(-0.5,0.2)--(0.5,0.2)--(0.5,-0.2)--(-0.5,-0.2);
    \node at (-0,-0) {$\scriptstyle{g}$};
    \draw[-,thick] (-0.4,-0.2)--(-0.4,-0.5);
    \draw[-,thick] (0.4,-0.2)--(0.4,-0.5);
    \draw[-,thick] (-0.4,0.2)--(-0.4,0.5);
    \draw[-,thick] (0.4,0.2)--(0.4,0.5);
    \node at (-0.2,0.35) {$\cdot$};
    \node at (-0,0.35) {$\cdot$};
    \node at (0.2,0.35) {$\cdot$};
    \node at (-0.2,-0.4) {$\cdot$};
    \node at (-0,-0.4) {$\cdot$};
    \node at (0.2,-0.4) {$\cdot$};
  \end{tikzpicture}+
    \begin{tikzpicture}[anchorbase,scale=.9]\draw[-] (-.9,0)--(-.9,0.4)--(.9,0.4)--(.9,0)--(-.9,0);
    \draw[-,thick] (-0.8,0)--(-0.8,-1.1);
    \draw[-,thick] (0,-1.1) to (0,-.5) to[out=135,in=down,looseness=1] (-1.2,.2) to (-1.2,.6);
    \draw[-,thick] (0,-.5) to [out=45,in=down,looseness=1] (0.15,0);
      \draw[-,thick] (-0.8,0.4)--(-0.8,0.6);
      \draw[-,thick] (0.8,0)--(0.8,-1.1);
    \draw[-,thick] (0.8,0.4)--(0.8,0.6);
    \draw[-,thick] (0,-0.7)--(-.2,-0.7);
    \closeddot{-0.2,-.7};
    \node at (-0.3,0.5) {$\cdot$};
    \node at (-0,0.5) {$\cdot$};
    \node at (0.3,0.5) {$\cdot$};
    \node at (-0.4,-0.7) {$\cdot$};
    \node at (-.55,-0.7) {$\cdot$};
    \node at (-0.7,-0.7) {$\cdot$};
    \node at (0.3,-0.7) {$\cdot$};
    \node at (.5,-0.7) {$\cdot$};
    \node at (0.7,-0.7) {$\cdot$};
    \node at (0,0.2) {$\scriptstyle{f}$};
    \node at (0,-1.2) {$\stringlabel{i}$};
    \node at (.8,-1.2) {$\stringlabel{1}$};
    \node at (-.8,-1.2) {$\stringlabel{n}$};
  \end{tikzpicture}\otimes\ \begin{tikzpicture}[anchorbase]\draw[-] (-0.5,-0.2)--(-0.5,0.2)--(0.5,0.2)--(0.5,-0.2)--(-0.5,-0.2);
    \node at (-0,-0) {$\scriptstyle{g}$};
    \draw[-,thick] (-0.4,-0.2)--(-0.4,-0.5);
    \draw[-,thick] (0.4,-0.2)--(0.4,-0.5);
    \draw[-,thick] (-0.4,0.2)--(-0.4,0.5);
    \draw[-,thick] (0.4,0.2)--(0.4,0.5);
    \node at (-0.2,0.35) {$\cdot$};
    \node at (-0,0.35) {$\cdot$};
    \node at (0.2,0.35) {$\cdot$};
    \node at (-0.2,-0.4) {$\cdot$};
    \node at (-0,-0.4) {$\cdot$};
    \node at (0.2,-0.4) {$\cdot$};
    \end{tikzpicture}-
        \begin{tikzpicture}[anchorbase,scale=.9]\draw[-] (-.9,0)--(-.9,0.4)--(.9,0.4)--(.9,0)--(-.9,0);
    \draw[-,thick] (-0.8,0)--(-0.8,-1.1);
    \draw[-,thick] (0,-1.1) to (0,-.5) to[out=135,in=down,looseness=1] (-1.2,.2) to (-1.2,.6);
    \draw[-,thick] (0,-.5) to [out=45,in=down,looseness=1] (0.15,0);
      \draw[-,thick] (-0.8,0.4)--(-0.8,0.6);
      \draw[-,thick] (0.8,0)--(0.8,-1.1);
    \draw[-,thick] (0.8,0.4)--(0.8,0.6);
    \draw[-,thick] (0,-0.7)--(.2,-0.7);
    \closeddot{0.2,-.7};
    \node at (-0.3,0.5) {$\cdot$};
    \node at (-0,0.5) {$\cdot$};
    \node at (0.3,0.5) {$\cdot$};
    \node at (-0.25,-0.7) {$\cdot$};
    \node at (-.45,-0.7) {$\cdot$};
    \node at (-0.65,-0.7) {$\cdot$};
    \node at (0.4,-0.7) {$\cdot$};
    \node at (.55,-0.7) {$\cdot$};
    \node at (0.7,-0.7) {$\cdot$};
    \node at (0,0.2) {$\scriptstyle{f}$};
    \node at (0,-1.2) {$\stringlabel{i}$};
    \node at (.8,-1.2) {$\stringlabel{1}$};
    \node at (-.8,-1.2) {$\stringlabel{n}$};
  \end{tikzpicture}\otimes\ \begin{tikzpicture}[anchorbase]\draw[-] (-0.5,-0.2)--(-0.5,0.2)--(0.5,0.2)--(0.5,-0.2)--(-0.5,-0.2);
    \node at (-0,-0) {$\scriptstyle{g}$};
    \draw[-,thick] (-0.4,-0.2)--(-0.4,-0.5);
    \draw[-,thick] (0.4,-0.2)--(0.4,-0.5);
    \draw[-,thick] (-0.4,0.2)--(-0.4,0.5);
    \draw[-,thick] (0.4,0.2)--(0.4,0.5);
    \node at (-0.2,0.35) {$\cdot$};
    \node at (-0,0.35) {$\cdot$};
    \node at (0.2,0.35) {$\cdot$};
    \node at (-0.2,-0.4) {$\cdot$};
    \node at (-0,-0.4) {$\cdot$};
    \node at (0.2,-0.4) {$\cdot$};
  \end{tikzpicture}
-        \begin{tikzpicture}[anchorbase,scale=.9]\draw[-] (-.9,0)--(-.9,0.4)--(.9,0.4)--(.9,0)--(-.9,0);
    \draw[-,thick] (-0.8,0)--(-0.8,-1.1);
    \draw[-,thick] (.2,-1.1) to (0,-.5) to[out=110,in=down,looseness=1] (-1.2,.2) to [out=up,in=down] (-1.2,.6);
    \draw[-,thick] (-.15,-.9) to[out=70,in=-130,looseness=0] (0,-.5) to [out=70,in=down,looseness=1] (0.13,0);
\opendot{-.15,-.9};
    \draw[-,thick] (-0.8,0.4)--(-0.8,0.6);
      \draw[-,thick] (0.8,0)--(0.8,-1.1);
    \draw[-,thick] (0.8,0.4)--(0.8,0.6);
    \draw[-,thick] (0,-0.5)--(-.25,-0.5);
    \closeddot{-0.25,-.5};
    \node at (-0.3,0.5) {$\cdot$};
    \node at (-0,0.5) {$\cdot$};
    \node at (0.3,0.5) {$\cdot$};
    \node at (-0.25,-0.7) {$\cdot$};
    \node at (-.45,-0.7) {$\cdot$};
    \node at (-0.65,-0.7) {$\cdot$};
    \node at (0.4,-0.7) {$\cdot$};
    \node at (.55,-0.7) {$\cdot$};
    \node at (0.7,-0.7) {$\cdot$};
    \node at (0,0.2) {$\scriptstyle{f}$};
    \node at (0.2,-1.2) {$\stringlabel{i}$};
    \node at (.8,-1.2) {$\stringlabel{1}$};
    \node at (-.8,-1.2) {$\stringlabel{n}$};
  \end{tikzpicture}\otimes\ \begin{tikzpicture}[anchorbase]\draw[-] (-0.5,-0.2)--(-0.5,0.2)--(0.5,0.2)--(0.5,-0.2)--(-0.5,-0.2);
    \node at (-0,-0) {$\scriptstyle{g}$};
    \draw[-,thick] (-0.4,-0.2)--(-0.4,-0.5);
    \draw[-,thick] (0.4,-0.2)--(0.4,-0.5);
    \draw[-,thick] (-0.4,0.2)--(-0.4,0.5);
    \draw[-,thick] (0.4,0.2)--(0.4,0.5);
    \node at (-0.2,0.35) {$\cdot$};
    \node at (-0,0.35) {$\cdot$};
    \node at (0.2,0.35) {$\cdot$};
    \node at (-0.2,-0.4) {$\cdot$};
    \node at (-0,-0.4) {$\cdot$};
    \node at (0.2,-0.4) {$\cdot$};
  \end{tikzpicture}\ .
  \end{align*}
  In the first term here, the left dot is some morphism in $Par_t$, which has the effect of changing $f$ to some other morphism $f'$.
  After doing that, it is clear that the top left vertex is still connected to the bottom edge, so the first term lies in $M_2$. For the second and third terms, the left and right dots can be commuted across the tensor using \cref{interest1}, then again we see that these morphisms lie in $M_2$ since the top left vertex
  is connected to the bottom edge again.
  For the final term, we note
   that
 $$
 \begin{tikzpicture}[anchorbase]
\draw[-,thick](-0.125,-0.25)--(0.25,0.5);
\draw[-,thick](0.25,-0.5)--(-0.25,0.5);
\draw[-,thick](0,0)--(-0.25,0);
\opendot{-.125,-.25};
\closeddot{-0.25,0};
 \end{tikzpicture}
 \stackrel{\cref{june}}{=} \begin{tikzpicture}[anchorbase]
   \draw[-,thick] (-.25,.5) to [out=-90,in=90] (.25,0.15) to [out=-90,in=60] (-.175,-.35);
   \draw[-,thick] (.25,.5) to [out=-90,in=90] (-.25,0.15) to [out=-90,in=90] (.3,-.5);
\draw[-,thick] (0.02,-.165) to (.25,-.165);
   \opendot{-.175,-.35};
\closeddot{0.25,-.165};
  \end{tikzpicture}
 \stackrel{\cref{crazycrossing}}{=}
 \begin{tikzpicture}[anchorbase]
   \draw[-,thick] (-.25,.5) to [out=-90,in=90] (.25,0.15) to [out=-90,in=60] (-.175,-.35);
   \draw[-,thick] (.25,.5) to [out=-90,in=90] (-.25,0.15) to [out=-90,in=90] (.3,-.5);
\draw[-,thick] (0.43,-.165) to (.2,-.165);
\draw[-,thick] (0.24,0.05) to[out=-30,in=30,looseness=1.75] (0.26,-.38);
\opendot{-.175,-.35};
\closeddot{0.2,-.165};
  \end{tikzpicture}
\,\: \substack{\cref{relssym2}\\{\displaystyle =}\\\cref{relsfrob2}}\:\,
 \begin{tikzpicture}[anchorbase]
\draw[-,thick] (-.25,0.5) to[out=-80,in=up] (.3,0) to [out=down,in=0,looseness=1] (0,-0.25) to [out=180,in=down,looseness=1] (-.3,0) to [out=up,in=-100] (.25,.5);
\draw[-,thick] (.3,0)--(0.03,0);
\draw[-,thick] (0,-.5)--(0,-.25);
\closeddot{0.03,0};
\end{tikzpicture}\ .
 $$
 Making this substitution in the middle of the picture reveals that the final term is exactly the expression studied in (i). On applying the conclusion of (i), we deduce that it  contributes exactly the needed correction to complete the proof.
  \end{proof}

\begin{lemma}\label{xyz}
  Consider the bimodule endomorphisms $\rho$ and $\lambda$
  of $M$ defined on
   $1_{m+1} Par_t 1_n \otimes \kk S_n$ by the left action of
   $x_{m+1}^R \otimes 1_n$ and $x_{m+1}^L\otimes 1_n$, respectively,
  for each $m,n \geq 0$.
  These endomorphisms preserve each of the sub-bimodules $M_i\:(i=1,2,3,4)$,
  hence, $\rho$ and $\lambda$ induce endomorphisms also denoted $\rho$ and $\lambda$
  of each of the subquotients
  $M_i / M_{i-1}$.
  Moreover, for each $i$, the isomorphism $\theta_i$
  from \cref{filtration} satisfies
  \begin{align}\label{maincommutationfact}
    \theta_i\circ \rho_i &= \rho \circ \theta_i,&
    \theta_i\circ \lambda_i &= \lambda \circ \theta_i,
  \end{align}
  where $\rho_i, \lambda_i:N_i \rightarrow N_i$ are defined as follows:
  \begin{itemize}
  \item[(i)]
    $\rho_1$ and $\lambda_1$ are the bimodule endomorphisms of $N_1$
    defined on the subspace $1_{m} Par_t 1_{n-1} \otimes \kk S_n$
    by the left actions of $(t-n+1) 1_{m} \otimes 1_n$
  and  $1_{m} \otimes x_n$,
    respectively, for each $m\geq 0, n > 0$.
  \item[(ii)] $\rho_2$ and $\lambda_2$ are the bimodule endomorphisms of $N_2$
    defined on $1_{m} Par_t 1_n \otimes \kk S_n \otimes \kk S_n$
    by the right action of
    $1_{n} \otimes x_n\otimes 1_n$ and the left action of
    $1_{m} \otimes 1_n \otimes x_n$, respectively.
  \item[(iii)] $\rho_3$ and $\lambda_3$ are both equal to the bimodule endomorphism
    of $N_3$
    defined on $1_{m} Par_t 1_n \otimes \kk S_n$
    by multiplication by $(t-n)$.
     \item[(iv)] $\rho_4$ and $\lambda_4$ are the bimodule endomorphisms of $N_4$
    defined on $1_{m} Par_t 1_{n+1} \otimes \kk S_{n+1}$
    by the right actions of  $1_{n+1} \otimes x_{n+1}$ and $(t-n) 1_{n+1} \otimes 1_{n+1}$,
    respectively.
    \end{itemize}
   \end{lemma}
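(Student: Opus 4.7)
\smallskip

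The plan is to verify each of the four commutation relations (i)--(iv) directly, using the technical lemma \cref{technical} as the main computational tool. First I will establish that $\rho$ and $\lambda$ preserve each $M_i$: on a basis vector $f \otimes g$ (in the sense of \cref{loud}), applying a left or right dot at the top-left vertex can only modify the connected component $c(f)$ in a controlled way, and the modifications that change the type of $c(f)$ (e.g., that turn an upward tree into an upward branch) always produce terms already lying in $M_{i-1}$. With this stability in hand, I will identify the induced action on each subquotient by transporting along $\theta_i$ and comparing to $\rho_i, \lambda_i$.

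For case (i), basis vectors in the image of $\theta_1$ have $c(f)$ equal to a trunk from the top-left vertex down to the leftmost bottom position, which (in the context of $M$) is position $n$ from the right. Placing a right dot on top of this trunk is precisely the situation of \cref{technical}(ii) with $i = n-1$ strings on the right of the trunk's bottom endpoint (since the image of $\theta_1$ has $n-1$ bottom strings emerging from $f$), yielding the scalar $(t-n+1)$; placing a left dot on top of the trunk is handled by \cref{technical}(i) with the same $i$, which converts the dot into an internal open dot on the $n$-th bottom string; this open dot, under the functor $p : \ASym \to \Sym$ from \cref{symp}, becomes left multiplication by the Jucys-Murphy element $x_n$ on the $\kk S_n$ factor, matching the claimed formula for $\lambda_1$. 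For case (iii), the component $c(f)$ is an upward leaf and the dots can be absorbed into the leaf; after accounting for the $n$ bottom strings present (passing through $f$ and $g$), both \cref{technical}(i) and (ii) reduce the calculation to a scalar of $(t-n)$. For case (iv), the branch from the top-left to some bottom position $i$ in the image of $\theta_4$ lets us commute the dot across the branch and produce precisely the right action of $x_{n+1}$ on the $\kk S_{n+1}$ factor for $\rho_4$, and the scalar $(t-n)$ for $\lambda_4$.

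The main obstacle is case (ii), where $c(f)$ is an upward tree with both a trunk going down to some position $i$ at the bottom and multiple branches going up to the top boundary. The isomorphism $\theta_2$ expresses the tree as a composition of an upward branch (which produces the extra bottom tensor factor of $\kk S_n$) with a trunk piece (which produces the top tensor factor). For this case one has to simultaneously track the contributions of \cref{technical}(i) and (ii) along the ``trunk portion'' and the Frobenius relations \cref{relsfrob1,relsfrob2} along the ``branch portion'' of the tree, and then the careful combinatorics of which of the resulting terms lie in $M_1$ (via the ``top left vertex connected to the bottom'' argument used in the proof of \cref{filtration}) shows that modulo $M_1$ the right dot produces an $x_n$ acting on the left $\sym$ factor and the left dot produces an $x_n$ acting on the right $\sym$ factor of $N_2$, as claimed.
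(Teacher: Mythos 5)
Your overall framework is right: verify each of the four cases by transporting $\rho$ and $\lambda$ across the $\theta_i$, tracking how the dangling dots move when slid down the distinguished connected component. And your case (iii) correctly uses \cref{technical}(ii) with $i=n$ (though note $\rho = \lambda$ here by the third relation of \cref{rightslide}, so only one computation is needed). But there is a genuine gap in your treatment of cases (i) and (ii), which \cref{technical} simply cannot address.

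The issue is that \cref{technical}(i) and (ii) are congruences modulo $M_2$. For case (iii) this is exactly the right thing, since $M_3/M_2$ is the relevant quotient. But for case (i) you need to identify elements of $M_1$ \emph{exactly}, and since $M_1 \subset M_2$, any two elements of $M_1$ are trivially congruent mod $M_2$, so \cref{technical}(ii) with $i=n-1$ tells you nothing. The same objection applies to case (ii), where you need equalities in $M_2/M_1$: working mod $M_2$ again collapses everything you care about to zero. (There is also a structural mismatch: the diagrams in \cref{technical} involve an upward leaf with a dangling dot, whereas the image of $\theta_1$ involves a trunk from the top-left vertex straight down to the bottom boundary, which is a different connected component type.) The paper's argument for (i) and (ii) is both simpler and avoids this: slide the dangling dot freely down the trunk to the bottom boundary, where it becomes $x_n^L$ or $x_n^R$ acting through the tensor on $\infl^\sharp\sym$. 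Since the ideal $K_n$ of strictly downward diagrams annihilates $\infl^\sharp\sym$, one may replace the dot by its Harish-Chandra image from \cref{c}, which is $x_n$ for the left dot and $(t-n+1)1_n$ for the right dot. This requires no ``mod $M_2$'' approximation. Case (ii) is handled the same way after first sliding the dot along the trunk or around the cap inside the combined $f\circ g$ diagram, tracking carefully which terms spin off into $M_1$.

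For case (iv), working directly through $\theta_4$ as you propose is a legitimate alternative to the paper's use of the inverse $\phi$; both routes amount to sliding the dot around the cup and then invoking \cref{interest1} once the dot lands at the bottom boundary. But your description (``the branch from the top-left to some bottom position $i$'') is a little off — the distinguished component at stage $4$ is an upward branch, which has no bottom boundary endpoint at all; the cup in the picture defining $\theta_4$ connects the top-left vertex to the internal structure, not to the bottom boundary, and this is where the interchange between left and right dots (and hence between $x_{n+1}$ and the scalar $(t-n)$) comes from.
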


\begin{proof}
(i)  Recall the definition of $\theta_1$ from \cref{theta1}.
  Take a vector $f \otimes g$ in the basis for $N_1$ from \cref{N1basis}.
  By \cref{interest1},
  we have that $x_n^L \equiv x_n\pmod{K_n}$ and $x_n^R \equiv (t-n+1) 1_n
  \pmod{K_n}$
  where $K_n$ is the two sided ideal of $1_n Par_t 1_n$ from \cref{playoff}.
  Since the strictly downward partition diagrams which generate $K^+$
  are zero on $\infl^\sharp \sym$, it follows that
  \begin{align*}
\rho(\theta_1(f\otimes g)) &= 
\right)
=
\theta_2(\rho_2(f\otimes g\otimes h)).
\end{align*}

\vspace{1mm}
\noindent
(iii)
Recall the definition of $\theta_3$ from \cref{theta3}.
Note that $\rho = \lambda$ by the third relation from \cref{rightslide}. For $\rho$,
we need to show that $\rho(\theta_3(f \otimes g)) = (t-n)\theta_3(f \otimes g)$
for any
basis vector $f \otimes g \in 1_m Par_t 1_n \otimes \kk S_n \subset N_3$
from \cref{N3basis}. This follows from \cref{technical}(ii) taking $i=n$.

\vspace{1mm}
\noindent
(iv)
Instead of working with $\theta_4$ from \cref{theta4},
it is easier to use the inverse map $\bar\phi$ induced by the homomorphism $\phi:M\rightarrow N_4$ from \cref{phi}.
We need to show that $\phi \circ \rho =\rho_4 \circ \phi$.
This follows from the following calculations for $f \otimes g \in 1_{m+1} Par_t 1_n \otimes \kk S_n$ and $m, n\geq 0$:
\begin{align*}
\phi(\rho(f \otimes g)) &= \phi\left(\begin{tikzpicture}[scale=1.1,anchorbase]
  \draw[-] (-0.5,-0.2)--(-0.5,0.2)--(0.5,0.2)--(0.5,-0.2)--(-0.5,-0.2);
    \node at (-0,-0) {$\scriptstyle{f}$};
    \draw[-,thick] (-0.4,-0.2)--(-0.4,-0.5);
    \draw[-,thick] (0.4,-0.2)--(0.4,-0.5);
    \draw[-,thick] (-0.4,0.2)--(-0.4,0.5);
    \draw[-,thick] (0.4,0.2)--(0.4,0.5);
    \node at (-0,0.35) {$\cdot$};
    \node at (0.2,0.35) {$\cdot$};
    \node at (-0.2,-0.35) {$\cdot$};
    \node at (-0,-0.35) {$\cdot$};
    \node at (0.2,-0.35) {$\cdot$};
    \draw[-,thick] (-0.4,0.35)--(-0.25,0.35);
    \closeddot{-0.25,0.35};
  \end{tikzpicture}\otimes\begin{tikzpicture}[scale=1.1,anchorbase]
  \draw[-] (-0.5,-0.2)--(-0.5,0.2)--(0.5,0.2)--(0.5,-0.2)--(-0.5,-0.2);
    \node at (-0,-0) {$\scriptstyle{g}$};
    \draw[-,thick] (-0.4,-0.2)--(-0.4,-0.5);
    \draw[-,thick] (0.4,-0.2)--(0.4,-0.5);
    \draw[-,thick] (-0.4,0.2)--(-0.4,0.5);
    \draw[-,thick] (0.4,0.2)--(0.4,0.5);
    \node at (-0.2,0.35) {$\cdot$};
    \node at (-0,0.35) {$\cdot$};
    \node at (0.2,0.35) {$\cdot$};
    \node at (-0.2,-0.35) {$\cdot$};
    \node at (-0,-0.35) {$\cdot$};
    \node at (0.2,-0.35) {$\cdot$};
  \end{tikzpicture}\right)\ =\ \begin{tikzpicture}[scale=1.1,anchorbase]
  \draw[-] (-0.5,-0.2)--(-0.5,0.2)--(0.5,0.2)--(0.5,-0.2)--(-0.5,-0.2);
    \node at (-0,-0) {$\scriptstyle{f}$};
    \draw[-,thick] (-0.4,-0.2)--(-0.4,-0.5);
    \draw[-,thick] (0.4,-0.2)--(0.4,-0.5);
    \draw[-,thick] (-0.4,0.2)--(-0.4,0.3)to[out=up,in=up,looseness=2](-0.7,0.3)to(-0.7,-0.5);
    \draw[-,thick] (0.4,0.2)--(0.4,0.5);
    \node at (-0,0.35) {$\cdot$};
    \node at (0.2,0.35) {$\cdot$};
    \node at (-0.2,-0.35) {$\cdot$};
    \node at (-0,-0.35) {$\cdot$};
    \node at (0.2,-0.35) {$\cdot$};
    \draw[-,thick] (-0.4,0.3)--(-0.25,0.3);
    \closeddot{-0.25,0.3};
  \end{tikzpicture}\otimes\!\begin{tikzpicture}[scale=1.1,anchorbase]
  \draw[-] (-0.5,-0.2)--(-0.5,0.2)--(0.5,0.2)--(0.5,-0.2)--(-0.5,-0.2);
    \node at (-0,-0) {$\scriptstyle{g}$};
    \draw[-,thick] (-0.4,-0.2)--(-0.4,-0.5);
    \draw[-,thick] (0.4,-0.2)--(0.4,-0.5);
    \draw[-,thick] (-0.4,0.2)--(-0.4,0.5);
    \draw[-,thick] (0.4,0.2)--(0.4,0.5);
    \draw[-,thick] (-0.7,-0.5)--(-0.7,0.5);
    \node at (-0.2,0.35) {$\cdot$};
    \node at (-0,0.35) {$\cdot$};
    \node at (0.2,0.35) {$\cdot$};
    \node at (-0.2,-0.35) {$\cdot$};
    \node at (-0,-0.35) {$\cdot$};
    \node at (0.2,-0.35) {$\cdot$};
  \end{tikzpicture}= \begin{tikzpicture}[scale=1.1,anchorbase]
  \draw[-] (-0.5,-0.2)--(-0.5,0.2)--(0.5,0.2)--(0.5,-0.2)--(-0.5,-0.2);
    \node at (-0,-0) {$\scriptstyle{f}$};
    \draw[-,thick] (-0.4,-0.2)--(-0.4,-0.5);
    \draw[-,thick] (0.4,-0.2)--(0.4,-0.5);
    \draw[-,thick] (-0.4,0.2)--(-0.4,0.3)to[out=up,in=up,looseness=2](-0.7,0.3)to(-0.7,-0.5);
    \draw[-,thick] (0.4,0.2)--(0.4,0.5);
    \node at (-0,0.35) {$\cdot$};
    \node at (-0.2,0.35) {$\cdot$};
    \node at (0.2,0.35) {$\cdot$};
    \node at (-0.2,-0.35) {$\cdot$};
    \node at (-0,-0.35) {$\cdot$};
    \node at (0.2,-0.35) {$\cdot$};
    \draw[-,thick] (-0.7,-0.3)--(-0.85,-0.3);
    \closeddot{-0.85,-0.3};
  \end{tikzpicture}\otimes\!\begin{tikzpicture}[scale=1.1,anchorbase]
  \draw[-] (-0.5,-0.2)--(-0.5,0.2)--(0.5,0.2)--(0.5,-0.2)--(-0.5,-0.2);
    \node at (-0,-0) {$\scriptstyle{g}$};
    \draw[-,thick] (-0.4,-0.2)--(-0.4,-0.5);
    \draw[-,thick] (0.4,-0.2)--(0.4,-0.5);
    \draw[-,thick] (-0.4,0.2)--(-0.4,0.5);
    \draw[-,thick] (0.4,0.2)--(0.4,0.5);
    \draw[-,thick] (-0.7,-0.5)--(-0.7,0.5);
    \node at (-0.2,0.35) {$\cdot$};
    \node at (-0,0.35) {$\cdot$};
    \node at (0.2,0.35) {$\cdot$};
    \node at (-0.2,-0.35) {$\cdot$};
    \node at (-0,-0.35) {$\cdot$};
    \node at (0.2,-0.35) {$\cdot$};
    \end{tikzpicture} \\&= \begin{tikzpicture}[scale=1.1,anchorbase]
  \draw[-] (-0.5,-0.2)--(-0.5,0.2)--(0.5,0.2)--(0.5,-0.2)--(-0.5,-0.2);
    \node at (-0,-0) {$\scriptstyle{f}$};
    \draw[-,thick] (-0.4,-0.2)--(-0.4,-0.5);
    \draw[-,thick] (0.4,-0.2)--(0.4,-0.5);
    \draw[-,thick] (-0.4,0.2)--(-0.4,0.3)to[out=up,in=up,looseness=2](-0.7,0.3)to(-0.7,-0.5);
    \draw[-,thick] (0.4,0.2)--(0.4,0.5);
    \node at (-0,0.35) {$\cdot$};
    \node at (-0.2,0.35) {$\cdot$};
    \node at (0.2,0.35) {$\cdot$};
    \node at (-0.2,-0.35) {$\cdot$};
    \node at (-0,-0.35) {$\cdot$};
    \node at (0.2,-0.35) {$\cdot$};
  \end{tikzpicture}\otimes\begin{tikzpicture}[scale=1.1,anchorbase]
  \draw[-] (-0.5,-0.2)--(-0.5,0.2)--(0.5,0.2)--(0.5,-0.2)--(-0.5,-0.2);
    \node at (-0,-0) {$\scriptstyle{g}$};
    \draw[-,thick] (-0.4,-0.2)--(-0.4,-0.5);
    \draw[-,thick] (0.4,-0.2)--(0.4,-0.5);
    \draw[-,thick] (-0.4,0.2)--(-0.4,0.5);
    \draw[-,thick] (0.4,0.2)--(0.4,0.5);
    \draw[-,thick] (-0.7,-0.5)--(-0.7,0.5);
    \node at (-0.2,0.35) {$\cdot$};
    \node at (-0,0.35) {$\cdot$};
    \node at (0.2,0.35) {$\cdot$};
    \node at (-0.2,-0.35) {$\cdot$};
    \node at (-0,-0.35) {$\cdot$};
    \node at (0.2,-0.35) {$\cdot$};
    \opendot{-0.7,.3};
    \end{tikzpicture}= \begin{tikzpicture}[scale=1.1,anchorbase]
  \draw[-] (-0.5,-0.2)--(-0.5,0.2)--(0.5,0.2)--(0.5,-0.2)--(-0.5,-0.2);
    \node at (-0,-0) {$\scriptstyle{f}$};
    \draw[-,thick] (-0.4,-0.2)--(-0.4,-0.5);
    \draw[-,thick] (0.4,-0.2)--(0.4,-0.5);
    \draw[-,thick] (-0.4,0.2)--(-0.4,0.3)to[out=up,in=up,looseness=2](-0.7,0.3)to(-0.7,-0.5);
    \draw[-,thick] (0.4,0.2)--(0.4,0.5);
    \node at (-0,0.35) {$\cdot$};
    \node at (-0.2,0.35) {$\cdot$};
    \node at (0.2,0.35) {$\cdot$};
    \node at (-0.2,-0.35) {$\cdot$};
    \node at (-0,-0.35) {$\cdot$};
    \node at (0.2,-0.35) {$\cdot$};
  \end{tikzpicture}\otimes\begin{tikzpicture}[scale=1.1,anchorbase]
  \draw[-] (-0.5,-0.2)--(-0.5,0.2)--(0.5,0.2)--(0.5,-0.2)--(-0.5,-0.2);
    \node at (-0,-0) {$\scriptstyle{g}$};
    \draw[-,thick] (-0.4,-0.2)--(-0.4,-0.5);
    \draw[-,thick] (0.4,-0.2)--(0.4,-0.5);
    \draw[-,thick] (-0.4,0.2)--(-0.4,0.5);
    \draw[-,thick] (0.4,0.2)--(0.4,0.5);
    \draw[-,thick] (-0.7,-0.5)--(-0.7,0.5);
    \node at (-0.2,0.35) {$\cdot$};
    \node at (-0,0.35) {$\cdot$};
    \node at (0.2,0.35) {$\cdot$};
    \node at (-0.2,-0.35) {$\cdot$};
    \node at (-0,-0.35) {$\cdot$};
    \node at (0.2,-0.35) {$\cdot$};
    \opendot{-0.7,-.35};
    \end{tikzpicture} = \rho_4(\phi(f\otimes g)),\\
\phi(\lambda(f\otimes g))&=\phi\left(\!\!\begin{tikzpicture}[scale=1.1,anchorbase]
  \draw[-] (-0.5,-0.2)--(-0.5,0.2)--(0.5,0.2)--(0.5,-0.2)--(-0.5,-0.2);
    \node at (-0,-0) {$\scriptstyle{f}$};
    \draw[-,thick] (-0.4,-0.2)--(-0.4,-0.5);
    \draw[-,thick] (0.4,-0.2)--(0.4,-0.5);
    \draw[-,thick] (-0.4,0.2)--(-0.4,0.5);
    \draw[-,thick] (0.4,0.2)--(0.4,0.5);
    \node at (-0,0.35) {$\cdot$};
    \node at (0.2,0.35) {$\cdot$};
    \node at (-0.2,0.35) {$\cdot$};
    \node at (-0.2,-0.4) {$\cdot$};
    \node at (-0,-0.4) {$\cdot$};
    \node at (0.2,-0.4) {$\cdot$};
    \draw[-,thick] (-0.4,0.35)--(-0.6,0.35);
    \closeddot{-0.6,0.35};
  \end{tikzpicture}\otimes\begin{tikzpicture}[scale=1.1,anchorbase]
  \draw[-] (-0.5,-0.2)--(-0.5,0.2)--(0.5,0.2)--(0.5,-0.2)--(-0.5,-0.2);
    \node at (-0,-0) {$\scriptstyle{g}$};
    \draw[-,thick] (-0.4,-0.2)--(-0.4,-0.5);
    \draw[-,thick] (0.4,-0.2)--(0.4,-0.5);
    \draw[-,thick] (-0.4,0.2)--(-0.4,0.5);
    \draw[-,thick] (0.4,0.2)--(0.4,0.5);
    \node at (-0.2,0.35) {$\cdot$};
    \node at (-0,0.35) {$\cdot$};
    \node at (0.2,0.35) {$\cdot$};
    \node at (-0.2,-0.4) {$\cdot$};
    \node at (-0,-0.4) {$\cdot$};
    \node at (0.2,-0.4) {$\cdot$};
  \end{tikzpicture}\right)\ =\ \begin{tikzpicture}[scale=1.1,anchorbase]
  \draw[-] (-0.5,-0.2)--(-0.5,0.2)--(0.5,0.2)--(0.5,-0.2)--(-0.5,-0.2);
    \node at (-0,-0) {$\scriptstyle{f}$};
    \draw[-,thick] (-0.4,-0.2)--(-0.4,-0.5);
    \draw[-,thick] (0.4,-0.2)--(0.4,-0.5);
    \draw[-,thick] (-0.4,0.2)--(-0.4,0.3)to[out=up,in=up,looseness=2](-0.8,0.3)to(-0.8,-0.5);
    \draw[-,thick] (0.4,0.2)--(0.4,0.5);
    \node at (-0,0.35) {$\cdot$};
    \node at (0.2,0.35) {$\cdot$};
    \node at (-0.2,0.35) {$\cdot$};
    \node at (-0.2,-0.4) {$\cdot$};
    \node at (-0,-0.4) {$\cdot$};
    \node at (0.2,-0.4) {$\cdot$};
    \draw[-,thick] (-0.4,0.3)--(-0.6,0.3);
    \closeddot{-0.6,0.3};
  \end{tikzpicture}\otimes\ \begin{tikzpicture}[scale=1.1,anchorbase]
  \draw[-] (-0.5,-0.2)--(-0.5,0.2)--(0.5,0.2)--(0.5,-0.2)--(-0.5,-0.2);
    \node at (-0,-0) {$\scriptstyle{g}$};
    \draw[-,thick] (-0.4,-0.2)--(-0.4,-0.5);
    \draw[-,thick] (0.4,-0.2)--(0.4,-0.5);
    \draw[-,thick] (-0.4,0.2)--(-0.4,0.5);
    \draw[-,thick] (0.4,0.2)--(0.4,0.5);
    \draw[-,thick] (-0.8,-0.5)--(-0.8,0.5);
    \node at (-0.2,0.35) {$\cdot$};
    \node at (-0,0.35) {$\cdot$};
    \node at (0.2,0.35) {$\cdot$};
    \node at (-0.2,-0.4) {$\cdot$};
    \node at (-0,-0.4) {$\cdot$};
    \node at (0.2,-0.4) {$\cdot$};
  \end{tikzpicture}\ =\ \begin{tikzpicture}[scale=1.1,anchorbase]
  \draw[-] (-0.5,-0.2)--(-0.5,0.2)--(0.5,0.2)--(0.5,-0.2)--(-0.5,-0.2);
    \node at (-0,-0) {$\scriptstyle{f}$};
    \draw[-,thick] (-0.4,-0.2)--(-0.4,-0.5);
    \draw[-,thick] (0.4,-0.2)--(0.4,-0.5);
    \draw[-,thick] (-0.4,0.2)--(-0.4,0.3)to[out=up,in=up,looseness=2](-0.8,0.3)to(-0.8,-0.5);
    \draw[-,thick] (0.4,0.2)--(0.4,0.5);
    \node at (-0,0.35) {$\cdot$};
    \node at (-0.2,0.35) {$\cdot$};
    \node at (0.2,0.35) {$\cdot$};
    \node at (-0.2,-0.4) {$\cdot$};
    \node at (-0,-0.4) {$\cdot$};
    \node at (0.2,-0.4) {$\cdot$};
    \draw[-,thick] (-0.8,-0.35)--(-0.6,-0.35);
    \closeddot{-0.6,-0.35};
  \end{tikzpicture}\otimes\ \begin{tikzpicture}[scale=1.1,anchorbase]
  \draw[-] (-0.5,-0.2)--(-0.5,0.2)--(0.5,0.2)--(0.5,-0.2)--(-0.5,-0.2);
    \node at (-0,-0) {$\scriptstyle{g}$};
    \draw[-,thick] (-0.4,-0.2)--(-0.4,-0.5);
    \draw[-,thick] (0.4,-0.2)--(0.4,-0.5);
    \draw[-,thick] (-0.4,0.2)--(-0.4,0.5);
    \draw[-,thick] (0.4,0.2)--(0.4,0.5);
    \draw[-,thick] (-0.8,-0.5)--(-0.8,0.5);
    \node at (-0.2,0.35) {$\cdot$};
    \node at (-0,0.35) {$\cdot$};
    \node at (0.2,0.35) {$\cdot$};
    \node at (-0.2,-0.4) {$\cdot$};
    \node at (-0,-0.4) {$\cdot$};
    \node at (0.2,-0.4) {$\cdot$};
\end{tikzpicture}\\
&=(t-n)\ \begin{tikzpicture}[scale=1.1,anchorbase]
  \draw[-] (-0.5,-0.2)--(-0.5,0.2)--(0.5,0.2)--(0.5,-0.2)--(-0.5,-0.2);
    \node at (-0,-0) {$\scriptstyle{f}$};
    \draw[-,thick] (-0.4,-0.2)--(-0.4,-0.5);
    \draw[-,thick] (0.4,-0.2)--(0.4,-0.5);
    \draw[-,thick] (-0.4,0.2)--(-0.4,0.3)to[out=up,in=up,looseness=2](-0.8,0.3)to(-0.8,-0.5);
    \draw[-,thick] (0.4,0.2)--(0.4,0.5);
    \node at (-0,0.35) {$\cdot$};
    \node at (-0.2,0.35) {$\cdot$};
    \node at (0.2,0.35) {$\cdot$};
    \node at (-0.2,-0.4) {$\cdot$};
    \node at (-0,-0.4) {$\cdot$};
    \node at (0.2,-0.4) {$\cdot$};
   \end{tikzpicture}\otimes\ \begin{tikzpicture}[scale=1.1,anchorbase]
  \draw[-] (-0.5,-0.2)--(-0.5,0.2)--(0.5,0.2)--(0.5,-0.2)--(-0.5,-0.2);
    \node at (-0,-0) {$\scriptstyle{g}$};
    \draw[-,thick] (-0.4,-0.2)--(-0.4,-0.5);
    \draw[-,thick] (0.4,-0.2)--(0.4,-0.5);
    \draw[-,thick] (-0.4,0.2)--(-0.4,0.5);
    \draw[-,thick] (0.4,0.2)--(0.4,0.5);
    \draw[-,thick] (-0.8,-0.5)--(-0.8,0.5);
    \node at (-0.2,0.35) {$\cdot$};
    \node at (-0,0.35) {$\cdot$};
    \node at (0.2,0.35) {$\cdot$};
    \node at (-0.2,-0.4) {$\cdot$};
    \node at (-0,-0.4) {$\cdot$};
    \node at (0.2,-0.4) {$\cdot$};
  \end{tikzpicture}= \lambda_4(\phi(f \otimes g)).
\end{align*}
\end{proof}

\begin{proof}[Proof of \cref{refinedfilt}]
  The functor $D_{b|a} \circ j_!$ is defined by tensoring with the bimodule
  $\overline{M}$ that is the simultaneous generalized $a$ eigenspace of
  the endomorphism $\rho$ and generalized $b$ eigenspace of the endomorphism $\lambda$
  defined in \cref{xyz}.
  \cref{filtration} defines a filtration of $M$ with sections
  $M_i / M_{i-1} \cong N_i$ for $i=1,\dots,4$.
  Then \cref{xyz} shows that the endomorphisms
  $\rho$ and $\lambda$ preserve this filtration, hence, the filtration of $M$
  induces a filtration of the summand $\overline{M}$. Moreover, for each $i$,
  $\overline{M}_i / \overline{M}_{i-1}$ is isomorphic to
  the summand $\overline{N}_i$ of $N$ defined by the simultaneous generalized $a$-eigenspace of the endomorphism $\rho_i$ and generalized $b$ eigenspace of the endomorphism $\lambda_i$. By the descriptions of $\rho_i$ and $\lambda_i$, it follows that
  $\overline{N}_i \otimes_{\sym}$ is isomorphic to the functor
  $j_! \circ E_a \circ \PR_{t-b}$, $j_! \circ \PR_{t-a} \circ \PR_{t-b}$,
  $j_! \circ E_a \circ F_b$ or $j_! \circ \PR_{t-a} \circ F_b$
  for $i=4,3,2,1$, respectively.
  It remains to observe that $\sym$ is semisimple, so every $\sym$-module is flat. This means that
  the filtration of $\overline{M}$ induces a filtration $0 = S_0 \subseteq S_1 \subseteq S_2 \subseteq S_3 \subseteq S_4 = D_{b|a} \circ j_!$ such that
  $S_i \cong \overline{M}_i / \overline{M}_{i-1} \otimes_{\sym}
  \cong \overline{N}_i \otimes_{\sym}$.
\end{proof}


\begin{thebibliography}{HOMFLY}
\bibitem[BDVO]{BDO}
C. Bowman, M. De Visscher and R. Orellana,
The partition algebra and Kronecker coefficients,
{\em Trans. Amer. Math. Soc.} {\bf 367} (2015), 3647--3667. 

\bibitem[BOR]{BOR}
E. Briand, R. Orellana and M. Rosas, The stability of
the Kronecker product of Schur functions,
{\em J. Algebra} {\bf 331} (2011), 11--27.  


\bibitem[B]{Bheis}
  J. Brundan,
  On the definition of Heisenberg category,
{\em Algebr. Comb.} {\bf 1} (2018), 523--544.


\bibitem[BEEO]{BEEO}
  J. Brundan, I. Entova-Aizenbud, P. Etingof and V. Ostrik,
  Semisimplification of the category of tilting modules for $GL_n$,
  {\em Adv. Math.}
   {\bf 375} (2020), 107331.  

\bibitem[BSW]{BSW}
  J. Brundan, A. Savage and B. Webster,
  Heisenberg and Kac-Moody categorification,
{\em  Selecta Math.} {\bf 26} (2020), 74.
  
\bibitem[BS]{BS}
J. Brundan and C. Stroppel,
Semi-infinite highest weight categories, to appear in {\em Mem. Amer. Math. Soc.}.

\bibitem[C]{Com}
J. Comes,
Jellyfish partition categories,
{\em Alg. Rep. Theory} {\bf 23} (2020), 327--347.

\bibitem[CO]{CO}
J. Comes and V. Ostrik,
On blocks of Deligne's category $\REP(S_t)$,
{\em Adv. Math.} {\bf 226} (2011), 1331--1377.


\bibitem[Cr]{Creedon}
  S. Creedon,
  The center of the partition algebra,
  {\em J. Algebra} {\bf 570} (2021), 215--266.

\bibitem[D]{Del}
P. Deligne, ``La cat\'egorie des repr\'esentations du groupe sym\'etrique $S_t$, lorsque $t$ n’est pas un entier naturel", in: 
{\em Algebraic Groups and Homogeneous Spaces}, 
Tata Inst. Fund. Res. Stud. Math., pp. 209--273, 2007.

\bibitem[DW]{DW}
  W. Doran IV and D. Wales, The partition algebra revisited,
  {\em J. Algebra} {\bf 231} (2000), 265--330.

\bibitem[EL]{EL}
B. Elias and A. Lauda, 
Trace decategorification of the Hecke category, {\em J. Algebra} {\bf
   449} (2016), 615--634.

\bibitem[E-A]{EA}
I. Entova-Aizenbud,
Deligne categories and reduced Kronecker coefficients, 
{\em J. Algebr. Comb.} {\bf 44} (2016), 345-–362. 

\bibitem[E1]{En}
J. Enyang,
Jucys–Murphy elements and a presentation for partition algebras,
{\em J. Algebr. Comb.} {\bf 37} (2013), 401--454.

\bibitem[E2]{En2}
  \bysame,
  A seminormal form for partition algebras, {\em J. Combin. Theory Ser. A} {\bf 120} (2013), 1737--1785.

\bibitem[EGNO]{EGNO} P. Etingof, S. Gelaki, D. Niksych, V. Ostrik, {\it Tensor 
Categories}, Math. Surveys Monogr., vol. {\bf 205}, AMS (2015).
  
\bibitem[HR]{HR}
T. Halverson and A. Ram,
Partition algebras, {\em European J. Combin.} {\bf 26} (2005), 869--921.

\bibitem[J]{Jones}
  V. Jones, The Potts model and the symmetric group,
  in: {\em Subfactors, Kyuzeso, 1993}, World Sci. Publ., River Edge, NJ, 1994, pp.
  259--267.
  
\bibitem[K]{K}
  M. Khovanov, Heisenberg algebra and a graphical calculus, {\em
    Fund. Math.} {\bf 225} (2014), 169--210.

\bibitem[KS]{KS}
  M. Khovanov and R. Sazdanovic,
  Categorifications of the polynomial ring,
  {\em Fund. Math.} {\bf 230} (2015), 251--280.
  
\bibitem[LSR]{LSR}
S. Likeng and A. Savage (appendix with C. Ryba),
Embedding Deligne's category $\REP(S_t)$ in the Heisenberg category,
{\em Quantum Topol.} {\bf 12} (2021), 211--242. 

\bibitem[L]{Littlewood}
D. Littlewood, The Kronecker product of symmetric group
representations, {\em J. Lond. Math. Soc.}
{\bf 31} (1956), 89--93.

\bibitem[M1]{Martinfirst}
  P. Martin,
  {\em Potts Models and Related Problems in Statistical Mechanics},
  Series on Advances in Statistical Mechanics, vol. 5, World Scientific Publishing, Teaneck, NJ, 1991.
  
\bibitem[M2]{Martin}
  \bysame,
  The structure of the partition algebras, {\em J. Algebra} {\bf 183} (1996), 319--358.

\bibitem[M3]{Martinlast}
  \bysame, The partition algebra and the Potts model transfer matrix spectrum in high dimensions, {\em J. Phys. A} {\bf 33} (2000), 3669--3695.
  
\bibitem[OV]{OV}
A. Okounkov and A. M. Vershik,
A new approach to the representation theory of the symmetric groups, {\em J. Math. Sci.} {\bf 131} (2005), 5471--5494.

\bibitem[OZ]{OZ}
  R. Orellana and M. Zabrocki,
  Symmetric group characters as symmetric functions; \arxiv{1605.06672v4}.
  


\bibitem[SS1]{SSold}
  S. Sam and A. Snowden,
Stability patterns in representation theory, 
{\em Forum Math. Sigma} {\bf 3} (2015), paper no. e11, 108 pp..

\bibitem[SS2]{SaS}
  \bysame,
  The representation theory of Brauer categories I: triangular
    categories; \arxiv{2006.04328v1}.

  \bibitem[X]{X}
    C. Xi, Partition algebras are cellular, {\em Compositio Math.} {\bf 119} (1999), 99--109.
\end{thebibliography}
\end{document}